\documentclass[a4paper]{amsart}
\usepackage{tikz}
\usetikzlibrary{calc, arrows.meta}
\usepackage[english]{babel}
\usepackage[utf8]{inputenc}
\usepackage{latexsym}
\usepackage{graphicx}
\usepackage{array}
\usepackage{amsthm}
\usepackage{amsmath}
\usepackage{amssymb}
\usepackage{amsfonts}
\usepackage{mathrsfs}
\usepackage{mathtools}
\usepackage{tikz-cd} 
\usepackage{faktor}
\usepackage{xfrac}
\usepackage{tensor}
\usepackage{fancyhdr} %para el encabezado y pie de página
\usepackage{enumitem} %para que los => y <= no se crucen con la raya vertical
\usepackage{mdframed} %para que los teoremas queden con la raya vertical
\usepackage{amssymb}
\usepackage[hidelinks]{hyperref}
\usepackage{scalerel}
\usepackage{pgfplots}
\usepackage{pdfpages}
\usepackage{bbm}
\usepackage{listings}
\usepackage{tgbonum}
\usepackage[T1]{fontenc}
\mathtoolsset{showonlyrefs} %Solo pone numeritos a las ecuaciones que uso.
\usetikzlibrary{decorations.markings}
\tikzset{tangent/.style={postaction={decorate,decoration={markings,
				mark=at position #1 with {
					\draw[red] (0pt,0pt) coordinate (Tang) -- (0pt,0pt);)}}}}}

\allowdisplaybreaks
\title{Dispersion for the wave equation with Robin boundary conditions on a 2d convex model domain}
\author{Diego Sánchez Sanz$^1$}
\thanks{$^1$Sorbonne Université, CNRS, LJLL, Paris, France}
\date{}%\today}

\newtheorem{defi}{Definition}
\numberwithin{defi}{section}
\newtheorem{teo}[defi]{Theorem}
\newtheorem{cor}[defi]{Corollary}
\newtheorem{lema}[defi]{Lemma}
\newtheorem{prop}[defi]{Proposition}
\theoremstyle{definition}
\newtheorem{obs}[defi]{Remark}

\newtheorem{nota}[defi]{Notation}
\DeclareMathOperator{\arccot}{arccot}

\theoremstyle{definition}

\theoremstyle{plain}

\usepackage{etoolbox}
\patchcmd{\section}{\normalfont}{\normalfont\Large}{}{}
\begin{document}
\begin{abstract}
		This paper presents a study of dispersion for the two-dimensional Friedlander model with Neumann and Robin boundary conditions, thereby extending previous works treating only Dirichlet boundary conditions. By considering an Airy-type special function, we obtain a parametrix construction similar to the Dirichlet case but with additional terms requiring careful control.
		By studying the dispersion of this parametrix, we obtain the same dispersive estimates as in previous results for the Dirichlet case, and therefore the same Strichartz estimates.
		\end{abstract}
		
		\maketitle
\section{Introduction}
We consider the wave equation on a Riemannian manifold $\Omega$ with non-empty boundary $\partial\Omega$:
\[ \begin{cases} 
	(\partial_t^2-\Delta)u(t,x) =0& x\in \Omega\\
	u(0,x)=u_0(x),	\partial_t u(0,x)=u_1(x)\\
	Bu(t,x)=0& x\in\partial\Omega.
\end{cases}
\]
Here $\Delta$ denotes the Laplace-Beltrami operator in $\Omega$. The operator $B$ usually encodes either Dirichlet or Neumann boundary conditions.\\
In our case, however, we are interested in a linear combination of the Dirichlet and Neumann conditions, which we call Robin boundary conditions, that is, for fixed $a_r,b_r\in \mathbb{R}$, $Bu(t,x)=a_r u(t,x)+b_r \partial_{n} u(t,x)$.
\begin{obs}
Since we are considering $Bu=0$, we can normalize and consider that $|(a_r,b_r)|=1$, that is, instead of considering $a_r,b_r\in\mathbb{R}$, we consider $Bu(t,x)=\cos(\iota) u(t,x)-\sin(\iota)\partial_{n} u(t,x)$, $\iota\in (0,\pi)$, excluding the Dirichlet case and with the Neumann case being $\iota=\pi/2$.
\end{obs} 
For this problem with Dirichlet conditions, solutions are known to have local-in-time Strichartz estimates \cite{blair_strichartz_2009} with some losses in the admissible pairs range \cite{ivanovici_new_2021}.
A central problem in the field is to determine the smallest loss exponent (which we will call $\gamma(n)$ with $n$ being the dimension of the manifold) for which local-in-time Strichartz estimates hold for a given geometrical setting. This is usually approached by estimating dispersive properties of the wave. Under some possible additional conditions on the boundary, or on specific toy models \cite{positive}, strict convexity is classical \cite{ivanovici_dispersion_2023}.\\
That is, the general aim is to find the smallest possible $\gamma(n)$ such that the following statement holds:\\
	Let $q,r\in[2,\infty]$ such that
	\begin{equation}
		\frac{1}{q}\le \left(\frac{1}{2}-\gamma(n)\right)\left(\frac{1}{2}-\frac{1}{r}\right).
	\end{equation}
	Let $t_{max}\in\mathbb{R}_+$. Then, there exists a constant $C>0$ independent of $u_0,u_1$ such that
	\begin{equation}
		||u||_{L^q([0,t_{max}],L^r(\Omega))}\le C(||u_0||_{H^\beta(\Omega)}+||u_1||_{H^{\beta-1}(\Omega)}),
	\end{equation}
	where $\beta=2\left(\frac{1}{2}-\frac{1}{r}\right)-\frac{1}{q}$, and where $u$ is the solution to the wave equation with Dirichlet, Neumann or Robin boundary conditions and initial data $(u_0,u_1)$ and $\Omega$ a specific or general manifold of dimension $n$.\\
By standard reductions, we may localize in frequencies and trade the spectral localization for a localization with respect to the time frequency using the equation. This, together with the fact that the estimates are local in time, allows us to deduce the previous statement from the local high frequency estimates:\\
	Let $q,r\in[2,\infty]$ such that
	\begin{equation}
		\frac{1}{q}\le \left(\frac{1}{2}-\gamma(n)\right)\left(\frac{1}{2}-\frac{1}{r}\right).
	\end{equation}
	Let $t_{max}\in\mathbb{R}_+$ and $\psi_0\in \mathcal{C}^\infty_0([\frac{1}{2},\frac{3}{2}])$. Then, there exists a constant  $C>0$ independent of $u_0,u_1$ such that
	\begin{equation}
		||\psi_0(h^2 D_t^2)u||_{L^q([0,t_{max}],L^r(\Omega))}\le h^{1-\beta}C(||u_0||_{H^1(\Omega)}+||u_1||_{L^2(\Omega)}),
	\end{equation}
	where $\beta=2\left(\frac{1}{2}-\frac{1}{r}\right)-\frac{1}{q}$, and where $u$ is the solution to the wave equation with Dirichlet, Neumann or Robin boundary conditions and with initial data $(u_0,u_1)$ and $\Omega$ a specific or general manifold of dimension $n$.\\
It is known that for the Friedlander model and Dirichlet boundary conditions, $\gamma(2)\le \frac{1}{9}$ the result holds \cite{positive}. The main goal of this paper is to prove that the result holds also for Robin boundary conditions (and Neumann as a particular case for us), that is, for the Friedlander model and $\gamma(2)\le\frac{1}{9}$, the previous estimates hold.
\subsection{Motivation}
We are interested in the behavior of the wave equation with Robin boundary conditions in a convex domain, so we will start by considering the simplest convex domain. In principle, that would be the unit disk, however, the Friedlander model is simpler and it encapsulates many of the features of a convex domain (and the disk in particular). We provide a heuristic motivation.\\ 
We consider the following problem with $\boldsymbol{w}\in\mathbb{R}^2$:
\[ \begin{cases} 
	(\partial_t^2-\Delta)u(t,\boldsymbol{w}) =0& |\boldsymbol{w}|\le 1 \\
	u(0,\boldsymbol{w})=u_0(\boldsymbol{w}),	\partial_t u(0,\boldsymbol{w})=u_1(\boldsymbol{w})\\
	Bu(t,\boldsymbol{w})=0& |\boldsymbol{w}|=1.
\end{cases}
\]
This is the boundary value problem in the disk, with $B$ corresponding to the Dirichlet, Neumann or Robin boundary conditions. We change the problem to polar coordinates $(r,\theta)$, then we change the radial variable $r$ so that $r=1-\frac{x}{2}$, and we set $y=\theta$. We then change the sign to make the problem on the upper half-plane. Finally we linearize the metric coefficients near the boundary $\frac{1}{(1-\frac{x}{2})^2}\sim 1+x$ and we discard the lower order term $\frac{1}{2-{x}} \partial_x$. After all these transformations, we obtain the Friedlander equation:
\[ \begin{cases} 
	(\partial_t^2-\partial_x^2-(1+x)\partial_y^2)u(t,x,y) =0& x>0 \\
	u(0,x,y)=u_0(x,y),	\partial_t u(0,x,y)=u_1(x,y)\\
	Bu(t,x,y)=0& x=0.
\end{cases}
\]
This indicates that the Friedlander equation is a model for the wave equation near the boundary of the disk and locally in time. Thus, understanding the Friedlander equation gives insight into the convex boundary problem of the wave equation using a simpler model.\\
Furthermore, since the circle is the most basic case of convexity, the Friedlander model is a useful toy model for understanding convex domains in general (see \cite{me_taylor_boundary_nodate} section 1.4).\\\\
\begin{obs}
	This is a heuristic motivation, the exact disk problem should be studied independently and we cannot a priori prove properties for the disk by studying the Friedlander model. However, it provides useful intuition about which mechanisms are expected to persist.
\end{obs}
\subsection{Setting}\label{setting}
We therefore consider the two-dimensional Friedlander model:
Given $\iota\in(0,\pi),  u_0,u_1\in \mathcal{D}'(\Omega_2)$, we define $\Delta_F:=\partial_x^2+(1+x)\partial_y^2$ and we have the following problem:
\begin{align} 
	\label{baseprob}
	\begin{cases} 
		(\partial_t^2-\Delta_F)u(t,x,y) =0,& x>0 \\
		u(0,x,y)=u_0(x,y),	\partial_t u(0,x,y)=u_1(x,y)\\
		\cos(\iota)u(t,x,y)-\sin(\iota) \partial_x  u(t,x,y)=0,& x=0.
	\end{cases}
\end{align}
We will refer to this equation as the Friedlander equation.
\begin{nota}
	\label{firstnot}
	We will denote $b:=\cot(\iota)$.\\
	For simplicity, we write $\Omega_2:=\mathbb{R}_{+}\times \mathbb{R}$.\\
	We use the following vector notation:
	$\boldsymbol{t}:=(t,x,y)$, $\boldsymbol{T}:=(T,X,Y)$, $\boldsymbol{\sigma}=(\sigma,s)$.
\end{nota}
\begin{obs}
	The Neumann boundary condition is included in our study, since it is the case $\iota=\frac{\pi}{2}$, or $b=0$. We note here that Neumann is very natural for our case, since our Green function tends to the Neumann Green function, as $h\rightarrow 0$. Furthermore, our case does include the Neumann condition but does not include the Dirichlet condition. The Robin condition is then more closely related to Neumann than to Dirichlet in this high-frequency regime.
\end{obs}
We prove the following result:
\begin{teo}
	\label{mainteo}
	Let $q,r\in[2,\infty]$ such that
	\begin{equation}
		\frac{1}{q}\le \left(\frac{1}{2}-\frac{1}{9}\right)\left(\frac{1}{2}-\frac{1}{r}\right).
	\end{equation}
	Let $t_{max}\in\mathbb{R}_+$, $\psi_0\in \mathcal{C}^\infty_0(1-2^{-9},1+2^{-9})\subset \mathcal{C}^\infty_0(\frac{1}{2},\frac{3}{2})$. Then, there exists a constant $C>0$ such that, for every solution $u$  to the Friedlander equation with Robin boundary conditions \eqref{baseprob} and initial data $(u_0,u_1)\in H^1\times L^2(\Omega_2)$, we have the following Strichartz estimates
	\begin{equation}
		||\psi_0(h^2 D_t^2)u||_{L^q([0,t_{max}],L^r(\Omega_2))}\le h^{1-\beta}C(||u_0||_{H^1(\Omega_2)}+||u_1||_{L^2(\Omega_2)}),
	\end{equation}
	where $\beta=2\left(\frac{1}{2}-\frac{1}{r}\right)-\frac{1}{q}$, $0<h<h_0$ and the constant $C$ is independent of $h,u_0,u_1$.
\end{teo}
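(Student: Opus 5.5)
The plan is to deduce the Strichartz estimates from a frequency-localized dispersive estimate for the half-wave propagator. Concretely, I would prove that for $h\in(0,h_0)$, $t\in(0,t_{max}]$ and any point source $(x_0,y_0)\in\Omega_2$, the Green function $G_h$ of $\psi_0(h^2D_t^2)$ applied to the Robin problem \eqref{baseprob} satisfies
\begin{equation}
|G_h(t,x,y;x_0,y_0)|\le C h^{-2}\min\Big(1,\big(\tfrac{h}{t}\big)^{\frac12}+\big(\tfrac{h}{t}\big)^{\frac14}h^{\frac14}\Big),
\end{equation}
uniformly in $(x,y)$. This is the same bound obtained in \cite{positive} for Dirichlet data. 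Once this is established, the standard $TT^*$ argument together with interpolation against energy conservation yields Theorem~\ref{mainteo}. The dispersion loss $(h/t)^{1/4}h^{1/4}$ translates exactly into $\gamma(2)=\frac19$ after the usual optimization over the time scale. This last step is identical to the Dirichlet case and only uses that the Robin Laplacian is self-adjoint and nonnegative up to a bounded perturbation. For $b<0$ there may be finitely many negative eigenvalues, but the localization $\psi_0(h^2D_t^2)$ removes them for small $h$.

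To prove the dispersive bound, I would Fourier transform in $y$ (dual variable $\eta$, with $|\eta|\sim h^{-1}$ after localization) and write the operator in $x$ as $-\partial_x^2+(1+x)\eta^2$. By rescaling $x\mapsto \eta^{2/3}x$, its eigenfunctions are Airy functions $Ai(\eta^{2/3}x-\omega)$ subject to the boundary condition $\cos\iota\,Ai(-\omega)-\sin\iota\,\eta^{2/3}Ai'(-\omega)=0$, that is, $Ai'(-\omega)-b\eta^{-2/3}Ai(-\omega)=0$. The eigenvalues $\omega_k(\eta)$ are therefore perturbations of the Neumann zeros $a_k'$ of $Ai'$. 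Via the implicit function theorem, I would expand them as $\omega_k(\eta)=a'_k+b\eta^{-2/3}/a'_k+O(\eta^{-4/3}\omega_k^{-2})$, and similarly for the normalization constants $\|Ai(\cdot-\omega_k)\|_{L^2}^{-1}$. The Green function then becomes a sum over $k$ of oscillatory integrals in $\eta$. I would apply Poisson summation in $k$, using the Airy-type function $A_b(\omega,\eta)$ that defines the eigenvalue condition, exactly as the Dirichlet argument uses $Ai(-\omega)$. This turns the sum over modes into a sum over reflections $N$, each given by an integral with phase $\Phi_N=y\eta+t\sqrt{1+\omega\eta^{-2/3}}\eta+\ldots-N L_b(\omega,\eta)$. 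Here $L_b$ is the Neumann phase $L$ plus a correction of order $b\,\eta^{-2/3}\omega^{-1/2}$.

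The key steps, in order, are the following. (i) Establish the asymptotic expansion of $A_b$ and of its argument $L_b$, uniformly for $\omega\lesssim h^{-2/3}$, showing that $L_b-L_{Neu}$ is a symbol of lower order. (ii) Show that the resulting parametrix has the same stationary phase structure as in \cite{positive}: degenerate critical points of cusp/swallowtail type near tangential directions, with identical degeneracy orders. The lower order phase corrections and amplitudes are then absorbed into the symbols, so they do not change the Hessian or cubic degeneracies. (iii) Bound each reflected wave by the corresponding Dirichlet estimates, and control the number of relevant $N$ by $\lesssim t\,\omega^{-1/2}$. Small $\omega$ (gallery modes, $\omega\lesssim h^{-2/3+\epsilon}$) is handled by direct spectral summation, while large $\omega$ uses the parametrix.

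I expect the main obstacle to be step (ii) in the regime $\omega\sim 1$, i.e. whispering gallery modes. There the Robin correction $b\eta^{-2/3}/\omega$ is not negligible relative to the spacing of eigenvalues over the time scale $t\sim1$ (it accumulates like $N b h^{2/3}$ across $N\sim h^{-1/3}$ reflections). My first attempt would be to treat $e^{iN b h^{2/3}\phi(\omega)}$ as part of the amplitude, paying derivatives of size $Nh^{2/3}\lesssim 1$, so that it remains an admissible symbol. If that fails, I would instead incorporate the correction into a modified phase and recheck the degenerate stationary-phase analysis of \cite{positive} with a perturbed phase, using stability of the normal forms.
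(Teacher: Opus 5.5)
\textbf{There is a genuine gap, and it sits in the step you treat as routine: going from a fixed-time dispersive bound to $\gamma(2)=\frac19$.}

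\emph{The stated dispersive estimate cannot hold.} It is not what \cite{positive} proves. Note that $(h/t)^{1/4}h^{1/4}\le (h/t)^{1/2}$ exactly when $t\le 1$, and $(h/t)^{1/4}h^{1/4}\le t_{max}^{1/2}(h/t)^{1/2}$ for $1\le t\le t_{max}$. So your bound is just the boundaryless estimate $h^{-2}\min(1,(h/t)^{1/2})$ on $[0,t_{max}]$, and Keel--Tao would then give Strichartz estimates with no loss at all. It ignores the swallowtails. Proposition \ref{bigtangenctial}(2) with $N\sim 1$, $a\sim t^2$, $\hat T=4N$ only gives $|\hat W_{h,N,\gamma,a}|\lesssim h^{-7/4}t^{1/4}$. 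For $t$ of order one this is $h^{-1/4}$ larger than your bound. This $(t/h)^{1/4}$ fixed-time loss is sharp in the Dirichlet model, whose swallowtail phase the Robin problem shares.

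\emph{Even the correct fixed-time bound does not give $\frac19$.} With decay $(h/t)^{1/4}$ up to the gallery term, $TT^*$ applied to a pointwise-in-time bound only yields $\frac1q\le\frac14(\frac12-\frac1r)$, i.e.\ a loss of $\frac14$. No optimization over time scales recovers $\frac19$.

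\textbf{The missing idea is to integrate in time instead of taking a supremum in time.} In Section \ref{strichartz}, the paper treats each dyadic piece $G_{\gamma,h}$ and each reflected wave separately, as follows.
\begin{enumerate}
\item The large bounds of Proposition \ref{bigtangenctial} hold only on windows $|\hat T-4N|\lesssim 1/N$, with decay $|N(\hat T-4N)|^{-1/4}$ outside them.
\item The overlap of waves at a fixed time is controlled by $|\mathcal N_{1,h,\gamma}|\lesssim C+T/\lambda_{\gamma,h}^2$ (Proposition \ref{prop1}). Your step (iii) needs this overlap count, not just the total number of $N$, and it is here that $Resrob_1$ must be controlled.
\item The paper computes $\|\sup_{x,y}|G_{\gamma,h}|\|_{L^p_t}$ directly and uses Young's inequality in time inside $TT^*$.
\item The resulting bounds carry factors $\min\{\gamma h^{-4/9},h^{4/9}\gamma^{-1}\}^{c(q)}$. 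These are summable over dyadic $\gamma$ under $q\ge\frac{36}{7}$, which for $r=\infty$ is exactly $\frac1q\le(\frac12-\frac19)\frac12$. The balance point $\gamma\sim h^{4/9}$ is the origin of $\frac19$.
\end{enumerate}

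\textbf{The gallery modes need their own argument.} The paper uses per-mode estimates with the $\eta$-dependent zeros of Proposition \ref{boundszeroes}, orthogonality of the $E^h_k$, Cauchy--Schwarz over $k\lesssim\lambda_{\gamma,h}$, and interpolation with Lemma \ref{grossboundgallmodes}. Also, that regime is $w\lesssim h^{-\varepsilon}$, not $\omega\lesssim h^{-2/3+\epsilon}$ as you wrote.

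Your steps (i)--(ii) are consistent with the paper: Poisson summation with the frequency-dependent Robin phase, and absorbing $e^{iNB}$ into the amplitude when $|N|\le\lambda$.
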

\begin{obs}
	We record the following remarks about the theorem:
	\begin{enumerate}
		\item The constant $C$ in the theorem depends on $\iota$, $t_{max}$ and the specific cut-offs chosen $\psi_0, \psi_1$, but it does not depend on $h, u_0,u_1$.
		\item The exponent $\beta$ does not depend on the Robin condition $\iota$.
		\item Our Robin setting does include the Neumann case $\iota=\frac{\pi}{2}$, but does not include the Dirichlet case $\iota=0,\pi$.
	\end{enumerate}
\end{obs}
\begin{obs}
	We make here some remarks about possible extensions which could be derived from this work in the future.
	\begin{enumerate}
		\item The constant $C$ of this theorem depends on the specific Robin problem chosen, that is, it depends on $\iota$, however, it is natural to ask whether there is continuity of $C(\iota)$, since we have $\iota\in[0,\pi]$ in a compact domain (with $\iota=0,\iota=\pi$ being the Dirichlet case, which holds by \cite{positive}). One may therefore conjecture that there is a constant $C$ independent of $\iota$ for which the result holds. This question is not treated in this paper and remains open.
		\item This model, while it is studied in 2D, may be generalized to higher dimensional isotropic model domains by standard methods (see \cite{ivanovici_new_2021},\cite{ivanovici_dispersion_2014}).
	\end{enumerate}
\end{obs}
\begin{obs}
	The relevant part of the wave for dispersion is the part of the rays which are tangential to the boundary $hD_y\sim 1$, so the rays transverse to the boundary only have one reflection at most and their contribution is handled by  standard transverse reflection estimates which we do not repeat. The tools for dealing with these rays are the ones used in \cite{blair_strichartz_2009}, or \cite{OanaIvanovici2021DiscreteandContinuousDynamicalSystems}.
This means that we can add a harmless cut-off $\psi_1(hD_y)$ with $\psi_1\in \mathcal{C}^\infty_0(\sqrt{1-2^{-10}},\sqrt{1+2^{-10}})\subset\mathcal{C}^\infty_0(\frac{1}{2},\frac{3}{2})$ to the function $u$ without altering the problem.
\end{obs}
\subsection{Structure of the proof and comparison with the Dirichlet case \cite{positive}.}

In this paper, we follow the same general strategy as in \cite{positive}: we construct the Green function of the Friedlander operator after Fourier transform in the $y$ variable, and then analyze its dispersion in order to obtain dispersive estimates, which are later used to prove Strichartz estimates.

The main new difficulty compared with the Dirichlet case \cite{positive} is that the Airy zero parameter depends on the frequency through
\[
\beta(\iota,\theta)=\operatorname{arccot}(\cot(\iota)\theta^{-2/3}).
\]
Therefore, derivatives in $\eta$ produce additional terms which are absent in the Dirichlet analysis. The main technical part of the paper is to prove that these frequency-dependent Robin remainders remain perturbative in each dispersive regime. On the other hand, the parts of the proof which rely only on derivatives in variables other than $\eta$ remain unchanged, provided that $B_{\beta(\theta)}$ satisfies the same asymptotic estimates. This is established in proposition \ref{boundsb}.

We start in Section \ref{spectralfriedlander} by constructing the Green function through a spectral analysis of the Fourier transformed Friedlander model, and using this analysis to write the Green function as a sum of eigenmodes. This already shows the first difference with the Dirichlet case: the zeroes of the Robin function have an additional dependence on $\theta$, which is not present in the Dirichlet setting. Nevertheless, we are able to repeat a similar argument and obtain a Hilbert basis of eigenmodes in proposition \ref{robin}. We then obtain two equivalent forms of the Green function, after using an adapted Poisson formula, in Definitions \ref{green} and \ref{green2}.

For the function defined in Definition \ref{green2}, we obtain a structure very similar to equation (25) in \cite{positive}. The main difference is the function $B$.

In our case, the function $B$ depends on $\eta$ in two places: through its argument and through its subscript. In \cite{positive}, by contrast, it depends only on its argument. This additional dependence is the main issue that has to be addressed in order to replicate the Dirichlet argument.

In Section \ref{sectionphase}, we carry out a detailed analysis of this new term and its asymptotics. In particular, in proposition \ref{boundsb} we prove that, in the regime of interest, namely when $\alpha^{3/2}\eta h^{-1}\lesssim \eta h^{-1}$, the asymptotics of our new function $B_\beta$ are the same as those of the function $B$ in the Dirichlet case, as are its derivatives with respect to the argument. This implies that, whenever an argument requires only derivatives in variables other than $\eta$, the corresponding proof from the Dirichlet case translates directly to our setting, without requiring any further adaptation. However, when we differentiate $B_\beta$ with respect to $\eta$, new terms appear which are absent in the Dirichlet case. These terms are introduced in Definition \ref{restrobsnewnew}, in the parts of the proof where they occur, we give new proofs and verify that these terms remain controlled. The necessary bounds are proved in proposition \ref{restrobs}.

Continuing with the strategy of \cite{positive}, we consider the case $\gamma>h^{2/3-\varepsilon}$ in Section \ref{forlambda}. The goal of this section is to prove that our Green function satisfies the same dispersive estimates as in the Dirichlet case. As explained above, for the propositions which do not require derivatives with respect to $\eta$, the corresponding Dirichlet arguments also prove the Robin case, once proposition \ref{boundsb} has established the required asymptotics. Indeed, the only information about $B$ used in those proofs is its asymptotic behavior, and our function $B_{\beta(\eta/h)}$ satisfies the same type of estimates uniformly in $\eta/h$. This applies to proposition \ref{boundgreat}, parts (1), (2), and (3) of proposition \ref{disptransverse}, and parts (1), (2), (3), and (4) of proposition \ref{bigtangenctial}, which correspond respectively to the proofs of lemma 4 and propositions 9, 10, 4, 5, 6, and 7 (1) in \cite{positive}. By contrast, proposition \ref{disptransverse} (4) and proposition \ref{bigtangenctial} (5) require new proofs in the Robin case. For proposition \ref{prop1}, we prove the bound on the number of waves, while the fact that the contribution is negligible outside these numbers follows as in the Dirichlet case in the tangential regime.

After that, in Section \ref{sectiongall}, we prove the Strichartz estimates in the regime $\gamma<h^{2/3-\varepsilon}$, corresponding to the whispering-gallery modes. We use the techniques of \cite{Ivanovici2010Counterexamples} and \cite{positive}, but we provide proofs adapted to the Robin setting.

Finally, in Section \ref{strichartz}, we use the dispersive estimates obtained in Section \ref{forlambda} to prove the Strichartz estimates outside the gallery-mode regime.

To obtain the properties needed throughout the paper, we rely on the Airy function, as in related works such as \cite{OanaIvanovici2021DiscreteandContinuousDynamicalSystems}, \cite{ivanovici_dispersion_2014}, \cite{positive}, and \cite{me_taylor_boundary_nodate}. We recall some of its properties in Appendix A. These properties are classical and are mostly taken from \cite{airy}. We also introduce a new auxiliary function adapted to the Robin problem, which we study in detail in Appendix \ref{approbin}, providing the necessary proofs. The only work related to the study of this special function that we found is \cite{rus}. However, we were not able to check all details, since the available version is in Russian. It seems to contain a more detailed analysis of the localization of the zeroes of $\operatorname{Rob}_\iota$ than the one needed here, but it does not appear to prove the additional properties of the function used in this paper.
\subsection{Acknowledgments}
I would like to thank the guidance of my thesis advisor Fabrice Planchon for his recommendations and feedback on this paper, as well as Luca Fanelli and Olivier Lafitte for their thesis reports which allowed me to improve some parts of the redaction of this paper. I also thank Oana Ivanovici for some technical recommendations and for helping me to understand better some of the works this paper is based on.\\
The author was partially supported by the European Research Council (ERC) under the European Unions Horizon 2020 research and innovation program (gran agreement N0. 757996, ANADEL).\\
\section{Spectral analysis of the Friedlander model}
\label{spectralfriedlander}
We consider 
$$\Omega_2=\{(x,y)\in\mathbb{R}^2\mid x>0,\,y\in\mathbb{R}\},\quad
\Delta_F=\partial^{2}_{x}+(1+x)\partial^{2}_{y}.$$

After a Fourier transform in the $y$ variable, the operator $-\Delta_{F}$ is now
$-\partial^2_x+(1+x)\theta^2$.\\\\
\begin{defi}
	We define the differential operator from $\mathcal{C}^\infty(\mathbb{R_+})\cap L^2(\mathbb{R}_+)$ to $\mathcal{C}^\infty(\mathbb{R_+})$
	\begin{equation}
		D_\theta f(x)=-\partial^2_x f(x)+(1+x)\theta^2 f(x) .
	\end{equation}
\end{defi}
From now on, we assume $\theta\ge c$ where $c$ is a constant depending on $\iota$. This causes no loss because of the $hD_y\sim 1$ cutoff.\\
\begin{obs}
	Since later we will have the spectral localization $\theta \sim \frac{1}{h}$, we can assume $\theta$ larger than a constant (depending on $\iota$ which is fixed in the model) without altering the argument.
\end{obs}
\begin{defi}
	We define the (unique) Green's operator $T_{\iota,\theta}$ such that, for every $f\in\mathcal{C}^{\infty} (\mathbb{R_+})\cap L^2(\mathbb{R}_+)$, we have
	\begin{equation}
		D_\theta T_{\iota,\theta} f=f.
	\end{equation}
	subject to the boundary conditions
	\begin{align}
		&\cos(\iota)(T_{\iota,\theta} f)(0)-\sin(\iota)(T_{\iota,\theta} f)'(0)=0, \\
		&\lim_{x\rightarrow \infty}T_{\iota,\theta} f(x)=0.
	\end{align}
	\begin{obs}
		One may define a Green's kernel $G_{\iota,\theta}$
		such that, for every $x\in\mathbb{R_+}$, 
		\begin{equation}
			T_{\iota,\theta} f(x)=\int_{0}^{\infty}G_{\iota,\theta}(x,s)f(s)ds.
		\end{equation}
		Written in this form, we can extend the $T_{\iota,\theta}$ operator to $L^2(\mathbb{R_+})$.
	\end{obs}
	\begin{obs}
		If $f\in\mathcal{C}^{\infty} (\mathbb{R_+})\cap L^2(\mathbb{R}_+)$ is an eigenvector of the operator, then it is an eigenvector of $D_\theta$ which satisfies the corresponding boundary condition, but the converse is also true:
		if we extend $D_\theta$ to distributions, it follows that $f\in L^2(\mathbb{R}_+)$ is an eigenfunction of $D_\theta$ satisfying the Robin boundary condition if and only if $f$ is an eigenfunction of $T_{\iota,\theta}$.
	\end{obs}
\end{defi}
\begin{prop}
	For $\theta> c$ (with the constant depending on $\iota$), the $T_{\iota,\theta}$ operator is a positive, compact, self-adjoint operator
	on $L^2(\mathbb{R}_+)$.
\end{prop}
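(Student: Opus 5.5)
Work with the quadratic form of $D_\theta$ on the form domain $H^1(\mathbb{R}_+)\cap L^2((1+x)dx)$. For $f$ smooth and decaying with $\cos(\iota)f(0)=\sin(\iota)f'(0)$, i.e.\ $f'(0)=b f(0)$ with $b=\cot(\iota)$, one integration by parts gives
\begin{equation}
\langle D_\theta f,f\rangle=\int_0^\infty |f'|^2+(1+x)\theta^2|f|^2\,dx \;-\; b|f(0)|^2 .
\end{equation}
The plan is to show this form is coercive for $\theta>c(\iota)$. Then $D_\theta$ is a positive self-adjoint operator with Robin condition, $T_{\iota,\theta}=D_\theta^{-1}$, and symmetry, positivity and compactness of $T_{\iota,\theta}$ follow.

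\textbf{Coercivity.} Only the boundary term $-b|f(0)|^2$ with $b>0$ can destroy positivity. Control it by the trace inequality
\begin{equation}
|f(0)|^2=-\int_0^\infty (|f|^2)'\,dx\le 2\|f\|\,\|f'\|\le \varepsilon\|f'\|^2+\varepsilon^{-1}\|f\|^2 .
\end{equation}
Take $\varepsilon=1/(2|b|)$. The quadratic form is then bounded below by $\tfrac12\|f'\|^2+(\theta^2-2b^2)\|f\|^2+\theta^2\|\sqrt{x}f\|^2$. So choosing $c(\iota)$ with $c^2>2b^2+1$ makes it $\ge \|f\|_{H^1}^2+\theta^2\|\sqrt x f\|^2$. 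This is the step where $\theta>c$ is used, and I expect it to be the main (though mild) obstacle: for small $\theta$ with $b>0$ there can be a negative eigenvalue, so positivity genuinely fails.

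\textbf{Construction of $T_{\iota,\theta}$ and its properties.} Apply Lax--Milgram on the Hilbert space $V=\{f\in H^1:\sqrt{x}f\in L^2\}$. For each $g\in L^2$ it gives a unique $u\in V$ with $a(u,v)=\langle g,v\rangle$ for all $v\in V$. Elliptic regularity (the ODE) shows $u$ is smooth, solves $D_\theta u=g$, and satisfies the Robin condition, which appears as the natural boundary condition of the form. Since $u\in V$, it decays at infinity. So $u=T_{\iota,\theta}g$, and uniqueness in the definition follows from coercivity. Hence $T$ extends to a bounded operator on $L^2$ with $\|Tg\|_V\lesssim\|g\|$.

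\emph{Symmetry.} $\langle Tg,k\rangle=a(Tk,Tg)=\overline{a(Tg,Tk)}=\langle g,Tk\rangle$, because $a$ is Hermitian with real coefficients. Together with boundedness this gives self-adjointness.

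\emph{Positivity.} $\langle Tg,g\rangle=a(Tg,Tg)\ge C\|Tg\|_V^2>0$ for $g\neq0$.

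\emph{Compactness.} $T$ maps $L^2$ boundedly into $V$, and I will show the embedding $V\hookrightarrow L^2(\mathbb{R}_+)$ is compact. Take a bounded sequence in $V$. Its tails beyond $R$ have $L^2$ norm at most $R^{-1/2}\|\sqrt{x}f\|$, uniformly small. On $[0,R]$, Rellich gives compactness of $H^1(0,R)\hookrightarrow L^2(0,R)$. A diagonal argument then produces a convergent subsequence.

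Alternatively, one could write the explicit kernel $G_{\iota,\theta}$ using $\operatorname{Ai}(\theta^{2/3}(1+x)-\cdot)$ and a Robin-adjusted second solution, and check that it is Hilbert--Schmidt. The variational route avoids any Airy asymptotics, so I would use it.
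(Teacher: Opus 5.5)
\textbf{Same approach as the paper, with one sign error to fix.} Your route is the paper's. Positivity of the quadratic form follows from the trace inequality once $\theta>c(\iota)$, and compactness comes from the confining potential $(1+x)\theta^2\to\infty$. The paper simply cites Sturm--Liouville theory for self-adjointness and compact resolvent; you make that step explicit through Lax--Milgram on $V$ and the tail-plus-Rellich proof that $V\hookrightarrow L^2(\mathbb{R}_+)$ is compact, which is fine.

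\textbf{The sign error.} It is in your integration by parts. With $f'(0)=bf(0)$ you get
\[
\int_0^\infty -f''\bar f\,dx=\|f'\|^2+f'(0)\overline{f(0)}=\|f'\|^2+b|f(0)|^2 ,
\]
so the form carries $+b|f(0)|^2$, not $-b|f(0)|^2$.

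This is not cosmetic for the Lax--Milgram step. The natural boundary condition of the form you wrote is $u'(0)=-bu(0)$, which is the Robin problem for $\pi-\iota$. As written, you would therefore be inverting the wrong operator.

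\textbf{Which $b$ is dangerous.} For the same reason, the dangerous case is $b<0$, not $b>0$. For $b>0$ the form is positive for every $\theta>0$. For $b<0$ and small $\theta$, the test function $e^{bx}$ has negative Rayleigh quotient.

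\textbf{Everything else holds.} Your coercivity bound only uses $|b|$, so correcting the sign changes nothing else. The boundedness of the form on $V$, the regularity of the weak solution, symmetry, positivity and compactness all go through as you wrote them. One minor point: the lower bound you actually obtain is $\tfrac12\|f\|_{H^1}^2+\theta^2\|\sqrt{x}f\|^2$, not $\|f\|_{H^1}^2+\theta^2\|\sqrt{x}f\|^2$, which is equally good for coercivity.
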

\begin{proof}
For $\theta> c$, the quadratic form associated with $D_\theta$ is strictly positive, where the possibly negative Robin contribution is controlled by the standard trace inequality. Moreover, by standard Sturm-Liouville theory, the operator $D_\theta$ is self-adjoint with compact resolvent, since the potential $V(x)=\theta^2(1+x)\rightarrow \infty$, $x\rightarrow \infty$. Therefore, $T_{\iota,\theta}$ is self-adjoint and compact.
\end{proof}
By the spectral theorem, we can conclude that the eigenfunctions form a Hilbert basis in $L^2(\mathbb{R}_+)$.

\begin{prop}\label{robin}
	A sequence of orthonormal eigenfunctions for the operator $D_\theta$ is
	\begin{equation}
		e_k(x,\theta,\iota)=\sqrt{\frac{2\pi |\theta|^\frac{2}{3}}{L'_{{\beta(\iota,\theta)}}(w_{{\beta(\iota,\theta),k}})}}Ai(\theta^{\frac{2}{3}}x-w_{{\beta(\iota,\theta),k}})
	\end{equation}
	with $\lambda_k(\theta)=\theta^2+w_{k,\beta(\iota,\theta)}\theta^{4/3}$ being the corresponding eigenvalues of $D_\theta$, and\\ $\beta(\iota,\theta)=\arccot(\cot(\iota)\theta^{-2/3})\in [0,\pi)$. This sequence of eigenfunctions forms a Hilbert basis for every $\theta\ge c$.
\end{prop}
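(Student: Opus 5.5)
Since $D_\theta$ is a second-order ODE operator, we solve the eigenvalue equation explicitly and then use the Robin condition to quantize the eigenvalues. I will assume the objects used in the statement, $L_\beta$ and $w_{\beta,k}$, are defined in Appendix \ref{approbin} as follows: $L_\beta(z)$ is the Airy-type Robin function $\operatorname{Rob}_\beta(z)=\cos(\beta)Ai(-z)+\sin(\beta)Ai'(-z)$ up to the normalization used there, its zeroes are $w_{\beta,k}$, and $L'_\beta$ denotes the corresponding normalizing derivative (for Dirichlet this would be $Ai'(-w_k)^2$, $L=Ai'^2$).

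First I solve $-f''+(1+x)\theta^2 f=\lambda f$ on $\mathbb{R}_+$ with $f\in L^2$. Substitute $z=\theta^{2/3}x-w$ with $w=(\lambda-\theta^2)\theta^{-4/3}$. The equation becomes $f_{zz}=zf$, so $f=c_1Ai(z)+c_2Bi(z)$. Square integrability at $+\infty$ forces $c_2=0$, hence $f(x)=Ai(\theta^{2/3}x-w)$ and $\lambda=\theta^2+w\theta^{4/3}$.

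Next I impose the boundary condition $\cos(\iota)f(0)-\sin(\iota)f'(0)=0$. This reads $\cos(\iota)Ai(-w)-\sin(\iota)\theta^{2/3}Ai'(-w)=0$, which is equivalent to $\cot(\iota)\theta^{-2/3}Ai(-w)=Ai'(-w)$ when $\sin\iota\neq0$. Define $\beta=\arccot(\cot(\iota)\theta^{-2/3})\in(0,\pi)$, so that $\cot\beta=\cot(\iota)\theta^{-2/3}$. The condition then becomes $\cos(\beta)Ai(-w)-\sin(\beta)Ai'(-w)=0$. So $w$ must be a zero of the Robin function with parameter $\beta$, i.e. $w=w_{\beta,k}$. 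By the appendix these zeroes form an increasing sequence $w_{\beta,1}<w_{\beta,2}<\dots\to\infty$, and for $\theta\ge c$ they are all real with $w_{\beta,k}>-\theta^{2/3}$, so the eigenvalues are positive, consistent with the positivity proposition. This step identifies all eigenvalues, with multiplicity one by uniqueness of the decaying solution.

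Then I normalize. The key identity is $\int_{-w}^{\infty}Ai(z)^2dz=Ai'(-w)^2+wAi(-w)^2$, obtained from $(Ai'^2-zAi^2)'=-Ai^2$. Changing variables $z=\theta^{2/3}x-w$ gives
\begin{equation}
\|Ai(\theta^{2/3}\cdot-w)\|_{L^2(\mathbb{R}_+)}^2=\theta^{-2/3}\bigl(Ai'(-w)^2+wAi(-w)^2\bigr).
\end{equation}
I expect $L'_\beta(w_{\beta,k})$ to be exactly (up to the $2\pi$ factor convention of the appendix) this expression evaluated at the zero. Checking that the appendix's $L_\beta$ normalization matches this is the fiddly step. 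On the Robin zero set one can rewrite it using $Ai'=\cot\beta\,Ai$, giving $Ai(-w)^2(\cot^2\beta+w)$. The statement writes $|\theta|^{2/3}$ in the numerator of the prefactor, consistent with the formula above.

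Finally, orthogonality is automatic: the $e_k$ are eigenfunctions of the self-adjoint operator $T_{\iota,\theta}$ for distinct eigenvalues. Completeness follows from the preceding proposition and the spectral theorem, since the eigenfunctions of the compact self-adjoint positive $T_{\iota,\theta}$ span $L^2(\mathbb{R}_+)$ (its kernel is trivial because $D_\theta T=\mathrm{Id}$). The main obstacle is the zero-structure claim: showing that all Robin zeroes are real and simple and that none is lost or spurious uniformly for $\theta\ge c$. I would take this from the appendix on $\operatorname{Rob}_\iota$ via a Sturm comparison argument.
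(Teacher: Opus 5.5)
Your argument follows the paper's proof step by step. You solve the Airy equation, discard $Bi$ by square-integrability, and absorb $\theta^{2/3}$ into $\beta(\iota,\theta)$ so that the Robin condition reads $\cos\beta\,Ai(-w)-\sin\beta\,Ai'(-w)=0$. You then normalize with $\int_0^\infty Ai^2(x-w)\,dx=Ai'(-w)^2+wAi(-w)^2$ and get completeness from the spectral theorem for $T_{\iota,\theta}$.

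The gap is the one step needed to match the constant in the statement:
\[
L'_\beta(w_{\beta,k})=2\pi\bigl(Ai'(-w_{\beta,k})^2+w_{\beta,k}Ai(-w_{\beta,k})^2\bigr).
\]
You leave this open, and your guess about where it comes from is wrong. $L_\beta$ is not the Robin function up to normalization. Nor is $L=Ai'^2$ in the Dirichlet case, since that function has zero derivative at every $w_k$. In the paper, $L_\beta=\pi-2\theta_\beta$, where $\theta_\beta$ is the phase (not the frequency) of
\[
A_{-,\beta}(x)=\tfrac12\bigl[\operatorname{Rob}_\beta(-x)+i(\cos\beta\,Bi(-x)-\sin\beta\,Bi'(-x))\bigr]=\rho_\beta e^{i\theta_\beta}.
\]
So the identity is a real lemma (Lemma \ref{LemaL}), not a normalization-convention check. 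Its proof runs as follows:
\begin{enumerate}
\item The Wronskian $W(Ai,Bi)=1/\pi$ gives $\theta_\beta'(x)=-(\cos^2\beta+x\sin^2\beta)/(4\pi\rho_\beta^2)$.
\item At a zero, $\cos\theta_\beta=0$ forces $\operatorname{Rob}_\beta'(-w)^2=4\rho_\beta^2\theta_\beta'(w)^2$.
\item Directly, $\operatorname{Rob}_\beta'(-w)=(\cos^2\beta+w\sin^2\beta)Ai(-w)/\sin\beta$.
\item Inserting this into your expression $Ai(-w)^2(w+\cot^2\beta)$ yields $\int_0^\infty Ai^2(x-w)\,dx=-\theta_\beta'(w)/\pi=L_\beta'(w)/(2\pi)$.
\end{enumerate}
With this lemma cited, your proof closes exactly as the paper's does.

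Separately, the obstacle you single out is not one. You never need all zeros of $\operatorname{Rob}_\beta$ to be real or simple. A non-real $w$ would produce a non-real eigenvalue of the self-adjoint $D_\theta$. Each eigenvalue is simple because the solutions decaying at $+\infty$ form a one-dimensional space, as you yourself note.

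What is needed is only the enumeration of the real zeros. The paper obtains this without any Sturm comparison (Proposition \ref{zerosrob}). $\operatorname{Lai}=Ai'/Ai$ is a decreasing diffeomorphism of each interval $(-w_n,-w_{n-1})$ onto $\mathbb{R}$, so $\operatorname{Lai}(-w)=\cot\beta$ has exactly one root there, giving $w_{n-1}<w_{\beta,n}<w_n$.

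Also, the definition $\operatorname{Rob}_\beta(z)=\cos\beta\,Ai(-z)+\sin\beta\,Ai'(-z)$ you posit has the wrong sign in front of $Ai'$. The condition you actually use in the derivation is the correct one.
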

\begin{obs}
	$-w_{{\beta},k}$ is the $k$-nth zero of $Rob_{\beta}(x):=\cos(\beta)Ai(x)-\sin(\beta)Ai'(x)$ (proposition \ref{zerosrob}), $L_\beta'(w_{k,\beta})=2\pi \int_{0}^{\infty} Ai^2(x-w_{\beta,k})dx$ (Lemma \ref{LemaL}).
\end{obs}
\begin{proof}
By writing the eigenvalue problem, we obtain an Airy-like function, so we obtain
	\begin{align}
		& f''(x)-(1+x)\theta^2f(x)=-\lambda f(x),\\
		&f(x)=CAi\left(\theta^{\frac{2}{3}}\left(x+\frac{\theta^2-\lambda}{\theta^2}\right)\right).
	\end{align}
	The $Bi$ component vanishes, since the $Bi$ part is not in $L^2(\mathbb{R_+})$, nor in $\mathcal{S}'(\mathbb{R})$, so it must vanish.\\
	Imposing the Robin condition gives
	\begin{align}
		&\cos(\iota)f(0)-\sin(\iota)f'(0)=0.\\
		&-\theta^{\frac{2}{3}}\sin(\iota)Ai'\left(\theta^{\frac{2}{3}}\left(\frac{\theta^2-\lambda}{\theta^2}\right)\right)+\cos(\iota)Ai\left(\theta^{\frac{2}{3}}\left(\frac{\theta^2-\lambda}{\theta^2}\right)\right)=0.
	\end{align}
	With $\beta(\iota,\theta)=\arccot(\cot(\iota)\theta^{-2/3})$, the preceding equation becomes
	\begin{equation}
		-\sin(\beta(\iota,\theta))Ai'\left(\theta^{\frac{2}{3}}\left(\frac{\theta^2-\lambda}{\theta^2}\right)\right)+\cos(\beta(\iota,\theta))Ai\left(\theta^{\frac{2}{3}}\left(\frac{\theta^2-\lambda}{\theta^2}\right)\right)=0.
	\end{equation}
	Thus $\theta^{\frac{2}{3}}\left(\frac{\theta^2-\lambda}{\theta^2}\right)=-w_{k,\beta(\iota,\theta)}$, that is $\lambda_k(\theta)=\theta^2+w_{{\beta(\iota,\theta),k}} \theta^{4/3}$, and
	\begin{equation}
		e_k(x,\theta,\iota)=Ai(\theta^{\frac{2}{3}}x-w_{{\beta(\iota,\theta),k}}).
	\end{equation}
	We now normalize using lemma \ref{LemaL}:
	\begin{equation}
		\int_{0}^{\infty} Ai(\theta^{\frac{2}{3}}x-w_{{\beta(\iota,\theta),k}})^2dx=\frac{L'_{{\beta(\iota,\theta)}}(w_{{\beta(\iota,\theta),k}})}{\theta^{\frac{2}{3}}2\pi}.
	\end{equation}
	\begin{equation}
		e_k(x,\theta,\iota)=\sqrt{\frac{2\pi |\theta|^\frac{2}{3}}{L'_{{\beta(\iota,\theta)}}(w_{{\beta(\iota,\theta),k}})}}Ai(\theta^{\frac{2}{3}}x-w_{{\beta(\iota,\theta),k}}).
	\end{equation}
	Hence, except when zero is an eigenvalue (which we excluded, since we assume that $\theta\ge c$), we know by applying the spectral theorem to the Green's operator that these functions form a Hilbert basis.
\end{proof}
This section motivates the study of the Airy functions and also the function that we call the Robin function, $Rob_\iota(x)=\cos(\iota)Ai(x)-\sin(\iota)Ai'(x)$, in the appendix.
\section{Constructing the parametrix}
We now construct the Green functions for the problem. \\
Consider the problem
\[ \begin{cases} 
	(\partial_t^2-\partial_x^2-(1+x)\theta^2)\tilde{u}(t,x,\theta) =0,& x>0 \\
	\tilde{u}(0,x,\theta)=\tilde{u_0}(x,\theta),	\partial_t \tilde{u}(0,x,
	\theta)=0\\
	\cos(\iota)\tilde{u}(t,x,\theta)-\sin(\iota) \partial_x  \tilde{u}(t,x,\theta)=0,& x=0.
\end{cases}
\]
We assume $\theta\ge c$, where $c$ is chosen so that $w_{\beta(\iota,c)}\ge 0$ with $\tilde{u}_0\in L^2(\mathbb{R}_+)$ compatible with the boundary condition, that is $\cos(\iota)\tilde{u}_0(0,\theta)-\sin(\iota) \partial_x  \tilde{u}_0(0,\theta)=0$ and $supp\{\tilde{u}_0\}\subset\{x\ge0\}$.\\
This is the original problem \eqref{baseprob} but having done Fourier transform in the $y$ variable.\\
Then, by the spectral decomposition, there exist coefficients $\tilde{u}_{0,k}(\theta)$ such that, for almost every $\theta>0$,
\begin{equation}
	\tilde{u}_0(x,\theta)=_{L^2}\sum_{k=1}^{\infty} \tilde{u}_{0,k}(\theta) e_k(x,\theta,\iota),
\end{equation}
where the sum converges in the sense of $L^2(\mathbb{R}_+)$.
Furthermore,\\ $\tilde{u}_{0,k}(\theta)=\int \tilde{u}_0(x,\theta)e_k(x,\theta,\iota)dx$.
The solution $\tilde{u}$ is therefore given by
\begin{align}
	&\tilde{u}(t,x,\theta)=_{L^2} \sum_{k=1}^{\infty} e_k(x,\theta,\iota)\left(\frac{e^{-ti\sqrt{\lambda_k(\theta)}} +e^{ti\sqrt{\lambda_k(\theta)}}}{2}\right)\tilde{u}_{0,k}(\theta)\\
	&=\sum_{k=1}^{\infty} \int e_k(x,\theta,\iota)\left(\frac{e^{-ti\sqrt{\lambda_k(\theta)}} +e^{ti\sqrt{\lambda_k(\theta)}}}{2}\right)\tilde{u}_0(a,\theta)e_k(a,\theta,\iota)da.
\end{align}
The object of interest is not $\tilde{u}$ itself, but $\psi_0(h^2 D_t^2)\psi_1(h\theta)\tilde{u}$. We use the following elementary identity.
\begin{lema}
	For every $\lambda\in\mathbb{C}$, if $\psi_0\in \mathcal{C}^\infty_0([1/2,3/2])$ then $\psi_0(h^2D_t^2) e^{\lambda\text{Id}}=\psi_0(-h^2\lambda^2) e^{\lambda\text{Id}}$. Equivalently, with the usual abuse of notation, we say $\psi_0(h^2D_t^2)e^{\lambda t}=\psi_0(-h^2\lambda^2) e^{\lambda t}$.\\
\end{lema}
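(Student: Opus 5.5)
The plan is to give the statement a precise meaning for every $\lambda\in\mathbb{C}$. The only operator-theoretic input is that the exponential is an eigenfunction of $D_t^2$. Since $D_t=-i\partial_t$, we have $D_t^2=-\partial_t^2$, and hence
\begin{equation}
h^2D_t^2\, e^{\lambda t}=-h^2\partial_t^2 e^{\lambda t}=-h^2\lambda^2\, e^{\lambda t}.
\end{equation}
This holds for every $\lambda\in\mathbb{C}$, with no growth or temperedness assumption. Iterating, for every polynomial $P$ we get $P(h^2D_t^2)e^{\lambda t}=P(-h^2\lambda^2)e^{\lambda t}$. So the identity holds for the polynomial calculus, and what remains is to pass from polynomials to $\psi_0$.

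\textbf{Purely imaginary $\lambda$.} Write $\lambda=i\tau_0$ with $\tau_0\in\mathbb{R}$. Here $e^{\lambda t}$ is a tempered distribution and $\psi_0(h^2D_t^2)$ is the Fourier multiplier with symbol $\psi_0(h^2\tau^2)$. Since $\mathcal{F}(e^{i\tau_0 t})=2\pi\delta_{\tau_0}$, applying the multiplier gives
\begin{equation}
\psi_0(h^2D_t^2)e^{i\tau_0 t}=\psi_0(h^2\tau_0^2)e^{i\tau_0 t}.
\end{equation}
The right-hand side equals $\psi_0(-h^2\lambda^2)e^{\lambda t}$, which is exactly the statement. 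This is the case actually used later, with $\lambda=\pm i\sqrt{\lambda_k(\theta)}$.

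\textbf{General $\lambda\in\mathbb{C}$.} This is the main obstacle. For $\operatorname{Re}\lambda\neq0$, the function $e^{\lambda t}$ grows exponentially. The convolution kernel $K_h=\mathcal{F}^{-1}[\psi_0(h^2\tau^2)]$ is Schwartz but, by Paley--Wiener, it has no exponential decay. So the multiplier definition does not apply directly, and this is what the phrase ``usual abuse of notation'' refers to. I would handle it by restricting the operator to the subspace
\begin{equation}
V_\lambda=\operatorname{span}\{e^{\lambda t},e^{-\lambda t}\}.
\end{equation}
This subspace is invariant under $h^2D_t^2$, and on it $h^2D_t^2$ acts as the scalar $\mu=-h^2\lambda^2$ times the identity. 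For a scalar operator $\mu\,\mathrm{Id}$, any functional calculus that agrees with the polynomial calculus must send $f$ to $f(\mu)\,\mathrm{Id}$; this is the Cayley--Hamilton/Lagrange interpolation argument for a one-point spectrum. This is the sense of the statement ``$\psi_0(h^2D_t^2)e^{\lambda\mathrm{Id}}=\psi_0(-h^2\lambda^2)e^{\lambda\mathrm{Id}}$'' as an identity of operators composed with $e^{\lambda\mathrm{Id}}$.

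Two consistency checks remain. First, when $\lambda\in i\mathbb{R}$ this definition coincides with the Fourier-multiplier one computed above, so the two notions agree wherever both are defined. Second, when $-h^2\lambda^2$ is not real, one must read $\psi_0$ through a fixed extension to $\mathbb{C}$, for instance an almost-analytic extension. Any such extension vanishes away from a neighbourhood of $[1/2,3/2]$, so the identity is a statement about that extension and introduces nothing new.

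In summary, the argument is: (i) the eigenfunction relation, (ii) the rigorous multiplier computation on the imaginary axis, and (iii) the extension to all $\lambda\in\mathbb{C}$ via the scalar action on $V_\lambda$ together with the compatibility check. Step (iii) is the delicate point, because it is the only place where the formula is a definition rather than a computation.
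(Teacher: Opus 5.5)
Your argument is correct where the statement has content: the Fourier-multiplier computation on $e^{i\tau_0 t}$ in step (ii) is exactly the justification behind the paper's ``elementary identity''. The paper states it without proof and applies it only with $\lambda=\pm i\sqrt{\lambda_k(\theta)}$, where $\lambda_k(\theta)>0$. Step (iii) is, as you say yourself, a convention rather than a proof. For $\operatorname{Re}\lambda\neq 0$ the expression has no canonical meaning, and for non-real $-h^2\lambda^2$ the value depends on which extension of $\psi_0$ you choose. Your ``any functional calculus'' claim also needs a multiplicativity assumption. You can simply drop step (iii) and state the lemma for $\lambda\in i\mathbb{R}$.
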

With this consideration, we have
\begin{align}
	&\psi_0(h^2 D_t^2)\psi_1(h\theta)\tilde{u}(t,x,\theta)\\
	&=_{L^2} \sum_{k=1}^{\infty}\int  e_k(x,\theta,\iota)\psi_0(h^2 \lambda_k(\theta))\psi_1(h\theta)\left(\frac{e^{-ti\sqrt{\lambda_k(\theta)}} +e^{ti\sqrt{\lambda_k(\theta)}}}{2}\right)\tilde{u}_0(a,\theta)e_k(a,\theta,\iota)da.
\end{align}
After inserting the cutoff, the conditions $h^2\theta^{4/3}w_{{\beta(\iota,\theta),k}}\le \frac{3}{2}$ and $w_{{\beta(\iota,\theta),k}}\ge w_{k-1}\rightarrow_{k\rightarrow \infty} \infty$ imply that the series is actually a finite sum, which in turn means that the equality is not only an equality in $L^2$ but an equality almost everywhere and we may interchange the sum and the integral.\\
Using the second cutoff, we also have that $h\theta\ge \frac{1}{2}$ in addition to $h^2\theta^{4/3}w_{{\beta(\iota,\theta),k}}\le \frac{3}{2}$, which allows us to conclude that $\frac{w_{k-1}}{h^{2/3}} \le \frac{w_{{\beta(\iota,\theta),k}}}{h^{2/3}}\le 3^{2/3}2^{2/3}$. Also, since $k^{2/3}\lesssim w_{k}$, the series is not only a finite sum, but the number of terms is $\lesssim C h^{-1}$ uniformly in $\theta$.\\
\begin{align}
	&\psi_1(h\theta)\psi_0(h^2 D_t^2)\tilde{u}(t,x,\theta)\\
	&= \sum_{k\lesssim h^{-1}}\int  e_k(x,\theta,\iota)\psi_0(h^2 \lambda_k(\theta))\psi_1(h\theta)\left(\frac{e^{-ti\sqrt{\lambda_k(\theta)}} +e^{ti\sqrt{\lambda_k(\theta)}}}{2}\right)\tilde{u}_0(a,\theta)e_k(a,\theta,\iota)da.
\end{align}
Also, since our estimates are local in time, only the full sum is relevant for the estimates. The contribution of finitely many low modes individually is the same as the free wave, so it will not be problematic, Remark \ref{lowmodes} provides a more detailed justification. In particular, we will consider only $k\ge 2$.
With all these considerations, we can finally consider the Green's function for our problem. We will consider the Green's function for the half wave operator 
\begin{equation}
	\tilde{G}^{\pm}_h(t,x,\theta,a)= \sum_{k=2}^{\infty}  e_k(x,\theta,\iota)\psi_0(h^2 \lambda_k(\theta))\psi_1(h\theta){e^{\pm ti\sqrt{\lambda_k(\theta)}} }e_k(a,\theta,\iota).
\end{equation}
\begin{obs}
	Considering the Green's function of the half wave operator instead of the full wave equation is technically convenient, since we need the complex exponential  for stationary phase arguments, and it also allows us to recover easily our case $\psi_1(h\theta)\psi_0(h^2 D_t^2)\tilde{u}(t,x,\theta)=\frac{1}{2}\int( G^{-}_h(t,x,\theta,a)+G^{+}_h(t,x,\theta,a))\tilde{u}_0(a,\theta)da$ for $\tilde{u_1}= 0$. Furthermore, when $\tilde{u_1}\ne 0$ and $u_0=0$, we have
	\begin{align}
		&\psi_1(h\theta)\psi_0(h^2 D_t^2)\tilde{u}(t,x,\theta)\\
		&= \sum_{k\lesssim h^{-1}}\int  e_k(x,\theta,\iota)\frac{\psi_0(h^2 \lambda_k(\theta))}{\sqrt{\lambda_k(\theta)}}\psi_1(h\theta)\left(\frac{e^{-ti\sqrt{\lambda_k(\theta)}} -e^{ti\sqrt{\lambda_k(\theta)}}}{2}\right)\tilde{u}_1(a,\theta)e_k(a,\theta,\iota)da.
	\end{align}
	Thus, the contribution is governed by  $\frac{h}{2}\int( G^{-}_h(t,x,\theta,a)-G^{+}_h(t,x,\theta,a))\tilde{u}_1(a,\theta)$ after replacing $\psi_0$ by another cut-off around $1$ with the same properties (specifically $\frac{\psi_0}{\sqrt{|-|}}\in\mathcal{C}^{\infty}(\mathbb{R})$).
\end{obs}
We now do the following decomposition: Define $\tilde{\psi}_2\in \mathcal{C}^\infty_0([0,3/2])\cap \mathbbm{1}([0,1])$. We define $\psi_2(\xi):=\tilde{\psi_2}(\xi)-\tilde{\psi_2}(2\xi)$, hence, we can define the following:
\begin{defi}
	\label{DefiGgamatilde}
	\begin{align}
		\tilde{G}_{h,\gamma}^{\pm}(t,x,\theta,a):=  \sum_{k=2}^{\infty}  \psi_0(h^2 \lambda_k(\theta)) 	\psi_1(h\theta)\psi_2\left(\frac{w_{{\beta(\iota,\theta),k}}}{\theta^{2/3}\gamma}\right)e_k(x,\theta,\iota)e_k(a,\theta,\iota){e^{\pm it\sqrt{\lambda_k(\theta)}} },
	\end{align}
\end{defi}
with $\psi_2\in \mathcal{C}^\infty_0(1/2,3/2)$. Thus we have a localization on a dyadic annulus.\\\\
With this construction, we have the important property that\\ $\tilde{G}_{h}^{\pm}(t,x,\theta,a)=\sum_{m=1}^{\infty}\tilde{G}_{h,2^{-m}}^{\pm}(t,x,\theta,a)$ and the sum is finite, in particular $h^{2/3}\lesssim \gamma \lesssim 1$.\\\\
\begin{obs}
	For $\gamma$ not small, the corresponding rays undergo at most one reflection. Thus, we can assume that $\gamma\le \gamma_0\le {2^{-7}}$. \\
	In particular, if $\gamma\ge 2^{-7}$, we have
	\begin{align}
		&\frac{w_{{\beta(\iota,\theta),k}}}{\theta^{2/3}\gamma}\le \frac{2^7w_{{\beta(\iota,\theta),k}}}{\theta^{2/3}}\le \frac{2^7h^2\theta^{4/3}w_{{\beta(\iota,\theta),k}}}{h^2\theta^{2}}\le \frac{2^7(1+2^{-9}-h^2\theta^2)}{\theta^2h^2}\\
		& \le\frac{2^7(1+2^{-9}-1+2^{-10})}{1-2^{-10}}\le \frac{\frac{1}{4}+\frac{1}{8}}{1-{2^{-10}}}\le \frac{\frac{3}{8}}{\frac{3}{4}}\le 1.
	\end{align}
	Thus, if $\gamma\ge 2^{-7}$, for every $h,\theta,w_{{\beta(\iota,\theta),k}}$, it follows that $\psi_0(h^2 \lambda_k(\theta)) 	\psi_1(h\theta)\psi_2\left(\frac{w_{{\beta(\iota,\theta),k}}}{\theta^{2/3}\gamma}\right)=0$.
\end{obs}
We now return to the original problem $(\partial_t^2-\partial_x^2-(1+x)\partial_y^2)u(t,x,y) =0$ by taking the inverse Fourier transform (where we can interchange the sum and the integral because the sum is finite and there is an upper bound to the number of terms which do not depend on $\theta$), we obtain the Green's function which will be the main object of study:
\begin{defi}
	\label{green}
	\begin{align}
		&\label{defih}
		G^\pm_h(t,x,y,a):=\mathcal{F}^{-1}_\theta(\tilde{G}^\pm_h(t,x,\theta,a))\\
		&=\frac{1}{(2\pi)} \sum_{k\ge 2}  \int \psi_0(h^2 \lambda_k(\theta)) 	\psi_1(h\theta)e_k(x,\theta,\iota)e_k(a,\theta,\iota){e^{iy\theta}e^{\pm it\sqrt{\lambda_k(\theta)}} } d\theta.
	\end{align}
	\begin{align}
		\label{Ggammadefi}
		G^\pm_{h,\gamma}(t,x,y,a):=\frac{1}{(2\pi)} \sum_{k\ge 2}  \int \psi_0(h^2 \lambda_k(\theta)) 	\psi_1(h\theta)\psi_2\left(\frac{w_{{\beta(\iota,\theta),k}}}{\theta^{2/3}\gamma}\right)e_k(x,\theta,\iota)e_k(a,\theta,\iota){e^{iy\theta}e^{\pm it\sqrt{\lambda_k(\theta)}} } d\theta.
	\end{align}
	Moreover, we have 
	$G_{h}(t,x,y,a)=\sum_{m=1}^{\infty}{G}_{h,2^{-m}}^{\pm}(t,x,y,a)$ and the sum is finite, in particular $h^{2/3}\lesssim \gamma \le 2^{-7}$.
\end{defi}
\begin{obs}
We are studying high frequencies, that is $h$ small. But for $h$ small enough, and since $h\sim \frac{1}{\theta}$, we have $\beta({\iota,\theta})$ tends to $\frac{\pi}{2}$, that is, the higher the frequency, the closer we are to Neumann and the further we are from Dirichlet ($\beta(\iota,\theta)\rightarrow \frac{\pi}{2}$ as $\theta\rightarrow \infty$). That is, the Neumann is the model towards which the Robin problem tends naturally rather than Dirichlet.
\end{obs}
\begin{obs}
	\label{lowmodes}
	As we stated before, we can eliminate any quantity of fixed modes without altering the result (that's why we can start in $k\ge 2$). 
	Specifying a little bit more, consider, the Green's function for the mode $k$,
	$$G_{h,k}(t,x,y,a)=\frac{1}{2\pi h}  \int \psi_0(h^2 \lambda_k(\eta/h)) 	\psi_1(\eta)e_k(x,\eta/h,\iota)e_k(a,\eta/h,\iota){e^{iy\eta/h}e^{\pm it\sqrt{\lambda_k(\eta/h)}} } d\eta.$$
	For this oscillatory integral, since $k$ is fixed, $e_k(\cdot,\eta/h,\iota)$ are symbols, and $|e_k(\cdot,\eta/h,\iota)|\lesssim h^{-1/3}$. Furthermore, the phase $\phi_h(\eta)=\eta\sqrt{1+w_{\beta(\frac{\eta}{h}),k}(h/\eta)^{2/3}}+\frac{y}{t}\eta$, satisfies $|\phi_h''(\eta)|\gtrsim h^{2/3}$. (It is the same phase as in lemma \ref{otherlemmagallery}). We get then that $\sup_{x,y,a}|G_{h,k}(t,x,y,a)|\lesssim \frac{h^{2/3}}{h^2}\frac{1}{|t|^{1/2}}\lesssim \frac{h^{1/2}}{h^2}\frac{1}{|t|^{1/2}}$. This is the classical free-wave dispersion, hence any isolated mode gives the same Strichartz as the free wave.
\end{obs}
We now use the Poisson formula in order to get a new expression for the Green's function. We first decompose the function in the eigenmodes by recalling that
\begin{align}
	&e_k(x,\theta,\iota)=\frac{\sqrt{2\pi}\theta^{1/3}}{\sqrt{L'_{\beta(\iota,\theta)}(w_{{\beta(\iota,\theta),k}})}}Ai(\theta^{2/3}x-w_{{\beta(\iota,\theta),k}})\\
	&=\frac{\sqrt{2\pi}^{-1}\theta^{1/3}}{\sqrt{L'_{\beta(\iota,\theta)}(w_{{\beta(\iota,\theta),k}})}}\int e^{i\left(\frac{\Xi_1^3}{3}+\Xi_1 \theta^{2/3}x-\Xi_1 w_{{\beta(\iota,\theta),k}} \right)}d\Xi_1.
\end{align}
We also prove here an initial bound on $G_{h,\gamma}$ which will be useful in the gallery modes regime.
\begin{lema}
	\label{grossboundgallmodes}
	We have the following bound: for every $t,x,y,a$, we obtain
	\begin{align}
		|G_{h,\gamma}(t,x,y,a)|\lesssim \frac{\sqrt{\gamma}}{h^2}.
	\end{align}
\end{lema}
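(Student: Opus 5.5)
The plan is to bound $G_{h,\gamma}$ crudely in absolute value, with no use of oscillation. We put the modulus inside the $\theta$-integral and the sum over $k$, and count how many modes and how large a $\theta$-set actually contribute. From Definition \ref{green},
\begin{align}
|G_{h,\gamma}(t,x,y,a)|\le \frac{1}{2\pi}\int \psi_1(h\theta)\sum_{k\ge 2}\left|\psi_2\left(\frac{w_{\beta(\iota,\theta),k}}{\theta^{2/3}\gamma}\right)\right| |e_k(x,\theta,\iota)||e_k(a,\theta,\iota)|\,d\theta .
\end{align}
The $\theta$-integral runs over a set of length $\lesssim h^{-1}$, because $h\theta\in(\frac12,\frac32)$. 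It therefore suffices to show that, uniformly in $\theta$ with $h\theta\sim1$,
\[
\sum_k \mathbbm{1}_{\{w_{\beta,k}\sim\gamma\theta^{2/3}\}}\,\sup_x|e_k(x,\theta,\iota)|^2\lesssim \frac{\sqrt{\gamma}}{h}.
\]

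First, the size of each eigenfunction. By Proposition \ref{robin},
\[
|e_k|^2\le \frac{2\pi\theta^{2/3}}{L'_{\beta}(w_{\beta,k})}\,\|Ai\|_{L^\infty}^2 .
\]
From Lemma \ref{LemaL}, $L'_\beta(w)=2\pi\int_0^\infty Ai^2(x-w)\,dx$, and the classical asymptotic $\int_{-w}^{\infty}Ai^2\sim \pi^{-1}\sqrt{w}$ gives $L'_\beta(w_{\beta,k})\sim w_{\beta,k}^{1/2}$, uniformly in $\beta$ for $w$ large. For $w$ bounded, positivity gives $L'_\beta\gtrsim 1$. We need $\|Ai\|_\infty$ uniformly; in fact $\sup|Ai(\cdot-w)|\le C$ is enough. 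Hence $|e_k|^2\lesssim \theta^{2/3}w_{\beta,k}^{-1/2}$. On the support of $\psi_2$ we have $w_{\beta,k}\sim\gamma\theta^{2/3}\sim\gamma h^{-2/3}$, so
\[
|e_k|^2\lesssim h^{-2/3}\,\gamma^{-1/2}h^{1/3}=h^{-1/3}\gamma^{-1/2}.
\]

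Second, the number of $k$ with $w_{\beta(\iota,\theta),k}\in[\frac12\gamma\theta^{2/3},\frac32\gamma\theta^{2/3}]$. Here we use the interlacing and asymptotics of the Robin zeros from Proposition \ref{zerosrob}. The zeros satisfy $w_{k-1}\le w_{\beta,k}\le w_k$ in terms of the Airy zeros, and $w_k\sim (\frac{3\pi}{2}k)^{2/3}$, so $k\sim w_{\beta,k}^{3/2}$ uniformly in $\beta$. The number of admissible $k$ is therefore $\lesssim (\gamma\theta^{2/3})^{3/2}=\gamma^{3/2}\theta\sim\gamma^{3/2}h^{-1}$, which is at least $1$ since $\gamma\gtrsim h^{2/3}$. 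Multiplying the count by the pointwise bound gives
\[
\gamma^{3/2}h^{-1}\cdot h^{-1/3}\gamma^{-1/2}=\gamma h^{-4/3}.
\]
This is not the claimed $\sqrt{\gamma}h^{-1}$, so the $\sup$-norm bound $|Ai|\le C$ must be refined. Since $Ai(\theta^{2/3}x-w)$ oscillates with amplitude $\sim|\theta^{2/3}x-w|^{-1/4}$, we use $|Ai(z)|\lesssim \langle z\rangle^{-1/4}$. This alone does not give uniform decay in $x$.

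The better route is the spectral-sum viewpoint. Write $\sum_k|e_k(x)|^2\mathbbm{1}$ and use $|Ai(-z)|^2\lesssim z^{-1/2}$ for $z>0$, together with the fact that consecutive zeros are spaced by $\sim w^{-1/2}$. The sum over $k$ of $\theta^{2/3}w_k^{-1/2}|Ai(\theta^{2/3}x-w_k)|^2$ is then a Riemann sum with step $\Delta w\sim w^{-1/2}$, namely $\sum_k \Delta w\,|Ai(\theta^{2/3}x-w_k)|^2$. Multiplying by $\theta^{2/3}$, it is $\lesssim\theta^{2/3}\int_{w\sim\gamma\theta^{2/3}}|Ai(\theta^{2/3}x-w)|^2\,dw$. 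Using $|Ai(-z)|^2\lesssim \langle z\rangle^{-1/2}$, this integral over an interval of length $\sim\gamma\theta^{2/3}$ is maximized when $x$ makes the interval start near $0$. That gives $\lesssim (\gamma\theta^{2/3})^{1/2}$, so the total is $\lesssim\theta^{2/3}\gamma^{1/2}\theta^{1/3}=\sqrt{\gamma}\,\theta\sim\sqrt{\gamma}h^{-1}$. Integrating in $\theta$ over the set of length $h^{-1}$ yields $\sqrt{\gamma}h^{-2}$ by Cauchy–Schwarz in $k$, pairing $e_k(x)$ with $e_k(a)$.

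The main obstacle is justifying the Riemann-sum comparison uniformly in $\beta(\iota,\theta)$. This needs the spacing $w_{\beta,k+1}-w_{\beta,k}\sim w_{\beta,k}^{-1/2}$ from the appendix on $Rob_\beta$. It also needs control of $|Ai|^2$ between zeros, which we get from monotonicity of the envelope, or by bounding each term by the integral over an adjacent cell of length $\sim w^{-1/2}$ using $|Ai'|\lesssim \langle z\rangle^{1/4}$.
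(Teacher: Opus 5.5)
Your final route is correct and is essentially the paper's proof: after Cauchy--Schwarz in $k$ and $|e_k|^2\lesssim \theta^{2/3}w_{\beta,k}^{-1/2}|Ai|^2$, the paper (Lemma \ref{lemmaaux}) bounds $\sup_z\sum_{k\lesssim L}|Ai(z-x_k)|^2k^{-1/3}\lesssim L^{1/3}$, with $L=\gamma^{3/2}/h$, by exactly your Riemann-sum comparison against $\int (1+|z-s|^{1/2})^{-1}\,ds$. The step you flagged as the main obstacle is handled there without any spacing estimate for consecutive $Rob_\beta$-zeros, and with two adjustments to your version: it splits into odd and even $k$, so that the interlacing $w_{k-1}\le x_k\le w_k$ alone makes the cells $[x_k,x_{k+2}]$ disjoint with length $\sim k^{-1/3}$, and it applies the envelope bound $|Ai|^2\lesssim (1+|\cdot|^{1/2})^{-1}$ before comparing sum to integral, rather than comparing $\sum|Ai|^2$ with $\int|Ai|^2$ directly, which is not justified pointwise because $Ai$ oscillates.
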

\begin{proof}
	Lemma 3.5 in \cite{ivanovici_dispersion_2014} proves that for $L\ge 1$
	\begin{align}
		\sup_z \left(\sum_{1\le k\lesssim L} |Ai(z-w_{k})|^2k^{-1/3}\right)\lesssim L^{1/3}.
	\end{align}
	We now prove a similar statement for our Robin setting:
	\begin{align}
		\sup_z \left(\sum_{1\le k \lesssim L} |Ai(z-w_{{\beta(\iota,\eta/h),k}})|^2k^{-1/3}\right)\lesssim L^{1/3}.
	\end{align}
In order to do so, we prove the following general lemma:
\begin{lema}
	\label{lemmaaux}
	Let $\{x_n\}_{n\in\mathbb{N}}$ be a positive strictly increasing sequence of real numbers such that, for every $n$, $w_{n-1} \le x_n\le w_{n}$. For every $L\ge 1$ the following follows:
		\begin{align}
	\sup_{z\in\mathbb{R}}\left(\sum_{1\le n \lesssim L} |Ai(z-x_n)|^2n^{-1/3}\right)\lesssim L^{1/3}.
	\end{align}
\end{lema}
\begin{proof}[Proof of lemma \ref{lemmaaux}]
	The expression $w_{n-1} \le x_n\le w_{n}$ implies that for every $n\in\mathbb{N}$, we have $x_{n+2}-x_n\sim n^{-1/3}$ and $x_n \sim n^{2/3}$. We divide the sum into odd and even numbers.
			\begin{align}
		\left(\sum_{1\le k \lesssim L} |Ai(z-x_k)|^2k^{-1/3}\right)=\sum_{\substack{1\le k \lesssim L\\ k \; odd}} |Ai(z-x_k)|^2k^{-1/3}+\sum_{\substack{1\le k \lesssim L\\ k \; even}} |Ai(z-x_k)|^2k^{-1/3}.
	\end{align}
We will prove the property for $k$ odd, for $k$ even is analogue.\\
Let $z\in\mathbb{R}$, $k\in\mathbb{N}$. For every $s\in [x_{k}, x_{k+2}]$,
\begin{align}
	|z-x_k|^{1/2}=|z-s+s-x_k|^{1/2}\ge |z-s|^{1/2}-|s-x_k|^{1/2}\ge |z-s|^{1/2}-|x_{k+2}-x_k|^{1/2}\ge |z-s|^{1/2}-C.
\end{align}
For a certain constant $C\ge 0$. Thus we obtain
	$1+|z-s|^{1/2}\le (1+C) (1+|z-x_k|^{1/2})$. 
We obtain then
\begin{align}
	\frac{k^{-1/3}}{1+|z-x_k|^{1/2}}\lesssim\int_{x_k}^{x_{k+2}} \frac{ds}{1+|z-s|^{1/2}}.
\end{align}
Now, by using that $|Ai(x)|\lesssim \frac{1}{1+|x|^{1/4}}$, we conclude that
\begin{align}
	\sum_{\substack{1\le k \lesssim L\\ k \; odd}} |Ai(z-x_k)|^2k^{-1/3}\lesssim \int_{x_1}^{x_{L+3}} \frac{ds}{1+|z-s|^{1/2}}\lesssim x_{L+3}^{1/2}\lesssim L^{1/3}
\end{align}
\end{proof}
With all this information, we can conclude
	\begin{align}
	&|G^\pm_{h,\gamma}(t,x,y,a)|:=|\frac{1}{(2\pi)} \sum_{2\le k \lesssim \frac{\gamma^{3/2}
		}{h}}  \int \psi_0(h^2 \lambda_k(\theta)) 	\psi_1(h\theta)\psi_2\left(\frac{w_{{\beta(\iota,\theta),k}}}{\theta^{2/3}\gamma}\right)e_k(x,\theta,\iota)e_k(a,\theta,\iota){e^{iy\theta}e^{\pm it\sqrt{\lambda_k(\theta)}} } d\theta|\lesssim\\
	&\frac{1}{h}\sum_{2\le k \lesssim \frac{\gamma^{3/2}
		}{h}}  \int 	\psi_1(\eta)\frac{1}{h^{2/3}k^{1/3}}|Ai(\eta x/h-w_{{\beta(\iota,\eta/h),k}})Ai(\eta a/h-w_{{\beta(\iota,\eta/h),k}})|d\eta\lesssim\\
	& \frac{1}{h^{5/3}}\int \psi_1(\eta)\sup_{z\in\mathbb{R}}\left(\sum_{2\le k\lesssim \frac{\gamma^{3/2}}{h}} |Ai(z-w_{{\beta(\iota,\eta/h),k}})|^2k^{-1/3}\right)d\eta\lesssim \frac{1}{h^{5/3}}\left(\frac{\gamma^{3/2}}{h}\right)^{1/3}=\frac{\sqrt{\gamma}}{h^2}.
	\end{align}
\end{proof}
Since $ w_{{\beta(\iota,\theta),k}}<w_k$ and we start the sum at $k=2$, we can add a function $\psi_3\in \mathcal{C}^\infty(3/2,\infty)\cap \mathbbm{1}((2,\infty))$ without altering anything, having
\begin{align}
	G^\pm_{h,\gamma}(t,x,y,a):=\frac{1}{(2\pi)^2}\int \sum_{k\ge 2}\frac{ \theta^{2/3}}{{L'_{\beta(\iota,\theta)}(w_{{\beta(\iota,\theta),k}})}} \psi_0(h^2 \lambda_k(\theta)) 	\psi_1(h\theta)\psi_2\left(\frac{w_{{\beta(\iota,\theta),k}}}{\theta^{2/3}\gamma}\right)\psi_3(w_{{\beta(\iota,\theta),k}})\\
	e^{i\left(\frac{\Xi_1^3+\Xi_2^3}{3}+\theta^{2/3}(\Xi_1 x+\Xi_2 a)- w_{{\beta(\iota,\theta),k}}(\Xi_1+\Xi_2) \pm t\sqrt{\lambda_k(\theta)}+y\theta\right)}d\theta d\Xi_1 d\Xi_2.
\end{align}
After applying all the cut-offs $\psi_0,\psi_1,\psi_2,\psi_3$, we have $w_{{\beta(\iota,\theta),k}}\gtrsim 1$, so we apply our version of the Poisson formula \ref{AiryPoisson}.
\begin{align}
	G^\pm_{h,\gamma}(t,x,y,a):=\frac{1}{(2\pi)^3}\sum_{N\in \mathbb{Z}}{ \theta^{2/3}}\int \psi_0(h^2 \theta^2+w h^2\theta^{4/3}) 	\psi_1(h\theta)\psi_2\left(\frac{w}{\theta^{2/3}\gamma}\right)\psi_3(w)\\
	\cdot e^{i\left(\frac{\Xi_1^3+\Xi_2^3}{3}+\theta^{2/3}(\Xi_1 x+\Xi_2 a)- w(\Xi_1 +\Xi_2) \pm t\sqrt{h^2 \theta^2+w h^2\theta^{4/3}}+y\theta-NL_{\beta(\iota,\theta)}(w)\right)}d\theta d\Xi_1 d\Xi_2dw.
\end{align}
Now, we perform the following changes of variables: 
\begin{align}
	\Xi_1=\theta^{1/3}\sigma, \Xi_2=\theta^{1/3}s, h\theta=\eta, \alpha=\frac{w}{\theta^{2/3}}=w \frac{h^{2/3}}{\eta^{2/3}},
\end{align}
\begin{align}
	G^\pm_{h,\gamma}(t,x,y,a):=\frac{1}{(2\pi)^3 h^3}\sum_{N\in \mathbb{Z}}\int \psi_0(\eta^2(1+\alpha)) 	\eta^2\psi_1(\eta)\psi_2\left(\frac{\alpha}{\gamma}\right)\psi_3\left(\alpha \eta^{2/3}h^{-2/3}\right)\\
	e^{\frac{i\eta}{h}\left(\frac{\sigma^3+s^3}{3}+(\sigma x+s a)- \alpha(\sigma +s ) \pm t\sqrt{1+\alpha}+y\right)-iNL_{\beta(\iota,\eta/h)}(\alpha\eta^{2/3}h^{-2/3})}d\eta d\sigma ds d\alpha.
\end{align}
Or, in a more compact manner, (and, since now $\psi_0$ is redundant),
\begin{align}
	{G}_{h,\gamma}(t,x,y,a)=\frac{1}{(2\pi h)^3}\sum_{N\in\mathbb{Z}}\int e^{\frac{i}{h}\phi_{N,a}(\boldsymbol{t},\sigma,s,\alpha,\eta,h)}\eta^2\psi_1(\eta)\psi_2(\alpha/\gamma)\psi_3(\eta^{2/3}\alpha/h^{2/3})dsd\sigma d\alpha d\eta,\\
	\label{gammapoisson}
\end{align}
with
\begin{align}
	&\label{firstphase}
	\phi_{N,a}(\boldsymbol{t},\sigma,s,\alpha,\eta,h)\\
	&=\eta y+\frac{\eta\sigma^3}{3}+\eta\sigma(x-\alpha)+\frac{\eta s^3}{3}+\eta s(a-\alpha)+\eta t\sqrt{1+\alpha}-NhL_{\beta(\eta/h)}((\frac{h}{\eta})^{-2/3}\alpha),
\end{align}
which leads to the following definition
\begin{defi}
	\label{green2}
	We define $V_{h,\gamma,N}(\boldsymbol{t},a)$ as 
	\begin{equation}
		\label{Vdef}
		V_{h,\gamma,N}(\boldsymbol{t},a)=\frac{1}{(2\pi h)^3}\int e^{\frac{i}{h}\phi_{N,a}(\boldsymbol{t},\sigma,s,\alpha,\eta,h)}\eta^2\psi_1(\eta)\psi_2(\alpha/\gamma)\psi_3(\eta^{2/3}\alpha/h^{2/3})dsd\sigma d\alpha d\eta.
	\end{equation}
	With this definition,
	\begin{align}
		\label{writtingG}
		{G}_{h,\gamma}(t,x,y,a)=\sum_{N\in\mathbb{Z}} V_{h,\gamma,N}(t,x,y,a).
	\end{align}
\end{defi}
\begin{obs}
	Since $b=\cot(\iota)$ is fixed, we make the following abuse of notation $\beta({\theta})=\arccot(b{\theta^{-2/3}})=\beta(\iota,{\theta})$.
\end{obs}
\section{Studying the phase}
\label{sectionphase}
We have the following phase
\begin{align}
	&\label{phasedef}
	\phi_{N,a}(\boldsymbol{t},\sigma,s,\alpha,\eta,h)=\eta \left(y+\frac{\sigma^3}{3}+\sigma (x-\alpha)+\frac{s^3}{3}+s(a-\alpha)+t\sqrt{1+\alpha}\right)-NhL_{\beta(\frac{\eta}{h})}(\alpha (\frac{\eta}{h})^{2/3})\\
	&	\label{phasedef2}
	=\eta \left(y+\frac{\sigma^3}{3}+\sigma (x-\alpha)+\frac{s^3}{3}+s(a-\alpha)+t\sqrt{1+\alpha}-\frac{4}{3}N\alpha^{3/2}\right)-NhB_{\beta(\frac{\eta}{h})}(\alpha^{3/2} (\frac{\eta}{h})).
\end{align}
\subsection{The function $B_\beta$}
We now define
\begin{defi}
	\label{defib}
	The function $B_\beta$ is defined in the following manner:
	\begin{equation}
		L_\beta(w)=\frac{4}{3}w^{3/2}+B_{\beta}(w^{3/2}).
	\end{equation}
\end{defi}
\begin{cor}
	\label{L'}
	\begin{equation}
		L'_\beta(w)={2w^{1/2}}(1+\frac{3}{4}B'_\beta(w^{3/2})).
	\end{equation}
\end{cor}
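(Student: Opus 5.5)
This identity follows by differentiating the defining relation of Definition \ref{defib} in $w$ with the chain rule. I treat $\beta$ as a fixed parameter throughout, because the corollary concerns the derivative of $L_\beta$ in its argument only. I also take $w>0$, the range where the Poisson formula is applied (recall $w_{\beta(\iota,\theta),k}\gtrsim 1$ after the cut-off $\psi_3$), so that $w^{3/2}$ is smooth. From the appendix on the Robin function I use that $L_\beta$ is differentiable in $w$; then $B_\beta(u)=L_\beta(u^{2/3})-\frac{4}{3}u$ is differentiable for $u>0$, so $B'_\beta$ is meaningful.

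The computation has three steps. First, differentiate both sides of $L_\beta(w)=\frac{4}{3}w^{3/2}+B_\beta(w^{3/2})$ with respect to $w$. Second, use $\frac{d}{dw}\frac{4}{3}w^{3/2}=2w^{1/2}$, and apply the chain rule to get $\frac{d}{dw}B_\beta(w^{3/2})=\frac{3}{2}w^{1/2}B'_\beta(w^{3/2})$. Third, factor out $2w^{1/2}$:
\begin{equation}
L'_\beta(w)=2w^{1/2}+\tfrac{3}{2}w^{1/2}B'_\beta(w^{3/2})=2w^{1/2}\Big(1+\tfrac{3}{4}B'_\beta(w^{3/2})\Big).
\end{equation}

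There is no real obstacle beyond fixing the conventions. The main point is that the derivative here is taken at fixed $\beta$, so no $\partial_\beta$ terms appear. This distinction matters later, when $\beta=\beta(\eta/h)$ is differentiated in $\eta$ and extra terms arise.
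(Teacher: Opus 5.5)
Your proposal is correct and is exactly the paper's route: the corollary is the immediate consequence of differentiating the defining relation $L_\beta(w)=\frac{4}{3}w^{3/2}+B_\beta(w^{3/2})$ in $w$ at fixed $\beta$ with the chain rule, and your computation $2w^{1/2}+\frac{3}{2}w^{1/2}B'_\beta(w^{3/2})=2w^{1/2}\bigl(1+\frac{3}{4}B'_\beta(w^{3/2})\bigr)$ is right. Your remarks on smoothness for $w>0$ (via $L_\beta=\pi-2\theta_\beta$) and on the absence of $\partial_\beta$ terms are appropriate, though not needed for the identity itself.
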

\begin{obs}
	This definition of $B_{\beta}$ is not ideal for the Robin case but allows us to maintain coherence with the Dirichlet result.
\end{obs}
We now introduce some auxiliary statements used to study the function $B_{\beta}$.
We note here the following properties, immediate consequences of the properties of $\arccot$:
\begin{prop}
	\label{propbeta}
	$\beta'(\theta)=\frac{2}{3}\frac{b}{\theta^{5/3}+\theta^{1/3}b^2}$, $\cos^2(\beta(\theta))=\frac{b^2\theta^{-4/3}}{1+b^2\theta^{-4/3}}=\frac{b^2}{b^2+\theta^{4/3}}$,
	$\sin^2(\beta(\theta))=\frac{1}{1+b^2\theta^{-4/3}}=\frac{\theta^{4/3}}{b^2+\theta^{4/3}}$, $\sin\cos(\beta(\theta))=\frac{b\theta^{-2/3}}{1+b^2\theta^{-4/3}}=\frac{b\theta^{2/3}}{b^2+\theta^{4/3}}$.
\end{prop}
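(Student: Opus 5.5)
The statement (Proposition \ref{propbeta}) is a direct computation from the definition $\beta(\theta)=\arccot(b\theta^{-2/3})$, and I would prove the four identities in the order they are listed.

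\textbf{The derivative.} I would use $\frac{d}{du}\arccot(u)=-\frac{1}{1+u^2}$ together with the chain rule, taking $u=b\theta^{-2/3}$ and $u'=-\frac{2}{3}b\theta^{-5/3}$. This gives
\begin{align}
\beta'(\theta)=\frac{\frac{2}{3}b\theta^{-5/3}}{1+b^2\theta^{-4/3}}.
\end{align}
Multiplying numerator and denominator by $\theta^{5/3}$ yields $\frac{2}{3}\frac{b}{\theta^{5/3}+\theta^{1/3}b^2}$, as claimed.

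\textbf{The trigonometric identities.} I would set $u=\cot\beta=b\theta^{-2/3}$. Since $\beta\in(0,\pi)$ (the convention $[0,\pi)$ of Proposition \ref{robin} with $\theta>0$), we have $\sin\beta>0$. The identity $1+\cot^2\beta=\sin^{-2}\beta$ then gives $\sin^2\beta=\frac{1}{1+u^2}$.

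From this:
\begin{itemize}
\item $\cos^2\beta=u^2\sin^2\beta=\frac{u^2}{1+u^2}$;
\item $\sin\beta\cos\beta=\cot\beta\,\sin^2\beta=\frac{u}{1+u^2}$.
\end{itemize}
The product formula keeps the sign of $b$ without taking any square root, because $\cos\beta=u\sin\beta$ and $\sin\beta>0$.

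Substituting $u=b\theta^{-2/3}$ gives the first form of each identity. Multiplying numerator and denominator by $\theta^{4/3}$ gives the second form.

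\textbf{Possible obstacle.} None of these steps is difficult. The only point needing care is the branch convention for $\arccot$: the range $(0,\pi)$ is what ensures $\sin\beta>0$, which in turn gives the sign of $\sin\beta\cos\beta$ equal to the sign of $b$. With that fixed, every identity is routine algebra.
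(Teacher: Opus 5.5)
Your proposal is correct and is the same computation the paper has in mind: it records these identities without proof as immediate consequences of $\frac{d}{du}\arccot u=-\frac{1}{1+u^2}$ and $\cot\beta(\theta)=b\theta^{-2/3}$. One small remark: the identity $\sin\beta\cos\beta=\cot\beta\,\sin^2\beta$ only needs $\sin\beta\neq 0$, which holds automatically because $\cot\beta$ is finite, so the branch convention plays no real role in the sign of the product.
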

\begin{defi}\label{stableO1}
	We say that a real smooth function $f$ defined around a neighborhood of zero is a stable big $O$ of $x$ in zero, denoted $f=O^s_0$ if and only if for every $n\in\mathbb{N}$, $f^{(n)}$ is a big $O$ of $1$ in zero ($f^{(n)}=O_0(1)$). In particular, if $f$ is analytic around zero, $f=O^s_0$.\\
	The definition of $f=\tilde{O}^s_0$ follows (for every $n\in\mathbb{N}$, $f^{(n)}=\tilde{O}_0(1)$).
\end{defi}

We also make the definition in several variables (we will use it in two variables).
\begin{defi}\label{stableO2}
	Let $f$ be a smooth function of two variables $x,y$ defined around a neighborhood of zero. We say that $f$ is a stable big  $O$ of $x,y$ in zero, denoted $f(x,y)=O^s_0(x,y)$ if and only if, for every $\alpha\in\mathbb{N}^2$, there exist constants $c_\alpha,K_\alpha>0$, such that, if $(x,y)\in B(0,c_\alpha)$, then $|\partial_{\alpha}f(x,y)|\le K_\alpha$.\\
	We say that $f(x,y)=\tilde{O}^s_0(x,y)$ iff for every $\alpha\in\mathbb{N}^2$, there exist constants $c_\alpha,K_\alpha,k_\alpha>0$, such that, if $(x,y)\in B(0,c_\alpha)$, then $k_\alpha\le|\partial_{\alpha}f(x,y)|\le K_\alpha$.\\
\end{defi}
We shall use the following elementary stability property:\\
\begin{prop}
	Let $f_1,f_2$ be functions, $v,w,s,p>0$, such that $f_1(x,y)=x^{-v}\tilde{O}^s_0(x^{-s},y^{-p})$, $f_2(x,y)=x^{-v}y^{-w}\tilde{O}^s_0(x^{-s},y^{-p})$. Then, for every $N,M\in \mathbb{N}$, we have\\
$\partial^N_x f_1(x,y)=x^{-v-N}\tilde{O}^s_0(x^{-s},y^{-p})$, $\partial^M_y\partial^N_x f_2(x,y)=y^{-w-M}x^{-v-N}\tilde{O}^s_0(x^{-s},y^{-p})$. Thus, stable $\tilde{O}$ terms behave as constants under differentiation.
\end{prop}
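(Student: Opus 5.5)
The proof is an induction on the number of derivatives, done one derivative at a time. After each derivative I check that the new coefficient function is again $\tilde{O}^s_0$; to get the lower bounds I will shrink the ball $B(0,c_\alpha)$.

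\textbf{Setup.} Write $f_1(x,y)=x^{-v}g(x^{-s},y^{-p})$, where $g(u,w)$ is smooth near $0$. By Definition \ref{stableO2}, for each multi-index $\alpha$ there are $c_\alpha,k_\alpha,K_\alpha>0$ with $k_\alpha\le|\partial^\alpha g|\le K_\alpha$ on $B(0,c_\alpha)$. The relevant region is $x,y$ large, so that $(u,w)=(x^{-s},y^{-p})$ lies near $0$.

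\textbf{One $x$-derivative.} The chain rule gives
\begin{equation}
\partial_x f_1(x,y)=x^{-v-1}\,g_1(x^{-s},y^{-p}),\qquad g_1(u,w):=-v\,g(u,w)-s\,u\,\partial_u g(u,w).
\end{equation}
It remains to show $g_1=\tilde{O}^s_0$. Differentiating,
\begin{equation}
\partial^\alpha g_1=-(v+s\alpha_1)\,\partial^\alpha g-s\,u\,\partial_u\partial^\alpha g .
\end{equation}
The upper bound is immediate: $|\partial^\alpha g_1|\le (v+s\alpha_1)K_\alpha+s|u|K_{\alpha+e_1}$.

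For the lower bound I use $v,s>0$, so $v+s\alpha_1\ge v>0$. This gives
\begin{equation}
|\partial^\alpha g_1|\ge (v+s\alpha_1)k_\alpha-s|u|K_{\alpha+e_1}\ge \tfrac{v}{2}k_\alpha
\end{equation}
once $|u|\le vk_\alpha/(2sK_{\alpha+e_1})$. Replacing $c_\alpha$ by the minimum of $c_\alpha$, $c_{\alpha+e_1}$ and this threshold gives the two-sided bounds on a smaller ball.

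\textbf{Iterating in $x$.} Iterating $N$ times, with $v$ replaced by $v+j$ at step $j$ (still positive), gives $\partial_x^N f_1=x^{-v-N}\tilde{O}^s_0(x^{-s},y^{-p})$.

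\textbf{The $y$-derivatives for $f_2$.} For $f_2=x^{-v}y^{-w}g(x^{-s},y^{-p})$ the same computation in $y$ applies. Each $y$-derivative produces
\begin{equation}
y^{-w-1}\bigl(-w\,g-p\,w'\,\partial_{w'}g\bigr),
\end{equation}
where $w'=y^{-p}$ denotes the second variable of $g$. The coefficient is $-(w+p\alpha_2)$, which is negative since $w,p>0$, so the same shrinking argument gives the lower bound. The factor $x^{-v}$ is inert under $\partial_y$, and $y^{-w}$ is inert under $\partial_x$. Since the operations commute, applying $M$ $y$-derivatives and then $N$ $x$-derivatives gives $\partial_y^M\partial_x^N f_2=y^{-w-M}x^{-v-N}\tilde{O}^s_0(x^{-s},y^{-p})$.

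\textbf{Main obstacle.} The only delicate point is keeping the lower bounds in the $\tilde{O}$ definition. This works because the principal term $-(v+s\alpha_1)\partial^\alpha g$ has a coefficient bounded away from $0$, thanks to the positivity of the exponents. The perturbation $u\,\partial_u\partial^\alpha g$ vanishes at $u=0$ and is absorbed after finitely many shrinkings of the ball; finitely many suffice because each fixed order of differentiation involves only finitely many multi-indices.
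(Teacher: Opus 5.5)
Your argument is correct. The paper states this property without proof, calling it elementary, and your chain-rule induction, one derivative at a time, is exactly the verification it leaves implicit. Your key observation is that the leading coefficient $-(v+s\alpha_1)$, resp.\ $-(w+p\alpha_2)$, stays bounded away from zero while the perturbation $s\,u\,\partial_u\partial^\alpha g$ vanishes at $u=0$, so the two-sided bounds of Definition \ref{stableO2} survive after shrinking the ball. The same observation explains why the statement allows only $x$-derivatives of $f_1$: a $y$-derivative of $f_1$ gives $x^{-v}y^{-1}\bigl(-p\,w'\partial_{w'}g\bigr)$, which has no leading term, so its coefficient vanishes at $w'=0$ and is not $\tilde{O}^s_0$.
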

\begin{prop}
	\label{boundsb}
	\begin{align}
		B'_{\beta(\theta)}(w)=\tilde{O}_0^s(\theta^{-2/3}b,w^{-2/3})w^{-4/3}\theta^{-2/3}b+\tilde{O}_0^s(\theta^{-2/3}b,w^{-2/3})w^{-2}.
	\end{align}
	In particular, for $w\le k \theta$, for some constant $k$, we have 
	\begin{align}
		\label{grossboundsb}
		B'_{\beta(\theta)}(w)=\tilde{O}^\infty(w^{-2}).
	\end{align}
	Here, the $\tilde{O}^\infty(w^{-2})$ denotes a function which depends on $\theta$, but the constants in the definition of $\tilde{O}^\infty$ do not depend on $\theta$.
\end{prop}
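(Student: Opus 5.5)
The plan is to derive an explicit modulus--phase representation of $Rob_\beta(-w)$ from the classical modulus--phase functions of $Ai$ and $Ai'$ (Appendix A), and then read off $L_\beta$, and hence $B_\beta$, as an analytic function of two small parameters. I recall from the appendix that $-w_{\beta,k}$ are the zeros of $Rob_\beta$, and that $L_\beta$ is the phase normalized so that $L_\beta(w_{\beta,k})=2\pi k$ (up to the fixed constant used in the Poisson formula \ref{AiryPoisson}). With $\zeta=\frac{2}{3}w^{3/2}$ I write
\[
Ai(-w)=M(w)\cos\vartheta(w),\qquad Ai'(-w)=N(w)\sin\varphi(w),
\]
where $M\sim\pi^{-1/2}w^{-1/4}$ and $N\sim\pi^{-1/2}w^{1/4}$. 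Moreover $\vartheta(w)=\zeta-\frac{\pi}{4}+\zeta^{-1}f_1(\zeta^{-1})$, $\varphi(w)=\zeta-\frac{\pi}{4}+\zeta^{-1}f_2(\zeta^{-1})$ and $M/N=w^{-1/2}g(\zeta^{-1})$, with $f_1,f_2,g$ having asymptotic expansions in powers of $\zeta^{-1}$ that are stable under differentiation, and $g(0)=1$. This is exactly the input used in the Dirichlet case.

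Since $\sin\beta>0$ for $\theta\ge c$, I factor $\sin\beta\,N(w)$ out of $Rob_\beta(-w)=\cos\beta\,Ai(-w)+\sin\beta\,Ai'(-w)$. This gives
\[
Rob_\beta(-w)=\sin\beta\,N(w)\bigl[\sin\varphi+\varepsilon\cos\vartheta\bigr],\qquad \varepsilon:=\cot\beta\,\frac{M}{N}=b\theta^{-2/3}\,w^{-1/2}g(\zeta^{-1}),
\]
where I used $\cot\beta(\theta)=b\theta^{-2/3}$ from proposition \ref{propbeta}. Writing $\vartheta=\varphi+\delta$ with $\delta=\zeta^{-1}(f_1-f_2)$, the bracket equals $R\sin(\varphi+\Psi)$ with $R>0$ and
\[
\Psi=\arctan\!\Bigl(\frac{\varepsilon\cos\delta}{1-\varepsilon\sin\delta}\Bigr).
\]
For $\theta\ge c$ and $w\gtrsim 1$ (ensured by $\psi_3$), $\varepsilon$ is small, so this is legitimate and $\Psi$ is an analytic function of $(\varepsilon,\delta)$ that vanishes at $\varepsilon=0$. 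Hence the zeros are $\varphi+\Psi\in\pi\mathbb{Z}$, and $L_\beta(w)=2(\varphi+\Psi)+\mathrm{const}$. Passing to the variable $u=w^{3/2}$ of Definition \ref{defib} gives
\[
B_\beta(u)=u^{-1}F_0(u^{-1})+\cot\beta\,u^{-1/3}\,G\bigl(\cot\beta\,u^{-1/3},\,u^{-2/3}\bigr).
\]
Here $F_0$ is the Neumann part, which only involves $\zeta^{-1}\sim u^{-1}$. The function $G$ comes from expanding the arctangent: its odd powers $\varepsilon^{2j+1}$ contribute $\cot^{2j+1}\!\beta\,u^{-(2j+1)/3}$, and all these terms can be reorganized as functions of $(\theta^{-2/3}b,u^{-2/3})$.

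Next I differentiate in $u$, using the stability proposition that stable $\tilde O$ terms behave as constants under differentiation. The first term produces $u^{-2}\tilde O^s_0$. The second produces $b\theta^{-2/3}u^{-4/3}\tilde O^s_0(\theta^{-2/3}b,u^{-2/3})$, because $\partial_u$ of $u^{-1/3}G$ gains a factor $u^{-1}$ and $\partial_u G$ gains $u^{-5/3}$. This is the claimed form of $B'_{\beta(\theta)}$. For the gross bound \eqref{grossboundsb}, I assume $w\le k\theta$; then $\theta^{-2/3}\lesssim w^{-2/3}$, so the first term is $\lesssim w^{-4/3}w^{-2/3}=w^{-2}$. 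All constants depend only on $b$ and $k$, not on $\theta$. The upper bounds on all derivatives follow from the same computation.

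I expect the main obstacle to be the \emph{lower} bound built into the notation $\tilde O$. It requires that $u^2B'_\beta(u)$ stay bounded away from zero uniformly in $\theta$. I would obtain it from the Neumann part: the leading coefficient of $u^{-1}F_0$ is an explicit nonzero constant from the expansions of $\vartheta$ and $\varphi$. Under $w\le k\theta$, the Robin correction is of the same order $w^{-2}$ but carries the factor $b(\theta/w)^{-2/3}$. So either this factor is small, and the Neumann part dominates, or one has to check that the two explicit leading coefficients do not cancel. I would first try to verify non-cancellation directly from the explicit first terms of the Airy modulus--phase expansions. If that fails, I would restrict the statement to $w\le k\theta$ with $k$ small depending on $b$, which is compatible with the regime $\alpha\lesssim\gamma_0$ in which the result is used.
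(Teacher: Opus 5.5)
Your route is different from the paper's and can be made to work. One step as written does not follow, and your worry about the lower bound is justified.

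\textbf{The step that needs repair.} Factoring the \emph{real} function $Rob_\beta(-w)$ only tells you where it vanishes, i.e.\ that $L_\beta$ and $2(\varphi+\Psi)$ agree modulo $2\pi$ on $\{w_{\beta,k}\}$. That does not identify them as functions. The proposition concerns $B'_\beta(w)$ for all $w$, and $L_\beta=\pi-2\theta_\beta$ with $\theta_\beta$ the argument of the complex function $A_{-,\beta}$; many increasing functions equal $2\pi k$ at $w_{\beta,k}$.

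The fix is to run your algebra on $2A_{-,\beta}(w)=\cos\beta\,(Ai+iBi)(-w)-\sin\beta\,(Ai'+iBi')(-w)$. Use the complex modulus--phase forms $(Ai+iBi)(-w)=Me^{-i\vartheta}$ and $(Ai'+iBi')(-w)=iNe^{-i\varphi}$, which are what define your $M,N,\vartheta,\varphi$ in the first place. Then $2A_{-,\beta}=-i\sin\beta\,Ne^{-i\varphi}\bigl(1+i\varepsilon e^{-i\delta}\bigr)$ with $\varepsilon=\cot\beta\,M/N$, hence
\[
L_\beta(w)=2\varphi(w)-2\arctan\frac{\varepsilon\cos\delta}{1+\varepsilon\sin\delta}+\mathrm{const}
\]
identically in $w$. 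The single $2\pi\mathbb{Z}$ ambiguity is fixed by continuity and Proposition \ref{inipoint}.

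Note also that $Rob_\beta(-w)=\cos\beta\,Ai(-w)-\sin\beta\,Ai'(-w)$, so your $\varepsilon$ has the wrong sign. This is invisible in the $\tilde O$ statement but decides which sign of $b$ is dangerous below.

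After this repair your expansion goes through. The correction is smooth in $(\cot\beta,u^{-2/3})$, as your last sentence says, not in $(\cot\beta\,u^{-1/3},u^{-2/3})$. The reason is that $\varepsilon\sin\delta\sim\cot\beta\,u^{-4/3}=(\cot\beta\,u^{-1/3})(u^{-2/3})^{3/2}$.

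\textbf{Comparison.} The paper never touches $L_\beta$ itself. It combines $L'_\beta(x)=2x^{1/2}(1+\frac34B'_\beta(x^{3/2}))$ with the Wronskian identity of Proposition \ref{phase}, $L'_\beta(x)=\frac{\cos^2\beta+\sin^2\beta\,x}{2\pi\rho_\beta^2(x)}$, and with the expansion \eqref{asymptmodrob} of $\rho_\beta^2$. After dividing by $\sin^2\beta$, $\frac34B'_\beta$ is an explicit ratio from which the two-term form is read off, with no branch issues. Your phase expansion costs the repair above. In return it gives $B_\beta$ itself, with Neumann and Robin parts separated and explicit leading coefficients, which is exactly what the lower bound needs.

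\textbf{The lower bound.} The paper's proof does not address it, and non-cancellation is false in general. Carrying your expansion to leading order with the correct sign gives, as $u,\theta\to\infty$,
\[
B'_{\beta(\theta)}(u)=-\tfrac{7}{24}\,u^{-2}\bigl(1+o(1)\bigr)+\tfrac{2}{3}\,b\theta^{-2/3}u^{-4/3}\bigl(1+o(1)\bigr).
\]
For $b>0$ the two terms cancel near $u\approx(7/(16b))^{3/2}\theta$. So your fallback is necessary, not optional: take $k$ with $|b|k^{2/3}$ small compared with $7/16$. This is permitted, since the statement only asks for some constant $k$. Under that restriction $u^2|B'_{\beta(\theta)}(u)|$ is bounded below uniformly in $\theta$, and the derivative bounds follow as you say.
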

\begin{proof}
	First, by definition,
	\begin{align}
		\frac{3}{4}B'_{\beta(\theta)}(w)=\frac{L'_{\beta({\theta})}(w^{2/3})}{2w^{1/3}}-1=\frac{\cos^2(\beta(\theta))+\sin^2(\beta(\theta))w^{2/3}}{4\pi \rho^2_{\beta(\theta)}(w^{2/3})w^{1/3}}-\frac{4\pi \rho^2_{\beta(\theta)}(w^{2/3})w^{1/3}}{4\pi \rho^2_{\beta(\theta)}(w^{2/3})w^{1/3}}.
	\end{align}
	By using the asymptotic expansion of the phase and dividing everything by $\sin^2(\beta(\theta))$ (which we know is not zero), we obtain
	\begin{align}
		&\frac{3}{4}B'_{\beta(\theta)}(w)=\frac{-2b\theta^{-2/3}\frac{1}{24}w^{-2/3}-\tilde{O}^{\infty}(w^{-4/3})-b^2\theta^{-4/3}\tilde{O}^{\infty}(w^{-2})}{b^2\theta^{-4/3}(1+\tilde{O}^{\infty}(w^{-2}))+w^{2/3}(1+\tilde{O}^{\infty}(w^{-2}))+b\theta^{-2/3}\frac{1}{12}w^{-2/3}}\\
		&=\frac{\tilde{O}_0^s(\theta^{-2/3}b,w^{-2/3})w^{-4/3}\theta^{-2/3}b+\tilde{O}_0^s(\theta^{-2/3}b,w^{-2/3})w^{-2}}{b^2\theta^{-4/3}w^{-2/3}(1+\tilde{O}^{\infty}(w^{-2}))+1+\tilde{O}^{\infty}(w^{-2})+b\theta^{-2/3}\frac{1}{12}w^{-4/3}}\\
		&=\frac{\tilde{O}_0^s(\theta^{-2/3}b,w^{-2/3})w^{-4/3}\theta^{-2/3}b+\tilde{O}_0^s(\theta^{-2/3}b,w^{-2/3})w^{-2}}{1+O_0^s(\theta^{-2/3}b,w^{-2/3})b\theta^{-2/3}w^{-2/3}}\\
		&=\left(\tilde{O}_0^s(\theta^{-2/3}b,w^{-2/3})w^{-4/3}\theta^{-2/3}b+\tilde{O}_0^s(\theta^{-2/3}b,w^{-2/3})w^{-2}\right)\tilde{O}^s_0(O_0^s(\theta^{-2/3}b,w^{-2/3})b\theta^{-2/3}w^{-2/3}).\\
		&B'_{\beta(\theta)}(w)=\tilde{O}_0^s(\theta^{-2/3}b,w^{-2/3})w^{-4/3}\theta^{-2/3}b+\tilde{O}_0^s(\theta^{-2/3}b,w^{-2/3})w^{-2}.
	\end{align}
\end{proof}

\subsubsection{Derivatives of the Function $B_\beta$} \label{boundingb}
With this, we next study the derivatives of the phase with respect to the variables involved in the integral (which will be relevant since we want to study the dispersion by stationary phase methods).\\
In particular, we focus on the new terms involved in the Robin case which do not appear in the Dirichlet case and why these terms are small.
By \ref{partialalpha}, we know that
\begin{align}
	\partial_\beta B_\beta(w)=\frac{2\left(1+\frac{3}{4}B_\beta'(w)\right)}{\cos^2(\beta)w^{-1/3}+w^{1/3}\sin^2(\beta)}.
\end{align}
By using this expression, we obtain
\begin{prop}
	\label{superasimptotics}
	\begin{align}
		\partial_\theta B_{\beta(\theta)}(w)=w^{-1/3}b\theta^{-5/3}\tilde{O}^s_0(\theta^{-4/3}b^2,w^{-2/3}).
	\end{align}
\end{prop}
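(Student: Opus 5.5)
The proof is a chain rule computation. Since $B_{\beta(\theta)}(w)$ depends on $\theta$ only through the subscript $\beta$, we have
\begin{align}
	\partial_\theta B_{\beta(\theta)}(w)=\beta'(\theta)\,(\partial_\beta B_\beta)(w)\big|_{\beta=\beta(\theta)}.
\end{align}
I will insert Proposition \ref{propbeta} for $\beta'(\theta)$ and the displayed formula from \ref{partialalpha} for $\partial_\beta B_\beta$. I then factor out the claimed leading size $w^{-1/3}b\theta^{-5/3}$ and check that the remaining factor is a $\tilde{O}^s_0(\theta^{-4/3}b^2,w^{-2/3})$.

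\textbf{The factor $\beta'$.} Proposition \ref{propbeta} gives
\begin{align}
	\beta'(\theta)=\frac{2}{3}\frac{b}{\theta^{5/3}+\theta^{1/3}b^2}=\frac{2}{3}\,b\theta^{-5/3}\,\frac{1}{1+b^2\theta^{-4/3}}.
\end{align}
The last factor is an analytic, nonvanishing function of $\theta^{-4/3}b^2$ near $0$. It is therefore a $\tilde{O}^s_0(\theta^{-4/3}b^2)$, and in particular a $\tilde{O}^s_0(\theta^{-4/3}b^2,w^{-2/3})$.

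\textbf{The factor $\partial_\beta B_\beta$.} I substitute $\cos^2(\beta(\theta))=\frac{b^2\theta^{-4/3}}{1+b^2\theta^{-4/3}}$ and $\sin^2(\beta(\theta))=\frac{1}{1+b^2\theta^{-4/3}}$ into the denominator. Factoring out $w^{1/3}$ gives
\begin{align}
	\cos^2(\beta)w^{-1/3}+w^{1/3}\sin^2(\beta)=\frac{w^{1/3}}{1+b^2\theta^{-4/3}}\left(1+b^2\theta^{-4/3}w^{-2/3}\right).
\end{align}
Hence
\begin{align}
	\partial_\beta B_\beta(w)=2w^{-1/3}\left(1+\tfrac34 B'_{\beta(\theta)}(w)\right)\frac{1+b^2\theta^{-4/3}}{1+b^2\theta^{-4/3}w^{-2/3}}.
\end{align}
The last fraction is analytic and nonvanishing in the pair $(\theta^{-4/3}b^2,w^{-2/3})$ near $0$, so it is a $\tilde{O}^s_0$ of that pair.

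\textbf{The factor $1+\tfrac34 B'_{\beta(\theta)}(w)$.} For this I invoke Proposition \ref{boundsb}. It writes $B'_{\beta(\theta)}(w)$ as $w^{-4/3}\theta^{-2/3}b$ and $w^{-2}$ times $\tilde{O}^s_0(\theta^{-2/3}b,w^{-2/3})$ factors, which are small near $0$. So $1+\tfrac34B'$ is $1$ plus a small stable term, hence bounded above and below together with all its derivatives, i.e.\ it is $\tilde{O}^s_0$.

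\textbf{Main obstacle: reconciling the variables.} Proposition \ref{boundsb} is stated in the variable $\theta^{-2/3}b$, while the target uses $\theta^{-4/3}b^2$. The odd powers of $\theta^{-2/3}b$ in Proposition \ref{boundsb} all come with the extra factor $w^{-4/3}\theta^{-2/3}b$. I plan to use the explicit expression inside the proof of Proposition \ref{boundsb}, where $\cos\beta\sin\beta$ produces such odd powers. I will either absorb these terms, since they are small and smooth in both variables on the relevant neighbourhood, or interpret $\tilde{O}^s_0$ loosely as smooth with uniformly bounded derivatives in $(\theta^{-2/3}b,w^{-2/3})$.

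\textbf{Stability under differentiation.} I also need to check that the stable-$O$ property survives: derivatives in $w$ and $\theta$ of the product behave like those of the prefactor. This follows from the preceding stability proposition, since each factor is either explicit or already of stable form.

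Multiplying the three factors then yields
\begin{align}
	\partial_\theta B_{\beta(\theta)}(w)=w^{-1/3}b\theta^{-5/3}\tilde{O}^s_0(\theta^{-4/3}b^2,w^{-2/3}).
\end{align}
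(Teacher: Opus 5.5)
Your proposal is correct and follows the paper's route: the paper's proof is only the chain rule $\partial_\theta B_{\beta(\theta)}=\beta'(\theta)\,\partial_\beta B_\beta$, combined with Proposition \ref{propbeta} and the formula from Proposition \ref{partialalpha}, with $1+\tfrac34B'_{\beta(\theta)}(w)$ controlled by Proposition \ref{boundsb}; your computation supplies the omitted details (the factors $1+b^2\theta^{-4/3}$ in fact cancel, leaving $\tfrac43\, b\theta^{-5/3}w^{-1/3}(1+\tfrac34B')/(1+b^2\theta^{-4/3}w^{-2/3})$). The variable mismatch you flagged is a real imprecision in the statement, not in your argument: $B'_{\beta(\theta)}(w)$ contains terms odd in $b\theta^{-2/3}$ (through $\sin\beta\cos\beta$, visible in Proposition \ref{boundsb}), so your ``absorb'' option cannot yield a smooth function of $b^2\theta^{-4/3}$, and the right conclusion is your second option, a stable remainder in $(\theta^{-2/3}b,w^{-2/3})$, which is all that the proof of Proposition \ref{restrobs} needs.
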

\begin{nota}
	This study of the function $B_{\beta(\theta)}(w)$ is especially useful in the regime $\gamma>h^{\frac{2}{3}-\varepsilon}$, therefore, we will use the notation for the large parameter in that section $\lambda_{\gamma,h}=\frac{\gamma^{3/2}}{h}$.
\end{nota}
We now consider the following remainders.
\begin{defi}\label{restrobsnewnew}
	\begin{align}
		&Resrob_1(\alpha,\eta,h)=\frac{d}{d\eta}\left[B_{\beta(\eta/h)}(\alpha^{3/2}\eta h^{-1})\right]-\lambda_{\gamma,h}\frac{\alpha^{3/2}}{\gamma^{3/2}}B'_{\beta(\eta/h)}(\eta\lambda_{\gamma,h}\frac{\alpha^{3/2}}{\gamma^{3/2}})\\
		&Resrob_2(\alpha,\eta,h)=\frac{d^2}{d\eta^2}\left[B_{\beta(\eta/h)}(\alpha^{3/2}\eta h^{-1})\right]-\frac{\alpha^{3}}{\gamma^{3}}\lambda_{\gamma,h}^2B''_{\beta(\eta/h)}(\lambda_{\gamma,h}\eta\frac{\alpha^{3/2}}{\gamma^{3/2}}).\\
		&Resrob_3(\alpha,\eta,h)=\frac{d^3}{d\eta^3}\left[B_{\beta(\eta/h)}(\alpha^{3/2}\eta h^{-1})\right]-\frac{\alpha^{9/2}}{\gamma^{9/2}}\lambda_{\gamma,h}^3B'''_{\beta(\eta/h)}(\lambda_{\gamma,h}\eta\frac{\alpha^{3/2}}{\gamma^{3/2}}).\\
	\end{align}
\end{defi}
\begin{prop}
	\label{restrobs}
	We have the following bounds for the remainders: $|Resrob_i(\alpha,\eta,h)|\lesssim \frac{h^{2/3}}{\lambda_{\gamma,h}^{1/3}}$, for $i=1,2,3$, $|\partial_\alpha Resrob_1(\alpha,\eta,h)|\lesssim \lambda_{\gamma,h}^{-1}$, $|\partial_\alpha Resrob_2(\alpha,\eta,h)|\lesssim \lambda_{\gamma,h}^{-1}$ and $|\partial^2_\alpha Resrob_1(\alpha,\eta,h)|\lesssim \lambda_{\gamma,h}^{-5/3}h^{-2/3}$.
\end{prop}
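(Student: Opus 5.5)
The plan is to expand the total $\eta$-derivatives with the chain rule and isolate the terms coming from the dependence of the subscript $\beta(\eta/h)$. Write $F(\eta)=B_{\beta(\eta/h)}(W(\eta))$ with $W(\eta)=\alpha^{3/2}\eta h^{-1}=\lambda_{\gamma,h}\eta\alpha^{3/2}\gamma^{-3/2}$. Then
\begin{align}
F'(\eta)=W'(\eta)\,B'_{\beta(\eta/h)}(W)+h^{-1}(\partial_\theta B_{\beta(\theta)})(W)\big|_{\theta=\eta/h},
\end{align}
and since $W'(\eta)=\lambda_{\gamma,h}\alpha^{3/2}\gamma^{-3/2}$ is constant in $\eta$, the first term is exactly the one subtracted in $Resrob_1$. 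Hence $Resrob_1=h^{-1}\partial_\theta B_{\beta(\theta)}(W)$. Differentiating again, $Resrob_2$ and $Resrob_3$ are finite sums of mixed derivatives of the form
\begin{align}
h^{-j}(W')^{m}\,\partial_\theta^{j}\partial_w^{m}B_{\beta(\theta)}(W),\qquad j\ge 1,\ j+m\le 3.
\end{align}
In each of these terms at least one derivative falls on the $\theta$-dependence.

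To estimate them, I will use Proposition \ref{superasimptotics} together with the stability property of $\tilde O^s_0$ symbols stated before Proposition \ref{boundsb}. Each $\partial_\theta$ costs a factor $\theta^{-1}$ and each $\partial_w$ a factor $w^{-1}$. This gives
\begin{align}
|\partial_\theta^{j}\partial_w^{m}B_{\beta(\theta)}(w)|\lesssim w^{-1/3-m}\theta^{-2/3-j}.
\end{align}
Now substitute $\theta=\eta/h\sim h^{-1}$ (because $\eta\in\operatorname{supp}\psi_1$), $w=W\sim\lambda_{\gamma,h}$ (because $\alpha\sim\gamma$), and $W'\sim\lambda_{\gamma,h}$. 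The generic term is then bounded by
\begin{align}
h^{-j}\lambda_{\gamma,h}^{m}\lambda_{\gamma,h}^{-1/3-m}h^{2/3+j}=h^{2/3}\lambda_{\gamma,h}^{-1/3},
\end{align}
which is the claimed bound for $i=1,2,3$.

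For the $\alpha$-derivatives I use $\partial_\alpha W=\tfrac32\alpha^{1/2}\eta h^{-1}\sim\lambda_{\gamma,h}\gamma^{-1}$ and $\partial_\alpha W'\sim\lambda_{\gamma,h}\gamma^{-1}$. Each $\partial_\alpha$ therefore either hits $W$ inside a $w$-derivative, giving an extra factor of size $\lambda_{\gamma,h}/W\cdot\gamma^{-1}\sim\gamma^{-1}$, or hits a $W'$ prefactor, also giving a factor $\gamma^{-1}$. Consequently
\begin{align}
|\partial_\alpha Resrob_i|\lesssim\gamma^{-1}h^{2/3}\lambda_{\gamma,h}^{-1/3}=\lambda_{\gamma,h}^{-1},\qquad |\partial_\alpha^2Resrob_1|\lesssim \gamma^{-2}h^{2/3}\lambda_{\gamma,h}^{-1/3}=\lambda_{\gamma,h}^{-5/3}h^{-2/3},
\end{align}
where both equalities use $\gamma=\lambda_{\gamma,h}^{2/3}h^{2/3}$. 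These are exactly the stated bounds.

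The main obstacle is to justify that Proposition \ref{superasimptotics} can be differentiated in both variables with uniform constants. This needs the $\tilde O^s_0(\theta^{-4/3}b^2,w^{-2/3})$ factor to be a genuine smooth symbol in $(\theta^{-4/3},w^{-2/3})$ near $0$, and it needs $w\gtrsim1$ on the support; the latter follows from the cutoff $\psi_3$. I would obtain this by differentiating the formula for $\partial_\beta B_\beta$ from \ref{partialalpha} and combining it with $\beta'(\theta)$ from Proposition \ref{propbeta}. The derivatives $B'_\beta,B''_\beta$ appearing in that formula are controlled by Proposition \ref{boundsb}, which is already of the stable form, so the stability property applies.
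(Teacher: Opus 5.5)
Your proposal is correct and follows essentially the same route as the paper. The chain rule isolates $Resrob_1=h^{-1}\partial_\theta B_{\beta(\theta)}(W)|_{\theta=\eta/h}$, together with the analogous mixed terms for $Resrob_2$ and $Resrob_3$. These are then bounded via Proposition \ref{superasimptotics} and the stability of $\tilde O^s_0$, using $\theta\sim h^{-1}$, $W\sim\lambda_{\gamma,h}$, and the fact that each $\partial_\alpha$ costs a factor $\alpha^{-1}\sim\gamma^{-1}$.

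Your generic-term bookkeeping is a more systematic version of the paper's term-by-term computation, which leaves $Resrob_3$ and the $\alpha$-derivatives to ``the same argument''. One small correction: the largeness of $W$ that you need comes from $\alpha\sim\gamma$ (via $\psi_2$) together with $\lambda_{\gamma,h}\gg1$, not merely from the cutoff $\psi_3$.
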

\begin{obs}
	The  bounds $|Resrob_i(\alpha,\eta,h)|\lesssim \frac{h^{2/3}}{\lambda_{\gamma,h}^{1/3}}$, are expected to hold for every  $i\in \mathbb{N}$, but we only need it up to order $3$.
\end{obs}
\begin{proof}
	We compute the derivatives:
	\begin{align}
		&\frac{d B_{\beta(\eta/h)}(\alpha^{3/2}\eta h^{-1})}{d\eta}=\lambda_{\gamma,h}\frac{\alpha^{3/2}}{\gamma^{3/2}}B'_{\beta(\eta/h)}(\eta\lambda_{\gamma,h}\frac{\alpha^{3/2}}{\gamma^{3/2}})+\frac{1}{h}\partial_\theta B_{\beta(\theta)}(\eta\lambda_{\gamma,h}\frac{\alpha^{3/2}}{\gamma^{3/2}})|_{\theta=\frac{\eta}{h}}\\
		&=\lambda_{\gamma,h}\frac{\alpha^{3/2}}{\gamma^{3/2}}B'_{\beta(\eta/h)}(\eta\lambda_{\gamma,h}\frac{\alpha^{3/2}}{\gamma^{3/2}})+\frac{1}{h}\eta^{-1/3}\lambda_{\gamma,h}^{-1/3}\frac{\alpha^{-1/2}}{\gamma^{-1/2}}b\frac{\eta^{-5/3}}{h^{-5/3}}\tilde{O}^s_0(\frac{\eta^{-4/3}}{h^{-4/3}},(\eta\lambda_{\gamma,h}\frac{\alpha^{3/2}}{\gamma^{3/2}})^{-2/3})\\
		&=\lambda_{\gamma,h}\frac{\alpha^{3/2}}{\gamma^{3/2}}B'_{\beta(\eta/h)}(\eta\lambda_{\gamma,h}\frac{\alpha^{3/2}}{\gamma^{3/2}})+Resrob_1(\alpha,\eta,h).
	\end{align}
	With 
	\begin{align}
		|Resrob_1(\alpha,\eta,h)|\lesssim \frac{1}{h}\eta^{-1/3}\lambda_{\gamma,h}^{-1/3}\frac{\alpha^{-1/2}}{\gamma^{-1/2}}b\frac{\eta^{-5/3}}{h^{-5/3}}\lesssim \frac{h^{2/3}}{\lambda_{\gamma,h}^{1/3}}.
	\end{align}
	Furthermore, we have
	\begin{align}
		&\partial_\alpha Resrob_1(\alpha,\eta,h)=\frac{1}{h}\eta^{-1/3}\lambda_{\gamma,h}^{-1/3}\frac{\alpha^{-1/2}\alpha^{-1}}{\gamma^{-1/2}}b\frac{\eta^{-5/3}}{h^{-5/3}}\tilde{O}^s_0(\frac{\eta^{-4/3}}{h^{-4/3}},(\eta\lambda_{\gamma,h}\frac{\alpha^{3/2}}{\gamma^{3/2}})^{-2/3})\\
		&=\frac{h^{2/3}}{\gamma}\eta^{-1/3}\lambda_{\gamma,h}^{-1/3}\frac{\alpha^{-1/2}\alpha^{-1}}{\gamma^{-1/2}\gamma^{-1}}b\frac{\eta^{-5/3}}{h^{-5/3}}\tilde{O}^s_0(\frac{\eta^{-4/3}}{h^{-4/3}},(\eta\lambda_{\gamma,h}\frac{\alpha^{3/2}}{\gamma^{3/2}})^{-2/3}).
	\end{align}
	With this, we conclude that $|\partial_\alpha Resrob_1(\alpha,\eta,h)|\lesssim \frac{1}{\lambda_{\gamma,h}}$.\\
	The remaining derivatives are estimated by the same argument:
	\begin{align}
	&\frac{d^2 B_{\beta(\eta/h)}(\alpha^{3/2}\eta h^{-1})}{d\eta^2}=\lambda_{\gamma,h}^2\frac{\alpha^{3}}{\gamma^{3}}B''_{\beta(\eta/h)}(\eta\lambda_{\gamma,h}\frac{\alpha^{3/2}}{\gamma^{3/2}})\\
	&+\lambda_{\gamma,h}\frac{\alpha^{3/2}}{\gamma^{3/2}}\frac{1}{h}\partial_\theta B'_{\beta(\theta)}(w)|_{w=\eta\lambda_{\gamma,h}\frac{\alpha^{3/2}}{\gamma^{3/2}},\theta=\frac{\eta}{h}}+\partial_\eta Resrob_1(\alpha,\eta,h).\\
	\end{align}
	A direct computation shows that $|\partial_\eta Resrob_1(\alpha,\eta,h)|\lesssim \frac{h^{2/3}}{\lambda_{\gamma,h}^{1/3}}$, and for the other term, we just use \ref{superasimptotics}, getting 
	\begin{align}
		\partial_\theta B'_{\beta(\theta)}(w)=\tilde{O}_0^s(\theta^{-2/3}b,w^{-2/3})w^{-4/3}\theta^{-5/3}b+\tilde{O}_0^s(\theta^{-2/3}b,w^{-2/3})w^{-2}b\theta^{-5/3}.
	\end{align}
	Thus, for $w$ large (that is, $\lambda_{\gamma,h}$ large in our case), we obtain $|\partial_\theta B'_{\beta(\theta)}(w)|\lesssim b\theta^{-5/3}w^{-4/3}$. Applying $w=\eta\lambda_{\gamma,h}\frac{\alpha^{3/2}}{\gamma^{3/2}},\theta=\frac{\eta}{h}$ and the $h^{-1}$, we get that this term is $\lesssim \frac{h^{2/3}}{\lambda_{\gamma,h}^{1/3}}$ so the sum of both terms must be $\lesssim \frac{h^{2/3}}{\lambda_{\gamma,h}^{1/3}}$, concluding the bound.
\end{proof}
\section{The case $\gamma>h^{\frac{2}{3}-\varepsilon}$}\label{forlambda}
Set $\gamma>h^{\frac{2}{3}-\varepsilon}$. Defining $\lambda_{\gamma,h}=\frac{\gamma^{3/2}}{h}$, we have $\lambda_{\gamma,h}>h^{-\frac{3}{2}\varepsilon}\gg 1$, which means that we can use $\lambda_{\gamma,h}$ as the large parameter in stationary phase arguments. In particular $B'_{\beta(\eta/h)}(\alpha^{3/2}\eta h^{-1})\lesssim \lambda_{\gamma,h}^{-2}$, and $|Resrob_i(\alpha,\eta,h)|\lesssim \frac{h^{2/3}}{\lambda_{\gamma,h}^{1/3}}$ (by proposition \ref{restrobs}).
\begin{obs}
	\label{ignoreverything}
	The study of the phase shows that, as long as no derivatives with respect to $\eta$ are taken, the integrals have the same structure as those in \cite{positive} with a different but equivalent symbol. This is because the asymptotics of the new $B_{\beta(\theta)}$ are the same as the $B$ in \cite{positive} when we do not differentiate with respect to $\eta$. This means that, whenever a step of the Dirichlet proof depends only on the symbolic bounds and asymptotic structure of the oscillatory integral, we use the same argument after verifying that the Robin quantities satisfy the corresponding bounds. The only points at which the Robin problem differs from the Dirichlet one are those where derivatives fall on the frequency-dependent parameter $\beta(\iota,\eta/h)$. These derivatives produce the remainders ${Resrob}_i$, which are not present in \cite{positive} and are estimated in proposition \ref{restrobs}. Thus, in the sequel, references to \cite{positive} should be understood as references to the specific stationary phase or integration-by-parts argument used there, while the Robin-specific error terms are controlled separately whenever they occur.
\end{obs}
\begin{obs}
	In the regime $\gamma>h^{\frac{2}{3}-\varepsilon}$, having $O(\lambda_{\gamma,h}^{-\infty})$ implies $O(h^{\infty})$.
\end{obs}
In particular, the proof of lemma 4 in \cite{positive} does not involve derivatives in $\eta$. This allows us to repeat the proof of the corresponding lemma in \cite{positive} without modification, as detailed in remark \ref{ignoreverything}.
\begin{prop}[Lemma 4 in \cite{positive}]
	\label{boundgreat}
	The sum in \eqref{gammapoisson} is microlocally finite, that is, there exists a constant $K$ not depending on $h,\gamma,N$ such that, 
	\begin{equation}
		\sum_{K|N|>|t|\gamma^{-1/2}} V_{h,\gamma,N}=O(h^\infty).
	\end{equation}
	Thus only finitely many $N$ contribute to $G_{\gamma,h}$.\\
	Also, if $\gamma< a/2$ or $x>2\gamma$, $G_{\gamma,h}(\boldsymbol{t},a)=O(h^\infty)$.
\end{prop}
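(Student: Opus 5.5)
Since the proof of Lemma 4 in \cite{positive} uses no derivative in $\eta$, I will repeat it, checking only that the Robin quantities enjoy the same bounds (Remark \ref{ignoreverything}). Everything rests on non-stationary phase in the variables $\sigma,s,\alpha$ of the phase \eqref{phasedef2}, with large parameter $\lambda_{\gamma,h}=\gamma^{3/2}/h$.

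\textbf{Rescaling.} First I rescale to expose $\lambda_{\gamma,h}$:
\[
\alpha=\gamma A,\qquad \sigma=\gamma^{1/2}S_1,\qquad s=\gamma^{1/2}S_2,\qquad x=\gamma X,\qquad a=\gamma \tilde a .
\]
Then $\phi_{N,a}/h$ becomes $\lambda_{\gamma,h}\eta\,\Phi$, up to the terms $\eta y/h$ and $\eta t\sqrt{1+\alpha}/h$, which carry no $\sigma,s$ dependence. Here
\[
\Phi=\frac{S_1^3}{3}+S_1(X-A)+\frac{S_2^3}{3}+S_2(\tilde a-A)-\frac{4}{3}NA^{3/2}-\frac{N}{\lambda_{\gamma,h}\eta}B_{\beta(\eta/h)}\bigl(\lambda_{\gamma,h}\eta A^{3/2}\bigr),
\]
and on the support $A\in[1/2,3/2]$ and $\eta\sim 1$.

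\textbf{Localization in $x$ and $a$.} Critical points in $\sigma$ and $s$ require $S_1^2=A-X$ and $S_2^2=A-\tilde a$. If $x>2\gamma$, then $X>2>A$, so $|\partial_{S_1}\Phi|\gtrsim 1+S_1^2$. Integrating by parts in $S_1$ then gains $\lambda_{\gamma,h}^{-M}$ for every $M$, uniformly in the other variables. The case $a>2\gamma$ is handled the same way in $S_2$; I read the statement's condition ``$\gamma<a/2$'' as exactly this. Since $\lambda_{\gamma,h}\ge h^{-3\varepsilon/2}$, this is $O(h^\infty)$. The sum over $N$ still has to be controlled, and that follows once the next step gives decay in $N$.

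\textbf{Finiteness in $N$.} Here I use $\partial_A$. I have
\[
\partial_A\Phi=-S_1-S_2-\frac{\eta\,t}{\gamma^{1/2}}\cdot\frac{\gamma^{1/2}\,\partial_\alpha\sqrt{1+\alpha}}{\eta}\,(\text{rescaled})-2NA^{1/2}\Bigl(1+\tfrac34 B'_{\beta(\eta/h)}(\lambda_{\gamma,h}\eta A^{3/2})\Bigr).
\]
In the original normalization the $t$ term is $\eta t/(2\sqrt{1+\alpha})$, which after rescaling has size $|t|\gamma^{-1/2}$. By Proposition \ref{boundsb}, i.e. \eqref{grossboundsb}, $B'=O(\lambda_{\gamma,h}^{-2})$ uniformly in $\theta$, so the bracket is $\sim1$. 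On the near-critical set $|S_1|,|S_2|\lesssim1$ I therefore get $|\partial_A\Phi|\gtrsim|N|$ whenever $K|N|>|t|\gamma^{-1/2}$ with $K$ large. Away from that set, the $\sigma,s$ non-stationarity already gives $O(\lambda^{-\infty}(1+|N|)^{-\infty})$.

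Repeated integration by parts in $A$ then yields $O\bigl((\lambda_{\gamma,h}|N|)^{-M}\bigr)$. This needs $\partial_A^k$ of the $B$ term to be bounded. That holds because Proposition \ref{boundsb} gives stable $\tilde O$ bounds in $w$ with constants independent of $\theta$: each $\partial_A$ brings a factor $\lambda$ from the chain rule and a factor $w^{-1}\sim\lambda^{-1}$. The cutoffs $\psi_2(A)$ and $\psi_3$ have bounded derivatives. The resulting bound is summable in $N$, and the prefactor $h^{-3}$ is absorbed by $\lambda^{-M}$, which gives $O(h^\infty)$.

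\textbf{Main obstacle.} The delicate step is ensuring that the $N$-term dominates $\partial_A\Phi$ uniformly while $S_1,S_2$ are only localized modulo the $\sigma,s$ integration. For this I would insert a cutoff $\chi(S_1,S_2)$ on $|S_j|\le C$. On the complement I integrate by parts in $S_j$, which is possible since $|\partial_{S_j}\Phi|\gtrsim S_j^2$ there. Crucially, the $\eta$-dependent subscript $\beta(\eta/h)$ never gets differentiated, so the remainders $Resrob_i$ do not enter.
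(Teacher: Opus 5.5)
Your route is the one the paper relies on, since it only defers to the Dirichlet proof: non-stationary phase in $\sigma,s,\alpha$, never differentiating the subscript $\beta(\eta/h)$, so only the $w$-bounds of Proposition~\ref{boundsb} enter and no $Resrob_i$ appears. Your treatment of $x>2\gamma$ and $a>2\gamma$ through $|\partial_{S_j}\Phi|\gtrsim 1+S_j^2$ is fine.

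The gap is in the key inequality of the finiteness step, $|\partial_A\Phi|\gtrsim|N|$ ``whenever $K|N|>|t|\gamma^{-1/2}$ with $K$ large''. There are two problems with it.
\begin{enumerate}
\item The direction is wrong. For $K$ large, the set $\{K|N|>|t|\gamma^{-1/2}\}$ contains the indices $|N|\sim|t|\gamma^{-1/2}$. These are exactly the ones carrying genuine critical points (Corollary~\ref{TsimN}; $\hat T\approx 4N$ in the tangential regime). The integration by parts in $A$ needs $|N|\ge C|t|\gamma^{-1/2}$ with $C$ large.
\item Even after that correction, bounded $|N|$ are not covered. On your cutoff region $-S_1-S_2$ is of size $O(1)$; at critical points it can equal $\sqrt{A-X}+\sqrt{A-\tilde a}$, which is close to $2\sqrt A$. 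So $2|N|A^{1/2}$ does not dominate it when $|N|=1$.
\end{enumerate}

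The second point is not a technicality. Take the transverse regime with $X=\tilde a=\delta$ small, $N=1$, and $S_1=S_2=-\sqrt{A-\delta}$. Then:
\begin{itemize}
\item $\partial_{S_1}\Phi=\partial_{S_2}\Phi=0$;
\item $\partial_A\Phi=0$ for $T=2\sqrt{1+\gamma A}\,\bigl(2\sqrt A(1+\tfrac34 B')-2\sqrt{A-\delta}\bigr)\approx 2\delta A^{-1/2}$;
\item $Y$ can be chosen to cancel the $\eta$-derivative;
\item the Hessian of $\Phi$ in $(S_1,S_2,A)$ has determinant $\approx 2\delta A^{-1/2}\neq0$.
\end{itemize}
This is the once-reflected ray between two points near the boundary. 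Hence $V_{h,\gamma,\pm1}$ is not $O(h^\infty)$ at arbitrarily small $|t|$, and no choice of $K$ repairs the literal statement.

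What your argument does prove, once you impose $|N|\ge C(1+|t|\gamma^{-1/2})$ with $C$ large (so that $\sqrt2|N|$ beats $2C'+|T|/2$ on $\{|S_j|\le C'\}$), is
\[
\sum_{|N|\ge C(1+|t|\gamma^{-1/2})}V_{h,\gamma,N}=O(h^\infty).
\]
This is the form used later, e.g.\ $|N_i|\lesssim 1+t_i\gamma^{-1/2}$ in Proposition~\ref{prop1}. Combined with the uniform $O(\lambda_{\gamma,h}^{-\infty})$ bound on the remaining $\lesssim 1+|t|\gamma^{-1/2}$ indices, it still gives $G_{\gamma,h}=O(h^\infty)$ when $x>2\gamma$ or $a>2\gamma$.

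Also, with a fixed cutoff $|S_j|\le C'$, integrating by parts in $S_j$ on the complement gains powers of $\lambda_{\gamma,h}$ but no decay in $N$. You need one of two fixes: take the cutoff at scale $|S_j|\le c|N|$, or integrate out the $S_j$-tail first (it is a smooth $O(\lambda_{\gamma,h}^{-\infty})$ function of $A$) and then integrate by parts in $A$.
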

\begin{obs}
	Since we are working locally in time, $V_{h,\gamma,N}$, is only relevant for $\gamma ^{1/2}|N|\le K t_{max} \lesssim 1$.
\end{obs}
In our case, the analogue to proposition 1 in \cite{positive}, does not translate directly to our case (since it involves derivatives of $\eta$). The statement remains true, but the proof needs to control the new $\eta$-derivative terms coming from $\beta(\eta/h)$.\\
\begin{prop}
	\label{prop1}
	We define
	\begin{align}
		\label{firstdefiN}\mathcal{N}_{h,\gamma}(\boldsymbol{t})=\{N\in \mathbb{Z}: \text{there exists at least one critical point of } \phi_{N,a}(\boldsymbol{t},\cdot)\}.
	\end{align}
	that is
	\begin{align}
		\label{seconddefiN}\mathcal{N}_{h,\gamma}(\boldsymbol{t})=\{N\in \mathbb{Z}: \exists (\boldsymbol{\sigma},\alpha,\eta)\in \mathbb{R}^3\times \left[\frac{1}{2},\frac{3}{2}\right] : \nabla_{(\sigma,s,\alpha,\eta)}\phi_{N,a}(\boldsymbol{t},\sigma,s,\alpha,\eta)=0\}.
	\end{align}\\\\
	Now, we define 
	\begin{align}
		&\label{firslinen1}\mathcal{N}_{1,h,\gamma}(\boldsymbol{t})=\{N\in \mathbb{Z}: \exists (t',x',y')\in \mathbb{R}^3 : N\in\mathcal{N}(t',x',y') \text{ and }\\
		& \label{secondlinen2}|T(\gamma,\boldsymbol{t}')-T(\gamma,\boldsymbol{t})|\le 1,|X(\gamma,\boldsymbol{t}')-X(\gamma,\boldsymbol{t})|<1, |Y(\gamma,\boldsymbol{t}')-Y(\gamma,\boldsymbol{t})|<1\}.
	\end{align} 
	\begin{align}
		\mathcal{N}_{1,h,\gamma}(\boldsymbol{t})=\bigcup_{\{(t',x',y')\in \mathbb{R}^3 |T(\gamma,\boldsymbol{t}')-T(\gamma,\boldsymbol{t})|\le 1,|X(\gamma,\boldsymbol{t}')-X(\gamma,\boldsymbol{t})|<1, |Y(\gamma,\boldsymbol{t}')-Y(\gamma,\boldsymbol{t})|<1\}}\mathcal{N}_{h,\gamma}(t',x',y').\\
		\label{thirdlinen1}
	\end{align}
	Where $ t=\sqrt{\gamma}T(\gamma,\boldsymbol{t}), x=\gamma X(\gamma,\boldsymbol{t}),y+t\sqrt{1+\gamma}=\gamma^{3/2} Y(\gamma,\boldsymbol{t})$.
	Then, we have a bound for the distance between two elements $N_1,N_2$ of $\mathcal{N}_{1,h,\gamma}(\boldsymbol{t})$ and consequently a bound on the cardinal of $\mathcal{N}_{1,h,\gamma}(\boldsymbol{t})$:
	\begin{align}
		|N_1-N_2|\lesssim C+\frac{T(\gamma,\boldsymbol{t})}{\lambda_{\gamma,h}^2}=C+\frac{t}{\sqrt{\gamma}\lambda_{\gamma,h}^2},\\
		|\mathcal{N}_{1,h,\gamma}(\boldsymbol{t})|\lesssim C+\frac{T(\gamma,\boldsymbol{t})}{\lambda_{\gamma,h}^2}=C+\frac{t}{\sqrt{\gamma}\lambda_{\gamma,h}^2}.
	\end{align}
	Also, we have
	\begin{align}
		\label{finiten}
		\sum_{N\notin \mathcal{N}_{1,h,\gamma}(\boldsymbol{t})}V_{h,N,\gamma}(t,x,y)=O(h^{\infty}).
	\end{align}
	\begin{obs}
	This proposition tells us that the only relevant terms  $V_{h,\gamma,N}$ are those with, or close to a critical point, in a sense of closeness which depends on gamma.
	\end{obs}
\end{prop}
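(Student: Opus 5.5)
The plan is to follow the structure of Proposition 1 in \cite{positive}, using the $\eta$-derivative of the phase to locate $N$, and to treat the Robin contributions as the perturbations $Resrob_1$ and $Resrob_2$.

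\textbf{Rescaling the phase.} We pass to the rescaled variables $\sigma=\sqrt{\gamma}\,\tilde\sigma$, $s=\sqrt{\gamma}\,\tilde s$, $\alpha=\gamma A$. We also use $x=\gamma X$, $t=\sqrt{\gamma}T$, $y+t\sqrt{1+\gamma}=\gamma^{3/2}Y$. After dividing by $h$, the phase becomes $\lambda_{\gamma,h}$ times a function $\Phi_N$ that is $O(1)$ in the rescaled variables, up to the $B$-term. The critical point equations $\partial_\sigma=\partial_s=0$ give $\tilde\sigma^2=A-X$ and $\tilde s^2=A-a/\gamma$. The equation $\partial_\alpha=0$ gives, to leading order, $T\approx 2N\sqrt{A}+\tilde\sigma+\tilde s$ plus a correction of size $O(\sqrt{\gamma})$. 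Its $B'$ contribution is $O(\lambda_{\gamma,h}^{-2})$ by Proposition \ref{boundsb}. The key equation is $\partial_\eta=0$. Using Definition \ref{restrobsnewnew}, it reads
\begin{align}
Y+\frac{\tilde\sigma^3}{3}+\tilde\sigma(X-A)+\frac{\tilde s^3}{3}+\tilde s\Big(\frac{a}{\gamma}-A\Big)+T\,\frac{\sqrt{1+\gamma A}-\sqrt{1+\gamma}}{\gamma}-\frac{4}{3}NA^{3/2}\\
=\frac{hN}{\gamma^{3/2}}\Big(\lambda_{\gamma,h}A^{3/2}B'_{\beta(\eta/h)}(\eta\lambda_{\gamma,h}A^{3/2})+Resrob_1\Big).
\end{align}
The right-hand side is $N\lambda_{\gamma,h}^{-1}\big(O(\lambda_{\gamma,h}^{-1})+O(h^{2/3}\lambda_{\gamma,h}^{-1/3})\big)$. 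This uses $B'=O(w^{-2})$ with $w\sim\lambda_{\gamma,h}$, together with Proposition \ref{restrobs}.

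\textbf{Bounding $|N_1-N_2|$.} Let $N_1,N_2\in\mathcal N_{1,h,\gamma}(\boldsymbol t)$, with critical points at parameters $\boldsymbol t_1',\boldsymbol t_2'$ within distance $1$ of $(T,X,Y)$. We combine the $\alpha$- and $\eta$-equations to eliminate $T$: substituting $T$ turns the left side into $Y$ plus a smooth function of $(A,X,a/\gamma)$ plus $\tfrac{2}{3}NA^{3/2}$. Since $A\sim1$ on the support of $\psi_2$, this determines $N$ up to $O(1)$ once $X,Y,A$ vary by $O(1)$. The variation of $A$ itself is controlled through the $\alpha$-equation, since $\partial_A(2N\sqrt A+\dots)\gtrsim N$. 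The only non-$O(1)$ discrepancy comes from the right-hand side, which for $|N|\lesssim T\gamma^{-1/2}\cdots$ contributes $N\lambda_{\gamma,h}^{-2}$. The Dirichlet part of that term is already accounted for in \cite{positive}, and the Robin part is even smaller. Hence $|N_1-N_2|\lesssim C+|N|\lambda_{\gamma,h}^{-2}\lesssim C+T\lambda_{\gamma,h}^{-2}$, using $|N|\lesssim T$ from Proposition \ref{boundgreat}. The bound on the cardinality follows immediately.

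\textbf{The $O(h^\infty)$ statement.} For $N\notin\mathcal N_{1,h,\gamma}(\boldsymbol t)$, the phase $\Phi_N$ has no critical point, and in fact $|\nabla\Phi_N|\gtrsim 1$ in rescaled variables. This is the quantitative content of being at distance $\ge1$ from any critical configuration. We integrate by parts in $(\tilde\sigma,\tilde s,A,\eta)$ with large parameter $\lambda_{\gamma,h}$ to gain $\lambda_{\gamma,h}^{-M}=O(h^{\infty})$. This step needs symbol bounds on $\eta$-derivatives of the $B$-term, which are supplied by Proposition \ref{restrobs} for derivatives up to order $3$ together with the $\partial_\alpha$ bounds. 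Higher-order integrations by parts additionally require $\partial_\eta^k Resrob$ to be bounded, which follows from the stable-$\tilde O$ structure in Proposition \ref{superasimptotics}. Finally, summing over $N$ uses Proposition \ref{boundgreat} for large $|N|$.

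\textbf{Main obstacle.} I expect the hardest step to be making "$|\nabla\Phi_N|\gtrsim 1$ off $\mathcal N_1$" uniform despite the $\eta$-dependent remainder. My first approach would be to argue that the remainder is $O(N\lambda_{\gamma,h}^{-2})$. If this is $\le 1/2$, it does not destroy non-stationarity. If it is not, it is absorbed into the enlarged $T\lambda_{\gamma,h}^{-2}$ window of allowed $N$.
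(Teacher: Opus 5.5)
\textbf{Bounding $|N_1-N_2|$.} The key step of your cardinality bound does not close.

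Eliminating $T$ via the $\alpha$-equation expresses $N\approx -(Y+O(1))/G(A)$ with $G(A)=\sqrt A(\tfrac23A-2)$. (That equation is really $T\approx 4N\sqrt A+2(\tilde\sigma+\tilde s)$, and substituting it into $T(A-1)/2$ also produces a $-2N\sqrt A$ term.) Since $Y\sim T\sim N$ at critical points, this fixes $N$ to $O(1)$ only if $A$ varies by $O(1/N)$ between the two configurations, not by $O(1)$.

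You then say the $\alpha$-equation controls the variation of $A$. But $N$ varies as well, so that equation only gives $N|A_1-A_2|\lesssim 1+|N_1-N_2|$. Combined with your other relation, $|N_1-N_2|\lesssim 1+N|A_1-A_2|$, the argument is circular.

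What is missing is a quantitative non-degeneracy of the coupled system. The paper gets it by eliminating $N$ instead of $T$. This yields $Y_j/T_j=F_\gamma(A_j)+O(1/T_j)+N_j\lambda_{\gamma,h}^{-1}Resrob_1$, with $F_\gamma(A)\approx\tfrac12-\tfrac A6$ and $\partial_AF_\gamma\le-\tfrac1{12}$ on $1/4<A<4$. The mean value theorem then gives $T|A_1-A_2|=O(1)$. Only after that is the $\alpha$-equation, divided by $\alpha^{1/2}$, used to get $|N_1-N_2|\lesssim 1+N\lambda_{\gamma,h}^{-2}$.

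In your formulation, you would have to show that the Jacobian of $(N,A)\mapsto(T,Y)$, which is $\approx-\tfrac83 NA$, controls finite rather than infinitesimal differences. The ratio $Y/T$ avoids this because it depends on $A$ alone. You also need to check the Robin term in this ratio relation. It is harmless: $N\lambda_{\gamma,h}^{-1}|Resrob_1|\lesssim h^{1/3}\lambda_{\gamma,h}^{-5/3}$, using $N\lesssim\gamma^{-1/2}$.

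\textbf{The $O(h^\infty)$ claim.} For \eqref{finiten}, the lower bound $|\nabla\Phi_N|\gtrsim 1$ off $\mathcal N_{1,h,\gamma}$ is the entire content, and you assert it rather than derive it.

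In the full variables $(\tilde\sigma,\tilde s,A,\eta)$ it does not follow from the definition of $\mathcal N_{1,h,\gamma}$. The component $\partial_{\tilde s}\Phi_N=\eta(\tilde s^2+a/\gamma-A)$ does not involve $(T,X,Y)$. So a point with small gradient is not a critical point for any nearby $\boldsymbol T'$ unless $\tilde s^2=A-a/\gamma$.

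The paper (Proposition \ref{neg for trans}) handles this in two steps. First it applies stationary phase in $S$, which is non-degenerate because $A-a/\gamma\ge 1/4$ in the transverse regime. It then observes that the reduced gradient in $(\Sigma,A,\eta)$ is exactly affine in $\boldsymbol T'$. Precisely, it is a uniformly bounded, boundedly invertible linear image of $\boldsymbol T'-\tilde{\boldsymbol T}_\varepsilon(\Sigma,A,\eta)$, where the integral curves $\tilde{\boldsymbol T}_\varepsilon$ already contain $B'$ and $N\lambda_{\gamma,h}^{-1}Resrob_1$. Hence $\|\nabla\|\ge\tfrac1{10}\|\boldsymbol T-\tilde{\boldsymbol T}_\varepsilon\|\ge\tfrac1{10}$.

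This also shows that your ``main obstacle'' is misplaced. $\mathcal N_{1,h,\gamma}$ is defined through exact critical points of the full Robin phase, so the remainders need no perturbative treatment at this step. Nor can you ``absorb them into an enlarged window'': the statement fixes $\mathcal N_{1,h,\gamma}$, and every $N$ outside it must contribute $O(h^\infty)$.
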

\begin{proof}
	Thus, a stationary point satisfies the system
	\begin{align}
		&\alpha= \sigma^2+x,\\
		& \alpha= s^2+a,\\
		&2N\alpha^{1/2}(1+\frac{3}{4}B'_{\beta(\eta/h)})(\alpha^{3/2}\frac{\eta}{h})=-\sigma-s+\frac{t}{2\sqrt{1+\alpha}} \label{goodeq},\\ 
		&y+\frac{\sigma^3}{3}+\sigma (x-\alpha)+\frac{s^3}{3}+s(a-\alpha)+t\sqrt{1+\alpha}\\
		&=\frac{4}{3}N\alpha^{3/2}\left(1+\frac{3}{4}B'_{\beta(\eta/h)}(\alpha^{3/2}\eta h^{-1})\right)+NhResrob_1(\alpha,\eta,h). \label{bigeq}
	\end{align}
	Let $N_1,N_2\in \mathcal{N}_{1,h,\gamma}(t,x,y)$. By definition, there exist two points $(t_1,x_1,y_1)$ and $(t_2,x_2,y_2)$, and ${N}_1\in \mathcal{N}_{h,\gamma}(t_1,x_1,y_1)$, and $N_2\in\mathcal{N}_{h,\gamma}(t_2,x_2,y_2)$, with $||\textbf{T}(\gamma,\boldsymbol{t}_1)-\textbf{T}(\gamma,\boldsymbol{t})||_{\infty}\le 1$.\\
	Thus, by definition, there exist $\alpha_i,\eta_i,\sigma_i,s_i$, $i=1,2$ such that, for 
	$(t_i,x_i,y_i,N_i)$, $(\alpha_i,\eta_i,\sigma_i,s_i)$ is a stationary point.\\
	Also, we note that, because of \eqref{goodeq}, we have
	\begin{align}
		&2(N_2-N_1)\\ &=-\frac{3}{4}\left(N_2(B'_{\beta(\eta_2/h)})(\alpha_2^{3/2}\frac{\eta_2}{h})-N_1(B'_{\beta(\eta_1/h)})(\alpha_1^{3/2}\frac{\eta_1}{h})\right)\label{theB}\\
		&-\frac{\sigma_2+s_2}{\alpha_2^{1/2}}+\frac{\sigma_1+s_1}{\alpha_1^{1/2}}+\label{gammanads}\\
		&\frac{t_2}{2\sqrt{1+\alpha_2}\alpha_2^{1/2}}-\frac{t_1}{2\sqrt{1+\alpha_1}\alpha_1^{1/2}}\label{thehardone}.\\
	\end{align}
	To estimate \eqref{theB}, we use \eqref{grossboundsb}, plus the fact that we are in the regime $h<\lambda^{-1}$ (and $\alpha\sim\gamma$) to get that
	\begin{equation}
		|B'_{\beta(\eta_i/h)}(\alpha_i^{3/2}\frac{\eta_i}{h})|\lesssim \frac{h^2}{\alpha_i^3}\lesssim \lambda_{\gamma,h}^{-2}.
	\end{equation}
	Thus, we obtain that \eqref{theB} is bounded by $\frac{N_i}{\lambda_{\gamma,h}^2}\lesssim\frac{t}{\sqrt{\gamma}\lambda_{\gamma,h}^2}$
	and the desired bound follows.\\
	By definition $|t_i-t|\le \gamma^{1/2}$, and by proposition \ref{boundgreat}, $N_i\gamma^{1/2}\lesssim t_i$, which justifies $\frac{N_i}{\lambda_{\gamma,h}^2}\lesssim\frac{t}{\sqrt{\gamma}\lambda_{\gamma,h}^2}$.\\\\
	Since in the stationary points, we have $|\sigma_i|\le\sqrt{\alpha_i-x_i}\le \alpha_i^{1/2}$, and $|s_i|\le\sqrt{\alpha_i-a}\le \alpha_i^{1/2}$, then $\left|\frac{\sigma_i+s_i}{\alpha_i^{1/2}}\right|\lesssim 1$, so the bound holds.\\\\
	It remains to study \eqref{thehardone}.\\
	We compute as follows:
	\begin{align}
		\left|\frac{t_2}{2\sqrt{1+\alpha_2}\alpha_2^{1/2}}-\frac{t_1}{2\sqrt{1+\alpha_1}\alpha_1^{1/2}}\right|\lesssim (1+\frac{t_1}{\gamma^{1/2}}\frac{|\alpha_1-\alpha_2|}{\gamma}).
	\end{align}
	We now have to prove that $1+\frac{t_1}{\gamma^{1/2}}\frac{|\alpha_1-\alpha_2|}{\gamma}=O(1)$.\\
	Since it suffices to prove an $O(1)$ bound, and $\frac{|\alpha_1-\alpha_2|}{\gamma}\lesssim 1$, we can assume that $\frac{t_1}{\gamma}$ is large. Since $|t_1-t|<\gamma$ and $|t_2-t|\le\sqrt{\gamma}$, $t_1\sim t_2\sim t$.\\
	If we put together the equations \eqref{bigeq} and \eqref{goodeq}, we obtain 
	\begin{align}
		&y_i+\frac{\sigma_i^3}{3}+\sigma_i(x_i-\alpha_i)+\frac{s_i^3}{3}+s_i(a-\alpha_i)+t_i\sqrt{1+\alpha_i}\\
		&=\frac{2}{3}\alpha_i(-\sigma_i-s_i+\frac{t_i}{2\sqrt{1+\alpha_i}})+NhResrob_1(\alpha_i,\eta_i,h).
	\end{align}
	It is useful to rewrite $x,y,t$ into $X,Y,T$, this gives us
	\begin{align}
		Y(\gamma,\boldsymbol{t}_j)+\frac{\sigma_j^3}{3\gamma^{3/2}}+\sigma_j(\gamma X(\gamma,\boldsymbol{t}_j)-\alpha_j)\gamma^{-3/2}+\frac{s^3_j}{3\gamma^{3/2}}+s_j(a-\alpha_j)\gamma^{-3/2}+\frac{2\alpha_j}{3\gamma^{3/2}}(\sigma_j+s_j)\\
		=\gamma^{-1}T(\gamma,\boldsymbol{t}_j)\left(\frac{\alpha_j}{3\sqrt{1+\alpha_j}}+\frac{(\gamma-\alpha_i)}{\sqrt{1+\alpha_j}+\sqrt{1+\gamma}}\right)+N_j\lambda_{\gamma,h}^{-1}Resrob_1(\alpha_i,\eta_i,h).
	\end{align}
Using proposition \ref{boundgreat}, we note that $0<X<2$. Thus we obtain 
	\begin{align}
		\frac{Y_j}{T_j}=F_\gamma(\frac{\alpha_j}{\gamma})+O(\frac{1}{T_j})+N_j\lambda_{\gamma,h}^{-1}Resrob_1(\alpha_j,\eta_j,h), \:\:\;\;\;\;\;\ F_\gamma(A)=\frac{A}{3\sqrt{1+\gamma A}}-\frac{A-1}{\sqrt{1+\gamma A}+\sqrt{1+\gamma}}.
	\end{align}
	We have $|N_i\lambda_{\gamma,h}^{-1}Resrob_1(\alpha_i,\eta_i,h)|\lesssim N_i h^{2/3} \lambda_{\gamma,h}^{-4/3}\lesssim \gamma^{-1/2} h^{2/3}\lambda_{\gamma,h}^{-4/3} \lesssim \lambda_{\gamma,h}^{-1/3}h^{-1/3}h^{2/3}\lambda_{\gamma,h}^{-4/3}=h^{1/3}\lambda_{\gamma,h}^{-5/3}<<1$.\\
	It remains to use the following observation. The function $F$, for $\gamma$ small enough and in $1/4<A<4$ is decreasing, in particular $\partial_AF_\gamma(A)\le (1+a)\gamma-\frac{1}{6}$. Thus we obtain
	\begin{align}
		\left|\left(\frac{Y_2}{T_2}-\frac{Y_1}{T_1}\right)+O(\frac{1}{T_1})+O(\frac{1}{T_2})\right|=\left|(\frac{\alpha_2-\alpha_1}{\gamma})\int_{0}^{1} \partial_AF_\gamma(\frac{\alpha_1}{\gamma}+w(\frac{\alpha_2-\alpha_1}{\gamma}))dw\right|\ge \left|(\frac{\alpha_2-\alpha_1}{\gamma})\frac{1}{12}\right|.
	\end{align}
	This implies that
	\begin{align}
		\frac{t_1}{\gamma^{1/2}}\frac{|\alpha_1-\alpha_2|}{\gamma}\lesssim |Y_2-Y_1|+Y_1|1-\frac{T_2}{T_1}|+O(\frac{T_2}{T_1})+O(1)=O(1).
	\end{align}
	\begin{obs}
		The only difference with the Dirichlet problem is that in Robin we have to make sure that the term $Resrob_1$ is harmless, and that $B_\beta(\eta h^{-1})$ has the correct asymptotics.
	\end{obs}
\end{proof}
The proof that the contribution outside $\mathcal{N}_1$ is negligible depends on whether the wave is transverse or tangential, we therefore postpone it until after introducing the appropriate terms in propositions \ref{neg foir tan} and \ref{neg for trans}.\\\\
Furthermore, we can also improve the bound in \eqref{boundgreat} to state that, not only $|t|\gtrsim \sqrt{\gamma}|N|$, but $|t|\sim \sqrt{\gamma}|N|$.
\begin{cor}\label{TsimN}
	Let $|N|\ge 1$. There exists a constant $\mathcal{K}$ such that,
	\begin{equation}
		\sum_{\mathcal{K}|N|\le|t|\gamma^{-1/2}} V_{h,\gamma,N}=O(h^\infty).
	\end{equation}
	That is, the sum in $G_{\gamma,h}$ is only relevant for $|N|\sim|t|\gamma^{-1/2}$, $T\sim N$.
\end{cor}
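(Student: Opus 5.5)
The plan is to show that when $|t|\gamma^{-1/2}\ge \mathcal{K}|N|$ with $\mathcal{K}$ large, the phase $\phi_{N,a}$ has no critical point in $\alpha$, and then to integrate by parts in $\alpha$ to gain $O(\lambda_{\gamma,h}^{-\infty})=O(h^\infty)$. This is the mirror image of Proposition \ref{boundgreat}. That proposition treats $K|N|>|t|\gamma^{-1/2}$ and uses the $\alpha$-derivative of the phase only, so no $\eta$-derivative of $\beta(\eta/h)$ enters. Here I would again use only $\partial_\alpha$, so that Remark \ref{ignoreverything} applies and the Robin remainders $Resrob_i$ do not appear.

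The first step is to compute
\[
\partial_\alpha\phi_{N,a}=\eta\Big(-\sigma-s+\frac{t}{2\sqrt{1+\alpha}}-2N\alpha^{1/2}\big(1+\tfrac34 B'_{\beta(\eta/h)}(\alpha^{3/2}\eta h^{-1})\big)\Big).
\]
Next, I localize the $\sigma,s$ integrals. The $\sigma$- and $s$-derivatives are $\eta(\sigma^2+x-\alpha)$ and $\eta(s^2+a-\alpha)$. Since $\alpha\sim\gamma$, and by Proposition \ref{boundgreat} we may assume $x,a\lesssim\gamma$, a standard non-stationary-phase argument in $\sigma$ and $s$ (after rescaling $\sigma=\gamma^{1/2}\tilde\sigma$, $s=\gamma^{1/2}\tilde s$, with large parameter $\lambda_{\gamma,h}$) shows the contribution from $|\sigma|+|s|\ge C_0\gamma^{1/2}$ is $O(h^\infty)$. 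This is exactly as in \cite{positive} and involves no $\eta$-derivatives. On the remaining region, $|\sigma+s|\lesssim\gamma^{1/2}$.

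Proposition \ref{boundsb} gives $|B'|\lesssim\lambda_{\gamma,h}^{-2}$, so $|2N\alpha^{1/2}(1+\tfrac34B')|\le 3|N|\gamma^{1/2}$. Hence, for $|t|\ge \mathcal{K}|N|\gamma^{1/2}$ with $\mathcal{K}$ large, and also $|t|\gamma^{-1/2}$ larger than a fixed constant (this is where $|N|\ge1$ is used), the term $t/(2\sqrt{1+\alpha})$ dominates. We then get $|\partial_\alpha\phi_{N,a}|\gtrsim |t|$. In rescaled variables $\alpha=\gamma A$ this reads $|\partial_A\phi|\gtrsim \gamma|t|\gtrsim \gamma^{3/2}$, i.e. a gain of at least $\lambda_{\gamma,h}$ per integration by parts after dividing by $h$.

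The integration by parts uses $L=\frac{h}{i\,\partial_\alpha\phi}\partial_\alpha$. The higher $\alpha$-derivatives of the phase are controlled as follows. The $t$ term contributes $|t|\gamma^{-k+1}$ relative to its size, and the $N$ term contributes $|N|\gamma^{1/2-k+1}$. The $B_\beta$ term $NhB_\beta(\alpha^{3/2}\eta/h)$ has $\alpha$-derivatives bounded by Proposition \ref{boundsb} and the stability property of $\tilde O^s_0$, since they involve only derivatives in the argument of $B_\beta$. The amplitude factors $\psi_2(\alpha/\gamma)$ and $\psi_3$ cost $\gamma^{-1}$ per derivative. Each step therefore gains $h/(\gamma|t|)\lesssim \lambda_{\gamma,h}^{-1}$.

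Summing over $N$ with $\mathcal{K}|N|\le |t|\gamma^{-1/2}$ is harmless: since $|t|\le t_{max}$, there are at most $\lesssim\gamma^{-1/2}\lesssim h^{-1/3}$ such terms, and each is $O(h^\infty)$ uniformly. The main obstacle is making the $\sigma,s$ localization rigorous with uniform constants while keeping the $\eta$ integral untouched. I would first copy the argument of \cite{positive}, checking only that $B_{\beta(\eta/h)}$ satisfies the same symbolic bounds uniformly in $\eta/h$, which Proposition \ref{boundsb} provides.
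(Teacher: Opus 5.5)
Your argument is correct, but it is not the route the paper takes. The paper uses the same key inequality, namely the $\alpha$-critical equation $2N\alpha^{1/2}(1+\tfrac34B')=-\sigma-s+\tfrac{t'}{2\sqrt{1+\alpha}}$ with $|\sigma|,|s|\le3\sqrt\gamma$. It uses it only to show that, when $200|N|\le|t|\gamma^{-1/2}$, no $\boldsymbol t'$ with $|t'-t|\le\sqrt\gamma$ has a critical point. Equivalently, every $N\in\mathcal N_{1,h,\gamma}(\boldsymbol t)$ satisfies $131|N|\ge|t|\gamma^{-1/2}$. The $O(h^\infty)$ bound is then inherited from \eqref{finiten}, which is proved later in Propositions \ref{neg for trans} and \ref{neg foir tan}.

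You instead turn that inequality directly into a one-variable non-stationary phase argument in $\alpha$. This is the ``direct'' proof alluded to in the remark after the corollary, mirroring Proposition \ref{boundgreat}. Your route is self-contained and needs no transverse/tangential split. Since only $\partial_\alpha$ is taken, $\beta(\eta/h)$ is never differentiated and no $Resrob_i$ term appears. By contrast, \eqref{finiten} requires non-stationary phase in $(\Sigma,A,\eta)$ with control of $Resrob_1$ in the transverse case, and an appeal to \cite{positive} in the tangential case. The paper's route is shorter given the $\mathcal N_1$ machinery and gives an explicit constant. It also records a structural fact about $\mathcal N_{1,h,\gamma}$ that fits the later bookkeeping with $\mathcal N_3$.

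One detail needs fixing. The factor $\psi_3(\eta^{2/3}\alpha/h^{2/3})$ costs $h^{-2/3}\gg\gamma^{-1}$ per $\alpha$-derivative, not $\gamma^{-1}$ as you state. It is harmless only because $\psi_3\equiv1$ on the support of $\psi_2(\alpha/\gamma)$ once $\gamma\ge h^{2/3-\varepsilon}$ and $h$ is small. So drop it before integrating by parts.

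Finally, the $\sigma,s$ localization you flag as the main obstacle is routine; it is the $\psi_4$ cut-off of the transverse section. It works for all $x,a\ge0$ without Proposition \ref{boundgreat}, since $\sigma^2+x-\alpha\gtrsim\sigma^2$ once $|\sigma|\ge C_0\gamma^{1/2}$.
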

\begin{proof}
	We  prove that, if $200|N|\le |t|\gamma^{-1/2}$, then, $N\notin \mathcal{N}_{1,h,\gamma}(\boldsymbol{t})$.\\
	If $N\in \mathcal{N}_{h,\gamma}(\boldsymbol{t}')$, then there is a stationary point $(\boldsymbol{\sigma},\alpha,\eta)$, where $\alpha=\sigma^2+x'$, $\alpha=s^2+a$, $2N\alpha^{1/2}(1+\frac{3}{4}B'_{\beta(\eta/h)})(\alpha^{3/2}\frac{\eta}{h})=-\sigma-s+\frac{t'}{2\sqrt{1+\alpha}}$. The first two equations imply that $|\sigma|,|s|\le 3\sqrt{\gamma}$. The third one (assuming $\lambda_{\gamma,h}$ large enough, which is only to assume that $h$ is small enough in this regime), implies that $|N|\ge \frac{-12\sqrt{\gamma}+2\frac{t'}{4}}{\alpha^{1/2}}\ge -12+\frac{|t'|}{10\sqrt{\gamma}}$, that is: $10|N|+120\ge \frac{|t'|}{\sqrt{\gamma}}$. Since $|N|\ge 1$, we have $130|N|\ge \frac{|t'|}{\sqrt{\gamma}}$. Since $\frac{|t'-t|}{\sqrt{\gamma}}\le 1$, $130|N|\ge \frac{|t|}{\sqrt{\gamma}}-1$. Thus, $131|N|\ge \frac{|t|}{\sqrt{\gamma}}$. This concludes the proof. In order for $N$ to be in $\mathcal{N}_{1,h,\gamma}(\boldsymbol{t})$, we need $131|N|\ge \frac{|t|}{\sqrt{\gamma}}$.
\end{proof}
\begin{obs}
	We could also prove the corollary directly with the tools of proposition \ref{boundgreat}, but since we are not replicating that proof here,it is simpler in our case to use the argument above.
\end{obs}
It now makes sense to distinguish the cases of $\gamma\sim a$ which for geometrical reasons, we  refer to as tangential waves, and $\gamma \gg a$ which we  refer to as transverse waves. (By proposition \ref{boundgreat}, we do not have to consider the case $\gamma<<a$).
\subsection{Transverse waves $\gamma\gg a$}
We consider now the case $\gamma>4a$.\\
For this case, we introduce the following change of variables
\begin{align}
	\label{transchange}
	x=\gamma{X},\;\;\alpha=\gamma A, \;\; t=\sqrt{\gamma}{T}, \;\; s=\sqrt{\gamma}{S}, \;\;\;\; \sigma=\sqrt{\gamma}\Sigma,\;\; y+t\sqrt{1+\gamma}=\gamma^{3/2} {Y}.
\end{align}
We define 
\begin{defi}
	\label{change1}
	\begin{align}
		\label{defiw}
		&{W}_{h,N,\gamma,a}({T},{X},{Y})=V_{h,N,\gamma,a}(t(\gamma,{T},{X},{Y}),x(\gamma,{T},{X},{Y}),y(\gamma,{T},{X},{Y}))\\
		&=\frac{\gamma^2}{(2\pi h)^3}\int e^{i\lambda_{\gamma,h}  \psi_{N,a,h}(\boldsymbol{ T},\gamma,{\Sigma},{S},{A},\eta)} \eta^2 \psi_1(\eta)\psi_3(\lambda_{\gamma,h}^{2/3}\eta^{2/3}{A})\psi_2({A}) d{S}d{\Sigma}d{A}d\eta,
	\end{align}
	with 
	\begin{align}
		\label{defipsi}
		& \psi_{N,a,h}(\boldsymbol{ T},\gamma,{\Sigma},{S},{A},\eta)=\gamma^{-3/2}\phi_{N,a}(\boldsymbol{t}(\gamma, \boldsymbol{T}),\sqrt{\gamma}\Sigma,\sqrt{\gamma} S,\gamma A,\eta,h).
	\end{align}
	Or by writing it explicitly, we obtain
	\begin{align}
		\label{defipsiB}
		& \psi_{N,a,h}(\boldsymbol{ T},\gamma,{\Sigma},{S},{A},\eta)\\
		&=\eta \left({Y}+\frac{{\Sigma}^3}{3}+\frac{{S}^3}{3}+{X}{\Sigma}+\frac{a}{\gamma}{S}-({S}+{\Sigma}){A}+{T}\frac{1}{\gamma}(\sqrt{1+\gamma{A}}-\sqrt{1+\gamma})-N\frac{4}{3}{A}^{3/2}\right)\\
		&-\frac{N}{\lambda_{\gamma,h}}B_{\beta(\eta/h)}(\eta \lambda_{\gamma,h}{A}^{3/2}).
	\end{align}
\end{defi}
\begin{obs}
	Sometimes we will make the abuse of notation of suppressing the dependence on $\gamma$ on the phase and doing $ \psi_{N,a,h}(\boldsymbol{ T},\gamma,{\Sigma},{S},{A},\eta)= \psi_{N,a,h}(\boldsymbol{ T},{\Sigma},{S},{A},\eta)$.
\end{obs}
\begin{obs}
	Inspecting the phase, we see that $\psi_2({A})=0$ if ${A}\ge 6$ and also if ${A}\le 1/20$, which means that for $\lambda_{\gamma,h}^{2/3}>100$, we obtain $\psi_3(\lambda_{\gamma,h}^{2/3}\eta^{2/3}{A})=1$, and since we are considering $\lambda$ a large parameter, $\psi_3$ is equal to $1$ on the relevant support and remove it.\\
	Also since $\partial_{{S}} \psi_{N,a,h}=\eta ({S}^2+\frac{a}{\gamma}-{A})$ and $\partial_{{\Sigma}} \psi_{N,a,h}=\eta ({\Sigma}^2+{X}-{A})$, if either $|{S}|>\sqrt{6}$ or $|{\Sigma}|>\sqrt{6}$, the phase is non-stationary, and ${W}=O(\lambda_{\gamma,h}^{-\infty})$. Therefore we can add a cut-off $\psi_4$ on ${S}$ and ${\Sigma}$ and assume that ${S}$ and ${\Sigma}$ are less than $3$.\\
\end{obs}
Our goal is to prove the following dispersive estimates, which are the same as in the Dirichlet case these are the analogues of propositions 8,9,10,11 in \cite{positive} on the Dirichlet case.
\begin{prop}
	\label{disptransverse}
	We have the following dispersion:
	\begin{enumerate}
		\item For $|T|\le 1$, we have
		\begin{align}
			|({G}_{\gamma,h}(\boldsymbol{t}(\gamma,{\boldsymbol{T}})))|\lesssim \gamma^{-5/2} \lambda_{\gamma,h}^2 \inf\{1,\left(\frac{1}{\lambda_{\gamma,h} {|T|}}\right)^{1/2}\}.
		\end{align}
		\item For $1\le |T|\le 9$, we have
		\begin{align}
			\sum_{|N|\le 2} |W_{h,N,\gamma,a}(\boldsymbol{T})|\lesssim \gamma^{-5/2} \lambda_{\gamma,h}^2 \inf\{1,\left(\frac{1}{\lambda_{\gamma,h} {|T|}}\right)^{1/3}\}.
		\end{align}
		\item For $9\le |T|\le \lambda_{\gamma,h}^2$, we have
		\begin{align}
			|W_{h,N,\gamma,a}(\boldsymbol{|T|})|\lesssim \frac{\gamma^{-5/2}\lambda_{\gamma,h}^{5/3}}{\sqrt{T}}.
		\end{align}
		\item For $|T|\ge  \lambda_{\gamma,h}^2$, we have
		\begin{align}
			\label{imptrans}
			|W_{h,N,\gamma,a}(\boldsymbol{T})|\lesssim \frac{\lambda_{\gamma,h}^{13/6}}{\gamma^{5/2}|N|}=\frac{\gamma^2}{h^3 \lambda_{\gamma,h}^{5/6}|N|}.
		\end{align}
	\end{enumerate}
\end{prop}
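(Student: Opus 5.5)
Parts (1), (2) and (3) will not be reproved. As announced in Remark \ref{ignoreverything}, they are the analogues of Propositions 8, 9 and 10 of \cite{positive}. Their proofs use stationary phase in $(\Sigma,S)$, and in $A$ where needed. Derivatives in $\eta$ enter at most through the non-stationary part, and there the Robin phase $\psi_{N,a,h}$ differs from the Dirichlet one only through $B_{\beta(\eta/h)}$. By Proposition \ref{boundsb}, $B'_{\beta(\eta/h)}$ satisfies $\tilde O^\infty(w^{-2})$ with constants uniform in $\eta/h$, and the same holds for its derivatives in the argument. So the symbolic bounds used in \cite{positive} carry over verbatim, and we only need to check that every $\eta$-derivative appearing there is a derivative of the linear factor $\eta(\cdots)$.

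The real work is part (4), $|T|\ge\lambda_{\gamma,h}^2$. Here the Dirichlet proof (Proposition 11 of \cite{positive}) performs stationary phase in all four variables $(\Sigma,S,A,\eta)$, which involves the $\eta$-Hessian. The plan is as follows.

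First, apply stationary phase in $\Sigma$ and $S$ with large parameter $\lambda_{\gamma,h}$. This is degenerate (Airy-type) near $A=X$ and $A=a/\gamma$, but it gives at worst a factor $\lambda_{\gamma,h}^{-1/3}$ in each variable. This reduces us to a two-dimensional integral in $(A,\eta)$ with phase
\[
\Psi(A,\eta)=\eta\,\Phi_N(A)-\tfrac{N}{\lambda_{\gamma,h}}B_{\beta(\eta/h)}(\eta\lambda_{\gamma,h}A^{3/2}),
\]
where $\Phi_N$ collects $Y$, the $T$-term, the Airy critical values and $-\tfrac43NA^{3/2}$.

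Second, compute the $(A,\eta)$ Hessian. In the Dirichlet case the $\eta\eta$ entry comes only from $\tfrac{N}{\lambda}\lambda^2A^3B''$, while the $A\eta$ entry is $\Phi_N'(A)+O(N\lambda^{-2})$. At a critical point, $\partial_\eta\Psi=0$ forces $\Phi_N(A)=O(N/\lambda^2)$, and the $A\eta$ entry is of size $\sim N$ since $\partial_A\Phi_N\sim T\sim N$ by Corollary \ref{TsimN}. Hence the determinant is $\gtrsim N^2$, giving a factor $(\lambda N)^{-1}$. Collecting powers, $\gamma^2h^{-3}\cdot\lambda^{-1/3}\cdot\lambda^{-1/3}\cdot(\lambda N)^{-1}$ times the $\eta^2$ prefactor, and using the degeneracy trade-off between the two Airy factors, should give the claimed $\gamma^2h^{-3}\lambda^{-5/6}|N|^{-1}$. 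I will follow the Dirichlet bookkeeping for this power of $\lambda$.

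In the Robin case the $\eta$-derivatives of $\Psi$ pick up $\tfrac{N}{\lambda}Resrob_i$, $i=1,2$, and the $A\eta$ entry picks up $\tfrac{N}{\lambda}\partial_A Resrob_1$. By Proposition \ref{restrobs} these are bounded by $N h^{2/3}\lambda^{-4/3}$ and $N\lambda^{-2}$ respectively. Both are $o(N)$, so they do not change the nondegeneracy of the Hessian or the location and number of critical points. The third-order terms needed for the stationary phase remainder are controlled by $Resrob_3$ and $\partial^2_\alpha Resrob_1$, again from Proposition \ref{restrobs}.

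The main obstacle I expect is the uniformity of this step. The Hessian lower bound must hold uniformly on the support, including near the degenerate Airy points, where in the Dirichlet proof one argues via a van der Corput bound in $A$ rather than a clean determinant. So I would first try to isolate the $\eta$-integration: integrate in $\eta$ by a one-dimensional stationary phase, using $\partial_\eta^2\Psi=\tfrac{N}{\lambda}(\lambda^2A^3B''+Resrob_2)$. If that is too degenerate, because $B''$ is tiny, I would instead treat $\eta$ via the $A\eta$ mixed term, changing variables to $(\Phi_N(A),\eta)$. Then the Robin remainders only perturb a nondegenerate linear phase in $\eta$, by the bound on $\tfrac{N}{\lambda}\partial_A Resrob_1$, and the remaining estimate is exactly the Dirichlet one.
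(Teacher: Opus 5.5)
Your treatment of (1)--(3) matches the paper, which also defers these to the Dirichlet argument. For (4), however, the central Hessian computation is wrong. You say that at a critical point the mixed entry $\partial^2_{A\eta}\Psi$ has size $N$, because $\Phi_N'(A)$ contains $T/(2\sqrt{1+\gamma A})$ and $-2N\sqrt{A}$, each of size $N$. But a critical point also satisfies $\partial_A\Psi=0$, that is, $\Phi_N'(A_c)=\frac32 NA_c^{1/2}B'_{\beta(\eta_c/h)}(\eta_c\lambda_{\gamma,h}A_c^{3/2})=O(N\lambda_{\gamma,h}^{-2})$, so those two large terms cancel. Hence $\partial^2_{A\eta}\Psi=\eta^{-1}\partial_A\Psi-\frac32N\eta\lambda_{\gamma,h}A^2B''_{\beta(\eta/h)}+(\text{Robin term})$ is $O(N\lambda_{\gamma,h}^{-2})$ there; this is the identity for $\partial^2_{A,\eta}\psi$ in Appendix \ref{derivatives}.

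Consequently:
\begin{itemize}
\item The $(A,\eta)$ Hessian has eigenvalues $\sim N$ and $\sim N\lambda_{\gamma,h}^{-2}$, and determinant $\sim N^2\lambda_{\gamma,h}^{-2}$.
\item This block therefore contributes $\lambda_{\gamma,h}^{-1}(N\lambda_{\gamma,h}^{-1})^{-1}=N^{-1}$, not $(\lambda_{\gamma,h}N)^{-1}$.
\item Your own count $\lambda_{\gamma,h}^{-1/3}\lambda_{\gamma,h}^{-1/3}(\lambda_{\gamma,h}N)^{-1}=\lambda_{\gamma,h}^{-5/3}N^{-1}$ is strictly better than the stated bound, which already signals the error. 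Appealing to ``the Dirichlet bookkeeping'' and a ``degeneracy trade-off'' does not close the gap.
\item Your fallback fails for the same reason. Since $\Phi_N'$ essentially vanishes at the critical point while $\Phi_N''\sim N$, the map $A\mapsto\Phi_N(A)$ is a fold there, not a change of variables.
\item The Robin remainders must be compared with $N\lambda_{\gamma,h}^{-2}$, the size of the $\eta\eta$ and $A\eta$ entries, not with $N$. The terms $N\lambda_{\gamma,h}^{-1}Resrob_2$ and $N\lambda_{\gamma,h}^{-1}\gamma\partial_\alpha Resrob_1$ are only a factor $O(\gamma)$ below these entries, so ``$o(N)$'' is not the relevant statement.
\end{itemize}

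Two ingredients are missing. First, in the transverse regime $a/\gamma\le 1/4$ and $A\ge 1/2$ on the support. So $|S_{\pm c}(A)|=\sqrt{A-a/\gamma}\ge 1/2$, $\partial_S^2\psi=2\eta S$ is bounded below, and the $S$-integral is non-degenerate, giving $\lambda_{\gamma,h}^{-1/2}$ rather than $\lambda_{\gamma,h}^{-1/3}$. There is no fold at $A=a/\gamma$; the only Airy-type degeneracy is in $\Sigma$. Second, $|T|\ge\lambda_{\gamma,h}^2$ together with Corollary \ref{TsimN} gives $N\gtrsim\lambda_{\gamma,h}^2$. Then the small eigenvalue satisfies $\lambda_{\gamma,h}\cdot N\lambda_{\gamma,h}^{-2}\gtrsim\lambda_{\gamma,h}$, which is exactly why stationary phase in $\eta$ is legitimate here; no workaround for ``$B''$ tiny'' is needed.

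The paper accordingly does stationary phase in $(S,A,\eta)$ first, gaining $\lambda_{\gamma,h}^{-3/2}\cdot\lambda_{\gamma,h}N^{-1}$, and leaves $\Sigma$ for last, where $\Psi'(\Sigma)=\eta_c(\Sigma^2+X-A_c)$. The implicit function theorem, with the third-derivative bounds (where $Resrob_3$, $\partial_\alpha Resrob_2$ and $\partial^2_\alpha Resrob_1$ enter), gives $A_c'\sim N^{-1}$, $A_c''\lesssim N^{-1}$, $\eta_c'\lesssim\lambda_{\gamma,h}^{-3/4}$ and $\eta_c''\lesssim 1$. So either $|\Psi''|\ge\lambda_{\gamma,h}^{-1/3}$ or $|\Psi'''|\gtrsim 1$, and van der Corput gives $\lambda_{\gamma,h}^{-1/3}$. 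The total $\lambda_{\gamma,h}^{-3/2}\lambda_{\gamma,h}\lambda_{\gamma,h}^{-1/3}N^{-1}=\lambda_{\gamma,h}^{-5/6}N^{-1}$ is \eqref{imptrans}. Integrating $\Sigma$ first, as you propose, would leave an Airy amplitude in $(A,\eta)$, and you would still have to prove its uniformity near $A=X$.
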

\begin{prop}[\eqref{finiten} for the transverse waves]
	\label{neg for trans}
	\begin{align}
		\sum_{N\notin \mathcal{N}_{1,h,\gamma}(\boldsymbol{t}(\gamma,\boldsymbol{{T}))}}{W}_{h,N,\gamma,a}(\boldsymbol{{T}})=O(h^{\infty}).
	\end{align}
\end{prop}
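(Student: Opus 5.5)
The plan is to show that for every $N\notin\mathcal{N}_{1,h,\gamma}(\boldsymbol{t})$ the phase $\psi_{N,a,h}(\boldsymbol{T},\cdot)$ in \eqref{defiw} has no critical point on the support of the symbol, with a quantitative lower bound on $|\nabla_{(\Sigma,S,A,\eta)}\psi_{N,a,h}|$. Non-stationary phase in the large parameter $\lambda_{\gamma,h}$ then gives $|W_{h,N,\gamma,a}|=O(\lambda_{\gamma,h}^{-\infty})=O(h^\infty)$, and summability in $N$ is checked at the end. The choice of neighbourhood in the definition of $\mathcal{N}_{1,h,\gamma}$ (unit boxes in the rescaled variables $T,X,Y$) is what produces the lower bound.

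\textbf{Step 1: contrapositive formulation.} Suppose $N\notin\mathcal{N}_{1,h,\gamma}(\boldsymbol{t})$. Then for every $\boldsymbol{T}'$ with $\|\boldsymbol{T}'-\boldsymbol{T}\|_\infty<1$, the phase $\psi_{N,a,h}(\boldsymbol{T}',\cdot)$ has no critical point. The phase depends on $(T,X,Y)$ only through the affine terms $\eta Y$, $\eta X\Sigma$, and $\eta T\gamma^{-1}(\sqrt{1+\gamma A}-\sqrt{1+\gamma})$. Since $\eta\sim1$ and $A\sim1$ on the support, the map $\boldsymbol{T}\mapsto\nabla_{(\Sigma,A,\eta)}\psi$ is locally surjective: $\partial_Y$ moves $\partial_\eta\psi$, $\partial_X$ moves $\partial_\Sigma\psi$, and $\partial_T$ moves $\partial_A\psi$ by a nondegenerate amount $\eta/(2\sqrt{1+\gamma A})$. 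From this I would deduce, by the implicit function theorem, that there is a constant $c>0$ with $|\nabla\psi_{N,a,h}(\boldsymbol{T},\cdot)|\ge c$ on the support. Equivalently, if $|\nabla\psi|<c$ at some point, one can correct $(T,X,Y)$ by less than $1$ to create an exact critical point.

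The $S$-equation $S^2+a/\gamma=A$ does not involve $\boldsymbol{T}$. It can nevertheless be solved exactly in $S$ for $A\ge a/\gamma$, and is non-stationary otherwise, so it causes no difficulty. The Robin terms $N\lambda^{-1}B_\beta$ and their $\eta$-derivatives enter $\partial_\eta\psi$ and $\partial_A\psi$. Their bounds come from proposition \ref{boundsb}, namely $\tilde O(\lambda^{-2})$ after scaling, and from proposition \ref{restrobs}, which gives $N\lambda^{-1}Resrob_1\lesssim h^{1/3}\lambda^{-5/3}$ as in the proof of proposition \ref{prop1}. The Jacobian in $\boldsymbol{T}$ is unaffected by these terms, so the quantitative implicit-function argument goes through.

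\textbf{Step 2: non-stationary phase.} With $|\nabla\psi|\ge c$, I integrate by parts with $L=\frac{\nabla\psi\cdot\nabla}{i\lambda|\nabla\psi|^2}$. Each integration by parts needs derivative bounds on the phase that are uniform in $N$. Derivatives in $\Sigma$, $S$ and $A$ are handled exactly as in \cite{positive}, by remark \ref{ignoreverything}, together with the cut-off $\psi_4$. Derivatives in $\eta$ produce $N\lambda^{-1}\partial_\eta^k B_\beta$. Here I use proposition \ref{restrobs} and $|N|\lesssim T\lesssim\lambda^2$; for larger $N$ the region $|N|\gtrsim |T|$ is already covered by proposition \ref{boundgreat} and corollary \ref{TsimN}. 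Together these should keep all higher derivatives $O(1)$, or at worst $O(h^{-\delta})$, which is still absorbed by the gain of $\lambda^{-1}\le h^{3\varepsilon/2}$ per step.

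\textbf{Step 3: summation over $N$.} By proposition \ref{boundgreat} only $|N|\lesssim |t|\gamma^{-1/2}\lesssim\gamma^{-1/2}\lesssim h^{-1/3}$ terms remain, and each is $O(h^\infty)$ uniformly. The sum is therefore $O(h^\infty)$.

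The main obstacle is Step 1 for large $N$. The $B$-terms in $\partial_A\psi$ are multiplied by $N$, and uniformity of $c$ requires $N\lambda^{-2}\ll1$, together with $N\lambda^{-1}\partial_A Resrob_1\lesssim N\lambda^{-2}$. When $N\sim\lambda^2$ this fails, and I would instead absorb these terms into the $T$-shift: the $T$-coefficient in $\partial_A\psi$ is nondegenerate, so a perturbation of size $O(1)$ there is compensated by a change of $T$ of size $O(1)$. This is consistent with the cardinality bound $C+T\lambda^{-2}$ in proposition \ref{prop1}. If this absorption does not close, I would shrink the box to a smaller constant and use the same separation estimate as in the proof of proposition \ref{prop1}.
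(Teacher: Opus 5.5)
Your argument is in substance the paper's. Both rest on the fact that $\nabla_{(\Sigma,A,\eta)}\psi_{N,a,h}$ depends affinely on $\boldsymbol{T}=(T,X,Y)$ through a triangular matrix with diagonal $\eta$, $\eta/(2\sqrt{1+\gamma A})$, $1$ and bounded off-diagonal entries. So a small gradient at some point can be turned into an exact critical point for some $\boldsymbol{T}'$ with $\|\boldsymbol{T}'-\boldsymbol{T}\|_\infty<1$, contradicting $N\notin\mathcal{N}_{1,h,\gamma}$. After that come non-stationary phase and the count $|N|\lesssim h^{-1/3}$. No implicit function theorem is needed, since the dependence is exactly affine.

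The one structural difference is the variable $S$. The paper first does stationary phase in $S$ at $S_{\pm c}=\pm\sqrt{A-a/\gamma}$. It then writes the reduced gradient as an explicit linear function of $\boldsymbol{T}-\tilde{\boldsymbol{T}}_\varepsilon(\Sigma,A,\eta)$ along the integral curves, which gives $\|\nabla\psi\|_1\le 10\|\boldsymbol{T}-\tilde{\boldsymbol{T}}_\varepsilon\|_1\le 100\|\nabla\psi\|_1$. You instead keep $S$ and push it onto the exact root. That variant works and avoids the $S$-expansion. However, the estimate $|S-S_{\pm c}|\lesssim|\partial_S\psi|$ needs $A-a/\gamma\ge\frac14$ on the support, and this is where transversality $a\le\gamma/4$ enters. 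The paper uses the same fact to make the two $S$-critical points nondegenerate. The condition $A\ge a/\gamma$ alone does not suffice.

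Two of your auxiliary claims need correcting.

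\emph{Your main obstacle is not one.} The correction is made at a fixed $(\Sigma,A,\eta)$. Every $N$-dependent term, including $-\frac{N}{\lambda_{\gamma,h}}B_{\beta(\eta/h)}$ and $Resrob_1$, depends only on $(A,\eta)$. Hence the needed shift in $T$ is $2\sqrt{1+\gamma A}\,\partial_A\psi/\eta=O(c)$ for every $N$. Propositions \ref{boundsb} and \ref{restrobs} play no role in the lower bound. You also could not shrink the box, since $\mathcal{N}_{1,h,\gamma}$ is defined with unit boxes; it is $c$ that is chosen small.

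\emph{Your Step~2 justification is wrong.} The bound $T\lesssim\lambda_{\gamma,h}^2$ fails in the transverse regime: case (4) of proposition \ref{disptransverse} is exactly $|T|\ge\lambda_{\gamma,h}^2$. Moreover, regardless of the Robin terms, $\partial_A^2\psi\sim N$. So once $N\gg\lambda_{\gamma,h}$, which is possible when $\gamma\ll h^{1/2}$, the standard non-stationary phase lemma with only $|\nabla\psi|\ge c$ does not give constants uniform in $N$. One would first do stationary phase in $A$ (and in $\eta$ when $N\gg\lambda_{\gamma,h}^2$), as in the proof of \eqref{imptrans}, and then apply non-stationary phase to the reduced phase. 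The paper passes over this point with ``apply non-stationary phase and conclude'', so it is not a defect relative to the paper. But the reason you give for uniformity is not valid.
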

\begin{proof}
	Our objective is to prove that, if $N\notin\mathcal{N}_{1,h,\gamma}(\boldsymbol{t}(\gamma,\boldsymbol{{T}))}$, then, ${W}_{h,N,\gamma,a}(\boldsymbol{{T}})=O(h^{\infty})$ because, since we know that the sum is only relevant for $|N|\lesssim\gamma^{-1/2}\lesssim h^{-1/3}$, (The first inequality by proposition \ref{boundgreat}, the second is a condition of our regime of study $\gamma>h^{2/3-\varepsilon}$), then, ${W}_{h,N,\gamma,a}(\boldsymbol{{T}})=O(h^{\infty})$ implies that
	$\sum_{N\notin \mathcal{N}_{1,h,\gamma}(\boldsymbol{t}(\gamma,\boldsymbol{{T}))}}{W}_{h,N,\gamma,a}(\boldsymbol{{T}})=O(h^{\infty})+h^{-1/3}O(h^{\infty})=O(h^\infty)$.\\
	We  prove the smallness of ${W}_{h,N,\gamma,a}$ by proving that, if $S$ is stationary, then \\$|\partial_\Sigma \psi_{N,a,h}(\boldsymbol{ T},\gamma,{\Sigma},{S_c},{A},\eta)|+|\partial_A \psi_{N,a,h}(\boldsymbol{ T},\gamma,{\Sigma},{S_c},{A},\eta)|+|\partial_\eta \psi_{N,a,h}(\boldsymbol{ T},\gamma,{\Sigma},{S_c},{A},\eta)|\ge \frac{1}{10}$ and thus we can apply non-stationary phase theorem.\\
	We proceed then to obtain the lower bound announced.\\
	Let $N\notin\mathcal{N}_{1,h,\gamma}(\boldsymbol{t}(\gamma,\boldsymbol{{T}))}$, this means that, for every $\boldsymbol{T}'\in B(\boldsymbol{T},1)$,\\ $\nabla_{(\Sigma,S,A,\eta)} \psi_{N,a,h}(\boldsymbol{ T'},\gamma,{\Sigma},{S},{A},\eta)\ne 0$.
	Now, we note that there are exactly two critical points for $S$ and they only depend on $a,\gamma,A$, $S_{\pm c}(A):=\pm\sqrt{A-\frac{a}{\gamma}}$, and, since $\frac{a}{\gamma}\le \frac{1}{4}$ and $A\ge\frac{1}{2}$ we have $|S_{\pm c}(A)|\ge |\frac{1}{2}|$.\\
	\begin{obs}
		The second derivative in $S$ which is $2\eta S$ is large at the critical point and the stationary points are well separated from each other, in particular, these are precisely the facts that fail in the tangential case, which is why Airy analysis is needed.
	\end{obs}
	Define $W_{h,N,\gamma,a,\varepsilon}(\boldsymbol{T})$, that is, the result of applying stationary phase and separating the two $S_{c,\varepsilon}$, $\varepsilon=\pm 1$. $W_{h,N,\gamma,a,\varepsilon}(\boldsymbol{T})$ have as phase 
	\begin{align}
		\label{defipsieps}
		& \psi_{N,a,h,\varepsilon}(\boldsymbol{ T},\gamma,{\Sigma},{A},\eta)=\eta \Big({Y}+\frac{{\Sigma}^3}{3}+{X}{\Sigma}+\varepsilon\frac{2}{3}\left(A-\frac{a}{\gamma}\right)^{3/2}-{\Sigma}{A}+{T}\frac{1}{\gamma}(\sqrt{1+\gamma{A}}-\sqrt{1+\gamma})\\
		&-N\frac{4}{3}{A}^{3/2}\Big)-\frac{N}{\lambda_{\gamma,h}}B_{\beta(\eta/h)}(\eta \lambda_{\gamma,h}{A}^{3/2}).
	\end{align}
Define now the integral curves corresponding to $ \psi_{N,a,h,\varepsilon}$
	\begin{align}
		&\tilde{X}_\varepsilon(\Sigma,A):=A-\Sigma^2,\\
		&\tilde{T}_\varepsilon(\Sigma,A,\eta):=2\sqrt{1+\gamma A}\left(\Sigma+\varepsilon\left(A-\frac{a}{\gamma}\right)^{1/2}+2N\sqrt{A}\left(1+\frac{3}{4}B'_{\beta(\eta/h)}(\eta \lambda_{\gamma,h}A^{3/2})\right)\right),\\
		&\tilde{Y}_\varepsilon(\Sigma,A,\eta):=-2\sqrt{1+\gamma A}\frac{(\Sigma+\varepsilon(A-\frac{a}{\gamma})^{1/2})(A-1)}{\sqrt{1+\gamma A}+\sqrt{1+\gamma}}+\frac{2}{3}\Sigma^3-\varepsilon\left(A-\frac{a}{\gamma}\right)^{3/2}\\
		&+4N\sqrt{A}\left(1+\frac{3}{4}B'_{\beta(\eta/h)}(\eta\lambda_{\gamma,h}A^{3/2})\right)\left(\frac{A}{3}-\frac{(A-1)\sqrt{1+\gamma A}}{\sqrt{1+\gamma A}+\sqrt{1+\gamma}}\right)-N\lambda_{\gamma,h}^{-1}Resrob_1(\gamma A,\eta,h).
	\end{align}
	Now, if we take $\boldsymbol{T}'$, we have 
	\begin{align}
		\label{partialintegralcurves}
		&\partial_\Sigma \psi_{N,a,h,\varepsilon}(\boldsymbol{ T'},\gamma,{\Sigma},{A},\eta)=X'-\tilde{X}_\varepsilon(\Sigma,A),\\
		&\partial_A \psi_{N,a,h,\varepsilon}(\boldsymbol{ T'},\gamma,{\Sigma},{A},\eta)=\frac{T'-\tilde{T}_\varepsilon(\Sigma,A,\eta)}{2\sqrt{1+\gamma A}},\\
		&\partial_\eta \psi_{N,a,h,\varepsilon}(\boldsymbol{ T'},\gamma,{\Sigma},{A},\eta)=\frac{(T'-\tilde{T}_\varepsilon(\Sigma,A,\eta))(A-1)}{\sqrt{1+\gamma A}+\sqrt{1+\gamma}}+Y'-\tilde{Y}_\varepsilon(\Sigma,A,\eta)+\Sigma(X'-\tilde{X}_\varepsilon(\Sigma,A,\eta)).\\
	\end{align}
	This implies that in the coordinates $\boldsymbol{T}'=\tilde{\boldsymbol{T}}_\varepsilon(\Sigma,A,\eta)$, $(\Sigma,S_{\varepsilon c}(A),A,\eta)$ is a stationary point. In other words, 
	$N\in\mathcal{N}_{h,\gamma}(\boldsymbol{t}(\tilde{\boldsymbol{T}}_\varepsilon(\Sigma,A,\eta)))$, but since $N\notin\mathcal{N}_{1,h,\gamma}(\boldsymbol{t}(a,\boldsymbol{T}))$, by definition
	$||\boldsymbol{T}-\tilde{\boldsymbol{T}}_\varepsilon(\Sigma,A,\eta)||_1\ge 1$.\\
	Another implication of \eqref{partialintegralcurves} is that, by bounding the terms involved,
	\begin{align}
		||\nabla_{(\Sigma,A,\eta)} \psi_{N,a,h,\varepsilon}(\boldsymbol{ T'},\gamma,{\Sigma},{A},\eta)||_1\le 10||\boldsymbol{T}-\tilde{\boldsymbol{T}}_\varepsilon(\Sigma,A,\eta)||_1\le 100 ||\nabla_{(\Sigma,A,\eta)} \psi_{N,a,h,\varepsilon}(\boldsymbol{ T'},\gamma,{\Sigma},{A},\eta)||_1.
	\end{align}
	These two remarks allow us to conclude, we consider
	\begin{align}
		||\nabla_{(\Sigma,A,\eta)} \psi_{N,a,h,\varepsilon}(\boldsymbol{ T'},a,{\Sigma},{A},\eta)||_1\ge \frac{1}{10}.
	\end{align}
	Now, we apply non-stationary phase and conclude.
	\begin{align}
		W_{h,N,\gamma}(\boldsymbol{T}) =W_{h,N,\gamma,+}(\boldsymbol{T})+W_{h,N,\gamma,-}(\boldsymbol{T})=O(\lambda_{\gamma,h}^{-\infty}).
	\end{align}
\end{proof}
\begin{obs}
	In that case, we focus especially on \eqref{imptrans}, since the other bounds do not require differentiating with respect to $\eta$, which means that the dispersion will work exactly the same as in \cite{positive}, given that $B_{\beta}$ has the required asymptotics (Remark \ref{ignoreverything}).
\end{obs}
As we stated before, the main goal of this section is to prove \eqref{imptrans}.
\begin{proof}[Proof of \eqref{imptrans}]
	The main idea of the proof is to see that, in this regime, we can apply stationary phase in $S,A,\eta$ (with $\lambda_{\gamma,h}$ as large parameter), and after doing so, the remaining integral on $\Sigma$ has a positive third derivative, so we are able to finish it with Van der Corput.\\\\
	We compute then the first and second derivatives of the phase.\\
	In the first derivative, we have:
	\begin{align}
		&\partial_{S} \psi_{N,a,h}(\boldsymbol{ T},\gamma,{\Sigma},{S},{A},\eta)=\eta(S^2-\left(A-\frac{a}{\gamma}\right)),\\
		&\partial_A \psi_{N,a,h}(\boldsymbol{T},\Sigma,A,\eta)=\eta\left(-S-\Sigma+\frac{T}{2\sqrt{1+\gamma A}}-2N\sqrt{A}\left(1+\frac{3}{4}B'_{\beta(\eta/h)}(\eta \lambda_{\gamma,h} A^{3/2})\right)\right),\\
		&\partial_\eta \psi_{N,a,h}(\boldsymbol{ T},\gamma,{\Sigma},{S},{A},\eta)\\
		&=Y+\frac{\Sigma^3}{3}+\Sigma(X-A)+\frac{S^3}{3}+S(\frac{a}{\gamma}-A)+T\frac{\sqrt{1+\gamma A}-\sqrt{1+\gamma}}{\gamma}-N\frac{4}{3}A^{3/2}\\
		&-NA^{3/2}B'_{\beta(\eta/h)}(\eta \lambda_{\gamma,h} A^{3/2})+N\lambda_{\gamma,h}^{-1}Resrob_1(\gamma A,\eta,h).
	\end{align}
	By having $|T|\sim |N|$ and being in the regime $|T|\ge \lambda_{\gamma,h}^2$, we have \eqref{TsimN}, $\lambda_{\gamma,h}^2 \lesssim|N|$.\\
	We compute now the second derivative
	\begin{align}
		&|\partial^2_A \psi_{N,a,h}(\boldsymbol{ T},\gamma,{\Sigma},{S},{A},\eta)|=|\eta\left(-\frac{\gamma T}{4\sqrt{1+\gamma A}^3}\right)\\
		&-\eta\frac{N}{\sqrt{A}}\left(1+\frac{3}{4}B'_{\beta(\eta/h)}(\eta \lambda_{\gamma,h} A^{3/2})+\frac{9}{4}\eta\lambda_{\gamma,h}A^{3/2}B''_{\beta(\eta/h)}(\eta \lambda_{\gamma,h} A^{3/2})\right)|\sim N,\\\\
		&|\partial^2_{A,\eta} \psi_{N,a,h}(\boldsymbol{ T},\gamma,{\Sigma},{S},{A},\eta)|\\
		&=|\eta^{-1} \partial_A \psi_{N,a,h}(\boldsymbol{ T},\gamma,{\Sigma},{S},{A},\eta)-\frac{3}{2}NA^2\eta \lambda_{\gamma,h}B''_{\beta(\eta/h)}(A^{3/2}\eta\lambda_{\gamma,h})-N\lambda_{\gamma,h}^{-1} \gamma \partial_A Resrob_1(\gamma A,\eta,h)|\\
		&\sim \frac{N}{\lambda_{\gamma,h}^2}+K|\partial_A \psi_{N,a,h}(\boldsymbol{ T},\gamma,{\Sigma},{S},{A},\eta)|,\\
		&|\partial^2_{\eta} \psi_{N,a,h}(\boldsymbol{ T},\gamma,{\Sigma},{S},{A},\eta)|=|-NA^3\lambda_{\gamma,h}B''_{\beta(\eta/h)}(\lambda_{\gamma,h}\eta A^{3/2})+N\lambda_{\gamma,h}^{-1}Resrob_2(\alpha,\eta,h)|\sim \frac{N}{\lambda_{\gamma,h}^2},\\
		&|\partial^2_{S} \psi_{N,a,h}(\boldsymbol{ T},\gamma,{\Sigma},{S},{A},\eta)|=|2S\eta|, |\partial^2_{S,A} \psi_{N,a,h}(\boldsymbol{ T},\gamma,{\Sigma},{S},{A},\eta)|=\eta,\\
		& |\partial^2_{S,\eta} \psi_{N,a,h}(\boldsymbol{ T},\gamma,{\Sigma},{S},{A},\eta)|=S^2-\left(A-\frac{a}{\gamma}\right).
	\end{align}
	With this, since $|\partial^2_A \psi_{N,a,h}(\boldsymbol{ T},\gamma,{\Sigma},{S},{A},\eta)|$ and $|\partial^2_{\eta} \psi_{N,a,h}(\boldsymbol{ T},\gamma,{\Sigma},{S},{A},\eta)|$ are always bounded away from zero, we can conclude by the implicit functions theorem, provided that the Hessian in the critical points is nonzero, that the critical points $A_c$ and $\eta_c$ are always isolated. Also, $S_c$ is easily computed $S_c=\pm \sqrt{A-\frac{a}{\gamma}}$ and in the transverse regime $\frac{a}{\gamma}\le \frac{1}{4}$, that gives us always two isolated stationary points and $S_c(\Sigma)\sim 1$ (Remembering that the support of the symbol is $\frac{1}{2}\le A\le \frac{3}{2}$).\\
	If we compute the determinant of the Hessian in the critical points, we obtain\\ $\det|H_{S,A,\eta} \psi_{N,a,h_{\gamma,h},\varepsilon}(\boldsymbol{T},S_c(\Sigma),A_c(\Sigma),\eta_c(\Sigma))|\sim \frac{N^2}{\lambda_{\gamma,h}^2}$. Also, the smaller eigenvalue is  $\sim \frac{N}{\lambda_{\gamma,h}^2}\gtrsim 1$, so we can apply stationary phase directly.
	By applying stationary phase, 
	\begin{align}
		&{W}_{h,N,\gamma,a}({T},{X},{Y})=V_{h,N,\gamma,a}(t(\gamma,{T},{X},{Y}),x(\gamma,{T},{X},{Y}),y(\gamma,{T},{X},{Y}))\\
		&=\lambda_{\gamma,h}^{-3/2}\frac{\lambda_{\gamma,h}}{N}\frac{\gamma^2}{(2\pi h)^3}\int e^{i\lambda_{\gamma,h}  \psi_{N,\gamma,\lambda}(\boldsymbol{ T},\gamma,{\Sigma},{S_c(\Sigma)},{A_c(\Sigma)},\eta_c(\Sigma))} \chi_4(\Sigma,N,\lambda_{\gamma,h})d{\Sigma}.
	\end{align}
	Where $\chi_4(\Sigma,N,\lambda_{\gamma,h})$ has a harmless dependence on $N,\lambda_{\gamma,h}$ and is of compact support in $\Sigma$.\\\\
	Now it remains to check that $\Psi(\Sigma):=\psi_{N,\gamma,\lambda}(\boldsymbol{ T},\gamma,{\Sigma},{S_c(\Sigma)},{A_c(\Sigma)},\eta_c(\Sigma))$, behaves correctly.\\\\
	We compute now the derivatives. For our purposes, we suppress the variables in $\psi$ in this section. Set $\psi|_\Sigma:=\psi_{N,\gamma,\lambda}(\boldsymbol{T},\gamma,\Sigma,{S_c}(\Sigma),{A_c(\Sigma)},\eta_c(\Sigma))$, we  use this notation to consider the derivatives of $\psi_{N,\gamma,\lambda}$, but applying then to the critical points. For example \\$ \partial_A \psi|_\Sigma:= \partial_A \psi_{N,\gamma,\lambda}(\boldsymbol{T},\gamma,\Sigma,S,{A},\eta)|_{(\Sigma,S,{A},\eta)=(\Sigma,{S_c}(\Sigma),{A_c(\Sigma)},\eta_c(\Sigma)}$. We now compute:
	\begin{align}
		&\Psi'(\Sigma)=A_c'(\Sigma) \partial_A \psi|_\Sigma +\eta_c'(\Sigma)\partial_\eta \psi|_\Sigma +S_c'(\Sigma) \partial_S \psi|_\Sigma+\partial_\Sigma\psi|_\Sigma=\partial_\Sigma\psi|_\Sigma=(\Sigma^2+X-A_c(\Sigma))\eta_c(\Sigma).
	\end{align}
	( $ \partial_A \psi|_\Sigma,\partial_\eta \psi|_\Sigma,\partial_S \psi|_\Sigma$, are zero by definition of critical point). Since this is the first derivative, we can assume from now on that $|\Sigma^2+X-A_c(\Sigma)|\lesssim \lambda_{\gamma,h}^{-3/4}$. Otherwise, we could apply non-stationary phase and conclude.
	\begin{align}
		&\Psi''(\Sigma)=\eta_c(\Sigma)(2\Sigma-A_c'(\Sigma))+\eta_c'(\Sigma)(\Sigma^2+X-A_c(\Sigma)),\\
		& \Psi'''(\Sigma)=(2-A_c''(\Sigma))\eta_c(\Sigma)+\eta_c'(\Sigma)(2\Sigma-A_c'(\Sigma))+\eta_c''(\Sigma)(\Sigma^2+X-A_c(\Sigma)).
	\end{align}
	Now, we see that we need to compute $A_c''(\Sigma)$. In order to do so, we use the condition of critical points.
	\begin{align}
		&\partial_A \psi|_\Sigma=0, 	\partial_\eta \psi|_\Sigma=0, S_c(\Sigma)=\pm \sqrt{A_c(\Sigma)-\frac{a}{\gamma}},\\
		&\label{firstA}\frac{d\partial_A \psi|_\Sigma}{d\Sigma}=0=A_c'(\Sigma)\partial^2_{A} \psi|_\Sigma+\eta_c'(\Sigma)\partial^2_{A,\eta} \psi|_\Sigma+\partial^2_{A,S} \psi|_\Sigma S_c'(\Sigma)+\partial^2_{A,\Sigma}  \psi|_\Sigma,\\
		&\label{firstAsinS}\frac{d\partial_A \psi|_\Sigma}{d\Sigma}=0=A_c'(\Sigma)\partial^2_{A} \psi|_\Sigma+\eta_c'(\Sigma)\partial^2_{A,\eta} \psi|_\Sigma\pm\frac{1}{2}\eta_c(\Sigma)A'_c(\Sigma)\sqrt{A_c(\Sigma)-\frac{a}{\gamma}}^{-1}+\partial^2_{A,\Sigma}  \psi|_\Sigma,\\
		&\label{firsteta}\frac{d\partial_\eta \psi|_\Sigma}{d\Sigma}=0=A_c'(\Sigma)\partial^2_{\eta,A} \psi|_\Sigma+\eta_c'(\Sigma)\partial^2_{\eta} \psi|_\Sigma+(\Sigma^2+X-A_c(\Sigma)).\\
	\end{align}
	We can solve this system easily, having
	\begin{align}
		\begin{pmatrix}
			&\partial^2_{A}\psi|_\Sigma \pm\frac{1}{2}\eta_c(\Sigma)\sqrt{A_c(\Sigma)-\frac{a}{\gamma}}^{-1} &\partial^2_{A,\eta} \psi|_\Sigma\\
			&\partial^2_{\eta,A} \psi|_\Sigma & \partial^2_{\eta} \psi|_\Sigma
		\end{pmatrix}
		\begin{pmatrix}
			A'_c(\Sigma) \\ \eta'_c(\Sigma)
		\end{pmatrix}=
		\begin{pmatrix}
			\eta_c(\Sigma)\\
			-(\Sigma^2+X-A_c(\Sigma))
		\end{pmatrix}.
	\end{align}
	This gives us, since 
	$
	\begin{pmatrix}
		a&b\\
		b&c
	\end{pmatrix}^{-1}=\frac{1}{ac-b^2}	\begin{pmatrix}
		c&-b\\
		-b&a
	\end{pmatrix}
	$ and the determinant of the matrix is $\sim \frac{N^2}{\lambda_{\gamma,h}^2}$, the asymptotics of the derivatives:
	\begin{align}
		\begin{pmatrix}
			A'_c(\Sigma) \\ \eta'_c(\Sigma)
		\end{pmatrix}=\frac{1}{\det(H \psi_{N,a,h_{\gamma,h},\varepsilon})}
		\begin{pmatrix}
			&  \partial^2_{\eta} \psi|_\Sigma&-\partial^2_{A,\eta} \psi|_\Sigma\\
			&-\partial^2_{\eta,A} \psi|_\Sigma &\partial^2_{A}\psi|_\Sigma \pm\frac{1}{2}\eta_c(\Sigma)\sqrt{A_c(\Sigma)-\frac{a}{\gamma}}^{-1}
		\end{pmatrix}
		\begin{pmatrix}
			\eta_c(\Sigma)\\
			-(\Sigma^2+X-A_c(\Sigma))
		\end{pmatrix}.
	\end{align}
	\begin{align}
		&A_c'(\Sigma)\sim \frac{1}{N},& \eta'_c(\Sigma)\lesssim \frac{\lambda^2}{N}(\Sigma^2+X-A_c(\Sigma))\lesssim \lambda^{-3/4}.
	\end{align}
	We do the same with the second derivatives:
	\begin{align}
		&\label{secondA}0=\frac{d^2\partial_A \psi|_\Sigma}{d\Sigma^2}=A_c''(\Sigma)\partial^2_{A} \psi|_\Sigma+\eta_c''(\Sigma)\partial^2_{A,\eta} \psi|_\Sigma+\partial^2_{A,S} \psi|_\Sigma S_c
		''(\Sigma)\\
		&+(A_c'(\Sigma))^2\partial^3_{A} \psi|_\Sigma+(\eta_c'(\Sigma))^2\partial^3_{A,\eta,\eta} \psi|_\Sigma+\partial^3_{A,S,S} \psi|_\Sigma (S_c'(\Sigma))^2\\
		&+2\partial^3_{A,A,\eta} \psi|_\Sigma A_c'(\Sigma)\eta_c'(\Sigma)+2\partial^3_{A,A,S}\psi|_\Sigma A_c'(\Sigma)S_c'(\Sigma)+2\partial^3_{A,S,\eta}\psi|_\Sigma \eta_c'(\Sigma)S_c'(\Sigma)-\eta_c'(\Sigma),\\
		&\label{secondeta}0=\frac{d^2\partial_\eta \psi|_\Sigma}{d\Sigma^2} =A_c''(\Sigma)\partial^2_{A,\eta} \psi|_\Sigma+\eta_c''(\Sigma)\partial^2_{\eta,\eta} \psi|_\Sigma+\partial^2_{\eta,S} \psi|_\Sigma S_c
		''(\Sigma)\\
		&+(A_c'(\Sigma))^2\partial^3_{A,A,\eta} \psi|_\Sigma+(\eta_c'(\Sigma))^2\partial^3_{\eta,\eta,\eta} \psi|_\Sigma+\partial^3_{\eta,S,S} \psi|_\Sigma (S_c'(\Sigma)^2)\\
		&+2\partial^3_{\eta,A,\eta} \psi|_\Sigma A_c'(\Sigma)\eta_c'(\Sigma)+2\partial^3_{\eta,A,S}\psi|_\Sigma A_c'(\Sigma)S_c'(\Sigma)+2\partial^3_{\eta,S,\eta}\psi|_\Sigma \eta_c'(\Sigma)S_c'(\Sigma)+(2\Sigma-A_c'(\Sigma)).
	\end{align}
	\begin{align}
		&	\begin{pmatrix}
			&\partial^2_{A}\psi|_\Sigma \pm\frac{1}{2}\eta_c(\Sigma)\sqrt{A_c(\Sigma)-\frac{a}{\gamma}}^{-1} &\partial^2_{A,\eta} \psi|_\Sigma\\
			&\partial^2_{\eta,A} \psi|_\Sigma & \partial^2_{\eta} \psi|_\Sigma
		\end{pmatrix}
		\begin{pmatrix}
			A''_c(\Sigma) \\ \eta''_c(\Sigma)
		\end{pmatrix}=	\begin{pmatrix}
			r_1(\Sigma)\\ r_2(\Sigma)
		\end{pmatrix},
	\end{align}
	with
	\begin{align}
		&r_1(\Sigma)=\mp\eta_c(\Sigma)\frac{A'_c(\Sigma)}{4}\sqrt{A_c(\Sigma)-\frac{a}{\gamma}}^{-3}+(A_c'(\Sigma))^2\partial^3_{A} \psi|_\Sigma+(\eta_c'(\Sigma))^2\partial^3_{A,\eta,\eta} \psi|_\Sigma\\
		&+\partial^3_{A,S,S} \psi|_\Sigma (S_c'(\Sigma))^2+2\partial^3_{A,A,\eta} \psi|_\Sigma A_c'(\Sigma)\eta_c'(\Sigma)+2\partial^3_{A,A,S}\psi|_\Sigma  A_c'(\Sigma)S_c'(\Sigma)+2\partial^3_{A,S,\eta}\psi|_\Sigma  \eta_c'(\Sigma)S_c'(\Sigma)-\eta_c'(\Sigma)\\
		&\lesssim \lambda^{-3/4},\\
		&r_2(\Sigma)=(A_c'(\Sigma))^2\partial^3_{A,A,\eta} \psi|_\Sigma+(\eta_c'(\Sigma))^2\partial^3_{\eta,\eta,\eta} \psi|_\Sigma+\partial^3_{\eta,S,S} \psi|_\Sigma (S_c'(\Sigma)^2)\\
		&+2\partial^3_{\eta,A,\eta} \psi|_\Sigma A_c'(\Sigma)\eta_c'(\Sigma)+2\partial^3_{\eta,A,S}\psi|_\Sigma  A_c'(\Sigma)S_c'(\Sigma)+2\partial^3_{\eta,S,\eta}\psi|_\Sigma  \eta_c'(\Sigma)S_c'(\Sigma)+(2\Sigma-A_c'(\Sigma))
		\lesssim 1.
	\end{align}
	In order to get then $A''_c(\Sigma),\eta''_c(\Sigma)$, it remains to estimate the order of the third derivatives, and we obtain the following bounds for third derivatives (See Appendix \ref{derivatives})
	\begin{align}
		|\partial^3_{A,A,\eta} \psi|_\Sigma|\lesssim {N},\;\;\;\; |\partial^3_{\eta,\eta,\eta}\psi|_\Sigma\lesssim \frac{N}{\lambda_{\gamma,h}^2},\;\;\;\;
		|\partial^3_{A,\eta,\eta}\psi_\Sigma|\lesssim \frac{N}{\lambda_{\gamma,h}^2},\;\;\;\;
		|\partial^3_{A}\psi|_\Sigma|\lesssim {N}.
	\end{align}
	By using these bounds in the system, we conclude $A_c''(\Sigma)\lesssim \frac{1}{N}$  and $\eta''_c(\Sigma)\lesssim 1$. We can complete the analysis now:
	\begin{enumerate}
		\item If $(2\Sigma-A_c'(\Sigma))\ge \lambda_{\gamma,h}^{-1/3}$, then we can use  Van der Corput lemma of order two, since $|\lambda_{\gamma,h}\Psi''(\Sigma)|\ge \lambda_{\gamma,h}^{2/3}$.
		\item If $(2\Sigma-A_c'(\Sigma))\le \lambda_{\gamma,h}^{-1/3}$, then we can use Van der Corput lemma of order three, since $|\Psi'''(\Sigma)|\ge \frac{1}{4}+O(\lambda_{\gamma,h}^{-1/12})\gtrsim 1$. \\
	\end{enumerate}
	By van der Corput lemma then, 
	\begin{align}
		&|{W}_{h,N,\gamma,a}({T},{X},{Y})|\\
		&=|\lambda_{\gamma,h}^{-3/2}\frac{\lambda_{\gamma,h}}{N}\frac{\gamma^2}{(2\pi h)^3}\int e^{i\lambda_{\gamma,h}  \psi_{N,\gamma,\lambda}(\boldsymbol{ T},\gamma,{\Sigma},{S_c(\Sigma)},{A_c(\Sigma)},\eta_c(\Sigma))} \chi_4(\Sigma,N,\lambda_{\gamma,h})d{\Sigma}|\\
		& \lesssim\frac{\lambda_{\gamma,h}^{-3/2}\lambda_{\gamma,h}\lambda_{\gamma,h}^{-1/3}}{N}\frac{\gamma^2}{h^3}=\frac{\lambda_{\gamma,h}^{-5/6}}{N}\frac{\gamma^2}{h^3}.
	\end{align}
\end{proof}
\subsection{Tangential waves $\gamma \sim a$}
We consider the case in this section $\gamma/4\le a \le 2\gamma$.\\
For this section, we introduce the following change of variables
\begin{align}
	\label{tangenctialchange}
	x=a\hat{X},\;\alpha=a\hat A, \; t=\sqrt{a}\sqrt{1+a}\hat{T}, \; s=\sqrt{a}\hat{S},\; \sigma=\sqrt{a}\hat \Sigma,\; y+t\sqrt{1+a}=a^{3/2} \hat{Y}
\end{align}
\begin{obs}
	This change of variables depends on $a$, so we cannot use it when adding all the $G_\gamma$.
\end{obs}
We define then the functions $\hat W$ as the functions $V$ after the change of variables.
\begin{defi}
	\label{defitan}
	\begin{align}
		\label{defihatw}
		&\hat{W}_{h,N,\gamma,a}(\hat{T},\hat{X},\hat{Y})=V_{h,N,\gamma,a}(t(a,\hat{T},\hat{X},\hat{Y}),x(a,\hat{T},\hat{X},\hat{Y}),y(a,\hat{T},\hat{X},\hat{Y}))\\
		&=\frac{a^2}{(2\pi h)^3}\int e^{i\lambda_{a,h} \hat \psi_{N,a,h}(\boldsymbol{\hat T},a,\hat{\Sigma},\hat{S},\hat{A},\eta)} \eta^2 \psi_1(\eta)\psi_3(\lambda_{a,h}^{2/3}\eta^{2/3}\hat{A})\psi_2(\frac{a}{\gamma}\hat{A}) d\hat{S}d\hat{\Sigma}d\hat{A}d\eta,
	\end{align}
	\begin{align}
		\label{defihatpsiB}
		&\hat \psi_{N,a,h}(\boldsymbol{\hat T},a,\hat{\Sigma},\hat{S},\hat{A},\eta)\\
		&=\eta \left(\hat{Y}+\frac{\hat{\Sigma}^3}{3}+\frac{\hat{S}^3}{3}+\hat{X}\hat{\Sigma}+\hat{S}-(\hat{S}+\hat{\Sigma})\hat{A}+\hat{T}\frac{\sqrt{1+a}}{a}(\sqrt{1+a\hat{A}}-\sqrt{1+a})-N\frac{4}{3}\hat{A}^{3/2}\right)\\
		&-\frac{N}{\lambda_{a,h}}B_{\beta(\eta/h)}(\eta \lambda_{a,h}\hat{A}^{3/2}).
	\end{align}
\end{defi}
The main goal of the section is to prove that, in the Robin case, the functions $\hat{W}$ satisfy the same estimates as in the Dirichlet case ([Propositions 4,5,6,7 in \cite{positive} for the Dirichlet case]), as well as prove \eqref{finiten} for the tangential waves.
\begin{prop}
	\label{bigtangenctial}
	There exist a $C_i\in  \mathbb{R}$, $i=1,2,3$ such that:
	\begin{enumerate}
		\item  For $|\hat T|\le C_1$, we have
		\begin{align}
			|({G}_{\gamma,h}(\boldsymbol{t}(a,\hat{\boldsymbol{T}})))|\lesssim  a^{-5/2} \lambda_{a,h}^2 \inf\{1,\left(\frac{1}{\lambda_{a,h} {|\hat{T}|}}\right)^{1/2}\}.
		\end{align}
		\item For $1\le |N|<\lambda_{a,h}^{1/3}$ and $|\hat{T}-4N|\le C_2/N$, we have
		\begin{align}
			\label{swallowtail}
			|\hat{W}_{h,N,\gamma,a}(\hat{\boldsymbol{T}})|\lesssim \frac{a^{1/2}\lambda_{a,h}^2}{a^3\lambda_{a,h}^{1/3}((\frac{N}{\lambda_{a,h}^{1/3}})^{1/4}+|N(\hat{T}-4N)|^{1/6})}.
		\end{align}
		\item For $1\le |N|<\lambda_{a,h}^{1/3}$ and $|\hat{T}-4N|\ge C_2/N$, we have
		\begin{align}
			|\hat{W}_{h,N,\gamma,a}(\hat{\boldsymbol{T}})|\lesssim \frac{a^{1/2}\lambda_{a,h}^2}{a^3\lambda_{a,h}^{1/3}(1+|N(\hat{T}-4N)|^{1/4})}.
		\end{align}
		\item For $\lambda_{a,h}\ge|N|\ge \lambda_{a,h}^{1/3}$, we have
		\begin{align}
			|\hat{W}_{h,N,\gamma,a}(\hat{\boldsymbol{T}})|\lesssim \frac{a^{1/2}\lambda_{a,h}^2}{a^3\lambda_{a,h}^{1/3}((\frac{N}{\lambda_{a,h}^{1/3}})^{1/4}+|N(\hat{T}-4N)|^{1/4})}.
		\end{align}
		\item For $\lambda_{a,h}\le|N|\le C_3 a^{-1/2}$, we have
		\begin{align}
			\label{thegood}
			|\hat{W}_{h,N,\gamma,a}(\hat{\boldsymbol{T}})|\lesssim \frac{a^{1/2}\lambda_{a,h}^{1/3}\lambda_{a,h}^2}{a^3N}.
		\end{align}
	\end{enumerate}
\end{prop}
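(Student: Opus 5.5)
Parts (1)–(4) will not be reproved. None of their arguments differentiates the phase in $\eta$ beyond the structural bounds already established, so by Remark \ref{ignoreverything} the Dirichlet proofs (Propositions 4, 5, 6 and 7(1) in \cite{positive}) go through verbatim. The only input is that $B_{\beta(\eta/h)}$ and its derivatives in the argument have the Dirichlet asymptotics uniformly in $\eta/h$, which is Proposition \ref{boundsb}.

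The real work is (5), which mirrors the proof of \eqref{imptrans} in the transverse case. By Corollary \ref{TsimN} we have $|\hat T|\sim|N|\ge\lambda_{a,h}$.

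First we would apply stationary phase in $(\hat A,\eta)$ with $\lambda_{a,h}$ as large parameter and $(\hat\Sigma,\hat S)$ frozen. The first derivatives are the analogues of those computed for $\psi_{N,a,h}$, with the term $N\lambda_{a,h}^{-1}Resrob_1$ in $\partial_\eta\hat\psi$. For the second derivatives we expect:
\begin{itemize}
\item $|\partial_{\hat A}^2\hat\psi|\sim N$;
\item $|\partial^2_\eta\hat\psi|\sim N\lambda_{a,h}^{-2}$, since $B''$ contributes $N\lambda_{a,h}^{-2}$ while $N\lambda_{a,h}^{-1}Resrob_2\lesssim N h^{2/3}\lambda_{a,h}^{-4/3}$ is much smaller;
\item $|\partial^2_{\hat A\eta}\hat\psi|\lesssim N\lambda_{a,h}^{-2}+K|\partial_{\hat A}\hat\psi|$, using $|\partial_\alpha Resrob_1|\lesssim\lambda_{a,h}^{-1}$ from Proposition \ref{restrobs}.
\end{itemize}
The $2\times2$ Hessian then has determinant $\sim N^2\lambda_{a,h}^{-2}$ and smallest eigenvalue $\gtrsim N\lambda_{a,h}^{-2}$. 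For this to be $\gtrsim1$ we need $N\gtrsim\lambda_{a,h}^2$. When $\lambda_{a,h}\le N\le\lambda_{a,h}^2$ the correct large parameter in $\eta$ is $N\lambda_{a,h}^{-1}\ge1$, so the integrand is still nondegenerate. Stationary phase then yields the factor $(\lambda_{a,h}\cdot N\cdot\lambda_{a,h}N\lambda_{a,h}^{-2})^{-1/2}=\lambda_{a,h}^{-1/2}\cdot\lambda_{a,h}^{1/2}N^{-1}\cdot\ldots$; we would track this carefully so that it gives $\lambda_{a,h}^{-1}N^{-1}\cdot\lambda_{a,h}$ in total.

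We are left with a two-dimensional integral in $(\hat\Sigma,\hat S)$ with phase $\Psi(\hat\Sigma,\hat S)$ evaluated at the critical $(\hat A_c,\eta_c)$. As before, $\nabla\Psi=\eta_c(\hat\Sigma^2+\hat X-\hat A_c,\ \hat S^2+1-\hat A_c)$. Because $\hat A_c$ depends on $(\hat\Sigma,\hat S)$ only through the combination $\hat\Sigma+\hat S$ with weight $O(1/N)$, implicit differentiation exactly as in the transverse proof gives $\partial\hat A_c=O(1/N)$ and $\partial^2\hat A_c=O(1/N)$. This needs the third-derivative bounds of Appendix \ref{derivatives}, together with $|Resrob_3|$ and $|\partial_\alpha Resrob_2|$ from Proposition \ref{restrobs}.

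Hence $\Psi$ is, up to $O(1/N)$ perturbations, a sum of two cubic Airy phases in $\hat\Sigma$ and in $\hat S$, with third derivatives $\sim 2\eta_c$ bounded below. A two-dimensional van der Corput of order three, applied separately in each variable or by iterating the one-dimensional lemma as in the $\Sigma$-step of \eqref{imptrans}, gives a factor $\lambda_{a,h}^{-1/3}\cdot\lambda_{a,h}^{-1/3}$. Collecting the factors, namely $a^2h^{-3}=a^{-5/2}\lambda_{a,h}^{3}a^{1/2}\ldots$ with the Jacobian, together with the stationary-phase and van der Corput factors, should reproduce $a^{1/2}\lambda_{a,h}^{7/3}a^{-3}N^{-1}$.

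The main obstacle is the nondegeneracy of the $(\hat A,\eta)$ Hessian in the intermediate range $\lambda_{a,h}\le N\le\lambda_{a,h}^2$. There the $\eta$-curvature $N\lambda_{a,h}^{-2}$ is small, so we must check that the Robin remainders ($Resrob_2$ in $\partial_\eta^2$, and $\partial_\alpha Resrob_1$ in the mixed derivative) are dominated by the $B''$ term. Proposition \ref{restrobs} gives $N h^{2/3}\lambda_{a,h}^{-4/3}\ll N\lambda_{a,h}^{-2}$ exactly when $h^{2/3}\lambda_{a,h}^{2/3}=a\ll1$, which holds since $\gamma\le2^{-7}$. If the stationary phase in $\eta$ fails to be uniform there, our fallback is to bound the $\eta$-integral by a van der Corput of order two with parameter $N\lambda_{a,h}^{-1}$. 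This loses nothing in the final power of $N$.
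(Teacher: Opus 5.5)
Your reduction of (1)--(4) to the Dirichlet case is at the same level of detail as the paper's. The paper additionally notes that for $|N|\le\lambda_{a,h}$ the factor $e^{-iNB_{\beta(\eta/h)}}$ is a symbol. Your first step for (5) also matches the paper: stationary phase in $(\hat A,\eta)$, a Hessian determinant of size $N^2\lambda_{a,h}^{-2}$, a factor $N^{-1}$, and the $Resrob$ terms dominated by the $B''$ term.

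\textbf{The gap: the $(\hat\Sigma,\hat S)$ integral.} You argue only about $\hat A_c$. But the reduced phase $\Psi(\hat\Sigma,\hat S)=\hat\psi_{N,a,h}(\boldsymbol{\hat T},a,\hat\Sigma,\hat S,\hat A_c,\eta_c)$ also depends on $(\hat\Sigma,\hat S)$ through $\eta_c$. Let $H$ be the $(\hat A,\eta)$-Hessian. Implicit differentiation gives $\partial_{\hat S}\eta_c=-\det(H)^{-1}\bigl(\partial^2_{\hat A\eta}\hat\psi\,\eta_c+\partial^2_{\hat A}\hat\psi\,(\hat S^2+1-\hat A_c)\bigr)$. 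So $|\partial_{\hat S}\eta_c|$ is of size $\frac{\lambda_{a,h}^2}{N}|\hat S^2+1-\hat A_c|$, which reaches $\lambda_{a,h}$ when $N\sim\lambda_{a,h}$.

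This has two consequences.
\begin{itemize}
\item $\partial^2_{\hat\Sigma\hat S}\Psi=\partial_{\hat S}\eta_c\,(\hat\Sigma^2+\hat X-\hat A_c)-\eta_c\,\partial_{\hat S}\hat A_c$ contains a rank-one term of size $\frac{\lambda_{a,h}^2}{N}(\hat\Sigma^2+\hat X-\hat A_c)(\hat S^2+1-\hat A_c)$. Away from the critical set, $\lambda_{a,h}\Psi$ is therefore not an $O(1/N)$ perturbation of a separable cubic phase.
\item $\partial^3_{\hat\Sigma}\Psi=\eta_c(2-\partial^2_{\hat\Sigma}\hat A_c)+2\partial_{\hat\Sigma}\eta_c(2\hat\Sigma-\partial_{\hat\Sigma}\hat A_c)+\partial^2_{\hat\Sigma}\eta_c(\hat\Sigma^2+\hat X-\hat A_c)$ has no lower bound of order $2\eta_c$ without localization.
\end{itemize}

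Even granting lower bounds on $\partial^3_{\hat\Sigma}\Psi$ and $\partial^3_{\hat S}\Psi$, van der Corput does not tensorize. The phase $\Psi=(\hat\Sigma+\hat S)^3$ satisfies both bounds, yet $\iint e^{i\lambda\Psi}\chi$ decays only like $\lambda^{-1/3}$. Iterating the one-dimensional lemma requires the amplitude produced by the first integration to have bounded variation in the second variable. Concretely, you need an $L^1$ bound on the mixed derivative of $e^{i\lambda_{a,h}r}\chi$, where $r$ is the non-separable part of $\Psi$, and your proposal never establishes it.

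\textbf{What is needed.} That estimate is the real content of the paper's proof of (5).
\begin{enumerate}
\item By non-stationary phase, localize to $|\hat\Sigma^2+\hat X-\hat A_c|,\ |\hat S^2+1-\hat A_c|\le\lambda_{a,h}^{-11/12}$.
\item Localize further to $|2\hat\Sigma-\partial_{\hat\Sigma}\hat A_c|,\ |2\hat S-\partial_{\hat S}\hat A_c|\le\lambda_{a,h}^{-1/3}$; otherwise one variable has a nondegenerate second derivative. With $|\partial\hat A_c|\lesssim 1/N$ this forces $|\hat\Sigma|,|\hat S|\lesssim\lambda_{a,h}^{-1/3}$.
\item On this box $|\partial\eta_c|\lesssim\lambda_{a,h}^{1/12}$. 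Let $\hat\Psi$ be $\Psi$ minus the frozen cubic phase $\eta_c|_0\bigl[\hat\Sigma^3/3+\hat S^3/3-(\hat A_c|_0-\hat X)\hat\Sigma-(\hat A_c|_0-1)\hat S\bigr]$. Then $\lambda_{a,h}|\nabla\hat\Psi|\lesssim1$ and $\lambda_{a,h}|\partial^2_{\hat\Sigma\hat S}\hat\Psi|\lesssim\lambda_{a,h}^{1/2}$.
\item Rescale $(\hat\Sigma,\hat S)=(\eta_c|_0\lambda_{a,h})^{-1/3}(p,q)$ and put $e^{i\lambda_{a,h}\hat\Psi}$ into the amplitude $\chi_5$. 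This gives $\|\partial^2_{pq}\chi_5\|_{L^1}\lesssim1$.
\item Integrate by parts twice against the uniformly bounded primitives $\int_0^p e^{i(z^3/3-Pz)}dz$ and $\int_0^q e^{i(z^3/3-Qz)}dz$. This yields the factor $\lambda_{a,h}^{-2/3}$.
\end{enumerate}

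Separately, your fallback of bounding the $\eta$-integral by order-two van der Corput replaces it with an absolute bound. That destroys the phase structure you then need in $(\hat\Sigma,\hat S)$, so it does not ``lose nothing''.
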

\begin{prop}[\eqref{finiten} for the tangential waves]
	\label{neg foir tan}
	\begin{align}
		\sum_{N\notin \mathcal{N}_{1,h,\gamma}(\boldsymbol{t}(a,\boldsymbol{\hat{T}))}}\hat{W}_{h,N,\gamma,a}(\boldsymbol{\hat{T}})=O(h^{\infty}).
	\end{align}
\end{prop}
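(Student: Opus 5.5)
The plan is to follow the proof of Proposition \ref{neg for trans} almost verbatim, replacing the transverse variables \eqref{transchange} by the tangential ones \eqref{tangenctialchange}. The one structural difference is that in the tangential regime the $\hat S$-integral can no longer be eliminated by stationary phase: the two critical points $\hat S=\pm\sqrt{\hat A-1}$ may coalesce, since $a/\gamma$ is no longer small. The first step is the same reduction as in the transverse case. By Proposition \ref{boundgreat} and Corollary \ref{TsimN}, only $|N|\lesssim a^{-1/2}\lesssim h^{-1/3}$ matter. It therefore suffices to prove $\hat W_{h,N,\gamma,a}(\boldsymbol{\hat T})=O(\lambda_{a,h}^{-\infty})=O(h^\infty)$ for each fixed $N\notin\mathcal{N}_{1,h,\gamma}$, uniformly in $N$. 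Since $a\sim\gamma$, the cubes defining $\mathcal N_1$ in the $(T,X,Y)$ scaling and in the hatted scaling are comparable up to fixed constants, so non-membership in $\mathcal N_1$ gives a fixed-size hatted neighbourhood $B(\boldsymbol{\hat T},c)$ containing no parameter for which $N$ is critical.

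The second step is to keep all four variables $(\hat\Sigma,\hat S,\hat A,\eta)$ and write the gradient of $\hat\psi_{N,a,h}$ in the form of \eqref{partialintegralcurves}. The derivatives are
\[
\partial_{\hat\Sigma}\hat\psi=\eta(\hat X-\tilde X(\hat\Sigma,\hat A)),\qquad \partial_{\hat S}\hat\psi=\eta(\hat S^2+1-\hat A).
\]
The derivative $\partial_{\hat A}\hat\psi$ is $\eta(\hat T-\tilde T)$ times a factor comparable to $1$, and $\partial_\eta\hat\psi$ is a combination of $(\hat Y-\tilde Y)$, $(\hat T-\tilde T)$ and $(\hat X-\tilde X)$ with bounded coefficients. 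Here $\tilde X=\hat A-\hat\Sigma^2$. The curve $\tilde T$ contains the term $2N\sqrt{\hat A}\,(1+\tfrac34 B'_{\beta(\eta/h)})$. The curve $\tilde Y$ contains the Robin correction $-N\lambda_{a,h}^{-1}Resrob_1(a\hat A,\eta,h)$, which does not occur in the Dirichlet case. The map $\boldsymbol{\hat T}'\mapsto\nabla_{(\hat\Sigma,\hat A,\eta)}\hat\psi$ is affine in $\boldsymbol{\hat T}'$ with a triangular, uniformly invertible linear part. Consequently, at any point where $\partial_{\hat S}\hat\psi=0$, the parameter $\tilde{\boldsymbol T}(\hat\Sigma,\hat S,\hat A,\eta)$ is a point at which $N$ is critical. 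This gives $\|\boldsymbol{\hat T}-\tilde{\boldsymbol T}\|\ge c$, and hence
\[
\|\nabla_{(\hat\Sigma,\hat A,\eta)}\hat\psi\|\gtrsim 1 \quad\text{on } \{\partial_{\hat S}\hat\psi=0\}.
\]
We need the Robin terms to be harmless in these bounds. Proposition \ref{restrobs} gives $N\lambda_{a,h}^{-1}|Resrob_1|\lesssim h^{1/3}\lambda_{a,h}^{-5/3}\ll1$. Proposition \ref{boundsb} gives $|N B'|\lesssim N\lambda_{a,h}^{-2}$. Together these keep the coefficients bounded and the curves uniformly Lipschitz in $\eta$, as in the proof of Proposition \ref{prop1}.

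The third step extends the gradient lower bound from the set $\{\hat S^2=\hat A-1\}$ to a neighbourhood of it, and is where I expect the main difficulty. Near the fold $\hat A\approx1$ the $\hat S$ critical points degenerate, so $\partial_{\hat S}\hat\psi$ is small on a region of size $|\hat S^2+1-\hat A|\le\delta$ that is not controlled by a Hessian in $\hat S$. I would handle this by continuity. All other first derivatives are uniformly Lipschitz in $\hat S$ and $\hat A$ on the compact support. Therefore, for $\delta$ small but independent of $h$ and $N$, the full gradient satisfies
\[
|\nabla_{(\hat\Sigma,\hat S,\hat A,\eta)}\hat\psi|\gtrsim1
\]
everywhere: either $|\partial_{\hat S}\hat\psi|\ge\delta$, or we are within $O(\delta)$ of the critical set, where the bound from the second step persists with constant $c/2$. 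The delicate point is uniformity in $N$. The term $N\partial_{\hat A}(\ldots)$ has size up to $a^{-1/2}$, so $\tilde T$ is not uniformly Lipschitz in $\hat A$. I would first try to avoid the problem by moving along the $\hat S$ fibre only, at fixed $\hat A$ and $\eta$: then only the $N$-independent terms $\hat S^3/3+\hat S(1-\hat A)$ vary, and $\partial_{\hat A}\hat\psi$ changes by $O(\delta)$. To keep the $\hat S$-parametrisation well defined at the fold, I would use a cutoff localising $\hat A>1$ versus $\hat A\le1+\delta$. Finally, integration by parts with a smooth partition of unity adapted to the dominant derivative gives $O(\lambda_{a,h}^{-\infty})$. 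This uses the symbol bounds of Remark \ref{ignoreverything}, together with the bounds on $Resrob_2$ and $Resrob_3$ from Proposition \ref{restrobs} for the $\eta$-derivatives of the amplitude after each integration by parts.
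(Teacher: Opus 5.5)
Your Steps 1 and 2 are sound. They are the transverse argument of Proposition \ref{neg for trans} rewritten in the hatted variables, and that is all the paper offers here: its proof is a one-line reference to Proposition \ref{neg for trans} and to Proposition 1 of \cite{positive}. The proof of Proposition \ref{neg for trans} starts by removing $S$ through stationary phase, using $|S_{\pm c}|\ge\frac12$, and that is exactly what is unavailable at the fold, so you were right to flag it.

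\textbf{Step 3 fails.} On the side $\hat A<1$, the $\hat S$-fibre contains no zero of $\partial_{\hat S}\hat\psi=\eta(\hat S^2+1-\hat A)$. Moving along the fibre therefore never reaches the critical set. You have to move $\hat A$ instead, and that shifts $\tilde T$ by $N$ times the displacement. This non-uniformity cannot be removed. Here is a concrete configuration:
\begin{itemize}
\item Take $a=\gamma$, $\hat S=0$, $\hat A=1-C/N$, admissible $\hat\Sigma_0>0$ and $\eta_0$.
\item Choose $\hat T,\hat X,\hat Y$ so that $\partial_{\hat\Sigma}\hat\psi=\partial_{\hat A}\hat\psi=\partial_\eta\hat\psi=0$ at that point.
\item Every parameter at which $N$ is critical has $\hat A'=1+\hat S'^2\ge 1$. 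If also $|\hat X'-\hat X|<1$, then $|\hat\Sigma'|\le|\hat S'|+O(1)$.
\item Inserting this into the formula for $\tilde T$ gives $\hat T'\ge\hat T+2C-O(1)$. So $N\notin\mathcal N_{1,h,\gamma}$ once $C$ is a large absolute constant.
\item Yet at that point $|\nabla\hat\psi|=\eta_0C/N$.
\end{itemize}
So the bound $|\nabla_{(\hat\Sigma,\hat S,\hat A,\eta)}\hat\psi|\gtrsim 1$, with $\delta$ independent of $N$, is false.

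\textbf{What governs the fold is the Airy scale, not a fixed $\delta$.} Up to $O(\lambda_{a,h}^{-\infty})$, the $\hat S$-integral equals $2\pi(\lambda_{a,h}\eta)^{-1/3}Ai\bigl((\lambda_{a,h}\eta)^{2/3}(1-\hat A)\bigr)$. This is negligible only when $1-\hat A\ge\lambda_{a,h}^{-2/3+\epsilon}$. Moving $\hat A$ across the remaining layer costs about $N\lambda_{a,h}^{-2/3+\epsilon}$ in $\tilde T$. A splitting of this kind can therefore close your argument only when $N\lambda_{a,h}^{-2/3}\ll 1$.

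\textbf{For $|N|\gtrsim\lambda_{a,h}^{2/3}$ the scheme cannot close.} This range is nonempty once $a\lesssim h^{4/9}$, since $N$ runs up to $a^{-1/2}$. Take the point above with $C/N\lesssim\lambda_{a,h}^{-2/3}$.
\begin{itemize}
\item The Airy factor then has bounded nonnegative argument.
\item Since $N\gg\lambda_{a,h}^{1/3}$, that factor is a genuine symbol on the stationary window of width $(\lambda_{a,h}N)^{-1/2}$ in $\hat A$.
\item The remaining phase in $(\hat\Sigma,\hat A,\eta)$ is stationary there, and nondegenerate in $(\hat\Sigma,\hat A)$.
\end{itemize}
So there is nothing for non-stationary phase to exploit, and $\hat W_{h,N,\gamma,a}$ has the size of a main term at such parameters, not $O(h^\infty)$.

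In that range one would need neighbourhoods of size about $1+N\lambda_{a,h}^{-2/3}$ in $\hat T$, which would also affect the counting in Proposition \ref{prop1}, or some ingredient beyond integration by parts. Neither your proposal nor the paper's one-line reference supplies it.
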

The proof of this case does not require derivatives on $\eta$ except we are back on the case of proposition \ref{neg for trans} and so, it's the same case as \cite{positive}, [Proposition 1], as explained by remark \ref{ignoreverything}.
\begin{obs}
	We will prove in detail the case \eqref{thegood}. For the other cases, since in the proofs of the equivalent statements on \cite{positive} the dispersion estimates do not involve derivatives on $\eta$ we would be dealing with the exact same phases with different symbols, so the only thing we will need to prove is that the symbols we obtain are equivalent to the ones the Dirichlet case has.\\
	In particular, the case where the swallowtail appears is \eqref{swallowtail}, in which the phase will be exactly the same as in the Dirichlet case.
\end{obs}
We start to study the function $\hat{W}$:\\\\
Inspecting the phase, we can note that for $\psi_2(\frac{a}{\gamma}\hat{A})=0$ if $\hat{A}\ge 6$ (Also if $\hat{A}\le 1/20$ ), which means that for $\lambda_{a,h}^{2/3}>100$ $\psi_3(\lambda_{a,h}^{2/3}\eta^{2/3}\hat{A})=1$, and since we are considering $\lambda$ a large parameter, we can assume that $\psi_3$ is always one and eliminating it.\\
Also since $\partial_{\hat{S}} \psi_{N,a,h}=\eta (\hat{S}^2+1-\hat{A})$ and $\partial_{\hat{\Sigma}} \psi_{N,a,h}=\eta (\hat{\Sigma}^2+\hat{X}-\hat{A})$, if either $|\hat{S}|>\sqrt{6}$ or $|\hat{\Sigma}|>\sqrt{6}$, the phase is non-stationary, and $\hat{W}=O(\lambda_{a,h}^{-\infty})$. Therefore we can add a cut-off $\psi_4$ on $\hat{S}$ and $\hat{\Sigma}$ and assume that $\hat{S}$ and $\hat{\Sigma}$ are less than $3$.
Thus, we conclude that
\begin{align}
	\label{defihatwcut}
	&\hat{W}_{h,N,\gamma,a}(\hat{T},\hat{X},\hat{Y})\\
	&=\frac{a^2}{(2\pi h)^3}\int e^{i\lambda_{a,h} \hat \psi_{N,a,h}(\boldsymbol{\hat T},a,\hat{\Sigma},\hat{S},\hat{A},\eta)} \eta^2 \psi_1(\eta)\psi_4(\hat{S})\psi_4(\hat{\Sigma})\psi_2(\frac{a}{\gamma}\hat{A}) d\hat{S}d\hat{\Sigma}d\hat{A}d\eta+O(\lambda_{a,h}^{-\infty}).
\end{align}
We now distinguish two regimes, $|N|\le \lambda_{a,h}$, $|N|\ge \lambda_{a,h}$.
\subsubsection{Case $|N|\le \lambda_{a,h}$:}
If $|N|\le \lambda_{a,h}$ then, for all $n\in \mathbb{N}$ we have $NB_{\beta(\eta/h)}^{(n)}(\eta \lambda_{a,h} \hat{A}^{3/2})$ is bounded (for $\lambda$ large). This means that $e^{iNB_{\beta(\eta/h)}(\eta \lambda_{a,h} \hat{A}^{3/2})}$ will not oscillate, and may be absorbed into the symbol with respect to the derivatives in $A$. \\
Let us focus a bit more in that: In order for $e^{iNB_{\beta(\eta/h)}(\eta \lambda_{a,h} \hat{A}^{3/2})}$ to be treated as a symbol, (with respect to $A$) we need that $|\partial_{\hat{A}}^n\left(e^{iNB_{\beta(\eta/h)}(\eta \lambda_{a,h} \hat{A}^{3/2})}\right)|\lesssim 1$ for all $n$. But all the derivatives are always a linear combinations of terms like this: $$K \lambda_{a,h}^{p+q-1}\eta^{p+q-1}\hat{A}^{z}N^{q}\prod_{i=1}^q\left(B^{(1+j_i)}_{\beta(\eta/h)}(\eta \lambda_{a,h} \hat{A}^{3/2})\right)e^{iNB_{\beta(\eta/h)}(\eta \lambda_{a,h} \hat{A}^{3/2})},$$ where $K$ is a constant and $\sum_i^q j_i=p$, $p,q\in\mathbb{N}$, $z\in \mathbb{R}$.\\
Since $\left|e^{iNB_{\beta(\eta/h)}(\eta \lambda_{a,h} \hat{A}^{3/2})}\right|=1$, $|\hat{A}^{z}|\le \max\{20^{-z}, 6^z\}$ and $|\eta|\lesssim \frac{3}{2}$, these terms are harmless, since they are bounded. Furthermore, \\$B'_{\beta(\eta/h)}(\eta \lambda_{a,h} \hat{A}^{3/2})\lesssim \lambda_{a,h}^{-2/3} O^\infty(\hat{A}^{-4/3})+O^\infty(\hat{A}^{-2})$. That is, for all $p$, $$|B^{(p)}_{\beta(\eta/h)}(\eta \lambda_{a,h} \hat{A}^{3/2})|\lesssim  \lambda_{a,h}^{-2/3} \eta^{-4/3-p+1} \lambda_{a,h}^{-4/3-p+1} \hat{A}^{3/2-p+1}+\eta^{-1-p} \lambda_{a,h}^{-1-p} \hat{A}^{-1-p}\le K_p {\lambda_{a,h}}^{-1-p}.$$. Then, coming back to the term we obtain that $K \lambda_{a,h}^{p+q}\eta^{p+q}\hat{A}^{z}N^{q}\prod_{i=1}^q\left(B^{(1+j_i)}_{\beta(\eta/h)}(\eta \lambda_{a,h} \hat{A}^{3/2})\right)e^{iNB_{\beta(\eta/h)}(\eta \lambda_{a,h} \hat{A}^{3/2})}\\
\le K_p \lambda_{a,h}^{p+q} N^q \prod_{i=1}^{q}\lambda_{a,h}^{-2-j_i} 
\le K_p \lambda_{a,h}^{p+q} \lambda_{a,h}^{\sum_{i=1}^{q}(-2-N j_i)}\le K_p {N^q}\lambda_{a,h}^{p+q}\lambda_{a,h}^{-2q-p}\le K_p \frac{N^q}{\lambda_{a,h}^q}\lambda_{a,h}^{p+q-p-q}\le K_p \frac{N^q}{\lambda_{a,h}^q}$. Now, in the case there $|N|\le \lambda_{a,h}$, we conclude that this part is a symbol so it may be absorbed into the symbolic expression.\\\\
Since now we have relegated $B_{\beta}$ to the symbol, and since $B_{\beta}$ carries all the terms specific to the Robin problem, we now have the same phase structure as in the Dirichlet case with only different but same-behaved symbol, so we obtain directly the same estimates as in \cite{positive} since we are studying  the same oscillatory integral with different symbol. 
\begin{obs}
	We remark that this regime is the regime where the swallowtail appears, so is the worst case in terms of dispersion.
\end{obs}
\subsubsection{Case $|N|\ge \lambda_{a,h}$:} 
First, we  see that we can do stationary phase, as we did for \eqref{imptrans}, we compute the derivatives in our variables
\begin{align}
	&\partial_\eta \hat \psi_{N,a,h}(\boldsymbol{\hat T},a,\hat{\Sigma},\hat{S},\hat{A},\eta)\\
	&= \left(\hat{Y}+\frac{\hat{\Sigma}^3}{3}+\frac{\hat{S}^3}{3}+\hat{X}\hat{\Sigma}+\hat{S}-(\hat{S}+\hat{\Sigma})\hat{A}+\hat{T}\frac{\sqrt{1+a}}{a}(\sqrt{1+a\hat{A}}-\sqrt{1+a})-N\frac{4}{3}\hat{A}^{3/2}\right)\\
	&-\hat{A}^{3/2}{N}B'_{\beta(\eta/h)}(\eta \lambda_{a,h}\hat{A}^{3/2})-\frac{N}{\lambda_{a,h}}Resrob_1(\hat{A},\eta,h),\\
	&\partial_{\hat{A}}\hat \psi_{N,a,h}(\boldsymbol{\hat T},a,\hat{\Sigma},\hat{S},\hat{A},\eta)\\
	&=\eta\left(-\hat{\Sigma}-\hat{S}+\frac{\hat{T}}{2}\frac{\sqrt{1+a}}{{\sqrt{1+a\hat{A}}}}-2N\hat{A}^{1/2}\left(1+\frac{3}{4}B'_{\beta{(\eta/h)}}(\eta \lambda_{a,h}\hat{A}^{3/2})\right)\right),\\
	&\partial^2_{\hat{A}}\hat \psi_{N,a,h}(\boldsymbol{\hat T},a,\hat{\Sigma},\hat{S},\hat{A},\eta)\sim N,\\
	&\partial^2_{\eta}\hat \psi_{N,a,h}(\boldsymbol{\hat T},a,\hat{\Sigma},\hat{S},\hat{A},\eta)=-N\lambda_{a,h} \hat{A}^3B''_{\beta{(\eta/h)}}(\eta \lambda_{a,h}\hat{A}^{3/2})+N\lambda_{a,h}^{-1}Resrob_2(\hat{A},\eta,h)\sim \frac{N}{\lambda_{a,h}^2},\\
	&\partial^2_{\eta,\hat{A}}\hat \psi_{N,a,h}(\boldsymbol{\hat T},a,\hat{\Sigma},\hat{S},\hat{A},\eta)=\eta^{-1} \partial_{\hat{A}}\hat \psi_{N,a,h}(\boldsymbol{\hat T},a,\hat{\Sigma},\hat{S},\hat{A},\eta)-\frac{3}{2}N\hat{A}^2 \eta \lambda_{a,h}B''_{\beta{(\eta/h)}}(\eta \lambda_{a,h}\hat{A}^{3/2})\\
	&-N\lambda_{a,h}^{-1}{a}\partial_\alpha Resrob_1(\hat{A},\eta,h)\sim_{\hat{A}=A_c, \eta=\eta_c} \frac{N}{\lambda_{a,h}^2}.\\
\end{align}
The upper bound $|\partial^2_{\eta}\hat \psi_{N,a,h}(\boldsymbol{\hat T},a,\hat{\Sigma},\hat{S},\hat{A},\eta)|\gtrsim  \frac{N}{\lambda_{a,h}^2}$ comes from the fact that $N\lambda_{a,h}^{-1}Resrob_2(\hat{A},\eta,h)$ is negligible compared to $\frac{N}{\lambda_{a,h}}$, since $|N\lambda_{a,h}^{-1}Resrob_2(\hat{A},\eta,h)|\lesssim \frac{N h^{1/3}}{\lambda_{a,h}^{5/3}}$, since $N\ge \lambda_{\gamma,h}$, implies $\lambda_{a,h}^{-1/2}\ge h^{1/3}$.\\
Thus Hessian in the stationary points (${A}_c$,$\eta_c$) is $|H_{\hat{A},\eta}\hat \psi_{N,a,h}(\boldsymbol{\hat T},a,\hat{\Sigma},\hat{S},{A}_c(\hat{S},\hat{\Sigma}),\eta_c(\hat{S},\hat{\Sigma}))|\sim \frac{N^2}{\lambda_{a,h}^2}$, with the larger eigenvalue being $\sim N$ and the smaller one $\sim \frac{N}{\lambda_{a,h}^2}$.  
This means that we cannot guarantee the largeness of the small eigenvalue. However, if we consider the large parameter $\frac{N}{\lambda_{a,h}}$, then we can apply stationary phase, the only problem is that, in the direction of the small eigenvalue, and since we only have $|N|\gtrsim \lambda_{a,h}$, we  have $O\left(\frac{N}{\lambda_{a,h}}\right)$, that is $O\left(1\right)$, but since we only are using stationary phase for obtaining non-dangerous symbols and thus only using the first order expansion of the stationary phase, this is enough for us since the smallness of the Hessian compensate the $O\left(\frac{N}{\lambda_{a,h}}\right)$.\\ We have
\begin{align}
	\hat{W}_{h,N,\gamma,a}(\hat{T},\hat{X},\hat{Y})=\frac{a^2}{Nh^3} \int e^{i\lambda_{a,h} \hat \psi_{N,a,h}(\boldsymbol{\hat T},a,\hat{\Sigma},\hat{S},\hat{A}_c(\hat{S},\hat{\Sigma}),\eta_c(\hat{S},\hat{\Sigma}))} \chi_4(\hat{\Sigma},\hat{S},N,h,\lambda_{a,h})d\hat{S}d\hat{\Sigma},
\end{align}
where $\chi_4$ is a harmless symbol.\\
We recall here the following elementary theorem (which is only an exact expression of the Taylor remainder of order 1).
\begin{teo}
	For $f$ smooth function of several variables, we have
	\begin{equation}
		f(\boldsymbol{x})=f(0)+\boldsymbol{x}\cdot \int_{0}^1\nabla f(\theta\boldsymbol{x})d\theta.
	\end{equation}
\end{teo}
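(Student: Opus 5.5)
This is the first-order Taylor formula with integral remainder. The approach is to reduce it to the one-variable fundamental theorem of calculus along the segment from $0$ to $\boldsymbol{x}$.

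Fix $\boldsymbol{x}$ in the domain, assumed star-shaped with respect to $0$; in every application here it is a ball or convex neighbourhood of the critical point. Define $g(\theta):=f(\theta\boldsymbol{x})$ for $\theta\in[0,1]$. Since $f$ is smooth, $g$ is $\mathcal{C}^1$ on $[0,1]$. The chain rule gives
\begin{align}
g'(\theta)=\nabla f(\theta\boldsymbol{x})\cdot\boldsymbol{x}.
\end{align}
The fundamental theorem of calculus then yields $g(1)-g(0)=\int_0^1 g'(\theta)\,d\theta$, which reads
\begin{align}
f(\boldsymbol{x})-f(0)=\int_0^1 \nabla f(\theta\boldsymbol{x})\cdot\boldsymbol{x}\,d\theta.
\end{align}

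It remains to pull the constant vector $\boldsymbol{x}$ out of the integral. By linearity of the integral, applied componentwise, this gives $\boldsymbol{x}\cdot\int_0^1\nabla f(\theta\boldsymbol{x})\,d\theta$, where the vector integral is well defined because $\theta\mapsto\nabla f(\theta\boldsymbol{x})$ is continuous on $[0,1]$.

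There is no real obstacle. The only point requiring care is that the segment $\{\theta\boldsymbol{x}\}$ must stay inside the domain of $f$, which is why star-shapedness (or a convex neighbourhood) is assumed. In the intended use, with $f$ a phase expanded around the stationary point $(\hat{A}_c,\eta_c)$, one also notes that the resulting integral $\int_0^1\nabla f(\theta\boldsymbol{x})\,d\theta$ is again smooth in the remaining parameters.
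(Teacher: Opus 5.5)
Your proof is correct and is the standard argument the paper has in mind: the paper gives no proof and only recalls the identity as the exact first-order Taylor remainder, which is your restriction to the segment $\theta\mapsto\theta\boldsymbol{x}$ followed by the chain rule and the fundamental theorem of calculus. Your star-shapedness hypothesis is the implicit assumption the paper leaves unstated; one small correction to your closing remark is that the paper applies the formula to the critical-point maps $A_c(\hat\Sigma,\hat S)$ and $\eta_c(\hat\Sigma,\hat S)$, expanded about $(\hat\Sigma,\hat S)=0$, not to the phase around $(\hat A_c,\eta_c)$.
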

By applying this to the function $A_c(\hat{\Sigma},\hat{S})$, we obtain
\begin{align}
	A_c(\hat\Sigma,\hat S)=A_c|_0+A_{cr}(\hat \Sigma,\hat{S}).
\end{align}
Where 
\begin{align}
	A_{cr}(\hat \Sigma,\hat{S}):=(\hat S,\hat \Sigma)\cdot \int_{0}^{1} \nabla_{\hat S,\hat \Sigma}{A}_c(\theta \hat S,\theta \hat \Sigma)d\theta,
\end{align}
\begin{align}
	\eta_{cr}(\hat \Sigma,\hat{S}):=(\hat S,\hat \Sigma)\cdot \int_{0}^{1} \nabla_{\hat S,\hat \Sigma}\eta_c(\theta \hat S,\theta \hat \Sigma)d\theta.
\end{align}
Now we proceed to prove the proposition.\\
By using same heuristics as in tangential case, we have the following:
\begin{prop}
	We have the following asymptotics for the derivatives of the critical points:
	\begin{align}
		&|\partial_{\hat{\Sigma}} A_c({\hat{\Sigma}},{\hat{S}})|\lesssim \frac{1}{N},
		& |\partial_{\hat{S}} \eta_c({\hat{\Sigma}},{\hat{S}})|\lesssim \lambda_{a,h}|{\hat{S}}^2-1+A_c({\hat{\Sigma}},{\hat{S}})|,\\ &|\partial_{\hat{\Sigma}} \eta_c({\hat{\Sigma}},{\hat{S}})|\lesssim \lambda_{a,h}|{\hat{\Sigma}}^2-\hat{X}+A_c({\hat{\Sigma}},{\hat{S}})|,
		&|\partial_{\hat{S}} A_c({\hat{\Sigma}},{\hat{S}})|\lesssim \frac{1}{N}.
	\end{align}
\end{prop}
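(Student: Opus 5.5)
The plan is to differentiate the two critical point equations implicitly and solve the resulting $2\times2$ linear system by Cramer's rule, exactly as was done for $A_c'(\Sigma),\eta_c'(\Sigma)$ in the proof of \eqref{imptrans}. Here $\hat S$ and $\hat\Sigma$ are both free parameters. The critical point $(A_c(\hat\Sigma,\hat S),\eta_c(\hat\Sigma,\hat S))$ is defined by $\partial_{\hat A}\hat\psi_{N,a,h}=0$ and $\partial_\eta\hat\psi_{N,a,h}=0$.

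\textbf{Step 1: the linear systems.} For $v\in\{\hat\Sigma,\hat S\}$, differentiating both equations in $v$ gives
\begin{align}
\begin{pmatrix}\partial^2_{\hat A}\hat\psi & \partial^2_{\hat A,\eta}\hat\psi\\ \partial^2_{\eta,\hat A}\hat\psi & \partial^2_{\eta}\hat\psi\end{pmatrix}\begin{pmatrix}\partial_v A_c\\ \partial_v\eta_c\end{pmatrix}=-\begin{pmatrix}\partial^2_{\hat A,v}\hat\psi\\ \partial^2_{\eta,v}\hat\psi\end{pmatrix}.
\end{align}
From the explicit phase \eqref{defihatpsiB}, the right-hand sides contain no $B_\beta$ and no ${Resrob}$ terms, since neither depends on $\hat\Sigma$ or $\hat S$. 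Explicitly, $\partial^2_{\hat A,\hat\Sigma}\hat\psi=\partial^2_{\hat A,\hat S}\hat\psi=-\eta$. The remaining two entries are $\partial^2_{\eta,\hat\Sigma}\hat\psi$, which is the $\hat\Sigma$-stationarity quantity $\hat\Sigma^2+\hat X-\hat A$, and $\partial^2_{\eta,\hat S}\hat\psi$, which is $\hat S^2+1-\hat A$. These are the quantities appearing in the statement, written there with the paper's convention; I will work with $\partial^2_{\eta,v}\hat\psi$ directly. Denote this quantity by $m_v$; it is bounded on the support.

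\textbf{Step 2: Hessian bounds and the $A_c$ estimates.} The bounds just computed give, at the critical point, $\partial^2_{\hat A}\hat\psi\sim N$, $\partial^2_\eta\hat\psi\sim N\lambda_{a,h}^{-2}$ and $\partial^2_{\hat A,\eta}\hat\psi\sim N\lambda_{a,h}^{-2}$, so $\det\sim N^2\lambda_{a,h}^{-2}$. The ${Resrob}_2$ and $\partial_\alpha{Resrob}_1$ contributions are negligible by proposition \ref{restrobs}, because $|N|\ge\lambda_{a,h}$. Cramer's rule then gives
\[
\partial_vA_c=\frac{\eta\,\partial^2_\eta\hat\psi+m_v\,\partial^2_{\hat A,\eta}\hat\psi}{\det}.
\]
Both terms are $O(N\lambda_{a,h}^{-2})$, so $|\partial_vA_c|\lesssim N\lambda_{a,h}^{-2}\cdot\lambda_{a,h}^2N^{-2}=N^{-1}$. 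This proves the two $A_c$ bounds.

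\textbf{Step 3: the $\eta_c$ estimates, which are the main obstacle.} Cramer's rule gives
\[
\partial_v\eta_c=-\frac{\eta\,\partial^2_{\hat A,\eta}\hat\psi+m_v\,\partial^2_{\hat A}\hat\psi}{\det}.
\]
The second term is $\lesssim N|m_v|\lambda_{a,h}^2N^{-2}=\lambda_{a,h}^2|m_v|/N\le\lambda_{a,h}|m_v|$, using $N\ge\lambda_{a,h}$. This is exactly the claimed bound.

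The delicate point is the first term. A naive estimate only gives $O(1/N)$, which is not dominated by $\lambda_{a,h}|m_v|$ near $m_v=0$. My plan is to exploit the critical point identity $\partial_{\hat A}\hat\psi=0$ to rewrite
\[
\partial^2_{\hat A,\eta}\hat\psi=-\tfrac32N\hat A^2\eta\lambda_{a,h}B''_{\beta(\eta/h)}-N\lambda_{a,h}^{-1}a\,\partial_\alpha{Resrob}_1 .
\]
I would then check whether the combination $\eta\,\partial^2_{\hat A,\eta}\hat\psi$ cancels against the $\eta\,\partial^2_\eta\hat\psi$ structure. This uses the homogeneity of $B_\beta(\eta\lambda\hat A^{3/2})$ in $(\eta,\hat A^{3/2})$, up to the $\beta(\eta/h)$-dependence controlled by ${Resrob}_i$. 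Concretely, I would change variables from $(\hat A,\eta)$ to $(\hat A,\,\eta\hat A^{3/2})$, in which the $\hat\Sigma$-forcing $-\eta$ decouples from the $\eta$-equation.

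If this cancellation does not fully materialize, the fallback is to use the estimate only where it is needed. Off the set $|m_v|\lesssim(\lambda_{a,h}N)^{-1}$ the $1/N$ term is absorbed into $\lambda_{a,h}|m_v|$. On that set, the $\hat\Sigma$- or $\hat S$-integral is handled by the Van der Corput step that follows, as in the proof of \eqref{imptrans}, where only $\partial_v\eta_c\lesssim\lambda_{a,h}^{-3/4}$ is used.
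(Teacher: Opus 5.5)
\textbf{Agreement and the $A_c$ bounds.} Your Steps 1--2 coincide with the paper. Its proof is just the implicit function theorem plus the explicit inverse of the $2\times 2$ Hessian in $(\hat A,\eta)$, with $\det\sim N^2\lambda_{a,h}^{-2}$, and the two $A_c$ bounds are correct.

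\textbf{The gap: the $\eta_c$ bounds are false as stated.} For $\eta_c$ the paper writes the same Cramer formula and simply asserts the bound. In doing so it silently drops the term $-\eta_c\,\partial^2_{\hat A,\eta}\hat\psi/\det$ that you isolated. Your suspicion is justified, and the cancellation you hope for does not occur. Using $\partial_{\hat A}\hat\psi=0$, at the critical point one has $\partial^2_{\hat A,\eta}\hat\psi=\frac{3\eta}{2\hat A}\,\partial^2_\eta\hat\psi$ up to the $\mathrm{Resrob}$ corrections. Hence at any point where $m_v=0$,
\begin{align}
\partial_v\eta_c=-\frac{\eta_c\,\partial^2_{\hat A,\eta}\hat\psi}{\det}\approx-\frac{3\eta_c^2}{2A_c\,\partial^2_{\hat A}\hat\psi}\approx\frac{3\eta_c}{2A_c^{1/2}N}\neq 0 .
\end{align}
Your homogeneity change of variables only shows that $\partial_v(\eta_cA_c^{3/2})=O(\lambda_{a,h}^2|m_v|/N)$, up to Robin remainders; the $\eta$-forcing cancels exactly for $u_c=\eta_cA_c^{3/2}$. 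But $\partial_v\eta_c$ differs from $A_c^{-3/2}\partial_v(\eta_cA_c^{3/2})$ by $-\frac32\eta_cA_c^{-1}\partial_vA_c$, which is of exact order $N^{-1}$ because $\partial_vA_c$ is. So the two $\eta_c$ inequalities, as literally stated, fail on $\{m_v=0\}$, which is precisely the stationary set that matters. Neither your main route nor any other argument can establish them, and your fallback only changes the statement rather than proving it.

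\textbf{What you can prove, and why it suffices.} Your Cramer computation proves, with no case splitting,
\begin{align}
|\partial_v\eta_c|\lesssim\frac{\lambda_{a,h}^2}{N}|m_v|+\frac1N\le\lambda_{a,h}|m_v|+\frac1N .
\end{align}
You should state and prove this corrected version, then check it where the proposition is used. That use is not a Van der Corput step as in \eqref{imptrans}. It is Remark \ref{smallness} and the bounds on $\partial\hat\Psi$ in the proof of item (5) of proposition \ref{bigtangenctial}. On the region $|m_v|\le\lambda_{a,h}^{-11/12}$ you get $|\partial_v\eta_c|\lesssim\lambda_{a,h}^{1/12}+N^{-1}\lesssim\lambda_{a,h}^{1/12}$, since $N\ge\lambda_{a,h}$. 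Therefore $|\eta_{cr}|\lesssim\lambda_{a,h}^{-1/4}$ and $|\partial^2_{\hat S,\hat\Sigma}\hat\Psi|\lesssim\lambda_{a,h}^{-1/2}$ survive unchanged. You also implicitly corrected the sign typos in the statement: the relevant quantities are $\hat S^2+1-A_c$ and $\hat\Sigma^2+\hat X-A_c$, i.e.\ $\partial^2_{\eta,v}\hat\psi$, as you used.
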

The proposition is again a consequence of the implicit functions theorem, since
\begin{align}
	\begin{pmatrix}
		&\partial^2_{\hat{A}}\psi|_{\hat{\Sigma},\hat{S}} &\partial^2_{\hat{A},\eta} \psi|_{\hat{\Sigma},\hat{S}}\\
		&\partial^2_{\eta,\hat{A}} \ \psi|_{\hat{\Sigma},\hat{S}}& \partial^2_{\eta}  \psi|_{\hat{\Sigma},\hat{S}}
	\end{pmatrix}
	\begin{pmatrix}
		\partial_{\hat{\Sigma}} A_c(\hat{\Sigma},\hat{S}) \\ \partial_{\hat{\Sigma}}\eta_c(\hat{\Sigma},\hat{S})
	\end{pmatrix}=
	\begin{pmatrix}
		\eta_c(\hat{\Sigma},\hat{S})\\
		-(\hat{\Sigma}^2-\hat{X}+A_c(\hat{\Sigma},\hat{S}))
	\end{pmatrix},
\end{align}
\begin{align}
	\begin{pmatrix}
		&\partial^2_{\hat{A}}\psi|_{\hat{\Sigma},\hat{S}} &\partial^2_{\hat{A},\eta} \psi|_{\hat{\Sigma},\hat{S}}\\
		&\partial^2_{\eta,\hat{A}} \ \psi|_{\hat{\Sigma},\hat{S}}& \partial^2_{\eta}  \psi|_{\hat{\Sigma},\hat{S}}
	\end{pmatrix}
	\begin{pmatrix}
		\partial_{\hat{S}} A_c(\hat{\Sigma},\hat{S}) \\ \partial_{\hat{S}}\eta_c(\hat{\Sigma},\hat{S})
	\end{pmatrix}=
	\begin{pmatrix}
		\eta_c(\hat{\Sigma},\hat{S})\\
		-(\hat{S}^2-1+A_c(\hat{\Sigma},\hat{S}))
	\end{pmatrix}.
\end{align}
This implies that
\begin{align}
	\begin{pmatrix}
		\partial_{\hat{\Sigma}} A_c(\hat{\Sigma},\hat{S}) \\ \partial_{\hat{\Sigma}}\eta_c(\hat{\Sigma},\hat{S})
	\end{pmatrix}=	\frac{1}{\det(H_{\hat{A},\eta}\hat \psi_{N,a,h}|_{\hat{\Sigma},\hat{S}})}\begin{pmatrix}
		& \partial^2_{\eta}  \psi|_{\hat{\Sigma},\hat{S}}&-\partial^2_{\hat{A},\eta} \psi|_{\hat{\Sigma},\hat{S}}\\
		&-\partial^2_{\eta,\hat{A}} \ \psi|_{\hat{\Sigma},\hat{S}}& \partial^2_{\hat{A}}\psi|_{\hat{\Sigma},\hat{S}}
	\end{pmatrix}
	\begin{pmatrix}
		\eta_c(\hat{\Sigma},\hat{S})\\
		-(\hat{\Sigma}^2-\hat{X}+A_c(\hat{\Sigma},\hat{S}))
	\end{pmatrix},
\end{align}
\begin{align}
	\begin{pmatrix}
		\partial_{\hat{S}} A_c(\hat{\Sigma},\hat{S}) \\ \partial_{\hat{S}}\eta_c(\hat{\Sigma},\hat{S})
	\end{pmatrix}=	\frac{1}{\det(H_{\hat{A},\eta}\hat \psi_{N,a,h}|_{\hat{\Sigma},\hat{S}})}\begin{pmatrix}
		& \partial^2_{\eta}  \psi|_{\hat{\Sigma},\hat{S}} &-\partial^2_{\hat{A},\eta} \psi|_{\hat{\Sigma},\hat{S}}\\
		&-\partial^2_{\eta,\hat{A}} \ \psi|_{\hat{\Sigma},\hat{S}}&  \partial^2_{\hat{A}}\psi|_{\hat{\Sigma},\hat{S}}
	\end{pmatrix}
	\begin{pmatrix}
		\eta_c(\hat{\Sigma},\hat{S})\\
		-(\hat{S}^2-1+A_c(\hat{\Sigma},\hat{S}))
	\end{pmatrix}.
\end{align}
\begin{proof}[Proof of proposition \ref{bigtangenctial}, item (5)]
	With all the set-up, what we have to prove is that 
	\begin{align}
		|I|=\left|\int e^{i\lambda_{a,h} \hat \psi_{N,a,h}(\boldsymbol{\hat T},a,\hat{\Sigma},\hat{S},\hat{A}_c(\hat{S},\hat{\Sigma}),\eta_c(\hat{S},\hat{\Sigma}))} \chi_4(\hat{\Sigma},\hat{S},N,h,\lambda_{a,h})d\hat{S}d\hat{\Sigma} \right|\lesssim \lambda_{a,h}^{-2/3}.
	\end{align}
	We will prove that this integral is two almost independent oscillatory integrals to which we can use Van der Corput lemma.\\\\
	First, we assume that $|({\hat{\Sigma}}^2-X+A_c({\hat{\Sigma}},{\hat{S}}))|,|({\hat{S}}^2-1+A_c({\hat{\Sigma}},{\hat{S}}))|\le \lambda_{a,h}^{-11/12}$. If that is not the case we have non-stationary phase.\\
	We  also assume that $|2{\hat{\Sigma}}-\partial_{\hat\Sigma} A_c({\hat{\Sigma}},{\hat{S}})|,|2{\hat{S}}-\partial_{\hat{S}} A_c({\hat{\Sigma}},{\hat{S}})|\le \lambda_{a,h}^{-1/3}$. If that was not the case in one of the variables, do stationary phase in that variable and van der Corput on the other, is a strictly easier case than the one we will study.
	\begin{obs}\label{smallness}
		Since we know that $|\partial_{\hat{\Sigma}} A_c({\hat{\Sigma}},{\hat{S}})|,|\partial_{\hat{S}} A_c({\hat{\Sigma}},S)|\lesssim \frac{1}{N}$, our conditions implies that $|\hat{S}|,|\hat{\Sigma}|\lesssim \lambda_{a,h}^{-1/3}$. This implies that $|\eta_{cr}(\hat{\Sigma},\hat{S})|\lesssim \lambda_{a,h}^{-1/3}\lambda_{a,h}\lambda_{a,h}^{-11/12}=\lambda_{a,h}^{-1/4}$.
	\end{obs}
	We now introduce the following change of variables and terms (and we do the abuse of notation of suppressing from the notation variables on which the expression does not depend, since Van der Corput lemma gives a universal constant):
	\begin{align}
		P:=\eta_c|_0^{2/3}\lambda_{a,h}^{2/3}(A_c|_0-X), Q:=\eta_c|_0^{2/3}\lambda_{a,h}^{2/3}(A_c|_0-1), \hat{\Sigma}=\eta_c|_0^{-1/3}\lambda_{a,h}^{-1/3}p, \hat{S}=\eta_c|_0^{-1/3}\lambda_{a,h}^{-1/3}q.
	\end{align}
	After the change of variables, and these considerations, we obtain
	\begin{align}
		I=\lambda_{a,h}^{-2/3}\int e^{i\left(\frac{p^3}{3}-Pp\right)}e^{i\left(\frac{q^3}{3}-Qq\right)}\chi_5(p,q)dpdq.
	\end{align}
	With the symbol $\chi_5$ which we will study later, being the following:\\
	\begin{align}
		&\tilde{\chi_5}(\hat{\Sigma},\hat{S})=\chi_5(\hat{\Sigma}\eta_c|_0^{1/3}\lambda_{a,h}^{1/3},\hat{S}\eta_c|_0^{1/3}\lambda_{a,h}^{1/3})\\
		&=\chi_4(\hat{\Sigma},\hat{S},N,h,\lambda_{a,h})e^{i\lambda_{a,h} \left(\hat \psi_{N,a,h}(\boldsymbol{\hat T},a,\hat{\Sigma},\hat{S},\hat{A}_c(\hat{S},\hat{\Sigma}),\eta_c(\hat{S},\hat{\Sigma}))-\eta_c|_0\left[\frac{\hat{\Sigma^3}}{3}+\frac{\hat{S}^3}{3}-(A_c|_0-X)\hat{\Sigma}-(A_c|_0-1)\hat{S}\right]\right)}.
	\end{align}
	By integration by parts in $p$ and $q$ (and since the support of $\chi_5(p,q)$ is compact), this gives
	\begin{align}
		&\lambda_{a,h}^{2/3}I=-\frac{1}{2}\int e^{i\left(\frac{q^3}{3}-Qq\right)}\partial_p \chi_5(p,q)\left(\int_{0}^{p} e^{i\left(\frac{z^3}{3}-Pz\right)} dz\right) dp dq\\
		&=\int \partial^2_{p,q} \chi_5(p,q) \left(\int_{0}^{p} e^{i\left(\frac{z^3}{3}-Pz\right)} dz\right)\left(\int_{0}^{q} e^{i\left(\frac{z^3}{3}-Qz\right)} dz\right)dpdq.
	\end{align}
	After the same change of variables as in the Airy integral $z=t\sqrt{P}$,$z=t\sqrt{Q}$ respectively, we obtain, by Van der Corput of order 2 around 1 or non-stationary phase away from 1, the following.
	\begin{align}
		&\lambda_{a,h}^{2/3}|I|\lesssim  ||\partial^2_{p,q}\chi_5(p,q)||_{L^1_{p,q}(\mathbb{R}^2)} \frac{1}{1+|P|^{1/4}}\frac{1}{1+|Q|^{1/4}}\\
		&\lesssim ||\partial^2_{p,q}\chi_5(p,q)||_{L^1_{p,q}(\mathbb{R}^2)}.
	\end{align}
	We conclude then if we prove that $||\partial^2_{p,q}\chi_5(p,q)||_{L^1_{p,q}(\mathbb{R}^2)}\lesssim 1$ uniformly. 
	We compute the derivatives of the phase to see that they do not oscillate.\\
	First we can introduce a new auxiliary notation for this phase
	\begin{align}
		&\hat \Psi_{N,a,\lambda}(\boldsymbol{\hat T},a,\hat{\Sigma},\hat{S})\\
		&:=\hat \psi_{N,a,h}(\boldsymbol{\hat T},a,\hat{\Sigma},\hat{S},\hat{A}_c(\hat{S},\hat{\Sigma}),\eta_c(\hat{S},\hat{\Sigma}))\\
		&-\eta_c|_0\left[\frac{\hat{\Sigma^3}}{3}+\frac{\hat{S}^3}{3}-(A_c|_0-X)\hat{\Sigma}-(A_c|_0-1)\hat{S}\right],\\
		&\partial_{\hat{S}}\left(\hat \Psi_{N,a,\lambda}(\boldsymbol{\hat T},a,\hat{\Sigma},\hat{S})\right)\\
		&=\partial_{\hat{S}}\hat \psi_{N,a,h}(\boldsymbol{\hat T},a,\hat{\Sigma},\hat{S},\hat{A}_c(\hat{S},\hat{\Sigma}),\eta_c(\hat{S},\hat{\Sigma}))-\eta_c|_0\left[\hat{S}^2-(A_c|_0-1)\right]\\
		&=\eta_c(\hat{\Sigma},\hat{S})\left[\hat{S}^2-(A_c(\hat{\Sigma},\hat{S})-1)\right]-\eta_c|_0\left[\hat{S}^2-(A_c|_0-1)\right],\\
		& \partial_{\hat{\Sigma}}\left(\hat \Psi_{N,a,\lambda}(\boldsymbol{\hat T},a,\hat{\Sigma},\hat{S})\right)\\
		&=\eta_c(\hat{\Sigma},\hat{S})\left[\hat{\Sigma}^2-(A_c(\hat{\Sigma},\hat{S})-X)\right]-\eta_c|_0\left[\hat{\Sigma}^2-(A_c|_0-X)\right],\\
		&\partial_{\hat{S},\hat{\Sigma}}\left(\hat \Psi_{N,a,\lambda}(\boldsymbol{\hat T},a,\hat{\Sigma},\hat{S})\right)\\
		&=\partial_{\hat{\Sigma}}\eta_c(\hat{\Sigma},\hat{S})\left[\hat{S}^2-(A_c(\hat{\Sigma},\hat{S})-1)\right]+[\eta_c|_0-\eta_c(\hat{\Sigma},\hat{S})]\partial_{\hat{\Sigma}}A_c(\hat{\Sigma},\hat{S}).
	\end{align}
	Now, we can bound these derivatives using the assumptions we have made
	\begin{align}
		&|\partial_{\hat{S}}\left(\hat \Psi_{N,a,\lambda}(\boldsymbol{\hat T},a,\hat{\Sigma},\hat{S})\right)|\\
		&=|\eta_c(\hat{\Sigma},\hat{S})\left[\hat{S}^2-(A_c(\hat{\Sigma},\hat{S})-1)\right]-\eta_c|_0\left[\hat{S}^2-(A_c|_0-1)\right]|\\
		&=|\eta_c(\hat{\Sigma},\hat{S})\left[\hat{S}^2-(A_c(\hat{\Sigma},\hat{S})-1)\right]-\eta_c|_0\left[\hat{S}^2-(A_c(\hat{\Sigma},\hat{S})-1)\right]\\
		&+\eta_c|_0\left[\hat{S}^2-(A_c(\hat{\Sigma},\hat{S})-1)\right]-\eta_c|_0\left[\hat{S}^2-(A_c|_0-1)\right]|\\
		&\lesssim \lambda_{a,h}^{-7/6}+O(1/N)\lesssim \lambda_{a,h}^{-1},\\
		&|\partial_{\hat{\Sigma}}\left(\hat \Psi_{N,a,\lambda}(\boldsymbol{\hat T},a,\hat{\Sigma},\hat{S})\right)|\lesssim \lambda_{a,h}^{-7/6}+O(1/N)\lesssim \lambda_{a,h}^{-1},\\
		&|\partial_{\hat{S},\hat{\Sigma}}\left(\hat \Psi_{N,a,\lambda}(\boldsymbol{\hat T},a,\hat{\Sigma},\hat{S})\right)|\lesssim \lambda_{a,h}^{-1/2}.
	\end{align}
\end{proof}
Now, we have:
\begin{align}
	&\partial_{\hat{S},\hat{\Sigma}}\tilde{\chi_5}(\hat{\Sigma},\hat{S})=e^{i\lambda_{a,h}\hat \Psi_{N,a,\lambda}(\boldsymbol{\hat T},a,\hat{\Sigma},\hat{S})}\Big(\partial_{\hat{S},\hat{\Sigma}}\chi_4(\hat{\Sigma},\hat{S},N,h,\lambda_{a,h})\\
	&+i\lambda_{a,h}\partial_{\hat{S}}\hat \Psi_{N,a,\lambda}(\boldsymbol{\hat T},a,\hat{\Sigma},\hat{S}) \partial_{\hat{\Sigma}}\chi_4(\hat{\Sigma},\hat{S},N,h,\lambda_{a,h})\\
	&+i\lambda_{a,h}\partial_{\hat{\Sigma}}\hat \Psi_{N,a,\lambda}(\boldsymbol{\hat T},a,\hat{\Sigma},\hat{S}) \partial_{\hat{S}}\chi_4(\hat{\Sigma},\hat{S},N,h,\lambda_{a,h})\\
	&+i^2\lambda_{a,h}^2\partial_{\hat{\Sigma}}\hat \Psi_{N,a,\lambda}(\boldsymbol{\hat T},a,\hat{\Sigma},\hat{S}) \partial_{\hat{S}}\hat \Psi_{N,a,\lambda}(\boldsymbol{\hat T},a,\hat{\Sigma},\hat{S}) \chi_4(\hat{\Sigma},\hat{S},N,h,\lambda_{a,h})\\
	&+\lambda_{a,h}i\chi_4(\hat{\Sigma},\hat{S},N,h,\lambda_{a,h})\partial_{\hat{\Sigma},\hat{S}}\hat \Psi_{N,a,\lambda}(\boldsymbol{\hat T},a,\hat{\Sigma},\hat{S}) \Big).
\end{align}
Now, with our bounds, we obtain $|\partial_{\hat{S},\hat{\Sigma}}\tilde{\chi_5}(\hat{\Sigma},\hat{S})|\lesssim \lambda_{a,h}^{1/2}$, that is, $|\partial_{p,q}{\chi_5}(p,q)|\lesssim  1$. Therefore the integral $||\partial_{p,q}{\chi_5}||_{L^1}\lesssim 1$. This is because the integral is in a fixed interval, since $|\Sigma|,|S|\lesssim \lambda_{a,h}^{-1/3}$, that is $|p|,|q|\lesssim 1$.
\begin{obs}
	Essentially this is a more explicit rewriting of the analogous proof in \cite{positive}, the only difference is in proving that $\hat{A}_{cr}$ is small by controlling the $Resrob$ terms.
\end{obs}

\section{Gallery modes}
\label{sectiongall}
The regime $\gamma<h^{2/3-\varepsilon}$, will be handled by studying the so-called gallery modes.
We start by considering the space of the gallery modes:
 \begin{defi}
 	For every $k\in\mathbb{N}$, $0<h<1$, we define the space $E^h_{k}$ as the closure in $L^2(\Omega_2)$ of the following set:
 	\begin{equation}
 		\left\{\int \psi_1(h\theta)\hat{g}(\theta)e^{iy\theta}e_k(x,\theta,\iota)d\theta : g\in  \mathcal{S}\left(\mathbb{R}\right)\right\}
 	\end{equation} 
 \end{defi}
\begin{prop}
	For every $j\ne k$, $E^h_{k}$ is orthogonal to $E^h_j$. \\
	For every $u\in L^2(\Omega_2)$, then $\psi_1(hD_y)u$, can be written as a series of elements of $\{E^h_k\}$.
\end{prop}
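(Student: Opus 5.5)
The plan is to work on the Fourier side in $y$, where both claims reduce to the spectral decomposition of Proposition \ref{robin}. By Plancherel in $y$, an element of $E^h_k$ of the form $u_g(x,y)=\int \psi_1(h\theta)\hat g(\theta)e^{iy\theta}e_k(x,\theta,\iota)\,d\theta$ has partial Fourier transform $\mathcal{F}_y u_g(x,\theta)=2\pi\,\psi_1(h\theta)\hat g(\theta)e_k(x,\theta,\iota)$. Hence, for $g_1,g_2\in\mathcal{S}(\mathbb{R})$ and $j\ne k$,
\begin{align}
\langle u_{g_1},u_{g_2}\rangle_{L^2(\Omega_2)}=c\int \psi_1(h\theta)^2\hat g_1(\theta)\overline{\hat g_2(\theta)}\left(\int_0^\infty e_k(x,\theta,\iota)e_j(x,\theta,\iota)\,dx\right)d\theta .
\end{align}
For every $\theta$ in the support of $\psi_1(h\cdot)$, which lies in the region $\theta\ge c$, the inner integral vanishes because $e_k(\cdot,\theta,\iota)$ and $e_j(\cdot,\theta,\iota)$ are distinct members of the orthonormal basis from Proposition \ref{robin}. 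Fubini is justified since $|e_k|\lesssim \theta^{1/3}$ and $e_k\in L^2$ uniformly for $\theta$ in a compact set. Orthogonality therefore holds on the generating sets, and it passes to their $L^2$ closures by continuity of the inner product.

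For the second claim, take $u\in L^2(\Omega_2)$ and set $\tilde u(x,\theta)=\mathcal{F}_y u$. For almost every $\theta$, $\tilde u(\cdot,\theta)\in L^2(\mathbb{R}_+)$, so Proposition \ref{robin} gives
\begin{align}
\psi_1(h\theta)\tilde u(x,\theta)=\sum_k \psi_1(h\theta)c_k(\theta)e_k(x,\theta,\iota),\qquad c_k(\theta)=\int_0^\infty\tilde u(x,\theta)e_k(x,\theta,\iota)\,dx,
\end{align}
with $\sum_k|c_k(\theta)|^2=\|\tilde u(\cdot,\theta)\|^2_{L^2}$. I then define $u_k:=\mathcal{F}_y^{-1}\bigl(\psi_1(h\theta)c_k(\theta)e_k\bigr)$. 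Integrating Parseval in $\theta$ gives $\sum_k\|u_k\|^2=\|\psi_1(hD_y)u\|^2$ and $\|\psi_1(hD_y)u-\sum_{k\le K}u_k\|^2=\int\psi_1^2\sum_{k>K}|c_k|^2\,d\theta\to0$ by dominated convergence, so the series converges in $L^2(\Omega_2)$.

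The remaining step is to show $u_k\in E^h_k$, and this is where I expect the main obstacle: we need $c_k(\theta)$, restricted to the support of $\psi_1(h\cdot)$, to be approximable by Schwartz functions $\hat g$. I would first check measurability and $L^2$-integrability of $c_k$, which follow from the joint continuity of $e_k$ in $(x,\theta)$. Continuity of $\theta\mapsto w_{\beta(\iota,\theta),k}$ and of $L'_\beta$ comes from the implicit function theorem applied to $Rob_\beta$ (Proposition \ref{zerosrob}), since the zeros are simple. Since $|c_k(\theta)|\le\|\tilde u(\cdot,\theta)\|$, we have $c_k\psi_1(h\cdot)\in L^2(d\theta)$. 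Now choose $\hat g_n\in C_0^\infty$ with $\hat g_n\to c_k$ in $L^2$ on a neighbourhood of $\operatorname{supp}\psi_1(h\cdot)$. The bound $\|u_{g_n}-u_k\|_{L^2}\lesssim\|\psi_1(h\cdot)(\hat g_n-c_k)\|_{L^2(d\theta)}$, which uses $\|e_k(\cdot,\theta)\|=1$, then gives $u_k\in E^h_k$.
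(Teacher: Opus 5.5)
Your orthogonality argument is the paper's: Plancherel in $y$ reduces it to fiberwise orthogonality of $e_k(\cdot,\theta,\iota)$ and $e_j(\cdot,\theta,\iota)$ for $\theta\ge c$. You also make the Fubini step and the passage to closures explicit, which the paper leaves implicit.

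For the second claim your route is correct but genuinely different. The paper argues by contradiction. It assumes a nonzero $\psi_1(hD_y)u$ orthogonal to every $E^h_k$, expands $\psi_1(h\theta)\hat u(x,\theta)=\sum_j\psi_1(h\theta)w_j(\theta)e_j(x,\theta,\iota)$, and tests against all $g\in\mathcal{S}$ to force $\psi_1(h\theta)w_j(\theta)=0$ a.e. You instead construct the components $u_k=\mathcal{F}_y^{-1}\bigl(\psi_1(h\theta)c_k(\theta)e_k\bigr)$ directly. You get $L^2$ convergence from fiberwise Parseval plus dominated convergence, and you place $u_k$ in $E^h_k$ by density of $C_0^\infty$ in $L^2(d\theta)$.

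The paper's version is shorter and avoids the measurability and density bookkeeping for $c_k$. However, it implicitly relies on the fact that the closed span of mutually orthogonal closed subspaces consists of convergent orthogonal series. Its reduction to an element of the form $\psi_1(hD_y)u$ orthogonal to all $E^h_k$ is also stated without justification. The component orthogonal to $\overline{\bigoplus_k E^h_k}$ need not lie in the range of $\psi_1(hD_y)$. This is repairable, since the test-function argument only needs the $y$-Fourier transform to vanish where $\psi_1(h\cdot)$ does.

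Your constructive version sidesteps both points. It also yields $\sum_k\|u_k\|^2_{L^2}=\|\psi_1(hD_y)u\|^2_{L^2}$, so the $u_k$ are exactly the orthogonal projections onto the $E^h_k$. This is the kind of Pythagorean identity the paper later uses to control $\sum_k\|T_{G_{h,\gamma,k}}(u)(0,\cdot)\|^2_{L^2}$ in the gallery-mode estimate.
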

\begin{proof}
	Consider $v_k(x,y)=\int \psi_1(h\theta)\hat{g}_j(\theta)e^{iy\theta}e_k(x,\theta,\iota)d\theta\in E^h_k$, $v_j(x,y)=\int \psi_1(h\theta)\hat{g}_k(\theta)e^{iy\theta}e_j(x,\theta,\iota)d\theta\in E^h_j$. Then
	\begin{align}
		\int v_k(x,y) \overline{v}_j(x,y) dydx=\int \mathcal{F}_y{v}_k(x,\theta) \mathcal{F}_y{\overline{v}}_j(x,\theta) d\theta dx= \int \psi_1^2(h\theta) \hat{g}_j(\theta)\hat{g}_k(\theta) e_j(x,\theta,\iota)e_k(x,\theta,\iota)dxd\theta=0
	\end{align}
since $e_k$,$e_j$ are orthogonal in $x$ for every $\theta>c$. 
This proves orthogonality of the $E^h_k$.\\
We now prove that we can write $\psi_1(hD_y)u$ as a sum of elements of $E^h_k$. We prove by contradiction. If there exists a $u$ such that $\psi_1(hD_y)u$ cannot be written as a sum of elements of $E^h_k$, there also exists a $u$ such that $\psi_1(hD_y)u$ is orthogonal to all elements of $E^h_k$. We consider $\psi_1(hD_y)u$ as such and nonzero. We consider the Fourier transform of $\psi_1(hD_y)u$ in $y$, $w(x,\theta)=\psi_1(h\theta) \hat{u}(x,\theta)$. Since for every $\theta>c$, the family $e_k$ is complete, we can consider the spectral writing of this function $w(x,\theta)=\sum_{j=1}^\infty  \psi_1(h\theta)w_j(\theta)e_j(x,\theta,\iota)$. Consider $j\in\mathbb{N}$ arbitrary. By definition, for every $g\in \mathcal{S}(\mathbb{R})$, 
\begin{align}
	\int \psi_1^2(h\theta)g(\theta) w(x,\theta) e_k(x,\theta,\iota) d\theta dx=\int \psi_1^2(h\theta) g(\theta)w_j(\theta)=0.
\end{align}
Therefore, $\psi_1^2(h\theta)w_j(\theta)=0$ for almost all $\theta$, which implies that $\psi_1(h\theta)w_j(\theta)=0$. Also, since $j$ is arbitrary too, for almost all $\theta$, $w(x,\theta)$ is zero, that is $\psi_1(hD_y)u$ is zero, which is a contradiction.
\end{proof}
Given the form of $G$, it is useful to have the following estimate
\begin{teo}
	\label{stricharzgallery}
	Let $$u_{k,\pm,\iota}(t,x,y)=  \frac{1}{h}\int \psi_1(\eta) e^{\pm it\sqrt{\lambda_{k,\iota}(\eta/h)}}e^{iy\eta/h}e_{k}(x,\eta/h) d\eta.$$ Define $u_{k,0,\iota}(x,y)=u_{k,\pm,\iota}(0,x,y)$, then, for every $\iota,q,r,k$ satisfying $q\ge 4, \frac{1}{q}=\frac{1}{2}\left(\frac{1}{2}-\frac{1}{r}\right)$, $\psi_1\in\mathcal{C}^\infty_0\left(\frac{1}{2},\frac{3}{2}\right)$, $h^{2/3}w_k\le 10$ and $\beta_{Str}=2\left(\frac{1}{2}-\frac{1}{r}\right)-\frac{1}{q}$, there exists a $C$ independent of $q,r,k$ and $\psi_1$ such that 
	\begin{equation}
		||u_{k,\pm,\iota}||_{L^q([0,t_{max}],L^r(\Omega_2))}\le C h^{-\beta_{Str}}||u_{k,0,\iota}||_{L^2(\Omega_2)}.
	\end{equation}
\begin{obs}
This is a similar treatment to the one done in \cite{Ivanovici2010Counterexamples}, however, in our case we get a constant independent on $k$, unlike in the treatment of whispering gallery modes in \cite{Ivanovici2010Counterexamples}.
\end{obs}
	We will use stationary phase methods to solve the problem, so we begin by analyzing  $w_{\beta(\frac{\eta}{h}),k}$ and proving that its derivatives are "small enough" similarly to proposition \ref{restrobs}.
	\begin{prop}
		\label{boundszeroes}
		We have the following bounds: $|\partial_\eta w_{\beta(\frac{\eta}{h}),k}|,|\partial^2_\eta w_{\beta(\frac{\eta}{h}),k}|\lesssim \frac{h^{2/3}}{w_k}$, and $\partial_\eta w_{\beta(\frac{\eta}{h}),k}$ has the opposite sign from $b$.
	\end{prop}
\begin{obs}
	In the Neumann case $b=0$, there are no derivatives of $w_{\beta(\frac{\eta}{h}),k}$, they are all zero.
\end{obs}
	\begin{proof}
		By proposition \ref{zeros}, we have
		\begin{align}
			w_{\beta(\frac{\eta}{h}),k}=-Lai^{-1}(\cot(\beta(\frac{\eta}{h}))),
		\end{align}
		where $Lai^{-1}:\mathbb{R}\rightarrow(-w_{k},-w_{k-1})$. \\
		We recall that $Lai$ satisfies the differential equation $Lai'(x)=x-Lai^2(x)$, (by lemma \ref{lemalai}), so by the inverse function theorem, $Lai^{-1'}(x)=\frac{1}{Lai'(Lai^{-1}(x))}=\frac{1}{Lai^{-1}(x)-Lai^2(Lai^{-1}(x))}=\frac{1}{Lai^{-1}(x)-x^2}$, so
		\begin{align}
			&\partial_\eta w_{\beta(\frac{\eta}{h}),k}=-Lai^{-1'}(\cot(\beta(\frac{\eta}{h})))\cot'(\beta(\frac{\eta}{h}))\beta'(\frac{\eta}{h})\frac{1}{h}\\
			&=\frac{1}{\cot(\beta(\frac{\eta}{h}))^2-Lai^{-1}(\cot(\beta(\frac{\eta}{h})))}(\cot^2(\beta(\frac{\eta}{h}))-1)\beta'(\frac{\eta}{h})\frac{1}{h}\\
			&=-\frac{1}{b^2h^{4/3}\eta^{-4/3}+w_{\beta(\frac{\eta}{h}),k}}(b^2h^{4/3}\eta^{-4/3}+1)\beta'(\frac{\eta}{h})\frac{1}{h}.
		\end{align}
		From this expression we deduce the corresponding properties for the first derivative.\\
		We compute the second derivative
		\begin{align}
			&\partial^2_\eta w_{\beta(\frac{\eta}{h}),k}=-\left(\frac{b^2h^{4/3}\eta^{-4/3}+1}{b^2h^{4/3}\eta^{-4/3}+w_{\beta(\frac{\eta}{h}),k}}\beta'(\frac{\eta}{h})\frac{1}{h}\right)'\\
			&=-\frac{1}{h^2}\beta''(\frac{\eta}{h})\frac{b^2h^{4/3}\eta^{-4/3}+1}{b^2h^{4/3}\eta^{-4/3}+w_{\beta(\frac{\eta}{h}),k}}\\
			&+\beta'(\frac{\eta}{h})\frac{1}{h}\frac{b^2h^{4/3}\eta^{-7/3}\frac{4}{3}}{b^2h^{4/3}\eta^{-4/3}+w_{\beta(\frac{\eta}{h}),k}}+\frac{(b^2h^{4/3}\eta^{-4/3}+1)(\partial_\eta w_{\beta(\frac{\eta}{h}),k}-b^2h^{4/3}\eta^{-7/3}\frac{4}{3})}{(b^2h^{4/3}\eta^{-4/3}+w_{\beta(\frac{\eta}{h}),k})^2}\beta'(\frac{\eta}{h})\frac{1}{h}.
		\end{align}
		This yields the desired bounds.
	\end{proof}
\end{teo}
\begin{proof}[Proof of theorem \ref{stricharzgallery}]
	The first thing we note is that, if we define 
	\begin{equation}
		\Gamma_{x,w_{k}}(y):=\frac{1}{h}\int \frac{1}{\sqrt{L_{\beta(\eta/h)}'(w_{\beta(\frac{\eta}{h}),k})}} e^{i\frac{y\eta}{h}}(\eta/h)^{\frac{1}{3}}Ai((\eta/h)^{2/3}x-w_{\beta(\frac{\eta}{h}),k})\sqrt\psi_1(\eta)d\eta,
	\end{equation}
	and 
	\begin{equation}
		W_{k}(t,y):=\frac{1}{h}\int e^{i\frac{t}{h}\eta \sqrt{1+w_{\beta(\frac{\eta}{h}),k}(h/\eta)^{2/3}}}\sqrt\psi_1(\eta)e^{i\frac{y\eta}{h}} d\eta.
	\end{equation}
	(where we ignore the dependence on $b=\cot(\iota)$, since it is fixed and specific for the problem), then $u_{k,+,\iota}(t,x,y)=(\Gamma_{x,w_{k}}\ast_y W_{k}(t,\cdot))(y)$.\\
	Now, we can use Young's inequality in $y$ and get, 
	\begin{equation}
		||u_{k,b}(t,x,\cdot)||_{L^\infty_y}\le ||\Gamma_{x,w_{k}}||_{L^1_y}||W_{k}(t,\cdot)||_{L^\infty_y}.
	\end{equation}
	We can now conclude by a $TT^*$ argument if we prove the following lemmas (which are only a study of the dispersion of the related functions).
	\begin{obs}
		Throughout this proof, compared with the corresponding argument in \cite{positive}, we have the main added difficulty that $w_{\beta(\frac{\eta}{h}),k}$ depends on $\eta$. We will address it by comparing it with $w_{k}$.
	\end{obs}
	\begin{lema}
		\label{boundgamma}
		\begin{equation}
			\sup_x||\Gamma_{x,w_k}||_{L^1_y}\lesssim \frac{w_{k}^{1/4}}{h^{1/3}}.
		\end{equation}
	\end{lema}
	\begin{lema}
		\label{otherlemmagallery}
		There exists a $C$ such that, for every $t\ge Ch$, we have
		\begin{equation}
			|W_{k}(t,y)|\lesssim \frac{1}{h}\left(\frac{h}{t}\right)^{1/2}\frac{1}{w_{k}^{1/2}h^{1/3}}.
		\end{equation}
	\end{lema}
\end{proof}
\begin{proof}[Proof of lemma \ref{boundgamma}]
	We consider the change of variables $y=hw_{k}^{3/2}\Sigma$, $x=h^{2/3}w_{k}Z$, \\
	$H_{Z,w_k}(\Sigma)=w_k^{3/2} h \Gamma_{h^{2/3}w_{k}Z,w_k}(hw_k^{3/2}\Sigma)$. Then we have, 
	\begin{align}
		H_{Z,w_k}(\Sigma)=\frac{w_k^{3/2}}{h^{1/3}}\int \frac{\eta^{1/3}}{\sqrt{L_{\beta(\eta/h)}'(w_{\beta(\frac{\eta}{h}),k})}} e^{i{w_{k}^{3/2}\Sigma\eta}}Ai(Z\eta^{2/3}w_k-w_{\beta(\frac{\eta}{h}),k})\sqrt\psi_1(\eta)d\eta\\
		=\frac{w_k^2}{h^{1/3}}\int \frac{\eta^{1/3}}{\sqrt{L_{\beta(\eta/h)}'(w_{\beta(\frac{\eta}{h}),k})}} e^{i{w_{k}^{3/2}(\Sigma\eta+\sigma Z\eta^{2/3}-
				\sigma\frac{w_{\beta(\frac{\eta}{h}),k}}{w_k}+\sigma^3/3)}}\sqrt\psi_1(\eta)d\eta d\sigma.
	\end{align}
	With this change of variables, we can rewrite the problem in terms of $H$:
	\begin{align}
		&||\Gamma_{x,w_k}(y)||_{L^1}=\int |\Gamma_{x,w_k}(y)|dy=\int w_k^{3/2}h|\Gamma_{x,w_k}(hw_k^{3/2}\Sigma)|d\Sigma\\
		&=\int |H_{xh^{-2/3}w_k^{-1},w_k}(\Sigma)|d\Sigma=||H_{xh^{-2/3}w_k^{-1},w_k}(\Sigma)||_{L^1}
	\end{align}
	and
	\begin{equation}
		\sup_x||\Gamma_{x,w_k}||_{L^1_y}=\sup_{x}||H_{xh^{-2/3}w_k^{-1},k}(\Sigma)||_{L^1}=\sup_{xh^{-2/3}w_k^{-1}}||H_{xh^{-2/3}w_k^{-1},k}(\Sigma)||_{L^1}=\sup_{Z}||H_{Z,w_k}(\Sigma)||_{L^1}.
	\end{equation}
	Thus, the study of $\Gamma_{x,k}$ is equivalent to the study $H_{Z,w_k}$.\\
	We will apply stationary phase methods, so we need to study the phase and the symbol:\\
	For the symbol, we consider the following:
	\begin{align}
		&\frac{1}{\sqrt{L_{\beta(\eta/h)}'(w_{\beta(\frac{\eta}{h}),k})}}=\frac{1}{\sqrt{2}w_{\beta(\frac{\eta}{h}),k}^{1/4}\sqrt{1+\frac{3}{2}B_{\beta(\eta/h)}'(w_{\beta(\frac{\eta}{h}),k})}}\\
		&=\frac{\tilde{O}^s_0(b\frac{\eta}{h},w_{\beta(\frac{\eta}{h}),k}^{-1})}{\sqrt{2}w_{\beta(\frac{\eta}{h}),k}^{1/4}}.
	\end{align}
	Now, since the derivatives $\partial_\eta w_{\beta(\frac{\eta}{h}),k}$ are small, we have $\frac{1}{\sqrt{L_{\beta(\eta/h)}'(w_{\beta(\frac{\eta}{h}),k})}}\sim w_k^{-1/4}$ and it is harmless as a symbol.\\
	We study then the phase $\Phi_{h,k,Z,\Sigma}(\sigma,\eta)=(\Sigma\eta+\sigma Z\eta^{2/3}-
	\sigma\frac{w_{\beta(\frac{\eta}{h}),k}}{w_k}+\sigma^3/3)$:
	\begin{align}
		&\partial_\sigma\Phi_{h,k,Z}(\sigma,\eta)= Z\eta^{2/3}-
		\frac{w_{\beta(\frac{\eta}{h}),k}}{w_k}+\sigma^2,\\
		&\partial_\eta\Phi_{h,k,Z}(\sigma,\eta)=\Sigma+\frac{2}{3}\sigma Z \eta^{-1/3}-\sigma\frac{\partial_\eta w_{\beta(\frac{\eta}{h}),k}}{w_k},\\
		&\partial^2_\sigma\Phi_{h,k,Z}(\sigma,\eta)= 2\sigma,\\
		&\partial^2_{\sigma,\eta}\Phi_{h,k,Z}(\sigma,\eta)= \frac{2}{3}Z\eta^{-1/3}-
		\frac{\partial_\eta w_{\beta(\frac{\eta}{h}),k}}{w_k},\\
		&\partial_{\eta,\eta}\Phi_{h,k,Z}(\sigma,\eta)=-\frac{2}{9}\sigma Z \eta^{-4/3}-\sigma\frac{\partial_{\eta,\eta} w_{\beta(\frac{\eta}{h}),k}}{w_k}.\\
	\end{align}
	We consider three different cases
	\begin{enumerate}
		\item $100>Z>w_k^{-3/2}$.\\
		If $|\Sigma|\ge 10Z$ the phase is non-stationary in $\eta$, $|\partial_\eta \Phi_{h,k,Z}(\sigma,\eta)|\gtrsim |\Sigma|$. Integrating by parts twice, it follows that $|H_{Z,w_k}(\Sigma)|\lesssim \frac{1}{w_k^{3}\Sigma^2}$.\\
		For $|\Sigma|\lesssim Z$, 
		we consider the Hessian matrix of the phase on the critical points,  $H\Phi_{h,k,Z}(\sigma_c,\eta_c)$. We have $$\det(H\Phi_{h,k,Z})(\sigma_c,\eta_c)=-\frac{4}{9}\eta_c^{-4/3}Z-2\sigma_c^2 \frac{\partial_{\eta,\eta} w_{\beta(\eta_c/h),k}}{w_k}-\frac{4}{3}Z\eta_c^{-1/3}\frac{\partial_{\eta} w_{\beta(\eta_c/h),k}}{w_k}-\frac{(\partial_{\eta} w_{\beta(\eta_c/h),k})^2}{w_k^2}.$$
		Furthermore, if we compute the eigenvalues of the Hessian:
		\begin{align}
			\lambda_{\pm}=\frac{(\partial^2_{\sigma,\sigma}+\partial^2_{\eta,\eta})\Phi_{h,k,Z}(\sigma_c,\eta_c)\pm \sqrt{(\partial^2_{\sigma,\sigma}+\partial^2_{\eta,\eta})\Phi_{h,k,Z}(\Sigma,\eta)^2-4\det(H\Phi_{h,k,Z})(\sigma_c,\eta_c)}}{2}.
		\end{align}
		With all this, we can analyze the stationary phase.\\
		\begin{obs}\label{simplifications}
			Since the large parameter is $w_k$ and we know that $1\ge \frac{ w_{\beta(\frac{\eta}{h}),k}}{w_k}\rightarrow 1$, uniformly in $h,\eta$, we may assume that $w_k>10000$ and $\frac{(w_{\beta(\frac{\eta}{h}),k})^2}{w_k^2}\in (1/2,1)$.\\
			Finally, $|\sigma|\le1$ otherwise, the phase is non-stationary. Finally, we also use that $\eta \sim 1$ because of the cut-off.
		\end{obs}
		With all this, we obtain the desired bounds. First, since $ w_k^{-3/2}\lesssim Z\lesssim 100$ plus the bounds in proposition \ref{boundszeroes}, we have 
		$-100\lesssim\det(H\Phi_{h,k,Z})(\sigma_c,\eta_c)\lesssim -w_k^{-3/2}$. Now the trace $|(\partial^2_{\sigma,\sigma}+\partial^2_{\eta,\eta})\Phi_{h,k,Z}(\sigma_c,\eta_c)|\le 100$ although the sign is not fixed. With all this, we can bound the eigenvalues from below easily. We consider $(\partial^2_{\sigma,\sigma}+\partial^2_{\eta,\eta})\Phi_{h,k,Z}(\sigma_c,\eta_c)$ positive, the case negative is analogous swapping the roles of $\lambda_+$ and $\lambda_-$:
		\begin{align}
			&\lambda_+\ge \sqrt{|\det(H\Phi_{h,k,Z})(\sigma_c,\eta_c)}|\ge {\sqrt{Z}}.\\
			&|\lambda_-|=\frac{2|\det(H\Phi_{h,k,Z})(\sigma_c,\eta_c)|}{(\partial^2_{\sigma,\sigma}+\partial^2_{\eta,\eta})\Phi_{h,k,Z}(\sigma_c,\eta_c)+ \sqrt{(\partial^2_{\sigma,\sigma}+\partial^2_{\eta,\eta})\Phi_{h,k,Z}(\sigma_c,\eta_c)^2-4\det(H\Phi_{h,k,Z})(\sigma_c,\eta_c)}}\ge\\
			&\ge \frac{2|\det(H\Phi_{h,k,Z})(\sigma_c,\eta_c)|}{100+\sqrt{100^2-4\cdot 100}}\ge \frac{\cdot Z}{100(100+\sqrt{100^2-4\cdot 100})}\gtrsim Z.
		\end{align}
		Now, under the conditions of remark \ref{simplifications}, when $w_k^{3/2}Z\ge 1$, we can apply stationary phase and get 
		\begin{align}
			|H_{Z,w_k}(\Sigma)|\lesssim \frac{w_k^2h^{-1/3}}{{w_k}^{1/4}\sqrt{Z}}{w_k^{-3/2}}=h^{-1/3}w_k^{1/4}\frac{1}{\sqrt{Z}}.
		\end{align}
		This allows us to conclude about the integral
		\begin{align}
			&\int |H_{Z,w_k}(\Sigma)|d\Sigma=\int_{|\Sigma|\lesssim Z} |H_{Z,w_k}(\Sigma)|d\Sigma+\int_{|\Sigma|\gtrsim Z}|H_{Z,w_k}(\Sigma)|d\Sigma\\
			&\lesssim \int_{|\Sigma|\lesssim Z} h^{-1/3}w_k^{1/4}\frac{1}{\sqrt{Z}}+\int_{|\Sigma|\gtrsim Z}  \frac{w_k^2}{w_k^{1/4}h^{1/3}}\frac{1}{w_k^3\Sigma^2}\lesssim h^{-1/3}w_k^{1/4}\sqrt{Z}+\frac{w_k^2}{h^{1/3}w_k^{1/4}}\frac{1}{Z w_k^3}\\
			&\lesssim h^{-1/3}w_k^{1/4}+\frac{w_k^2}{h^{1/3}w_k^{1/4}}\frac{w_k^{3/2}}{w_k^3}\lesssim h^{-1/3}w_k^{1/4}.
		\end{align}
		\item If $Z>{100}$.\\
		We have $Z\eta^{2/3}-\frac{ w_{\beta(\frac{\eta}{h}),k}}{w_k}\ge 10$ so $\partial_\sigma\Phi_{h,k,Z}(\sigma,\eta)\ge 10$, and the phase is non-stationary.\\
		\item If $Z\le w_k^{-3/2}$.\\
		We have $|Z\eta^{2/3}-\frac{w_{\beta(\frac{\eta}{h}),k}}{w_k}|\ge \frac{1}{4}$, that is $|\sigma_c|\ge \frac{1}{2}$, and $|\partial^2_\sigma\Phi_{h,k,Z}(\sigma_c,\eta)|= 2|\sigma_c|\ge \frac{1}{2}$, and we can apply stationary phase on $\sigma$. Or more easily (and formally is the same idea), we can recover the Airy function and using that $\Psi_\pm(x)=\frac{1}{2}\frac{A_\pm(x)}{e^{\mp i|x|^{3/2}}}$, and we have $\Psi_\pm(x)=\frac{1}{2}\frac{Ai(x)}{e^{\mp i|x|^{3/2}}}={O}^\infty(\frac{1}{1+|x|^{1/4}})$ and $Ai(x)=\sum_{\varepsilon=\pm} A_\varepsilon(x)$.
		\begin{align}
			&H_{Z,w_k}(\Sigma)=\frac{w_k^{3/2}}{h^{1/3}}\int \frac{\eta^{1/3}}{\sqrt{L_{\beta(\eta/h)}'(w_{\beta(\frac{\eta}{h}),k})}} e^{i{w_{k}^{3/2}\Sigma\eta}}Ai(Z\eta^{2/3}w_k-w_{\beta(\frac{\eta}{h}),k})\sqrt\psi_1(\eta)d\eta\\
			&=\frac{w_k^{3/2}}{h^{1/3}}\sum_{\varepsilon=\pm}\int \frac{\sqrt\psi_1(\eta)\eta^{1/3}}{\sqrt{L_{\beta(\eta/h)}'(w_{\beta(\frac{\eta}{h}),k})}} e^{iw_{k}^{3/2}(\Sigma\eta+\varepsilon(\frac{w_{\beta(\frac{\eta}{h}),k}}{w_k}-Z\eta^{2/3})^{3/2})}\Psi_\varepsilon(Z\eta^{2/3}w_k-w_{\beta(\frac{\eta}{h}),k})d\eta.
		\end{align}
		The derivative of the new phase $\tilde\Phi_{\varepsilon,h,k,Z}(\eta)=(\Sigma\eta+\varepsilon(\frac{w_{\beta(\frac{\eta}{h}),k}}{w_k}-Z\eta^{2/3})^{3/2})$ is 
		\begin{align}
			\tilde{\Phi}_{\varepsilon,h,k,Z}'(\eta)=\Sigma-\varepsilon(\frac{w_{\beta(\frac{\eta}{h}),k}}{w_k}-Z\eta^{2/3})^{1/2}Z\eta^{-1/3}+\varepsilon(\frac{w_{\beta(\frac{\eta}{h}),k}}{w_k}-Z\eta^{2/3})^{1/2}\frac{\partial_\eta w_{\beta(\frac{\eta}{h}),k}}{w_k}.
		\end{align}
		Since in this case $Z<w_k^{-3/2}$ and $|\partial_\eta w_{\beta(\frac{\eta}{h}),k}|\lesssim \frac{h^{2/3}}{w_k}$, we have $|\Sigma|\lesssim w_k^{-3/2}$ or, we have non-stationary phase and decay. Thus, for $|\Sigma|\lesssim w_k^{-3/2}$ we bound directly
		\begin{align}
			&|H_{Z,w_k}(\Sigma)|\\
			&=\sum_{\varepsilon=\pm}\left|\frac{w_k^{3/2}}{h^{1/3}}\int \frac{\Psi_\varepsilon(Z\eta^{2/3}w_k-w_{\beta(\frac{\eta}{h}),k})\sqrt\psi_1(\eta)\eta^{1/3}}{\sqrt{L_{\beta(\eta/h)}'(w_{\beta(\frac{\eta}{h}),k})}} e^{iw_{k}^{3/2}(\Sigma\eta+\varepsilon(\frac{w_{\beta(\frac{\eta}{h}),k}}{w_k}-Z\eta^{2/3})^{3/2})}d\eta\right|\\
			&\le \sum_{\varepsilon=\pm}\frac{w_k^{3/2}}{h^{1/3}}\int\left| \frac{\Psi_\varepsilon(Z\eta^{2/3}w_k-w_{\beta(\frac{\eta}{h}),k})\sqrt\psi_1(\eta)\eta^{1/3}}{\sqrt{L_{\beta(\eta/h)}'(w_{\beta(\frac{\eta}{h}),k})}} \right|d\eta\lesssim \frac{w_k^{3/2}}{h^{1/3}w_k^{1/2}}\frac{1}{w_k^{1/4}}=\frac{w_k^{3/4}}{h^{1/3}}.
		\end{align}
		For $|\Sigma|\gg w_k^{-3/2}$, we have the estimates $|H_{Z,w_k}(\Sigma)|\lesssim \frac{w_k^{3/2}}{h^{1/3}w_k^{1/2}}\frac{1}{\Sigma^{n}w_k^{3n/2}}$.
		Therefore, in our case $Z<w_k^{-3/2}$,
		\begin{align}
			&\int |H_{Z,w_k}(\Sigma)|d \Sigma=\int_{|\Sigma|\lesssim w_k^{-3/2}} |H_{Z,w_k}(\Sigma)|d \Sigma+\int_{|\Sigma|\gtrsim w_k^{-3/2}} |H_{Z,w_k}(\Sigma)|d \Sigma\\
			&\lesssim\int_{|\Sigma|\lesssim w_k^{-3/2}} \frac{w_k^{3/4}}{h^{1/3}} d \Sigma+ \int_{|\Sigma|\gtrsim w_k^{-3/2}} \frac{w_k^{3/2}}{h^{1/3}w_k^{1/2}}\frac{1}{\Sigma^2w_k^{3}}\lesssim \frac{w_k^{3/4}}{h^{1/3}}w_k^{-3/2}+\frac{w_k^{3/2}}{h^{1/3}w_k^{1/2}} \frac{1}{w_k^3w_k^{-3/2}}\\
			&\lesssim \frac{1}{h^{1/3}w_k^{3/4}}.
		\end{align}
	\end{enumerate}
\end{proof}
Since this estimate is better than the other, we conclude that
\begin{equation}
	\sup_x||\Gamma_{x,k}||_{L^1_y}=\sup_{Z}||H_{Z,w_k}(\Sigma)||_{L^1_\Sigma}\lesssim h^{-1/3}w_k^{1/4}.
\end{equation}

\begin{proof}[Proof of \eqref{otherlemmagallery}]
	This is easier, we only need to check that the phase in $W_k$ is in condition to apply Van der Corput or stationary phase.
	\begin{align}
		\label{equlowmodes}
		&\phi_h(\eta)=\eta\sqrt{1+w_{\beta(\frac{\eta}{h}),k}(h/\eta)^{2/3}}+\frac{y}{t}\eta=\frac{y}{t}\eta+\sqrt{\eta^2+\eta^{4/3}w_{\beta(\frac{\eta}{h}),k}h^{2/3}}\\
		&\phi'_h(\eta)=\frac{y}{t}+\frac{2\eta+\frac{4}{3}\eta^{1/3}h^{2/3}w_{\beta(\frac{\eta}{h}),k}+\partial_\eta w_{\beta(\frac{\eta}{h}),k}h^{2/3}\eta^{4/3}}{2\sqrt{\eta^2+\eta^{4/3}w_{\beta(\frac{\eta}{h}),k}h^{2/3}}}\\
		&\phi''_h(\eta)=\frac{2+\frac{4}{9}\eta^{-2/3}h^{2/3}w_{\beta(\frac{\eta}{h}),k}+h^{2/3}\frac{4}{3}\eta^{1/3}\partial_\eta w_{\beta(\frac{\eta}{h}),k} +h^{2/3}\eta^{4/3}\partial^2_\eta w_{\beta(\frac{\eta}{h}),k}}{2\sqrt{\eta^2+\eta^{4/3}w_{\beta(\frac{\eta}{h}),k}h^{2/3}}}\\
		&-\frac{(2\eta+\frac{4}{3}\eta^{1/3}h^{2/3}w_{\beta(\frac{\eta}{h}),k}+\partial_\eta w_{\beta(\frac{\eta}{h}),k}h^{2/3}\eta^{4/3})^2}{4(\eta^2+\eta^{4/3}w_{\beta(\frac{\eta}{h}),k}h^{2/3})^{3/2}}\gtrsim h^{2/3}w_k.\\
	\end{align}
	Therefore by Van der Corput lemma, we obtain the desired result.
\end{proof}
theorem, \ref{stricharzgallery} can be generalized to $L^2$ functions by density.
\begin{cor}
	\label{corollarygallery}
	Let $u\in L^2(\Omega_2)$, we define the operator 
	\begin{align}
		T_{gall}(u)(t,x,y):=\frac{1}{h}\int \tilde{u}_k(\eta/h) \psi_1(\eta) e^{\pm it\sqrt{\lambda_{k,\iota}(\eta/h)}}e^{iy\eta/h}e_{k}(x,\eta/h) d\eta.
	\end{align}
With $\tilde{u}_k(\eta/h):=\int \mathcal{F}_y u(x,\eta/h) e_k(x,\eta/h,\iota)dx$
Then for every $\iota,q,r,k$ satisfying $q\ge 4, \frac{1}{q}=\frac{1}{2}\left(\frac{1}{2}-\frac{1}{r}\right)$, $h^{2/3}w_k\le 10$ and $\beta_{Str}=2\left(\frac{1}{2}-\frac{1}{r}\right)-\frac{1}{q}$, there exists a $C$ independent of $q,r,k$ and $u_0$ such that 
\begin{equation}
	||T_{gall}(u)(t,x,y)||_{L^q([0,t_{max}],L^r(\Omega_2))}\le C h^{-\beta_{Str}}||T_{gall}(u)(0,x,y)||_{L^2(\Omega_2)}.
\end{equation}
\end{cor}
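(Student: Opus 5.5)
The plan is to reduce Corollary \ref{corollarygallery} to Theorem \ref{stricharzgallery} by writing $T_{gall}(u)$ as a superposition of translates (in $y$) of the kernel already estimated there, and then rerunning the $TT^*$ argument with general data. Set $f(\eta):=\tilde u_k(\eta/h)\psi_1(\eta)$, so that $T_{gall}(u)(t,\cdot)=\mathcal{F}^{-1}$ of $f$ multiplied by the half-wave propagator on the $k$-th mode. By Plancherel in $y$ and the normalization of $e_k$ in $x$ (Proposition \ref{robin}), we get
\begin{align}
\|T_{gall}(u)(t)\|_{L^2(\Omega_2)}^2=\frac{2\pi}{h}\int |\tilde u_k(\eta/h)|^2\psi_1(\eta)^2d\eta .
\end{align}
This quantity is independent of $t$, so $U(t):=T_{gall}(u)(0)\mapsto T_{gall}(u)(t)$ is an isometry on the closed subspace $E_k^h$. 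This gives the energy estimate $L^2\to L^\infty_tL^2$.

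For the dispersive estimate we compute the kernel of $U(t)U(s)^*$ restricted to $E^h_k$. Inserting $\sqrt{\psi_1}$ twice (legitimate after replacing $\psi_1$ by a cut-off $\tilde\psi_1$ equal to $1$ on its support, which does not change $T_{gall}(u)$), the kernel is
\begin{align}
K(t-s,x,y-y',x')=\frac1h\int \psi_1(\eta)e^{\pm i(t-s)\sqrt{\lambda_{k,\iota}(\eta/h)}}e^{i(y-y')\eta/h}e_k(x,\eta/h)e_k(x',\eta/h)d\eta .
\end{align}
Exactly as in the proof of Theorem \ref{stricharzgallery}, $K=\Gamma_{x,w_k}\ast_y\Gamma_{x',w_k}\ast_y W_k(t-s,\cdot)$ up to reflection. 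Young's inequality, Lemma \ref{boundgamma} applied twice, and Lemma \ref{otherlemmagallery} then give, for $|t-s|\ge Ch$,
\begin{align}
\sup_{x,x',y,y'}|K|\lesssim \frac{w_k^{1/2}}{h^{2/3}}\cdot\frac1h\Big(\frac{h}{|t-s|}\Big)^{1/2}\frac{1}{w_k^{1/2}h^{1/3}}=\frac{1}{h^2}\Big(\frac{h}{|t-s|}\Big)^{1/2}.
\end{align}
This bound is uniform in $k$. For $|t-s|\le Ch$ we instead use the trivial bound $\lesssim h^{-2}$, either from Lemma \ref{grossboundgallmodes}-type estimates or directly from $|e_k|\lesssim h^{-1/3}w_k^{-1/4}\cdot$ bounded Airy factor. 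This is the usual $2$d wave dispersion with frequency $h^{-1}$.

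We interpolate the $L^1\to L^\infty$ bound with $L^2\to L^2$ and apply the Keel--Tao (or, for $q>4$, Hardy--Littlewood--Sobolev) $TT^*$ argument on $[0,t_{max}]$. This yields $\|U(t)g\|_{L^q_tL^r}\lesssim h^{-\beta_{Str}}\|g\|_{L^2}$ for $\frac1q=\frac12(\frac12-\frac1r)$, $q\ge4$, with a constant depending only on the dispersive constants and hence independent of $k$, $q$, $r$ and $u$. The step needing the most care is the identification of $U(t)U(s)^*$ with the convolution structure above: Theorem \ref{stricharzgallery} was stated for the specific datum $\hat g=1$, and one must check that for general $f\in L^2$ the adjoint and composition only produce $|\sqrt{\psi_1}|^2$ and no $f$-dependence in the kernel. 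I would first verify this for Schwartz $\hat g$, where Fubini applies, and then pass to general $u\in L^2(\Omega_2)$ by density. Both sides are continuous in $\tilde u_k\psi_1\in L^2$: the right side by the Plancherel identity, and the left side by the a priori estimate just proved together with Fatou's lemma.
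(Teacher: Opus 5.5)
Your proposal is correct and follows the paper's route. The paper proves Theorem \ref{stricharzgallery} through the convolution structure $\Gamma_{x,w_k}\ast_y W_k$, Young's inequality with Lemmas \ref{boundgamma} and \ref{otherlemmagallery}, and a $TT^*$ argument, and then obtains Corollary \ref{corollarygallery} by density, exactly as you do. Your version is more explicit on the point the paper leaves implicit: the $TT^*$ kernel carries two eigenfunction factors, so Lemma \ref{boundgamma} is used twice, and the resulting $w_k^{1/2}$ cancels the $w_k^{-1/2}$ from Lemma \ref{otherlemmagallery}, which gives the $k$-uniform bound $h^{-2}(h/|t-s|)^{1/2}$.
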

Using this, we generalize theorem \ref{stricharzgallery} by a simple orthogonality argument with respect to those spaces as in \cite{Ivanovici2010Counterexamples}, which allows us to balance the number of terms in the sum over gamma with the gain in the spectral localization.
We consider the following operator: 
\begin{align}
	&T_{G_{h,\gamma}}(u)(t,x,y)=\frac{1}{h}\sum_{k\ge1}\int  \tilde{u}_k(\eta/h)\psi_0(h^2\lambda_{k,\iota}(\eta/h)) \psi_1(\eta) \psi_2\left(\frac{h^{2/3}w_{{\beta(\iota,\eta/h),k}}}{\eta^{2/3}\gamma}\right)e^{\pm it\sqrt{\lambda_{k,\iota}(\eta/h)}}e^{iy\eta/h}e_{k}(x,\eta/h) d\eta\\
	&=\sum_{k\ge 1} T_{G_{h,\gamma,k}}(u)(t,x,y).
\end{align}
\begin{obs}
	We note that this operator is essentially the operator of applying the Green's function we define later, def \ref{Goperator}.
\end{obs}
For every $k$, we apply the gallery modes theorem with $r=\infty, q=4$, proving the following
\begin{align}
	\left|\left|T_{G_{h,\gamma,k}}(u)(t,x,y) \right|\right|_{L^4_tL^\infty(\Omega_2)}\le C h^{-1+\frac{1}{4}} \left|\left| T_{G_{h,\gamma,k}}(u)(0,x,y)\right|\right|_{L^2(\Omega_2)}
\end{align}
Indeed, because of the cut-off $\psi_0$, all non-zero terms  $T_{G_{h,\gamma,k}}(u)(t,x,y)$ follow $h^{2/3}w_k\le 1/100$. Thus we can apply \ref{corollarygallery}.
With this, we can prove the following:
\begin{prop}
	There exists a universal constant $C$ depending only on $\iota$ and the fixed cutoffs, such that 
	\begin{align}
		||T_{G_{h,\gamma}}(u)(t,x,y)||_{L^4_t L^\infty(\Omega_2)}\le C h^{\frac{1}{4}-1} \lambda_{\gamma,h}^{1/2} ||T_{G_{h,\gamma}}(u)(0,x,y)||_{L^2(\Omega_2)}\le h^{\frac{1}{4}-1} \lambda_{\gamma,h}^{1/2}C ||u||_{L^2(\Omega_2)}.
	\end{align}
\end{prop}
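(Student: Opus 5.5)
The plan is to combine the per-mode estimate with the orthogonality of the spaces $E^h_k$ and a count of the modes in the $\gamma$-annulus. Write $T_{G_{h,\gamma}}(u)=\sum_k T_{G_{h,\gamma,k}}(u)$. Because of the cut-off $\psi_2\bigl(h^{2/3}w_{\beta(\iota,\eta/h),k}/(\eta^{2/3}\gamma)\bigr)$ with $\eta\sim1$, only indices with $w_{\beta(\iota,\eta/h),k}\sim \gamma h^{-2/3}$ contribute. Since $w_{k-1}\le w_{\beta,k}\le w_k$ and $w_k\sim k^{2/3}$, these indices satisfy $k\sim \gamma^{3/2}/h=\lambda_{\gamma,h}$. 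Hence the sum runs over a set $K_\gamma$ with $|K_\gamma|\lesssim\lambda_{\gamma,h}$, uniformly in $\eta$ (the $\eta$-dependence of $\beta$ only shifts each zero within $[w_{k-1},w_k]$).

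Next I would apply the triangle inequality in $L^4_tL^\infty$, followed by Cauchy--Schwarz in $k$:
\begin{align}
\|T_{G_{h,\gamma}}(u)\|_{L^4_tL^\infty}\le \sum_{k\in K_\gamma}\|T_{G_{h,\gamma,k}}(u)\|_{L^4_tL^\infty}\le C h^{\frac14-1}\sum_{k\in K_\gamma}\|T_{G_{h,\gamma,k}}(u)(0)\|_{L^2}\le Ch^{\frac14-1}|K_\gamma|^{1/2}\Bigl(\sum_{k}\|T_{G_{h,\gamma,k}}(u)(0)\|_{L^2}^2\Bigr)^{1/2}.
\end{align}
The middle inequality is the per-mode estimate derived just above from Corollary \ref{corollarygallery} with $q=4$, $r=\infty$. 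It holds because each nonzero term has $h^{2/3}w_k\le 1/100$, and the constant is independent of $k$. The point to check is that the extra smooth factors $\psi_0(h^2\lambda_k)\psi_2(\cdots)$ can be absorbed into the coefficient $\tilde u_k(\eta/h)$ in $T_{gall}$: they are bounded multipliers in $\eta$ for fixed $k$, and the corollary holds for arbitrary $L^2$ coefficients. This uniformity in $k$ is exactly what Theorem \ref{stricharzgallery} provides, so it is where the Robin-specific work has already been done.

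Then I would use the proposition on orthogonality of the $E^h_k$: $T_{G_{h,\gamma,k}}(u)(0)\in E^h_k$, so by Pythagoras $\sum_k\|T_{G_{h,\gamma,k}}(u)(0)\|_{L^2}^2=\|T_{G_{h,\gamma}}(u)(0)\|_{L^2}^2$. Together with $|K_\gamma|^{1/2}\lesssim\lambda_{\gamma,h}^{1/2}$, this gives the first inequality.

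For the second inequality, I would use Plancherel in $y$ and the orthonormality of $\{e_k(\cdot,\eta/h,\iota)\}$ in $x$ for each $\eta$:
\[
\|T_{G_{h,\gamma}}(u)(0)\|_{L^2}^2\simeq \frac1h\int\sum_k|\tilde u_k(\eta/h)|^2\,|\psi_0\psi_1\psi_2|^2\,d\eta\le C\|u\|_{L^2}^2.
\]
This holds by Bessel's inequality, since all the cut-offs are bounded by $1$.

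I expect the main obstacle to be the bookkeeping in the counting step. Because $w_{\beta,k}$ depends on $\eta$, the set of $k$ selected by $\psi_2$ varies with $\eta$, so $K_\gamma$ must be taken as the union over $\eta\in\operatorname{supp}\psi_1$. It remains of size $\lesssim\lambda_{\gamma,h}$ because each $w_{\beta,k}$ is trapped in $[w_{k-1},w_k]$. A secondary point is to justify absorbing the $k$-dependent cut-offs into $\tilde u_k$ without losing uniformity in the constant.
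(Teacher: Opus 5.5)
Your proposal is correct and follows the paper's proof: the triangle (Minkowski) inequality over the $\lesssim\lambda_{\gamma,h}$ modes selected by $\psi_2$, the $k$-uniform per-mode bound from Corollary \ref{corollarygallery} with $(q,r)=(4,\infty)$, discrete Cauchy--Schwarz in $k$, and orthogonality of the spaces $E^h_k$. Your additional bookkeeping fills in details the paper leaves implicit: the mode count is uniform in $\eta$ because $w_{k-1}\le w_{\beta,k}\le w_k$, the cut-offs are absorbed into the coefficients, and the final bound by $\|u\|_{L^2}$ follows from Plancherel and Bessel.
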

\begin{proof}
	Because of the cut-off $\psi_2$, we know that the sum in the definition of $	T_{G_{h,\gamma}}(u)(t,x,y)$ is just a finite sum, $	T_{G_{h,\gamma}}(u)(t,x,y)=\sum_{\lambda_{\gamma,h} \gtrsim k\ge 1} T_{G_{h,\gamma,k}}(u)(t,x,y)$. By Minkowski's inequality we have,
	\begin{align}
			||T_{G_{h,\gamma,k}}(u)||_{L^4_t L^\infty(\Omega_2)}\le C h^{\frac{1}{4}-1} \sum_{k
			\lesssim \lambda_{\gamma,h}} ||T_{G_{h,\gamma,k}}(u)(0,\cdot)||_{L^2(\Omega_2)}.
	\end{align}
By discrete Cauchy-Schwarz, it follows
	\begin{align}
	||T_{G_{h,\gamma}}(u)||_{L^4_t L^\infty(\Omega_2)}\le C h^{\frac{1}{4}-1} \sum_{k
		\lesssim \lambda_{\gamma,h}} ||T_{G_{h,\gamma,k}}(u)(0,\cdot)||_{L^2(\Omega_2)}\le \left(\sum_{k
		\lesssim \lambda_{\gamma,h}} 1\right)^{1/2}\left(\sum_{k
		\lesssim \lambda_{\gamma,h}} ||T_{G_{h,\gamma,k}}(u)(0,\cdot)||^2_{L^2(\Omega_2)}\right)^{1/2}.
\end{align}
Since for every $k$, $T_{G_{h,\gamma,k}}(u)(0,\cdot)\in E_k^h$, and the $E_k^h$ are orthogonal spaces, we have \\
$\left(\sum_{k\ge 1} ||T_{G_{h,\gamma,k}}(u)(0,\cdot)||^2_{L^2(\Omega_2)}\right)^{1/2}\le ||T_{G_{h,\gamma}}(u)(0,\cdot)||_{L^2(\Omega_2)}\le ||u||_{L^2(\Omega_2)}$ which allows us to conclude.
\end{proof}
Lemma \ref{grossboundgallmodes}, by standard $TT^*$ arguments, gives, $||T_{G_{h,\gamma}}(u)||_{L^\infty_t L^\infty(\Omega_2)}\lesssim \frac{\gamma^{1/4}}{h}||u||_{L^2(\Omega_2)}$. By interpolating this bound with the one we have just obtained, we obtain the Strichartz for gallery modes.
\begin{prop}\label{gallerystr}
	For any $q\ge 4+\frac{12}{19}$, we obtain Strichartz estimates for the Green's function
	\begin{align}
		\left|\left|\sum_{\gamma\le h^{\frac{2}{3}-\frac{1}{30}}} T_{G_{\gamma,h}}(u_0)\right|\right|_{L^q_t L^\infty_{x,y}}\lesssim h^{\frac{1}{q}-1}||u_0||_{L^2}.
	\end{align}
\end{prop}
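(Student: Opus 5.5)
The plan is to interpolate, for each dyadic $\gamma\le h^{2/3-1/30}$, the two bounds just obtained for $T_{G_{h,\gamma}}$, and then sum over the dyadic $\gamma$'s. The choice of the cut point and the threshold $q\ge 4+\frac{12}{19}$ should come out of making the resulting geometric series in $\gamma$ converge with the correct power of $h$.

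\textbf{Interpolation.} The first bound is the $L^4_tL^\infty$ estimate of the previous proposition, with constant $h^{\frac14-1}\lambda_{\gamma,h}^{1/2}=h^{-3/4}\gamma^{3/4}h^{-1/2}=\gamma^{3/4}h^{-5/4}$. The second is the $L^\infty_tL^\infty$ bound $\gamma^{1/4}h^{-1}$, obtained from Lemma \ref{grossboundgallmodes} via $TT^*$. Both operators are linear in $u$ and map $L^2(\Omega_2)$ to $L^p_t([0,t_{max}],L^\infty_{x,y})$. Riesz--Thorin (or plain Hölder in $t$, since $|f|^q=|f|^4|f|^{q-4}$) therefore gives, for $q\ge4$ with $\frac1q=\frac{\theta}{4}$,
\begin{align}
\|T_{G_{h,\gamma}}u\|_{L^q_tL^\infty}\lesssim \left(\gamma^{3/4}h^{-5/4}\right)^{4/q}\left(\gamma^{1/4}h^{-1}\right)^{1-4/q}\|u\|_{L^2}=\gamma^{\frac14+\frac2q}h^{-1-\frac1q}\|u\|_{L^2}.
\end{align}

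\textbf{Summation.} By the triangle inequality over the $O(|\log h|)$ dyadic values $h^{2/3}\lesssim\gamma\le h^{2/3-\varepsilon}$ with $\varepsilon=\frac1{30}$, the positive exponent of $\gamma$ makes the sum geometric. It is therefore dominated by the largest term $\gamma=h^{2/3-\varepsilon}$, which yields
\begin{align}
h^{(\frac23-\varepsilon)(\frac14+\frac2q)-1-\frac1q}\|u_0\|_{L^2}.
\end{align}
We need this to be $\lesssim h^{\frac1q-1}$, which amounts to $(\frac23-\varepsilon)(\frac14+\frac2q)\ge\frac2q$, i.e.
\begin{align}
\left(\tfrac23-\varepsilon\right)\tfrac14\ge \tfrac2q\left(\tfrac13+\varepsilon\right),\qquad\text{that is}\qquad q\ge\frac{8(\frac13+\varepsilon)}{\frac23-\varepsilon}.
\end{align}
With $\varepsilon=\frac1{30}$ this gives $q\ge 8\cdot\frac{11/30}{19/30}=\frac{88}{19}=4+\frac{12}{19}$, which is exactly the stated threshold. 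This confirms that the choice $\varepsilon=\frac1{30}$ is what produces the constant in the statement.

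\textbf{Expected obstacle.} The main point needing care is the justification of the two endpoint bounds as operator estimates with uniform constants, in particular the $TT^*$ step converting the pointwise kernel bound $\sqrt\gamma h^{-2}$ of Lemma \ref{grossboundgallmodes} into $\|T_{G_{h,\gamma}}\|_{L^2\to L^\infty_{t,x,y}}\lesssim \gamma^{1/4}h^{-1}$. Here I would note that $T_{G_{h,\gamma}}T_{G_{h,\gamma}}^*$ has kernel $G_{h,\gamma}(t-s,\cdot)$ with $\psi_2$ squared, to which the same lemma applies, and that the factor $h^{-2}\cdot h\cdot h$ from the normalization must be tracked consistently with the $h^{-1}$ in the definition of $T_{G_{h,\gamma}}$. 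I would also check that the finite time interval $[0,t_{max}]$ contributes only a constant, and that the $\pm$ half-wave pieces and the $u_1$ part (via the modified cutoff remark) are handled identically.
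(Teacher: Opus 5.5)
Your proposal is correct and follows the paper's route. The paper proves this proposition with the one-line remark that one interpolates the $L^4_tL^\infty$ bound $h^{\frac14-1}\lambda_{\gamma,h}^{1/2}$ with the $TT^*$ consequence $\gamma^{1/4}h^{-1}$ of Lemma \ref{grossboundgallmodes} and then sums over dyadic $\gamma$, leaving the arithmetic implicit; your computation $\gamma^{\frac14+\frac2q}h^{-1-\frac1q}$, summed geometrically up to $\gamma=h^{\frac23-\frac1{30}}$, recovers exactly the stated threshold $q\ge\frac{88}{19}=4+\frac{12}{19}$.
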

\section{Strichartz Estimates}
\label{strichartz}
We now use the preceding estimates to prove the desired Strichartz estimates.\\
Let $u\in L^2(\Omega_2)$ be arbitrary. We want to prove that there exists a constant $C$ independent of $u$ such that
\begin{align}
	\left|\left| \int G_h(t,x,y-b,a)u(a,b)dadb\right|\right|_{L^q([0,t_{max}]),L^\infty(\Omega_2)}\le C h^{\frac{1}{q}-1}||u||_{L^2(\Omega_2)}
\end{align}
for $\frac{1}{q}\le \left(\frac{1}{2}-\frac{1}{9}\right)\frac{1}{2}$.\\
In this case, it is more useful to see $G_h$ as an operator:
\begin{defi}
	\label{Goperator}
	We define the following functional linear operator from $L^2(\Omega_2)$ to $L^p_t(L^q_{x,y}(\Omega_2))$, for certain $p,q$:
	\begin{align}
		T_{G_h}(u)(t,x,y)=\int G_h(t,x,y-b,a)u(a,b)dadb.
	\end{align}
\end{defi}
We have some classical properties on this operator:
\begin{prop}
	The operator is $L^2$ bounded, that is, for every $t\in[0,t_{max}]$, we have
	\begin{align}
		\label{almostconsenergy}
		||T_{G_h}(v)(t,\cdot)||_{L^2(\Omega_2)}=||T_{G_h}(v)(0,\cdot)||_{L^2(\Omega_2)}\le ||v||_{L^2(\Omega_2)}.
	\end{align}
	Furthermore, for every $t_0\in [0,t_{max}]$ there exists a $v_{t_0}\in L^2(\Omega_2)$ such that $||v_{t_0}||_{L^2(\Omega_2)}=||v||_{L^2(\Omega_2)}$ and $T_{G_h}(v)(t_0+t,x,y)=T_{G_h}(v_{t_0})(t,x,y)$, for every $t_{max}-t_0\ge t\ge 0$.
\end{prop}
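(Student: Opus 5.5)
Both claims reduce to the spectral representation of $T_{G_h}$ in the Hilbert basis $\{e_k(\cdot,\theta,\iota)\}$ of Proposition \ref{robin}, applied fibrewise after Fourier transform in $y$.

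First, since $G_h(t,x,y-b,a)$ is a convolution kernel in $y$, Plancherel in $y$ turns $T_{G_h}$ into a family of operators in $x$ indexed by $\theta$:
\begin{align}
\mathcal{F}_y\bigl(T_{G_h}(v)\bigr)(t,x,\theta)=\sum_{k\ge 2}\psi_0(h^2\lambda_k(\theta))\psi_1(h\theta)e^{\pm it\sqrt{\lambda_k(\theta)}}\,\tilde v_k(\theta)\,e_k(x,\theta,\iota),
\end{align}
with $\tilde v_k(\theta)=\int \mathcal{F}_y v(a,\theta)e_k(a,\theta,\iota)\,da$. As noted earlier, this sum is finite uniformly in $\theta$, so it may be exchanged freely with the integrals.

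For a.e.\ fixed $\theta\ge c$, the $e_k(\cdot,\theta,\iota)$ are orthonormal in $L^2(\mathbb{R}_+)$. Parseval in $x$ then gives
\begin{align}
\|\mathcal{F}_yT_{G_h}(v)(t,\cdot,\theta)\|_{L^2_x}^2=\sum_{k\ge2}|\psi_0(h^2\lambda_k)\psi_1(h\theta)|^2|\tilde v_k(\theta)|^2.
\end{align}
The exponential has modulus one, so this quantity does not depend on $t$. That yields the equality $\|T_{G_h}(v)(t)\|=\|T_{G_h}(v)(0)\|$ after integrating in $\theta$ and applying Plancherel. Next, $|\psi_0|,|\psi_1|\le1$ and Bessel's inequality give $\sum_k|\tilde v_k(\theta)|^2\le\|\mathcal{F}_yv(\cdot,\theta)\|_{L^2_x}^2$. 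Integrating in $\theta$ and using Plancherel once more bounds the result by $\|v\|_{L^2}$. To get the equality in time, the cut-offs must be real-valued with range in $[0,1]$; I would state this as an assumption on $\psi_0,\psi_1$.

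For the time-translation property, define $v_{t_0}$ by its spectral coefficients, $\widetilde{(v_{t_0})}_k(\theta)=e^{\pm it_0\sqrt{\lambda_k(\theta)}}\tilde v_k(\theta)$ for all $k$. Concretely, $v_{t_0}=\mathcal{F}_y^{-1}\bigl(\sum_k e^{\pm it_0\sqrt{\lambda_k}}\tilde v_k e_k\bigr)$, where the series converges in $L^2_x$ for a.e.\ $\theta$ because the basis is complete. Unitarity of this diagonal multiplication, by Parseval and Plancherel, gives $\|v_{t_0}\|=\|v\|$. Inserting $v_{t_0}$ into the formula above, the exponents add, so $T_{G_h}(v_{t_0})(t)=T_{G_h}(v)(t_0+t)$. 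Equivalently, $v_{t_0}=e^{\pm it_0\sqrt{-\Delta_F}}v$ for the self-adjoint Robin realization of $-\Delta_F$. This operator commutes with the spectral cut-offs, and that is all the argument uses.

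The only delicate point is measurability and a.e.-in-$\theta$ bookkeeping: $\beta$, $w_{\beta(\theta),k}$ and $e_k$ all depend on $\theta$. I would address it through continuity in $\theta$ of $w_{\beta(\theta),k}$, which follows from Proposition \ref{boundszeroes}. This continuity makes $\tilde v_k(\theta)$ measurable, and the remaining steps are routine.
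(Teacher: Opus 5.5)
Your argument is correct. The paper gives no proof here; it lists the proposition among ``classical properties''. Your fibrewise Parseval and diagonal-unitary argument is the same Fourier-in-$y$ plus orthonormality-of-$e_k$ computation the paper carries out in the proof of Proposition~\ref{adjoin}.

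There are two small slips to fix.

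\begin{itemize}
\item The equality in $t$ needs only $|e^{\pm it\sqrt{\lambda_k(\theta)}}|=1$, which holds because $\lambda_k(\theta)>0$ on the support of $\psi_0(h^2\,\cdot)$. The assumption $\sup|\psi_0\psi_1|\le 1$ is what gives the constant $1$ in the inequality, not the equality.
\item Completeness of $\{e_k(\cdot,\theta,\iota)\}_k$ is only established for $\theta\ge c$. So apply the multiplier $e^{\pm it_0\sqrt{\lambda_k(\theta)}}$ only on those fibres and leave the others unchanged. For the same reason, $v_{t_0}=e^{\pm it_0\sqrt{-\Delta_F}}v$ is not literally right when $\cot\iota<0$: the Robin realization then has negative spectrum at small $|\theta|$.
\end{itemize}
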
 
The adjoint of the operator is as follows
\begin{prop}
	\label{adjoin}
	\begin{align}
		T^\ast_{G_h}(f)(x,y)=\int G_h(-s,x,y-b,a)f(s,a,b)dadbds.
	\end{align}
	In consequence, the operator $T_{G_h}T^\ast_{G_h}(f)$ is
	\begin{align}
		T_{G_h}T^\ast_{G_h}(f)(t,x,y)=\int G_h(t-s,x,y-b,a)f(s,a,b)dadbds.
	\end{align}
\end{prop}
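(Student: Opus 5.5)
The plan is to work entirely on the Fourier side in $y$ and in the eigenbasis $e_k(\cdot,\theta,\iota)$ of Proposition \ref{robin}. There every object is an explicit finite sum, so both identities reduce to symmetry properties of the kernel and to the orthonormality of the $e_k$.

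\textbf{The adjoint.} Pair $T_{G_h}(u)$ with a test function $f(t,x,y)$ in $L^2([0,t_{max}]\times\Omega_2)$ and exchange the order of integration. This is legitimate because, after the cut-offs, the sum over $k$ is finite and uniformly bounded in $\theta$, and the $\theta$-integral runs over the compact set $h\theta\in\operatorname{supp}\psi_1$. The pairing becomes
\begin{align}
\langle T_{G_h}u,f\rangle=\int u(a,b)\,\overline{\int \overline{G_h(s,x,y-b,a)}\,f(s,x,y)\,dx\,dy\,ds}\;da\,db .
\end{align}
So I need the kernel identity $\overline{G_h(s,x,y-b,a)}=G_h(-s,a,b-y,x)$, and this comes from Definition \ref{green}. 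The functions $e_k(\cdot,\theta,\iota)$ are real, as are $\psi_0$, $\psi_1$, $\lambda_k(\theta)$ and $w_{\beta(\iota,\theta),k}$. Complex conjugation therefore only flips the phase $e^{i(y-b)\theta\pm is\sqrt{\lambda_k(\theta)}}$ to $e^{i(b-y)\theta\mp is\sqrt{\lambda_k(\theta)}}$. The product $e_k(x)e_k(a)$ is symmetric in $(x,a)$. Relabelling the variables then gives the stated formula for $T^*_{G_h}$, with the obvious convention on the sign $\pm$ (the half-wave of one sign is the adjoint of the other sign at time $-s$).

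\textbf{The composition $T_{G_h}T^*_{G_h}$.} Here I insert the spectral representation twice and integrate over the intermediate variables $(c,b')$. In $b'$, Plancherel collapses the two frequency integrals to the single variable $\theta$. In $c$, the orthonormality $\int e_k(c,\theta,\iota)e_j(c,\theta,\iota)\,dc=\delta_{kj}$ kills the off-diagonal terms. The exponentials combine as $e^{\pm it\sqrt{\lambda_k}}e^{\mp is\sqrt{\lambda_k}}=e^{\pm i(t-s)\sqrt{\lambda_k}}$, and the result is $\int G_h(t-s,x,y-b,a)f(s,a,b)\,da\,db\,ds$.

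\textbf{The main obstacle.} This is a bookkeeping issue rather than an analytic one: the composition produces the squared cut-offs $\psi_0^2(h^2\lambda_k)\psi_1^2(h\theta)$ instead of $\psi_0\psi_1$. I would handle it in one of two ways.
\begin{itemize}
\item Replace $\psi_0,\psi_1$ by $\sqrt{\psi_0},\sqrt{\psi_1}$ in $T_{G_h}$. I would choose the cut-offs as squares of smooth compactly supported functions from the start, so that the identity holds exactly.
\item Alternatively, observe that $G_h$ built with $\psi_0^2,\psi_1^2$ belongs to the same class. All the dispersive bounds of Propositions \ref{disptransverse}, \ref{bigtangenctial} and Lemma \ref{grossboundgallmodes} used only the support and symbol properties of these cut-offs.
\end{itemize}
With that convention fixed, both formulas follow.
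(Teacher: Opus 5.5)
Your proof is correct and follows essentially the same route as the paper: Fourier transform in $y$, expansion in the eigenbasis $e_k$, orthonormality in the intermediate variable to remove the off-diagonal terms, and recombination of the time exponentials into $e^{\pm i(t-s)\sqrt{\lambda_k(\theta)}}$. The paper also notes that the composition carries $\psi_0^2\psi_1^2$, and dismisses this in a remark as an equivalent cut-off, which is your second option; one small point is that no sign swap is needed in the adjoint, since $\overline{G^\pm_h(s,x,y-b,a)}=G^\pm_h(-s,a,b-y,x)$ holds with the same superscript, so your parenthetical about the ``other sign'' can simply be dropped.
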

\begin{proof}
	The adjoint is a simple calculation. We focus on proving that the composition have this expression. Let $f\in L^p_t(\mathbb{R}),L^q_{x,y}(\Omega_2)$. We have
	\begin{align}
		T^*_{G_h}(f)=\int G_{h}(-s,x,\cdot,a)\ast_y f(a,s,\cdot) dads,
	\end{align}
	so
	\begin{align}
		&\mathcal{F}_y(T^*_{G_h}(f))(x,\theta)=\int \mathcal{F}_y(G_{h})(-s,x,\theta,a) f(a,s,\theta) dads\\
		&=\sum_{k\ge 2} \psi_0(h^2 \lambda_k(\theta)) 	\psi_1(h\theta)e_k(x,\theta,\iota){}\int e_k(a,\theta,\iota)e^{-is\sqrt{\lambda_k(\theta)}}\mathcal{F}_y(f)(s,a,\theta)dads\\
		&=\sum_{k\ge 2} \psi_0(h^2 \lambda_k(\theta))\psi_1(h\theta)e_k(x,\theta,\iota)f_k(s,\theta,\iota).
	\end{align}
	Where $f_k(s,\theta,\iota)=\int e_k(a,\theta,\iota)e^{-is\sqrt{\lambda_k(\theta)}}\mathcal{F}_y(f)(s,a,\theta)dads$.
	Also, if we denote \\$F_m(x,t,\theta,\iota)= e_m(x,\theta,\iota)e^{it\sqrt{\lambda_m(\theta)}}$,
	\begin{align}
		&\mathcal{F}_y(T_{G_h}T^\ast_{G_h}(f))(t,x,\theta)=\int \mathcal{F}_y(G_h)(t,x,\theta,a)\mathcal{F}_y(T^*_{G_h}(f))(a,\theta)da\\
		&=\int \mathcal{F}_y(T^*_{G_h}(f))(\tilde{a},\theta)\sum_{m\ge 2} \psi_0(h^2 \lambda_m(\theta)) 	\psi_1(h\theta)e_m(x,\theta,\iota) e_m(\tilde{a},\theta,\iota)e^{it\sqrt{\lambda_m(\theta)}} d\tilde{a}\\
		&=\int \sum_{m\ge 2}\sum_{k\ge 2} \psi_0(h^2 \lambda_k(\theta))\psi_0(h^2 \lambda_m(\theta))\psi_1^2(h\theta) e_m(\tilde{a},\theta,\iota)e_k(\tilde{a},\theta,\iota)F_m(x,t,\theta,\iota)f_k(s,\theta,\iota)d\tilde{a}.
	\end{align}
	\begin{align}
		&=\sum_{m\ge 2}\sum_{k\ge 2} \psi_0(h^2 \lambda_k(\theta))\psi_0(h^2 \lambda_m(\theta))\psi_1^2(h\theta)F_m(x,t,\theta,\iota)f_k(s,\theta,\iota)\int e_m(\tilde{a},\theta,\iota)e_k(\tilde{a},\theta,\iota) d\tilde{a}
	\end{align}
	Since $e_k(x,\theta,\iota)$ is an orthonormal basis in $x$ for fixed $\theta,\iota$, that is $\int e_m(\tilde{a},\theta,\iota)e_k(\tilde{a},\theta,\iota) d\tilde{a}=0$, if $k\ne m$ and the integral is 1 in $k=m$, we have
	\begin{align}
		&\mathcal{F}_y(T_{G_h}T^\ast_{G_h}(f))(t,x,\theta)\\
		&=\sum_{k\ge 2} \psi_0^2(h^2 \lambda_k(\theta)) 	\psi_1^2(h\theta)F_k(x,t,\theta,\iota)f_k(s,\theta,\iota)\\
		&=\sum_{k\ge 2} \psi_0^2(h^2 \lambda_k(\theta)) 	\psi_1^2(h\theta)e_k(x,\theta,\iota)\int e_k(a,\theta,\iota)e^{-is\sqrt{\lambda_k(\theta)}}\mathcal{F}_y(f)(s,a,\theta)e^{it\sqrt{\lambda_k(\theta)}}dads\\
		&=\int \sum_{k\ge 2} \psi_0^2(h^2 \lambda_k(\theta)) 	\psi_1^2(h\theta)e_k(x,\theta,\iota) e_k(a,\theta,\iota)\mathcal{F}_y(f)(s,a,\theta)e^{i(t-s)\sqrt{\lambda_k(\theta)}}dads\\
		&=\int \mathcal{F}_y(G_h)(t-s,x,\theta,a)\mathcal{F}_y(f)(s,a,\theta) dads.
	\end{align}
	Therefore, since the inverse Fourier transform in $y$ transforms products in convolutions, we obtain the result.
\end{proof}
\begin{obs}
	Strictly speaking, the kernel $G$ which appears in the $TT^*$ argument is not exactly the same kernel as the one considered above: the symbol may differ by an equivalent cutoff. In particular, the kernel in the $TT^*$ argument contains the cutoff $\psi_1^2(h\theta)$ instead of $\psi_1(h\theta)$. This does not affect the estimates, since all the properties and asymptotics of $G$ were proved for an arbitrary cutoff
	$\psi_1\in \mathcal{C}^\infty_0\bigl(\sqrt{1-2^{-10}},\sqrt{1+2^{-10}}\bigr)$,
	and $\psi_1^2$ belongs to the same class. Therefore the constants in the estimates may change, but the bounds remain unchanged.
	
\end{obs}
\begin{obs}
	The operator $T_{G_{\gamma,h}}$ and similar operators satisfies the same definition and their adjoints have the same structure.
\end{obs}
With all these considerations, we can use our dispersive estimates to prove Strichartz.
We start by proving here the case $L^\infty(\Omega_2)$ and after that, we deduce the other estimates by interpolation with the energy.
\subsection{$L^\infty(\Omega_2)$: Strichartz estimates for small times }
The first statements of propositions (\ref{bigtangenctial}), (\ref{disptransverse}) tells us that, when $|t|\le C\sqrt{\gamma}$ (For a known fixed constant $C$), we have $|G_{\gamma,h}(t,x,y,a)|\lesssim \frac{\sqrt{\gamma}}{h^2} \inf\left(1,\frac{h^{1/2}}{t^{1/2}\gamma^{1/2}}\right)$. This expression of the dispersion has the same aspect as in the classical case, so the same argument applies:
\begin{align}
	&||T_{G_h}T^\ast_{G_{\gamma,h}}(f)||^q_{L^q([0,C\sqrt{\gamma}]),L^\infty(\Omega_2)}=\int_0^{C\sqrt{\gamma}} \left|\left|\int G_h(t-s,x,y-b,a)f(s,a,b)dadbds\right|\right|_{L^\infty_{x,y}(\Omega_2)}^q dt\\
	&\lesssim \int_0^{C\sqrt{\gamma}} \left(\int ||G_h(t-s,x,y-b,a)||_{L^\infty_{x,y}(\Omega_2)}|f(s,a,b)|dadbds \right)^q dt\\
	&\lesssim \int_0^{C\sqrt{\gamma}} \left(\int_0^{C\sqrt{\gamma}} \frac{h^{1/2}}{h^2}\frac{1}{|t-s|^{1/2}}|f(s,a,b)|dadbds \right)^q dt\\
	&\lesssim\frac{h^{q/2}}{h^{2q}} \int_0^{C\sqrt{\gamma}}\left(\int \frac{1}{|t-s|^{1/2}}||f(s,\cdot)||_{L^1_{a,b}}ds \right)^q dt\\
	&\lesssim\frac{h^{q/2}}{h^{2q}} ||\frac{1}{|t|^{1/2}}\ast_t |f|(t,a,b)||^q_{L^q([0,C\sqrt{\gamma}]),L^1_{x,y}}\lesssim \frac{h^{q/2}}{h^{2q}} ||f(t,a,b)||^q_{L^{(\frac{1}{q}+\frac{1}{2})^{-1}}([0,C\sqrt{\gamma}]),L^1_{a,b}}.
\end{align}
In order for $q$ and $(\frac{1}{q}+\frac{1}{2})^{-1}$ to be conjugated, we need $q=4$.\\
\begin{obs}
	We focus only on the minimum $q$ possible, since for larger $q$ we can simply use interpolation with the energy estimates. 
\end{obs}
Thus, for every $f\in L^{\frac{4}{3}}([0,C\sqrt{\gamma}]),L^\infty_{a,b}(\Omega_2)$, we obtain
\begin{align}
	||T_{G_{\gamma,h}}T^\ast_{G_{\gamma,h}}(f)||_{L^4([0,C\sqrt{\gamma}]),L^\infty(\Omega_2)}\lesssim \frac{h^{1/2}}{h^2}||f||_{L^{\frac{4}{3}}([0,C\sqrt{\gamma}]),L^\infty_{a,b}(\Omega_2)}.
\end{align}
By a $TT^*$ argument, it follows that
\begin{align}
	||T_{G_{\gamma,h}}(u)||_{L^4([0,C\sqrt{\gamma}]),L^\infty(\Omega_2)}\lesssim \frac{h^{1/4}}{h}||u||_{L^2_{a,b}(\Omega_2)}.
\end{align}
Now, we can use the energy estimate to extend this estimate to an estimate in all the interval $[0,t_{max}]$:\\
Let $[t_0,t_1]\subset[0,t_{max}]$ such that $t_1-t_0\le C\sqrt{\gamma}$. Then
\begin{align}
	||T_{G_{\gamma,h}}(u)||^4_{L^4([t_0,t_1]),L^\infty(\Omega_2)}=\int_{t_0}^{t_1} ||T_{G_{\gamma,h}}(u)(t,\cdot)||_{L^\infty(\Omega_2)}^4 dt=\int_{0}^{t_1-t_0} ||T_{G_{\gamma,h}}(u)(t-t_0,\cdot)||_{L^\infty(\Omega_2)}^4 dt.
\end{align}
We use our expression for the energy estimate and so, there exists a $v_{t_0}$ with the same $L^2$ norm as $u$ such that, for every $t$, $T_{G_{\gamma,h}}(u)(t-t_0,x,y)=T_{G_{\gamma,h}}(v_{t_0})(t,x,y)$. Using that we conclude that
\begin{align}
	&||T_{G_{\gamma,h}}(u)||^4_{L^4([t_0,t_1]),L^\infty(\Omega_2)}=\int_{0}^{t_1-t_0} ||T_{G_{\gamma,h}}(u)(t-t_0,\cdot)||_{L^\infty(\Omega_2)}^4 dt\\
	&=\int_{0}^{t_1-t_0} ||T_{G_{\gamma,h}}(v_{t_0})(t,\cdot)||_{L^\infty(\Omega_2)}^4 dt\lesssim \frac{h}{h^4}||v_{t_0}||^4_{L^2_{a,b}(\Omega_2)}=\frac{h}{h^4}||u_{t_0}||^4_{L^2_{a,b}(\Omega_2)}.
\end{align}
Now, we can divide the total space into parts with size $\le C\sqrt{\gamma}$. $[0,t_{max}]=\bigcup_{1\le N\le \lfloor \frac{t_{max}}{C\sqrt{\gamma}} \rfloor} [(N-1)C\sqrt{\gamma},NC\sqrt{\gamma}]\cup [C\sqrt{\gamma}\lfloor\frac{t_{max}}{ C\sqrt{\gamma}}\rfloor,t_{max}]$ and there are $\lfloor\frac{t_{max}}{ C\sqrt{\gamma}}\rfloor+1\lesssim\frac{t_{max}}{ C\sqrt{\gamma}}$ intervals. Then
\begin{align}
	\label{smalltobigtimes}
	&||T_{G_{\gamma,h}}(u)||^4_{L^4([0,t_{max}]),L^\infty(\Omega_2)}\\
	&=\sum_{1\le N\le \lfloor \frac{t_{max}}{C\sqrt{\gamma}} \rfloor} ||T_{G_{\gamma,h}}(u)||^4_{L^4([(N-1)C\sqrt{\gamma},NC\sqrt{\gamma}]),L^\infty(\Omega_2)}+||T_{G_{\gamma,h}}(u)||^4_{L^4([C\sqrt{\gamma}\lfloor\frac{t_{max}}{ C\sqrt{\gamma}}\rfloor,t_{max}]),L^\infty(\Omega_2)}\\
	&\lesssim \frac{t_{max}}{ C\sqrt{\gamma}} \frac{h}{h^4}||u_{t_0}||^4_{L^2_{a,b}(\Omega_2)}.
\end{align}
\subsection{$L^\infty(\Omega_2)$: Strichartz for longer times for tangential waves}
We study now what we have called the tangential waves, that is $\gamma/4\le a\le 2\gamma$.
\begin{obs}
	We  focus only in the tangential waves since the estimates for transverse waves are easier to get, since the bounds are already in terms of $T$ and not in terms of $N$, and they are strictly better than the tangential ones, thus the tangential waves are the bottleneck of the problem.
\end{obs}
First, by looking at \eqref{bigtangenctial}, we have at most three regimes: $1\le |N| \le \lambda_{a,h}^{1/3}$, $\lambda_{a,h}^{1/3}\le |N| \le \lambda_{a,h}$ and $|N| \ge \lambda_{a,h}$. However, we know that always, the relevant part is $|N|\le C_3 a^{-1/2}$, so if $\lambda_{a,h}^{1/3}\ge C_3 a^{-1/2}$ ($a\ge 10 C_3 h^{1/3}$) the last two regimes  never appear, and if $\lambda_{a,h}\ge C_3 a^{-1/2}$ ($a\ge 10 C_3 h^{1/2}$) the last regime never appears.
Now we compute the norm of $G_{\gamma,h}$ for tangential waves (We  use Young's inequality to obtain an estimate).\\
We compute then the integral, and we do the change of variables \eqref{tangenctialchange}, which we have used for the tangential waves $\hat{\boldsymbol{T}}$.
\begin{align}
	&\int_{C\sqrt{\gamma}}^{t_{max}} \sup_{x,y}\left|G_{\gamma,h}(t,x,y,a) \right|^p dt=\int_{C\sqrt{\gamma}}^{t_{max}} \sup_{x,y}\left|\sum_{N\in \mathcal{N}_1(\boldsymbol{t})} V_{h,\gamma,N}(\boldsymbol{t},a)+O(h^{\infty})\right|^pdt\\
	&\le\int_{C\sqrt{\gamma}}^{t_{max}} \sup_{x,y}\left|\sum_{N\in \mathcal{N}_1(\boldsymbol{t})} V_{h,\gamma,N}(\boldsymbol{t},a)\right|^pdt\\
	&+C(p)O(h^{\infty})=\sqrt{a}\sqrt{1+a}\int_{C\frac{\sqrt{\gamma}}{\sqrt{a}\sqrt{1+a}}}^{\frac{t_{max}}{\sqrt{a}\sqrt{1+a}}} \sup_{\hat{X},\hat{Y}}\left|\sum_{N\in \mathcal{N}_1(\boldsymbol{t}(\hat{\boldsymbol{T}},a))} W_{h,\gamma,N}(\boldsymbol{\hat{T}},a)\right|^pd\hat{T}+C(p)O(h^{\infty}).
\end{align}
We now consider the following:
we know that $\hat{T}$ and $N$ are closely related, however propositions \eqref{prop1} and \eqref{boundgreat} tell us that, for a fixed $\hat{T}$, the sum is bounded, and that $N\le 2K \hat{T}$. However we do not have the opposite $N\ge 2K \hat{T}$ (even if we know it must be true in some sense, since the stationary points are in $T\sim 4N$). Thus, instead of studying the regimes of $N$, we  study the regimes of $\hat{T}$. In order to do so, we  discretize this interval as follows:\\
Consider a partition $C\frac{\sqrt{\gamma}}{\sqrt{a}\sqrt{1+a}}=\hat{T}_0\le \hat{T}_1\le ...\le \hat{T}_m=\frac{t_{max}}{\sqrt{
		a}\sqrt{1+a}}$ such that $\frac{1}{4}= |T_i-T_{i-1}|$. One possible case could be $C\frac{\sqrt{\gamma}}{\sqrt{a}\sqrt{1+a}}\le C\frac{\sqrt{\gamma}}{\sqrt{a}\sqrt{1+a}}+1/2\le ...\le C\frac{\sqrt{\gamma}}{\sqrt{a}\sqrt{1+a}}+\frac{m-1}{2}\le \frac{t_{max}}{\sqrt{
		a}\sqrt{1+a}}$. In that case, we have 
\begin{align}
	\int_{C\frac{\sqrt{\gamma}}{\sqrt{a}\sqrt{1+a}}}^{\lambda_{a,h}^2} \sup_{\hat{X},\hat{Y}}\left|\sum_{N\in \mathcal{N}_1(\boldsymbol{t}(\hat{\boldsymbol{T}},a))} W_{h,\gamma,N}(\boldsymbol{\hat{T}},a)\right|^pd\hat{T}= \sum_{i=0}^{m} \int_{\hat{T}_i}^{\hat{T}_{i+1}} \sup_{\hat{X},\hat{Y}}\left|\sum_{N\in \mathcal{N}_1(\boldsymbol{t}(\hat{\boldsymbol{T}},a))} W_{h,\gamma,N}(\boldsymbol{\hat{T}},a)\right|^pd\hat{T}.
\end{align}
We note that $m$ depends on the parameters $\gamma,a$ and $m\sim \frac{t_{max}}{\sqrt{
		a}\sqrt{1+a}}$, we will call it $m_{a,\gamma}$.\\
Now, in each of the integrals, we have that for all $\hat{T}\in [\hat{T}_{i},\hat{T}_{i+1}]$, we have $\mathcal{N}_1(\boldsymbol{t}(\hat{\boldsymbol{T}},a))\subset \mathcal{N}_1(\boldsymbol{t}(\hat{T}_{i},a))\cup \mathcal{N}_1(\boldsymbol{t}(\hat{T}_{i+1},a))$ and this set has a cardinal bounded by $C+\frac{\hat{T}_{i+1}}{\lambda_{a,h}^2}$. In particular, for $\hat{T}_{i}\lesssim \lambda_{a,h}^2$, the cardinal is fixed by a constant (which is the same constant for every $j\le i$). We then define the following set.
\begin{defi}
	\label{N3}
	For every $0\le i\le m_{a,\gamma}$, we define $\mathcal{N}_{3}(\hat{T}_i):=(\mathcal{N}_1(\boldsymbol{t}(\hat{T}_{i},a))\cup \mathcal{N}_1(\boldsymbol{t}(\hat{T}_{i+1},a)))\cap \{N\in\mathbb{N}: N\le \hat{T}_{i+1}{K} \}$.
\end{defi}
Now we  study the different regimes of the theorem \ref{bigtangenctial} but substituting $N$ by $T_i$.
\begin{enumerate}
	\item $1\le \hat{T}_i\le K^{-1}\lambda_{a,h}^{1/3}$.\\
	In this case, we need to worry about the swallowtail, but we do not need to worry about the overlapping, this means that we can reconfigure the integral using that, for any $\hat{T}_i\le K^{-1}\lambda_{a,h}^{2}$, then $|\mathcal{N}_{3}(\hat{T}_i)|$ is bounded by a constant which does not depend on $i$. Thus we have
	\begin{align}
		&\int_{\hat{T}_i}^{\hat{T}_{i+1}} \sup_{\hat{X},\hat{Y}}\left|\sum_{N\in\mathcal{N}_3(\hat{T}_{i})} W_{h,\gamma,N}(\boldsymbol{\hat{T}},a)\right|^pd\hat{T}\\
		&\le\int_{\hat{T}_i}^{\hat{T}_{i+1}} C(p) \sum_{N\in\mathcal{N}_3(\hat{T}_{i})}\sup_{\hat{X},\hat{Y}}\left| W_{h,\gamma,N}(\boldsymbol{\hat{T}},a)\right|^pd\hat{T}\le\int_{\hat{T}_i}^{\hat{T}_{i+1}} C(p) \sum_{N\in \frac{K}{\sqrt{a}}}\sup_{\hat{X},\hat{Y}}\left| W_{h,\gamma,N}(\boldsymbol{\hat{T}},a)\right|^pd\hat{T},
	\end{align}
	so 
	\begin{align}
		&\sum_{i\in\mathbb{N}|\hat{T}_i\le K^{-1}\lambda_{a,h}^{1/3}} \int_{\hat{T}_i}^{\hat{T}_{i+1}} \sup_{\hat{X},\hat{Y}}\left|\sum_{N\in\mathcal{N}_3(\hat{T}_{i})} W_{h,\gamma,N}(\boldsymbol{\hat{T}},a)\right|^pd\hat{T} \\
		&\lesssim C(p) \sum_{|N|\lesssim T_{max}} \int_{C\frac{\sqrt{\gamma}}{\sqrt{a}\sqrt{1+a}}}^{T_{max}}\sup_{\hat{X},\hat{Y}}\left| \hat{W}_{h,\gamma,N}(\boldsymbol{\hat{T}},a)\right|^pd\hat{T}.
	\end{align}
	\begin{obs}
		$T_{max}$ is the maximum $T$ the integral goes, it is $T_{max}=K^{-1}\lambda_{a,h}^{1/3}$ unless $\lambda_{a,h}^{1/3}\ge \frac{1}{\sqrt{a}}$, in which case, $T_{max}=\frac{1}{\sqrt{a}}$.
	\end{obs}
	\begin{obs}
		The case $\lambda_{a,h}^{1/3}\ge \frac{1}{\sqrt{a}}$, corresponds with the case $h^{1/2}\le a$.
	\end{obs}
	We study this integral:
	\begin{align}
		&\int_{C\frac{\sqrt{\gamma}}{\sqrt{a}\sqrt{1+a}}}^{T_{max}}\sup_{\hat{X},\hat{Y}}\left| \hat{W}_{h,\gamma,N}(\boldsymbol{\hat{T}},a)\right|^pd\hat{T}\\
		&=\int_{[C\frac{\sqrt{\gamma}}{\sqrt{a}\sqrt{1+a}},T_{max}]-[4N-\frac{C}{N},4N+\frac{C}{N}]}\sup_{\hat{X},\hat{Y}}\left| \hat{W}_{h,\gamma,N}(\boldsymbol{\hat{T}},a)\right|^pd\hat{T}+\int_{4N-\frac{C}{N}}^{4N+\frac{C}{N}}\sup_{\hat{X},\hat{Y}}\left| \hat{W}_{h,\gamma,N}(\boldsymbol{\hat{T}},a)\right|^pd\hat{T}\\
		&\lesssim \int_{[C\frac{\sqrt{\gamma}}{\sqrt{a}\sqrt{1+a}},T_{max}]-[4N-\frac{C}{N},4N+\frac{C}{N}]}\frac{h^{\frac{p}{3}}}{h^{2p}(1+|N(T-4N)|^{1/4})^{p}}d\hat{T}\\
		&+\int_{4N-\frac{C}{N}}^{4N+\frac{C}{N}} \frac{h^{\frac{p}{3}}}{h^{2p}((\frac{N}{\lambda_{\gamma,h}^{1/3}})^{1/4}+|N(\hat{T}-4N)|^{1/6})^p}d\hat{T}\\
		&\lesssim \int_{0}^{4N-\frac{C}{N}}\frac{h^{\frac{p}{3}}}{h^{2p}(1+|N(T-4N)|^{1/4})^{p}}d\hat{T}+\int_{4N+\frac{C}{N}}^{T_{max}}\frac{h^{\frac{p}{3}}}{h^{2p}(1+|N(T-4N)|^{1/4})^{p}}d\hat{T}\\
		&+\int_{4N-\frac{C}{N}}^{4N+\frac{C}{N}} \frac{h^{\frac{p}{3}}}{h^{2p}((\frac{N}{\lambda_{\gamma,h}^{1/3}})^{1/4}+|N(\hat{T}-4N)|^{1/6})^p}d\hat{T}\\
		&\lesssim \int_{\frac{C}{N}}^{4N}\frac{h^{\frac{p}{3}}}{h^{2p}|N\theta|^{p/4}}d\theta+\int_{\frac{C}{N}}^{2T_{max}}\frac{h^{\frac{p}{3}}}{h^{2p}|N\theta|^{p/4}}d\theta+
		\int_{-\frac{C}{N}}^{\frac{C}{N}}  \frac{h^{\frac{p}{3}}}{h^{2p}((\frac{N}{\lambda_{\gamma,h}^{1/3}})^{1/4}+|N\theta|^{1/6})^p}d\theta\\
		&\lesssim C(p)\frac{h^{\frac{p}{3}}}{h^{2p}}\left( \frac{1}{\lambda_{a,h}^{\frac{p}{2}-2}N^{\frac{p}{4}}}+\frac{\log(N)^{\delta_{p,4}}}{N}+\frac{1}{N^{p/2}}+\frac{1}{N}+\frac{|\log(N^{\frac{3}{2}}/\lambda_{a,h}^{\frac{1}{2}})|^{\delta_{p,6}}N^{1/2}\lambda_{a,h}^{p/12}}{\lambda_{a,h}^{1/2}N^{p/4}}\right).
	\end{align}
	Now, if $p\ge 4$, we obtain the following bounds and adding all the terms, it follows, by using that $\sum_{N=1}^{m} \frac{\log(N)}{N}\lesssim \log(m)^2$, $\sum_{N=1}^{m} \frac{1}{N}\lesssim \log(m)$, $\sum_{N=1}^{m} \frac{1}{N^{p}}\le C(p)\frac{m}{m^{p}},p\ne 1$,
	\begin{align}
		\sum_{N \le KT_{max}}	\int_{C\frac{\sqrt{\gamma}}{\sqrt{a}\sqrt{1+a}}}^{T_{max}}\sup_{\hat{X},\hat{Y}}\left| W_{h,\gamma,N}(\boldsymbol{\hat{T}},a)\right|^pd\hat{T}\lesssim C(p) \frac{h^{p/3}}{h^{2p}}(1+\frac{|\log(KT_{max})|^{2\delta_{4,p}}}{(KT_{max})^{\frac{p}{4}-1}}).
	\end{align}
	This is the worst case possible, so in order to have Strichartz we need to choose $p$ as small as possible, however, in $p=4$, we obtain a $log$, so we have to be careful.
	We have then
	\begin{align}
		||G_{\gamma,h}(t,x,y,a)||_{L_t^4[C\sqrt{\gamma},\sqrt{a}\sqrt{1+a}T_{max}]L^\infty_{x,y}}\lesssim \sqrt{a}^{1/4}\frac{h^{1/3}}{h^2}|\log(KT_{max})|^{1/2}.
	\end{align}
	By Young's inequality, this gives
	\begin{align}	||T_{G_{\gamma,h}}T^\ast_{G_{\gamma,h}}(f)||_{L_t^8([C\sqrt{\gamma},\sqrt{a}\sqrt{1+a}T_{max}]),L^\infty(\Omega_2)}\lesssim \sqrt{\gamma}^{1/4}|\log(KT_{max})|^{1/2}\frac{h^{1/3}}{h^2}||f||_{L^{8/7}_t,L^1_{x,y}}.
	\end{align}
	But also, by using \eqref{smalltobigtimes}, we obtain
	\begin{align}
		||T_{G_{\gamma,h}}T^\ast_{G_{\gamma,h}}(f)||_{L_t^4([C\sqrt{\gamma},\sqrt{a}\sqrt{1+a}T_{max}]),L^\infty(\Omega_2)}\lesssim T_{max}^{1/2}\frac{h^{1/2}}{h^2}||f||_{L^{4/3}_t,L^1_{x,y}}.
	\end{align}
	We interpolate between these two expressions. For $\theta\in[0,1]$,
	\begin{align}
		&||T_{G_{\gamma,h}}T^\ast_{G_{\gamma,h}}(f)||_{L_t^{q_\theta}([C\sqrt{\gamma},\sqrt{a}\sqrt{1+a}T_{max}]),L^\infty(\Omega_2)}\\
		&\lesssim T_{max}^{\theta/2}\frac{h^{\theta/2}}{h^{2\theta}}\frac{h^{\frac{1-\theta}{3}}}{h^{2-2\theta}}\gamma^{\frac{1-\theta}{8}}|\log(KT_{max})|^{\frac{1-\theta}{2}}||f||_{L^{p_{\theta}}_t,L^1_{x,y}}.
	\end{align}
	We note that we always have $T_{max}h^{1/3}\lesssim \gamma^{1/2}$. (If $T_{max}\sim \lambda_{a,h}^{1/3}$ it follows that, if $T_{max}\sim \frac{1}{\sqrt{a}}$ is a consequence of being in the regime $\gamma \gtrsim h^{1/3}$). Furthermore, we have $h^{1/2}\gamma^{1/8}\lesssim T_{max}^{-3/4}$ (Again, by separating cases). This implies that $\gamma^{1/8}h^{1/3}\lesssim T_{max}^{-3/4}h^{1/4}$ and
	\begin{align}
		&||T_{G_{\gamma,h}}T^\ast_{G_{\gamma,h}}(f)||_{L_t^{q_\theta}([C\sqrt{\gamma},\sqrt{a}\sqrt{1+a}T_{max}]),L^\infty(\Omega_2)}\\
		&\lesssim T_{max}^{\frac{\theta}{2}-\frac{3}{4}(1-\theta)}\frac{h^{\frac{1-\theta}{4}+\frac{\theta}{2}}}{{h^2}}|\log(KT_{max})|^{\frac{1-\theta}{2}}||f||_{L^{p_{\theta}}_t,L^1_{x,y}}.
	\end{align}
	Since $\frac{1}{q_{\theta}}=\frac{\theta}{4}+\frac{1-\theta}{8}$, we have
	\begin{align}
		||T_{G_{\gamma,h}}T^\ast_{G_{\gamma,h}}(f)||_{L_t^{q_\theta}([C\sqrt{\gamma},\sqrt{a}\sqrt{1+a}T_{max}]),L^\infty(\Omega_2)}\lesssim T_{max}^{\frac{5\theta-3}{4}}\frac{h^{\frac{2}{q_{\theta}}}}{h^2}|\log(KT_{max})|^{\frac{1-\theta}{2}}||f||_{L^{p_{\theta}}_t,L^1_{x,y}}.
	\end{align}
	This means that, whenever $\theta<\frac{3}{5}$ (that is $q_{\theta}>5$), the exponent in $T_{max}$ is $<0$, so $T_{max}^{\frac{5\theta-3}{4}}|\log(KT_{max})|^{\frac{1-\theta}{2}}$ is bounded. Furthermore, it can be bounded by $T_{max}^{-\varepsilon(\theta)}$. And by duality we conclude. For $q>5$, we have
	\begin{align}
		||T_{G_{\gamma,h}}(u)||_{L_t^{q}([C\sqrt{\gamma},\sqrt{a}\sqrt{1+a}T_{max}]),L^\infty(\Omega_2)}\lesssim T_{max}^{-\varepsilon(q)}\frac{h^{\frac{1}{q}}}{h}||u||_{L^2_{x,y}}.
	\end{align}
	\begin{obs}
		If $q>8$ ($\theta<0$), instead of interpolating, we use directly
		\begin{align}
			||G_{\gamma,h}(t,x,y,a)||_{L_t^p[C\sqrt{\gamma},\sqrt{a}\sqrt{1+a}T_{max}]L^\infty_{x,y}}\lesssim \sqrt{\gamma}^{1/p}\frac{h^{1/3}}{h^2}\frac{|\log(KT_{max})|^{2\delta_{4,p}/p}}{KT_{max}^{\frac{1}{4}-\frac{1}{p}}},
		\end{align}
		getting the correct dispersion.
	\end{obs}
	\item $K^{-1}\lambda_{a,h}^{1/3}\le \hat{T}_i \le K^{-1}\lambda_{a,h}^{2/3}$.\\
	In this regime we start to have problems about the overlapping so we  study each integral in $\hat{T}_i$ individually.
	\begin{align}
		&\int_{\hat{T}_i}^{\hat{T}_{i+1}} \sup_{\hat{X},\hat{Y}}\left|\sum_{N\in\mathcal{N}_3(\hat{T}_{i})} W_{h,\gamma,N}(\boldsymbol{\hat{T}},a)\right|^pd\hat{T}\le C(p)\sum_{N\in \mathcal{N}_3(\hat{T}_{i})} \int_{\hat{T}_i}^{\hat{T}_{i+1}} \sup_{\hat{X},\hat{Y}}\left| W_{h,\gamma,N}(\boldsymbol{\hat{T}},a)\right|^pd\hat{T}.
	\end{align}
	For each $N$, we know that $N\le \lambda_{a,h}$ and by proposition \ref{bigtangenctial} we obtain
	\begin{align}
		|W_{h,\gamma,N}(\boldsymbol{\hat{T}},a)|\lesssim \frac{a^{1/2}\lambda_{\gamma,h}^2}{a^3\lambda_{\gamma,h}^{1/3}(\max\{(\frac{N}{\lambda_{\gamma,h}^{1/3}})^{1/2},1\}+|N(\hat{T}-4N)|^{1/4})}.
	\end{align}
	\begin{obs}
		\label{twosubdivisions}
		The main idea of this subdivision is that, for $K^{-1}\lambda_{a,h}^{1/3}\le \hat{T}_i \le K^{-1}\lambda_{a,h}^{2/3}$, it is more convenient to do
		\begin{align}
			&|W_{h,\gamma,N}(\boldsymbol{\hat{T}},a)|\lesssim \frac{a^{1/2}\lambda_{\gamma,h}^2}{a^3\lambda_{\gamma,h}^{1/3}(\max\{(\frac{N}{\lambda_{\gamma,h}^{1/3}})^{1/2},1\}+|N(\hat{T}-4N)|^{1/4})}\le \frac{a^{1/2}\lambda_{\gamma,h}^2}{a^3\lambda_{\gamma,h}^{1/3}(|N(\hat{T}-4N)|^{1/4})}.
		\end{align}
		While in $K^{-1}\lambda_{a,h}^{2/3}\le \hat{T}_i \le K^{-1}\lambda_{a,h}$, it is better to use the other denominator,
		\begin{align}
			&|W_{h,\gamma,N}(\boldsymbol{\hat{T}},a)|\lesssim \frac{a^{1/2}\lambda_{\gamma,h}^2}{a^3\lambda_{\gamma,h}^{1/3}(\max\{(\frac{N}{\lambda_{\gamma,h}^{1/3}})^{1/2},1\}+|N(\hat{T}-4N)|^{1/4})}\le \frac{a^{1/2}\lambda_{\gamma,h}^2}{a^3\lambda_{\gamma,h}^{1/3}(\frac{N}{\lambda_{\gamma,h}^{1/3}})^{1/2}}.
		\end{align}
	\end{obs}
	We get then the following: If $|\hat{T}_{i+1}-4N|\ge \frac{1}{3}$,then, for every $\hat{T}\in [\hat{T}_{i},\hat{T}_{i+1}]$, $|\hat{T}_{i+1}-4N|\ge \frac{1}{12}$ and we bound directly:
	\begin{align}
		&\int_{\hat{T}_{i}}^{\hat{T}_{i+1}}\sup_{\hat{X},\hat{Y}}\left| \hat{W}_{h,\gamma,N}(\boldsymbol{\hat{T}},a)\right|^pd\hat{T}\lesssim \int_{\hat{T}_{i}}^{\hat{T}_{i+1}} \frac{a^{p/2}\lambda_{\gamma,h}^{2p}}{a^{3p}\lambda_{\gamma,h}^{p/3}(|N(\hat{T}-4N)|^{1/4})^{p}}d\hat{T} \\
		&\lesssim C(p) \frac{a^{p/2}\lambda_{\gamma,h}^{2p}}{a^{3p}\lambda_{\gamma,h}^{p/3}N^{p/4}}.
	\end{align}
	If $|\hat{T}_{i+1}-4N|\le \frac{1}{3}$, then $|\hat{T}_{i}-4N|\le \frac{7}{12}$, and we can solve the integral:
	\begin{align}
		&\int_{\hat{T}_{i}}^{\hat{T}_{i+1}}\sup_{\hat{X},\hat{Y}}\left| \hat{W}_{h,\gamma,N}(\boldsymbol{\hat{T}},a)\right|^pd\hat{T}\lesssim \int_{\hat{T}_{i}}^{\hat{T}_{i+1}} \frac{a^{p/2}\lambda_{\gamma,h}^{2p}}{a^{3p}\lambda_{\gamma,h}^{p/3}|N|^{p/4}||\hat{T}-4N|^{p/4}}d\hat{T}\\
		&\lesssim C(p) \frac{a^{p/2}\lambda_{\gamma,h}^{2p}}{a^{3p}\lambda_{\gamma,h}^{p/3}N^{p/4}} \left(|\hat{T}_{i+1}-4N|^{\frac{p}{4}+1}+|\hat{T}_{i}-4N|^{\frac{p}{4}+1}\right)\lesssim C(p)\frac{a^{p/2}\lambda_{\gamma,h}^{2p}}{a^{3p}\lambda_{\gamma,h}^{p/3}N^{p/4}}.
	\end{align}
	In both cases, it follows that
	\begin{align}
		\int_{\hat{T}_{i}}^{\hat{T}_{i+1}}\sup_{\hat{X},\hat{Y}}\left| \hat{W}_{h,\gamma,N}(\boldsymbol{\hat{T}},a)\right|^pd\hat{T}\lesssim C(p)\frac{a^{p/2}\lambda_{\gamma,h}^{2p}}{a^{3p}\lambda_{\gamma,h}^{p/3}N^{p/4}}\lesssim C(p)\frac{a^{p/2}\lambda_{\gamma,h}^{2p}}{a^{3p}\lambda_{\gamma,h}^{p/3}T_{i}^{p/4}}.
	\end{align}
	See corollary \ref{TsimN}.\\
	Now, we can add all the terms, in our regime then, for $T_{max}\ge K^{-1}\lambda_{a,h}^{1/3}$
	\begin{align}
		&\int_{\hat{T}_i}^{\hat{T}_{i+1}} \sup_{\hat{X},\hat{Y}}\left|\sum_{N\in\mathcal{N}_3(\hat{T}_{i})} W_{h,\gamma,N}(\boldsymbol{\hat{T}},a)\right|^pd\hat{T}\le C(p)\sum_{N\in \mathcal{N}_3(\hat{T}_{i})} \int_{\hat{T}_i}^{\hat{T}_{i+1}} \sup_{\hat{X},\hat{Y}}\left| W_{h,\gamma,N}(\boldsymbol{\hat{T}},a)\right|^pd\hat{T}\\
		&\lesssim C(p)\frac{a^{p/2}\lambda_{\gamma,h}^{2p}}{a^{3p}\lambda_{\gamma,h}^{p/3}T_{i}^{p/4}}.
	\end{align}
	Therefore, for $2<p<4$
	\begin{align}
		&\sum_{i\in\mathbb{N}|\le K^{-1}\lambda_{a,h}^{1/3}\hat{T}_i\le T_{max}} \int_{\hat{T}_i}^{\hat{T}_{i+1}} \sup_{\hat{X},\hat{Y}}\left|\sum_{N\in\mathcal{N}_3(\hat{T}_{i})} W_{h,\gamma,N}(\boldsymbol{\hat{T}},a)\right|^pd\hat{T}\\
		&\lesssim C(p)\frac{a^{p/2}\lambda_{\gamma,h}^{2p}}{a^{3p}\lambda_{\gamma,h}^{p/3}} \sum_{i\in\mathbb{N}| K^{-1}\lambda_{a,h}^{1/3}\le \hat{T}_i\le T_{max}} \hat{T}_{i}^{-p/4}\lesssim C(p)\frac{a^{p/2}\lambda_{\gamma,h}^{2p}}{a^{3p}\lambda_{\gamma,h}^{p/3}} T_{max}^{1-\frac{p}{4}}.
	\end{align}
	Therefore, 
	\begin{align}
		||G_{\gamma,h}(t,x,y,a)||_{L_t^p[K^{-1}\lambda_{a,h}^{1/3},T_{max}]L_{x,y}^\infty}\lesssim \gamma^{\frac{1}{2p}} \frac{\gamma^{1/2}}{\gamma^{3}}\frac{\lambda_{\gamma,h}^2}{\lambda_{\gamma,h}^{1/3}}T_{max}^{\frac{1}{p}-\frac{1}{4}}\lesssim \gamma^{\frac{1}{2p}} \frac{h^{1/3}}{h^{2}}T_{max}^{\frac{1}{p}-\frac{1}{4}}.
	\end{align}
	Now we separate cases.\\
	If $T_{max}\sim \gamma^{-1/2}$, we have $\gamma\gtrsim h^{4/9}$. We group the terms like this:
	\begin{align}
		||G_{\gamma,h}(t,x,y,a)||_{L_t^p[K^{-1}\lambda_{a,h}^{1/3},T_{max}]L_{x,y}^\infty}\lesssim \frac{h^{1/p}}{h^2}\gamma^{1/8}h^{\frac{1}{3}-\frac{1}{p}}=\frac{h^{1/p}}{h^2}\gamma^{\frac{7}{8}-\frac{9}{4p}} \left(\frac{h}{\gamma^{9/4}}\right)^{{\frac{7}{8}-\frac{9}{4p}}}.
	\end{align}
	In order for this to be a good estimate, we need then $\gamma^{1/8}h^{\frac{1}{3}-\frac{1}{p}}\lesssim 1$. We compute then the following:
	\begin{align}
		\gamma^{1/8}h^{\frac{1}{3}-\frac{1}{p}}\lesssim \gamma^{1/8}\gamma^{\frac{9}{4}(\frac{1}{3}-\frac{1}{p})}=\gamma^{\frac{7}{8}-\frac{9}{4p}}.
	\end{align}
	If ${\frac{7}{8}-\frac{9}{4p}}\ge 0$, then we can guarantee that $\gamma^{1/8}h^{\frac{1}{3}-\frac{1}{p}}\lesssim 1$, and that coincides with $p\ge \frac{18}{7}$.\\\\
	If $T_{max}\sim \lambda_{a,h}^{2/3}\sim \lambda_{\gamma,h}^{2/3}$, we have $\gamma\lesssim h^{4/9}$. We group terms like this
	\begin{align}
		||G_{\gamma,h}(t,x,y,a)||_{L_t^p[K^{-1}\lambda_{a,h}^{1/3},T_{max}]L_{x,y}^\infty}\lesssim \frac{h^{1/p}}{h^2}\gamma^{\frac{3}{2p}-\frac{1}{4}}h^{\frac{1}{2}-\frac{5}{3p}}\lesssim \frac{h^{1/p}}{h^2}\left(\frac{\gamma}{h^{4/9}}\right)^{(\frac{3}{2p}-\frac{1}{4})}h^{\frac{4}{9}(\frac{3}{2p}-\frac{1}{4})}h^{\frac{1}{2}-\frac{5}{3p}}.
	\end{align}
	In order to $h^{\frac{4}{9}(\frac{3}{2p}-\frac{1}{4})}h^{\frac{1}{2}-\frac{5}{3p}}$ be small, we need $p\ge \frac{18}{7}$.
	Therefore, we have the following, for both cases of $T_{max}$.\\
	If $p\ge \frac{18}{7}$, we have 
	\begin{align}
		||G_{\gamma,h}(t,x,y,a)||_{L_t^p[K^{-1}\lambda_{a,h}^{1/3},T_{max}]L_{x,y}^\infty}\lesssim \left(\frac{\gamma}{h^{4/9}}\right)^{(\frac{3}{2p}-\frac{1}{4})}\frac{h^{1/p}}{h^2}.
	\end{align}
	This in turns, gives by the usual $TT\ast$ argument, the following.\\
	For $q\ge 5+\frac{1}{7}=\frac{36}{7}=\frac{2\cdot 18}{7}$, we have 
	\begin{align}
		||T_{G_{\gamma,h}}(u)||_{L_t^{q}([K^{-1}\lambda_{a,h}^{1/3},\sqrt{a}\sqrt{1+a}T_{max}]),L^\infty(\Omega_2)}\lesssim \min\{\left(\frac{\gamma}{h^{4/9}}\right),\left(\frac{h^{4/9}}{\gamma}\right)\}^{\frac{1}{2}(\frac{3}{q}-\frac{1}{4})} \frac{h^{\frac{1}{q}}}{h}||u||_{L^2_{x,y}}.
	\end{align}
	\item $K^{-1}\lambda_{a,h}^{2/3}\le \hat{T}_i \le K^{-1}\lambda_{a,h}$.
	As said in the remark \ref{twosubdivisions}, in this regime we consider the other denominator:
	\begin{align}
		&\int_{\hat{T}_{i}}^{\hat{T}_{i+1}}\sup_{\hat{X},\hat{Y}}\left| \hat{W}_{h,\gamma,N}(\boldsymbol{\hat{T}},a)\right|^pd\hat{T}\lesssim \int_{\hat{T}_{i}}^{\hat{T}_{i+1}} \frac{a^{p/2}\lambda_{\gamma,h}^{2p}}{a^{3p}\lambda_{\gamma,h}^{p/3}(\frac{N}{\lambda_{\gamma,h}^{1/3}})^{p/2}}d\hat{T} \lesssim \frac{a^{p/2}\lambda_{\gamma,h}^{2p}}{a^{3p}\lambda_{\gamma,h}^{p/3}(\frac{N}{\lambda_{\gamma,h}^{1/3}})^{p/2}} \\
		&\lesssim C(p)\frac{a^{p/2}\lambda_{\gamma,h}^{2p}\lambda_{\gamma,h}^{p/6}}{a^{3p}\lambda_{\gamma,h}^{p/3}{\hat{T}_{i}}^{p/2}}.\\
	\end{align}
	Thus we finalize obtaining the same bound as before for $2<p<4$
	\begin{align}
		&\sum_{i\in\mathbb{N}|\le K^{-1}\lambda_{a,h}^{2/3}\hat{T}_i\le T_{max}} \int_{\hat{T}_i}^{\hat{T}_{i+1}} \sup_{\hat{X},\hat{Y}}\left|\sum_{N\in\mathcal{N}_3(\hat{T}_{i})} W_{h,\gamma,N}(\boldsymbol{\hat{T}},a)\right|^pd\hat{T}\\
		&\lesssim C(p)\frac{a^{p/2}\lambda_{\gamma,h}^{2p}\lambda_{a,h}^{p/6}}{a^{3p}\lambda_{\gamma,h}^{p/3}} \sum_{i\in\mathbb{N}| K^{-1}\lambda_{a,h}^{2/3}\le \hat{T}_i\le T_{max}} \hat{T}_{i}^{-p/2}\lesssim C(p)\frac{a^{p/2}\lambda_{\gamma,h}^{2p}}{a^{3p}\lambda_{\gamma,h}^{p/3}} \lambda_{a,h}^{\frac{2}{3}-\frac{p}{6}}.
	\end{align}
	\begin{obs}
		Since $p>2$, $p/2>1$, so in the sum, the bound is taken using the first term, while before, since $p<4$, $p/4<1$, so in the sum, the bound is taken using the last term (in both cases $\lambda_{a,h}^{2/3}$).
	\end{obs}
	We obtain then, the same bound as the previous case for the case $T_{max}\sim \lambda_{a,h}^{2/3}$. Thus we obtain the exact same estimates:\\
	For $q\ge 5+\frac{1}{7}=\frac{36}{7}=\frac{2\cdot 18}{7}$, we have 
	\begin{align}
		||T_{G_{\gamma,h}}(u)||_{L_t^{q}([K^{-1}\lambda_{a,h}^{2/3},\sqrt{a}\sqrt{1+a}T_{max}]),L^\infty(\Omega_2)}\lesssim \min\{\left(\frac{\gamma}{h^{4/9}}\right),\left(\frac{h^{4/9}}{\gamma}\right)\}^{\frac{1}{2}(\frac{3}{2p}-\frac{1}{4})} \frac{h^{\frac{1}{q}}}{h}||u||_{L^2_{x,y}}.
	\end{align}
	\item $K^{-1}\lambda_{a,h}\le \hat{T}_i \le K^{-1}\lambda_{a,h}^2$.\\
	In this case, we can repeat the same argument as before, but with the bound
	\begin{align}
		|W_{h,\gamma,N}(\boldsymbol{\hat{T}},a)|\lesssim \frac{h^{1/3}\lambda_{a,h}^{2/3}}{h^2N},
	\end{align}
	obtaining
	\begin{align}
		&\sum_{i\in\mathbb{N}| K^{-1}\lambda_{a,h}\le \hat{T}_i\le K^{-1}\lambda_{a,h}^2} \int_{\hat{T}_i}^{\hat{T}_{i+1}} \sup_{\hat{X},\hat{Y}}\left|\sum_{N\in\mathcal{N}_3(\hat{T}_{i})} W_{h,\gamma,N}(\boldsymbol{\hat{T}},a)\right|^pd\hat{T}\\
		&\lesssim C(p) h^{\frac{p}{3}-2p}\lambda_{a,h}^{1-p}\lambda_{a,h}^{2p/3}.
	\end{align}
	That gives us that, when $p< 4$, we obtain
	\begin{align}
		||G_{\gamma,h}(t,x,y,a)||_{L_t^p[K^{-1}\lambda_{a,h},T_{max}]L_{x,y}^\infty}\lesssim \frac{h^{1/p}}{h^2}\gamma^{\frac{2}{p}-\frac{1}{2}}.
	\end{align}
	That is, 
	\begin{align}
		||T_{G_{\gamma,h}}(u)||_{L_t^{q}([K^{-1}\lambda_{a,h},\sqrt{a}\sqrt{1+a}T_{max}]),L^\infty(\Omega_2)}\lesssim \gamma^{\frac{1}{p}-\frac{1}{4}} \frac{h^{\frac{1}{q}}}{h}||u||_{L^2_{x,y}}.
	\end{align}
	The bound goes all the way until $C\frac{\sqrt{\gamma}}{\sqrt{a}\sqrt{1+a}}$, however, when $\hat{T}_i \ge K^{-1}\lambda_{a,h}^2$, we lose the nice bound of $\mathcal{N}_3$ by a constant, so the overlapping starts accumulating.
	\item $K^{-1}\lambda_{a,h}^2\le \hat{T}_i\lesssim \frac{t_{max}}{\sqrt{a}\sqrt{1+a}}$.\\
	In this case, we can take the bound of $\mathcal{N}_3$ directly in the integral and taking the smallest term:
	Obtaining
	\begin{align}	&\sum_{i\in\mathbb{N}|K^{-1}\lambda_{a,h}^2\le \hat{T}_i\le \frac{t_{max}}{\sqrt{a}\sqrt{1+a}}} \int_{\hat{T}_i}^{\hat{T}_{i+1}} \sup_{\hat{X},\hat{Y}}\left|\sum_{N\in\mathcal{N}_3(\hat{T}_{i})} W_{h,\gamma,N}(\boldsymbol{\hat{T}},a)\right|^pd\hat{T}\\
		&\lesssim \sum_{i\in\mathbb{N}|K^{-1}\lambda_{a,h}^2\le \hat{T}_i\le \frac{t_{max}}{\sqrt{a}\sqrt{1+a}}} \int_{\hat{T}_i}^{\hat{T}_{i+1}} \sup_{\hat{X},\hat{Y}}\left|\sum_{N\in\mathcal{N}_3(\hat{T}_{i})} |W_{h,\gamma,N}(\boldsymbol{\hat{T}},a)|\right|^pd\hat{T}\\
		&\lesssim \sum_{i\in\mathbb{N}|K^{-1}\lambda_{a,h}^2\le \hat{T}_i\le \frac{t_{max}}{\sqrt{a}\sqrt{1+a}}} \int_{\hat{T}_i}^{\hat{T}_{i+1}} \sup_{\hat{X},\hat{Y}}\left|(C+\frac{\hat{T}_{i}}{\lambda_{a,h}^2})\frac{h^{1/3}\lambda_{a,h}^{2/3}}{h^2\hat{T}_i} \right|^pd\hat{T}\\
		&\lesssim \sum_{i\in\mathbb{N}|K^{-1}\lambda_{a,h}^2\le \hat{T}_i\le \frac{t_{max}}{\sqrt{a}\sqrt{1+a}}}\left|(C+\frac{\hat{T}_{i}}{\lambda_{a,h}^2})\frac{h^{1/3}\lambda_{a,h}^{2/3}}{h^2\hat{T}_i} \right|^p \\
		&\lesssim \sum_{i\in\mathbb{N}|K^{-1}\lambda_{a,h}^2\le \hat{T}_i\le \frac{t_{max}}{\sqrt{a}\sqrt{1+a}}} \frac{h^{p/3}\lambda_{a,h}^{2p/3}}{h^{2p}\lambda_{a,h}^{2p}}\lesssim \frac{h^{p/3}\lambda_{a,h}^{2p/3}}{h^{2p}\lambda_{a,h}^{2p}\sqrt{a}}.\\
	\end{align}
	We get then
	\begin{align}
		||G_{\gamma,h}(t,x,y,a)||_{L_t^p[K^{-1}\lambda_{a,h}^2,\frac{t_{max}}{\sqrt{a}\sqrt{1+a}}]L_{x,y}^\infty}\lesssim \frac{h^{1/3}}{h^2}\lambda_{a,h}^{-4/3}.
	\end{align}
	This regime, gives us the good dispersion whenever we have $\lambda_{a,h}>>1$, that is in our regime $\lambda_{a,h}\ge h^{\varepsilon}$. The remaining regime is handled by gallery-mode analysis.\\
	We obtain, for any $\gamma\gtrsim h^{\frac{2}{3}-\varepsilon}$, $q\ge \frac{36}{7}$, there exists a bounded quantity $C(h,\gamma)$ such that
	\begin{align}
		||T_{G_{\gamma,h}}(u)||_{L_t^{q}([0,t_{max}]),L^\infty(\Omega_2)}\lesssim \frac{h^{\frac{1}{q}}}{h}||u||_{L^2_{x,y}}C(h,\gamma),
	\end{align}
	and such that $\sum_{2^{-7}\ge\gamma\gtrsim h^{\frac{2}{3}-\varepsilon}} C(h,\gamma)$ is bounded.
	Which in turn, means that
	\begin{align}
		\sum_{1\gtrsim\gamma\gtrsim h^{2/3-\varepsilon}}||T_{G_{\gamma,h}}(u)||_{L_t^{q}([0,t_{max}]),L^\infty(\Omega_2)}\lesssim \frac{h^{\frac{1}{q}}}{h}||u||_{L^2_{x,y}}.
	\end{align}
	This, together with proposition \ref{gallerystr}, give us
	\begin{align}
		\label{Linftyfinal}
		||T_{G_{h}}(u)||_{L_t^{q}([0,t_{max}]),L^\infty(\Omega_2)}\lesssim \frac{h^{\frac{1}{q}}}{h}||u||_{L^2_{x,y}}.
	\end{align}
This concludes the result for ${L_t^{q}([0,t_{max}]),L^\infty(\Omega_2)}$.
\end{enumerate}
Now we can conclude the result for all the admissible pairs stated in theorem \ref{mainteo}, 	
	$\frac{1}{q}\le \left(\frac{1}{2}-\frac{1}{9}\right)\left(\frac{1}{2}-\frac{1}{r}\right)$ by interpolating the expression we just obtained with the energy estimates, proposition \ref{almostconsenergy}, concluding the proof of $\ref{mainteo}$.
\appendix
\section{Airy Functions}
We introduce here the Airy functions $Ai$ and $Bi$.\\

\begin{defi}

	We define the functions $Ai,Bi:\mathbb{C}\longrightarrow\mathbb{C}$, as the solutions to the following differential equation
	\begin{equation}
		\label{airyeq} y''(z)-zy(z)=0.
	\end{equation}
	With the real initial conditions $Ai(0)=\frac{Bi(0)}{\sqrt{3}}=\frac{1}{3^{\frac{2}{3}}\Gamma\left(\frac{2}{3}\right)}$, $-Ai'(0)=\frac{Bi'(0)}{\sqrt{3}}=\frac{1}{3^{\frac{1}{3}}\Gamma\left(\frac{1}{3}\right)}$.
\end{defi}
In particular, since they solve a linear differential equation with real Cauchy data, we obtain the immediate corollary.
\begin{cor}
	$Ai,Bi$ are analytic functions and we have $Ai(\mathbb{R})\subset \mathbb{R}$, $Bi(\mathbb{R})\subset \mathbb{R}$, and they are both analytic in $\mathbb{C}$.
\end{cor}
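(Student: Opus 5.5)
The statement is the corollary right after the definition of $Ai,Bi$: both functions are entire, and both are real on $\mathbb{R}$. I would derive these directly from the Airy ODE $y''-zy=0$ together with the real Cauchy data fixed at $z=0$, with no asymptotic analysis.

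\emph{Step 1 (entire).} The coefficient $z$ of the linear equation is entire, so by the Cauchy existence theorem for linear ODEs in the complex domain, each initial value problem has a unique holomorphic solution on every disc centred at $0$. By uniqueness these solutions patch together, and since $\mathbb{C}$ is simply connected there is no monodromy, so the solution is entire. As a more hands-on route, I would substitute $y=\sum c_n z^n$. This gives the recursion $c_2=0$ and $(n+2)(n+1)c_{n+2}=c_{n-1}$, so $c_{n+3}$ is determined by $c_n$ through division by $(n+3)(n+2)$. Hence $|c_n|\le C/(n!)^{2/3}$ up to harmless factors, the radius of convergence is infinite, and the series defines an entire function solving the equation with the prescribed $c_0=y(0)$ and $c_1=y'(0)$.

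\emph{Step 2 (real on $\mathbb{R}$).} The recursion has real coefficients, and $c_0,c_1$ are real by the definition ($Ai(0)$, $Ai'(0)$ and the corresponding values for $Bi$ are real multiples of $\Gamma$-values). So every $c_n$ is real, and the series is real for real $z$. Equivalently, $\overline{y(\bar z)}$ solves the same equation with the same initial data, so by uniqueness $y(z)=\overline{y(\bar z)}$, and $y$ is real on $\mathbb{R}$.

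I do not expect a real obstacle here. The only point needing any care is the growth bound on the coefficients $c_n$ in Step 1. It is settled by comparing the recursion with $c_{n+3}\sim c_n/n^2$, which gives super-exponential decay of the coefficients.
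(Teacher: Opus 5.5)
Your proposal is correct and follows the same reasoning as the paper, which justifies the corollary in one line: $Ai$ and $Bi$ solve the linear equation $y''-zy=0$, whose coefficient is entire, with real Cauchy data at $0$, so they are entire and real on $\mathbb{R}$. Your power-series recursion $(n+2)(n+1)c_{n+2}=c_{n-1}$ and your reflection argument $y(z)=\overline{y(\bar z)}$ simply spell out that remark in detail.
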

We state here the following properties of the Airy functions which will use
\begin{prop}  
	For every $x\in\mathbb{R}$, we obtain
	\begin{equation}
		Bi(x):=\frac{1}{\pi}\left(\int_{0}^{\infty} e^{xp-\frac{p^3}{3}} dp+\int_{0}^{\infty} \sin\left(xp+\frac{p^3}{3}\right)dp\right),
	\end{equation}
	\begin{equation}
		Ai(x):=\frac{1}{2\pi}\int e^{i(\frac{t^3}{3}+tx)}dt=\frac{1}{\pi}\int_0^{\infty}\cos\left(\frac{t^3}{3}+xt\right)dt.
	\end{equation}
	Where the integrals involved converge in an improper sense.\\
	If we denote $j=e^{\frac{2}{3}\pi i}$, we get, for every $z\in\mathbb{C}$,
	\begin{equation}
		\label{bij}
		Bi(z)=ijAi(jz)-ij^2Ai(j^2z),
	\end{equation}
	\begin{equation}
		\label{propai}
		Ai(z)+jAi(jz)+j^2Ai(j^2z)=0,
	\end{equation}
	\begin{equation}
		W\{Ai,Bi\}(z):=Ai(z)Bi'(z)-Ai'(z)Bi(z)=\frac{1}{\pi}.
	\end{equation}
	\label{integral}
	For all $\alpha,\beta\in\mathbb{R}$, $\alpha\ne \beta$,
	\begin{equation}
		\label{intairy1}
		\int_{0}^{\infty} Ai^2(x+\beta)dx=-\beta Ai(\beta)^2+Ai'(\beta)^2,
	\end{equation}
	\begin{equation}
		\int_{0}^{\infty} Ai(x+\beta)Ai(x+\alpha)dx=\frac{1}{\alpha-\beta}\left(Ai(\alpha)Ai'(\beta)-Ai'(\alpha)Ai(\beta) \right).
	\end{equation}
\end{prop}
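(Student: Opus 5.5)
The plan is to verify each identity separately. Almost all of them reduce to the fact that the solution space of \eqref{airyeq} is two-dimensional, so a solution is fixed by its value and derivative at $0$.

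\textbf{Integral representations.} For $Ai$, I would first deform the contour of $\frac{1}{2\pi}\int e^{i(t^3/3+tx)}dt$ into $\operatorname{Im} t=\varepsilon>0$. There the integrand decays like $e^{-\varepsilon t^2}$, so the integral converges absolutely and may be differentiated in $x$. Next, $(\partial_x^2-x)$ applied to the integrand equals $i\,\partial_t$ of the integrand, since $-(t^2+x)e^{i\phi}=i\partial_t e^{i\phi}$ with $\phi=t^3/3+tx$. Hence the integral solves \eqref{airyeq}. At $x=0$, rotating the contour to the rays $\arg t=\pi/6,5\pi/6$ and using $\int_0^\infty e^{-s^3/3}s^{k}ds=3^{(k-2)/3}\Gamma(\frac{k+1}{3})$ recovers $Ai(0)$ and $Ai'(0)$; here one uses $\Gamma(1/3)\Gamma(2/3)=2\pi/\sqrt3$ to match the stated form. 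The cosine form follows from parity of the real part.

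For $Bi$, I would differentiate under the integral sign, which is legitimate for the exponential piece and, for the sine piece, after the same contour trick. Applying $\partial_x^2-x$ gives exact derivatives $-\partial_p e^{xp-p^3/3}$ and $-\partial_p\cos(xp+p^3/3)$. Their boundary terms at $p=0$ are $1$ and $-1$, which cancel, so the expression solves the ODE. Its initial data are again computed with Gamma integrals.

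\textbf{Rotation identities and Wronskian.} If $y$ solves \eqref{airyeq}, then $z\mapsto y(jz)$ and $z\mapsto y(j^2z)$ do too, since $j^3=1$. Therefore $Ai(z)+jAi(jz)+j^2Ai(j^2z)$ is a solution. Its value at $0$ is $Ai(0)(1+j+j^2)=0$, and its derivative at $0$ is $Ai'(0)(1+j^2+j^4)=0$, so it vanishes identically. Similarly, $ijAi(jz)-ij^2Ai(j^2z)$ solves the ODE, and I would compare its value and derivative at $0$ with those of $Bi$ using $j-j^2=i\sqrt3$ and $j^2-j^4=-i\sqrt3$. For the Wronskian, Abel's identity gives $W'=Ai\,Bi''-Ai''Bi=0$, so $W$ is constant. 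Evaluating at $0$ gives $W=\frac{2}{3\,\Gamma(1/3)\Gamma(2/3)}\cdot\sqrt3=\frac1\pi$.

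\textbf{Integral identities.} For the square, I would set $F(\beta)=Ai'(\beta)^2-\beta Ai(\beta)^2$. Using the ODE, $F'(\beta)=2Ai'Ai''-Ai^2-2\beta Ai\,Ai'=-Ai(\beta)^2$. Since $Ai$ and $Ai'$ decay superexponentially at $+\infty$, $F(+\infty)=0$, and hence $\int_\beta^\infty Ai^2=F(\beta)$, which is \eqref{intairy1}. For $\alpha\neq\beta$, let $u=Ai(\cdot+\alpha)$ and $v=Ai(\cdot+\beta)$. The ODE gives $u''v-uv''=(\alpha-\beta)uv$, and the left side equals $(u'v-uv')'$. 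Integrating over $[0,\infty)$ and using the decay at infinity yields the formula.

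\textbf{Main obstacle.} The only delicate step is justifying differentiation under the improper (conditionally convergent) integrals and computing the initial constants correctly. I would handle both by the complex contour shift, and the Gamma-function evaluations are then routine.
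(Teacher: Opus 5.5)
The paper gives no proof of this proposition; it records these facts as classical, essentially from \cite{airy}. Your route is the standard one: uniqueness from Cauchy data at $0$, Gamma-function evaluations after contour rotation, Abel's identity, and the identities $F'=-Ai^2$ and $(u'v-uv')'=(\alpha-\beta)uv$. It correctly gives both integral representations, \eqref{propai}, the Wronskian value $1/\pi$, and the two integral formulas.

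\textbf{The step that fails is \eqref{bij}.} Carry out the comparison you describe. The right-hand side has value $i(j-j^2)Ai(0)=i\cdot i\sqrt3\,Ai(0)=-\sqrt3\,Ai(0)=-Bi(0)$ at $0$. Its derivative there is $i(j^2-j^4)Ai'(0)=\sqrt3\,Ai'(0)=-Bi'(0)$, using $Bi'(0)=-\sqrt3\,Ai'(0)>0$. By uniqueness, $ijAi(jz)-ij^2Ai(j^2z)=-Bi(z)$, so the identity as stated is false.

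The correct version is $Bi(z)=-ijAi(jz)+ij^2Ai(j^2z)=e^{i\pi/6}Ai(jz)+e^{-i\pi/6}Ai(j^2z)$. Your plan asserts that the comparison confirms the claim, when in fact it disproves it. You needed to do the two-line computation and flag the sign.

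The same sign is inherited by Proposition \ref{arobbis}: with the upper sign, its right-hand side is actually $A_{+,\iota}$, not $A_{-,\iota}$. This is harmless for its later use, because $A_{+,\iota}(\bar z)=\overline{A_{-,\iota}(z)}$ and the strip used there is invariant under conjugation.

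\textbf{A smaller slip in the $Bi$ verification.} One has $(\partial_x^2-x)\sin(xp+p^3/3)=-(x+p^2)\sin(xp+p^3/3)=+\partial_p\cos(xp+p^3/3)$, not $-\partial_p\cos$. With your sign, both contributions would be $+1$ and would not cancel. With the correct sign they are $+1$ and $-1$, as you claim.

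Also, $\cos(xp+p^3/3)$ has no limit as $p\to\infty$, so the boundary term at infinity does not simply vanish. Write the sine piece as $\operatorname{Im}\int_0^\infty e^{i(xp+p^3/3)}\,dp$ and rotate to the ray $p=re^{i\pi/6}$, where the integrand has modulus $e^{-r^3/3-xr/2}$. The arc contribution vanishes by a Jordan-type bound. On that ray the boundary term at infinity is zero, and the same rotation justifies differentiating under the integral sign for the sine piece and for $Bi'(0)$.
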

\begin{cor}
	$Ai\in L^2(\mathbb{R}_+)$ and $||Ai||^2_{L^2(\mathbb{R}_+)}=Ai'(0)^2$, $Bi\notin L^2(\mathbb{R}_+)$, $Bi\notin S'(\mathbb{R}_+)$.
\end{cor}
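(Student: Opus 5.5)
The two $L^2$ claims follow from the integral identity \eqref{intairy1} together with decay of $Ai$ and growth of $Bi$ on $\mathbb{R}_+$; the $\mathcal{S}'$ claim uses positivity and super-polynomial growth of $Bi$ against a weighted sum of bumps.

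\textbf{Norm of $Ai$.} The plan is to put $\beta=0$ in \eqref{intairy1}, which gives
\[
\int_0^\infty Ai^2(x)\,dx=-0\cdot Ai(0)^2+Ai'(0)^2=Ai'(0)^2 .
\]
To make this rigorous rather than formal, I would re-derive \eqref{intairy1} on a finite interval. Using the Airy equation \eqref{airyeq} one checks
\[
\frac{d}{dx}\bigl(Ai'(x)^2-xAi(x)^2\bigr)=2Ai'Ai''-Ai^2-2xAiAi'=-Ai^2 ,
\]
so that
\[
\int_0^X Ai^2=Ai'(0)^2-Ai'(X)^2+X\,Ai(X)^2 .
\]
It then remains to show $Ai(X)$, $X^{1/2}Ai(X)$ and $Ai'(X)$ tend to $0$ as $X\to+\infty$. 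For this I would use the oscillatory integral for $Ai$. For $x>0$, shift the contour $t\mapsto t+i\sqrt{x}$, which is legitimate since $\frac{t^3}{3}$ gives decay in the sector. This yields the bound
\[
|Ai(x)|+|Ai'(x)|\lesssim (1+x)\,e^{-\frac23 x^{3/2}}\int e^{-\sqrt{x}\,s^2}\,ds\lesssim e^{-\frac23 x^{3/2}}.
\]
This is the main technical point: the rest is algebra. Hence $Ai\in L^2(\mathbb{R}_+)$ with $\|Ai\|_{L^2(\mathbb{R}_+)}^2=Ai'(0)^2$.

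\textbf{Non-integrability of $Bi$.} I would use the representation of $Bi$ in the preceding proposition. The oscillatory term $\frac1\pi\int_0^\infty \sin(xp+\frac{p^3}{3})\,dp$ is bounded uniformly for $x\ge0$, by integrating by parts once away from $p=0$. The first term is positive, and restricting the integral to $p\in[\sqrt{x}-1,\sqrt{x}]$ gives, for $x\ge1$,
\[
\int_0^\infty e^{xp-p^3/3}\,dp\;\ge\; c\,e^{\frac23 x^{3/2}-C\sqrt{x}} ,
\]
since on that range $xp-p^3/3\ge \frac23x^{3/2}-C\sqrt x$. Therefore $Bi(x)\ge c\,e^{\frac12 x^{3/2}}$ for large $x$; in particular $Bi\notin L^2(\mathbb{R}_+)$. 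Alternatively one can argue from the Wronskian $W\{Ai,Bi\}=\frac1\pi$ together with the decay of $Ai,Ai'$, but the direct lower bound is cleaner.

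\textbf{$Bi$ is not tempered.} I would argue by contradiction. If $Bi$, as a function, defined an element of $\mathcal{S}'(\mathbb{R}_+)$, there would be $C,M$ with
\[
\Bigl|\int Bi\,\varphi\Bigr|\le C\sum_{|\alpha|,|\beta|\le M}\sup|x^\alpha\partial^\beta\varphi| .
\]
Take a fixed bump $\chi\ge0$ supported in $[0,1]$ and set $\varphi=\sum_{n\ge1}e^{-n}\chi(\cdot-n)$, which is a Schwartz function. Because $Bi>0$ and grows like $e^{\frac12 x^{3/2}}$, we get $\int Bi\,\varphi\ge\sum_n e^{-n}c\,e^{\frac12 n^{3/2}}=\infty$, a contradiction. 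More robustly, one can test against the truncations $\varphi_K=\sum_{n\le K}e^{-n}\chi(\cdot-n)$: their seminorms stay bounded uniformly in $K$ while $\int Bi\,\varphi_K\to\infty$, which contradicts the seminorm bound.
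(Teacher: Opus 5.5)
Your proof is correct and follows the route the paper intends: the norm identity is \eqref{intairy1} at $\beta=0$, which you make rigorous by integrating $\frac{d}{dx}\bigl(Ai'^2-xAi^2\bigr)=-Ai^2$ over $[0,X]$ and using the decay of $Ai$ and $Ai'$; the two failures for $Bi$ come from the $e^{\frac23x^{3/2}}$ growth you read off its integral representation, and the truncations $\varphi_K$ give a clean proof of non-temperedness. One small correction, which does not affect the argument: the contour shift actually yields $|Ai(x)|\le\frac{1}{2\sqrt{\pi}}x^{-1/4}e^{-\frac23x^{3/2}}$ and $|Ai'(x)|\lesssim x^{1/4}e^{-\frac23x^{3/2}}$ for $x\ge1$, so the polynomial factor for $Ai'$ cannot be dropped, but this decay is all you need as $X\to\infty$.
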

We state now some classical results about the zeroes of the Airy function $Ai$.
\begin{teo}
	\label{zerai}
	All the zeroes of $Ai$ are on the negative part of the real axis, and the set of zeroes are countable, discrete and infinite. If we denote them $-w_k$ in increasing modulus, we obtain the sequence $\{w_k\}_{k\in\mathbb{N}}$ and we have that  $\lim_{k\rightarrow\infty}\frac{w_k}{k^{\frac{2}{3}}}$ is finite and non-zero and $w_{k+1}-w_k\sim k^{-1/3}$.
\end{teo}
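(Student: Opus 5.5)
The plan is to combine an energy identity from the Airy equation \eqref{airyeq}, which forces the zeroes onto the negative real axis, with the classical oscillatory asymptotics of $Ai$ on the negative axis, which gives the counting and spacing statements.

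\emph{Location of the zeroes.} Let $z_0\in\mathbb{C}$ with $Ai(z_0)=0$ and set $w(t)=Ai(z_0+t)$ for $t\ge 0$. Then $w''=(z_0+t)w$ and $w(0)=0$. Since $\arg(z_0+t)\to 0$ as $t\to+\infty$, the point $z_0+t$ eventually lies in the sector $|\arg z|<\pi/3$. There the standard asymptotic $Ai(z)\sim \frac{1}{2\sqrt{\pi}}z^{-1/4}e^{-\frac{2}{3}z^{3/2}}$ holds, so $w$ and $w'$ decay faster than any power. I multiply the equation by $\overline{w}$ and integrate over $[0,\infty)$; the boundary terms vanish because $w(0)=0$ and $w$ decays, which gives
\begin{equation}
-\int_0^\infty |w'(t)|^2\,dt=\int_0^\infty (z_0+t)|w(t)|^2\,dt .
\end{equation}
$w\not\equiv 0$, since otherwise $Ai\equiv 0$ by analyticity. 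Taking imaginary parts gives $\operatorname{Im} z_0\int_0^\infty|w|^2=0$, hence $z_0\in\mathbb{R}$. Taking real parts gives $z_0\int_0^\infty|w|^2<-\int_0^\infty t|w|^2\le 0$, hence $z_0<0$. The inequality is strict because $\int_0^\infty t|w|^2>0$. Since $Ai$ is entire and not identically zero, its zero set is discrete, and therefore countable.

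\emph{Infinitude and asymptotics.} For $x\to+\infty$ I use the classical expansion
\begin{equation}
Ai(-x)=\pi^{-1/2}x^{-1/4}\Big(\sin\big(\tfrac{2}{3}x^{3/2}+\tfrac{\pi}{4}\big)+O(x^{-3/2})\Big),
\end{equation}
together with the companion expansion of $Ai'(-x)$, whose leading term is $-\pi^{-1/2}x^{1/4}\cos(\frac{2}{3}x^{3/2}+\frac{\pi}{4})$. Both follow from the integral representation by stationary phase, or can be quoted from \cite{airy}.

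Put $\zeta=\frac{2}{3}x^{3/2}+\frac{\pi}{4}$. Near each $\zeta=m\pi$ with $m$ large, the bracket changes sign. Moreover its $\zeta$-derivative is $\cos\zeta+O(x^{-3/2})$, which stays bounded away from $0$ there. Hence the bracket has exactly one zero within $O(x^{-3/2})$ of $m\pi$, and none elsewhere for large $x$. This shows there are infinitely many zeroes, and for $k$ large $\frac{2}{3}w_k^{3/2}=k\pi-\frac{\pi}{4}+c_0+O(k^{-1})$ for a fixed integer shift $c_0$ accounting for the finitely many small zeroes. Therefore $w_k=\big(\tfrac{3\pi}{2}k\big)^{2/3}(1+O(k^{-1}))$, so $w_k/k^{2/3}\to(3\pi/2)^{2/3}$, which is finite and nonzero. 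Subtracting consecutive relations, the mean value theorem gives $w_{k+1}^{3/2}-w_k^{3/2}=\frac{3\pi}{2}+O(k^{-1})$, hence $w_{k+1}-w_k\sim w_k^{-1/2}\sim k^{-1/3}$.

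\emph{Main obstacle.} The delicate step is making the counting uniform: showing that exactly one zero lies near each $m\pi$ and that no zeroes are missed. I would handle this with the sign-change argument above, using the nonvanishing derivative to get uniqueness near each $m\pi$. For the finitely many small zeroes, it suffices that they are finite in number on any compact set, which follows from discreteness.
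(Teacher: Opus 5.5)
The paper does not prove this statement. It is quoted in Appendix A as a classical fact about $Ai$ (from \cite{airy}) and then used as input later, e.g.\ in Proposition \ref{weiestrass}, Lemma \ref{lemalai} and Proposition \ref{zerosrob}. Your argument is a correct, self-contained proof, modulo the standard large-argument expansions of $Ai$ and $Ai'$, which the paper also takes for granted (Proposition \ref{basicasympt}).

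\textbf{Location of the zeroes.} Your energy identity says that $w(t)=Ai(z_0+t)$ is a Dirichlet eigenfunction of $-\partial_t^2+t$ on the half-line with eigenvalue $-z_0$. Reality and negativity of $z_0$ then come from self-adjointness and positivity, the same mechanism behind the spectral analysis of Section \ref{spectralfriedlander}. This step needs no information about $Ai$ on the negative axis.

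\textbf{Counting.} Your local sign-change plus monotonicity argument is sound, and your identification of $(3\pi/2)^{2/3}$ and of the spacing $w_{k+1}-w_k\sim k^{-1/3}$ is correct. An alternative, in the spirit of Appendix \ref{approbin}, is a modulus--phase (Pr\"ufer-type) argument. Write $\frac12(Ai(-x)+iBi(-x))=\rho e^{i\theta}$. The Wronskian $W\{Ai,Bi\}=\frac1\pi$ gives $\rho>0$ and $\theta'=-\frac{1}{4\pi\rho^2}<0$ on all of $\mathbb{R}$. Hence the zeroes of $Ai(-x)$ are exactly the solutions of $\theta\equiv\frac{\pi}{2} \pmod{\pi}$, consecutive zeroes satisfy $\theta(w_k)-\theta(w_{k+1})=\pi$, and no local uniqueness argument is needed. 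The cost of that route is bringing in $Bi$ and its asymptotics, plus a branch normalization of $\theta$ if you want the exact shift.

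\textbf{Two cosmetic slips.}
\begin{itemize}
\item The shift in your formula for $\frac23 w_k^{3/2}$ should read $c_0\pi$, not $c_0$. Classically $c_0=0$, i.e.\ $\frac23 w_k^{3/2}=(k-\frac14)\pi+O(k^{-1})$.
\item In the real-part step, the first inequality is strict because $\int_0^\infty|w'|^2>0$; the fact $\int_0^\infty t|w|^2>0$ is what makes the second one strict. Either suffices to conclude $z_0<0$.
\end{itemize}
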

\begin{obs}
	For simplicity, since $Ai$ tends to zero at infinity, we will denote $-w_0=\infty$.
\end{obs}
We also have the following bound:
\begin{lema}
	\label{ineq}
	In $(-{w}_1,\infty)$, we have $xAi^2(x)< Ai'^2(x)$.
\end{lema}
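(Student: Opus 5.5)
The plan is to derive the inequality directly from the integral identity \eqref{intairy1}, which already expresses the difference $Ai'^2(x)-xAi^2(x)$ as a nonnegative quantity. Taking $\beta=x$ in \eqref{intairy1} and substituting $s=u+x$ gives, for every real $x$,
\begin{equation}
	Ai'(x)^2-xAi(x)^2=\int_0^\infty Ai^2(u+x)\,du=\int_x^\infty Ai^2(s)\,ds .
\end{equation}
The integral on the right converges because $Ai$ decays (super-exponentially) at $+\infty$.

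If one prefers not to rely on \eqref{intairy1} as a black box, the same identity can be recovered as follows.
\begin{enumerate}
	\item Set $F(x):=Ai'(x)^2-xAi(x)^2$.
	\item Using the Airy equation \eqref{airyeq}, $Ai''=xAi$, compute
	\begin{equation}
		F'(x)=2Ai'(x)\,xAi(x)-Ai(x)^2-2xAi(x)Ai'(x)=-Ai(x)^2 .
	\end{equation}
	\item The exponential decay of $Ai$ and $Ai'$ at $+\infty$ gives $F(x)\to0$ as $x\to+\infty$.
	\item Integrating $F'$ from $x$ to $+\infty$ yields $F(x)=\int_x^\infty Ai(s)^2\,ds$.
\end{enumerate}

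It remains to show that this integral is strictly positive. The function $Ai$ is real-analytic and not identically zero; for instance, $Ai(0)\neq0$ by the initial conditions in the definition. Hence $Ai^2$ is a continuous nonnegative function that is positive on some nonempty open subset of every half-line $(x,\infty)$. Indeed, by Theorem \ref{zerai}, all zeros of $Ai$ lie on the negative real axis, so $Ai>0$ on $[0,\infty)$ and on $(-w_1,\infty)$. Therefore $\int_x^\infty Ai^2>0$, which gives $xAi^2(x)<Ai'^2(x)$.

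This argument in fact proves the inequality on all of $\mathbb{R}$, so in particular it holds on $(-w_1,\infty)$. The only point that needs care is justifying the boundary term at infinity, that is, $F(x)\to0$. This follows from the classical asymptotics $Ai(x),Ai'(x)=O\!\left(x^{1/4}e^{-\frac23x^{3/2}}\right)$, or equivalently from the convergence already built into \eqref{intairy1}. I do not expect any genuine obstacle here.
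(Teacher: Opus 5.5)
Your argument is correct: $F'(x)=-Ai(x)^2$ together with $F(x)\to 0$ at $+\infty$ gives $Ai'(x)^2-xAi(x)^2=\int_x^\infty Ai(s)^2\,ds>0$. This is exactly the content of the identity \eqref{intairy1} recorded in the same appendix, and it proves the inequality on all of $\mathbb{R}$; for $x\le 0$ it is trivial anyway, since $Ai$ and $Ai'$ never vanish simultaneously. The paper states this lemma without proof, as a classical Airy fact, so your route is the natural one it implicitly relies on when it invokes the lemma for $x>0$ in the proof of Lemma \ref{lemalai}.
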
 
Taking the Weierstrass infinite product theorem, and its logarithm, we obtain, by differentiation
\begin{prop}
	\label{weiestrass}
	\begin{equation}
		Lai(z):=(\ln Ai)'(z)=\frac{Ai'(z)}{Ai(z)}=-\kappa+\sum_{n=1}^{\infty}\left(\frac{1}{z+w_n}-\frac{1}{w_n}\right).
	\end{equation}
\end{prop}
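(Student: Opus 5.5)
This is the Mittag-Leffler/Weierstrass partial-fraction expansion of the logarithmic derivative of $Ai$. I would derive it from the Hadamard factorization of $Ai$, which is legitimate because $Ai$ is entire of order $3/2$.

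\textbf{Step 1: order and zeros.} First I check that $Ai$ is entire of order $3/2$. This follows from the integral representation, or equivalently from the classical asymptotics $|Ai(z)|\lesssim e^{\frac{2}{3}|z|^{3/2}}$ in every sector. By Theorem \ref{zerai}, the zeros are exactly $-w_n$ with $w_n\sim n^{2/3}$, and they are simple: if $Ai(-w_n)=Ai'(-w_n)=0$, uniqueness for the ODE \eqref{airyeq} would force $Ai\equiv 0$. The exponent of convergence of the zeros is $3/2$, so $\sum_n w_n^{-2}<\infty$, while $\sum_n w_n^{-1}$ diverges.

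\textbf{Step 2: factorization.} The genus is therefore $1$, and Hadamard's theorem gives
\begin{equation}
Ai(z)=Ai(0)\,e^{cz}\prod_{n\ge1}\left(1+\frac{z}{w_n}\right)e^{-z/w_n},
\end{equation}
with a polynomial exponent of degree at most $\lfloor 3/2\rfloor=1$. I would normalize so that the constant term of the exponent is absorbed into $Ai(0)\neq0$.

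\textbf{Step 3: logarithmic derivative.} The product converges locally uniformly because $\sum_n w_n^{-2}<\infty$. So I take logarithms on simply connected regions avoiding the zeros and differentiate term by term; differentiation is justified by Weierstrass's theorem on locally uniform limits. This gives
\[
\frac{Ai'(z)}{Ai(z)}=c+\sum_{n\ge1}\left(\frac{1}{z+w_n}-\frac{1}{w_n}\right).
\]
The series converges locally uniformly away from the $-w_n$, since each term is $O(|z|/w_n^2)$. Evaluating at $z=0$ identifies the constant as $c=Ai'(0)/Ai(0)=:-\kappa$. The initial data give $\kappa=3^{1/3}\Gamma(2/3)/\Gamma(1/3)>0$.

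The only nontrivial input is the growth bound in Step 1. If quoting Hadamard's theorem is not acceptable, I would instead estimate $Ai$ directly from $Ai(x)=\frac{1}{2\pi}\int e^{i(t^3/3+tx)}dt$ by shifting the contour to $\operatorname{Im}t=\sqrt{|z|}$. Everything after that is routine.
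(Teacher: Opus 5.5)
Your proposal is correct and follows the same route as the paper, which simply invokes the Weierstrass product for $Ai$ and differentiates its logarithm; you supply the details the paper leaves implicit: order $3/2$, genus one from $\sum_n w_n^{-2}<\infty$, justification of termwise differentiation, and $\kappa=-Ai'(0)/Ai(0)=3^{1/3}\Gamma(2/3)/\Gamma(1/3)>0$. One small correction to your last paragraph: the contour-shift estimate can replace the quoted growth bound, but not Hadamard's theorem itself, which you still need in order to obtain the product.
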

This function has some notable properties which will be very useful:
\begin{lema}
	\label{lemalai}
	The function $Lai$ is a smooth function defined in $\mathbb{C}-\{-w_i:i\in\mathbb{N}\}$. The function also satisfies the following differential equation:
	\begin{equation}
		Lai'(z)=z-Lai^2(z).
	\end{equation}
	When we reduce its domain to the real numbers $\bigcup_{i=0}^{\infty}(-w_{i+1},-w_i)$, the function is strictly decreasing for all real $x$ in its domain, and more specifically $Lai'(x)<0$.\\
	Furthermore, for all $i\in\mathbb{N}_0$, $Lai\colon (-w_{i+1},-w_i)\to \mathbb{R}$, is a decreasing diffeomorphism.
\end{lema}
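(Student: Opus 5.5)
The plan is to start from the Riccati equation for $Lai$ and read off both smoothness and monotonicity from it. On $\mathbb{C}\setminus\{-w_i\}$, $Ai$ is entire and nonvanishing, so $Lai=Ai'/Ai$ is holomorphic there, hence smooth. Differentiating,
\begin{align}
Lai'(z)=\frac{Ai''(z)Ai(z)-Ai'(z)^2}{Ai(z)^2}=\frac{zAi(z)^2-Ai'(z)^2}{Ai(z)^2}=z-Lai(z)^2,
\end{align}
using the Airy equation \eqref{airyeq}. This gives the differential equation. The partial fraction expansion of proposition \ref{weiestrass} then serves only as a cross-check on the location of the poles: every pole is simple with residue $1$.

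For strict monotonicity on each real interval $(-w_{i+1},-w_i)$, I would differentiate the series of proposition \ref{weiestrass} term by term:
\begin{align}
Lai'(x)=-\sum_{n\ge1}\frac{1}{(x+w_n)^2}<0 .
\end{align}
Uniform convergence on compact subsets away from the poles holds since $w_n\sim n^{2/3}$ gives $\sum w_n^{-2}<\infty$. This yields $Lai'<0$ on every interval at once and avoids the inequality of lemma \ref{ineq}, which only controls the first interval. As an alternative on $(-w_1,\infty)$, lemma \ref{ineq} gives $xAi^2<Ai'^2$ directly, hence $Lai'<0$. The main obstacle I anticipate is justifying the Weierstrass expansion: the genus-one factorization $Ai(z)=Ai(0)e^{-\kappa z}\prod(1+z/w_n)e^{-z/w_n}$ needs $Ai$ to have order $3/2<2$. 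This follows from the integral representation, or more precisely from the standard growth bound $|Ai(z)|\le Ce^{C|z|^{3/2}}$. I would cite this from \cite{airy}.

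Finally, the diffeomorphism claim. On $(-w_{i+1},-w_i)$ with $i\ge1$, both endpoints are simple zeros of $Ai$, since $Ai'(-w_n)\neq0$ (otherwise $Ai\equiv0$ by uniqueness for \eqref{airyeq}). Near a simple zero, $Lai(x)\sim 1/(x+w_n)$, which tends to $+\infty$ as $x\to(-w_{i+1})^+$ and to $-\infty$ as $x\to(-w_i)^-$. On the unbounded interval $(-w_1,\infty)$ (the case $i=0$, with $-w_0=\infty$), the left endpoint behaves the same way. For the right end, the asymptotic $Ai(x)\sim \frac{e^{-\frac23x^{3/2}}}{2\sqrt\pi x^{1/4}}$, which can be differentiated, gives $Lai(x)\sim -x^{1/2}\to-\infty$. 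Continuity, strict decrease, and these limits give a bijection onto $\mathbb{R}$, and $Lai'\neq0$ makes the inverse smooth. This completes the lemma.
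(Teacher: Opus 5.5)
Your proof is correct, but the monotonicity step takes a different route from the paper's. The paper does not use the Weierstrass expansion here. It reads the sign directly off the Riccati equation:
\begin{itemize}
\item for $x<0$ one has $Lai'(x)=x-Lai^2(x)\le x<0$;
\item at $x=0$ one has $Lai'(0)=-Lai(0)^2<0$ since $Ai'(0)\neq 0$ (the paper writes $Lai(0)>0$, but actually $Lai(0)<0$; only $Lai(0)\neq0$ matters);
\item for $x>0$ it invokes lemma \ref{ineq}.
\end{itemize}
Since $(0,\infty)\subset(-w_1,\infty)$ and every other interval lies in $\{x<0\}$, your remark that lemma \ref{ineq} ``only controls the first interval'' is not a real limitation of that route.

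Your formula $Lai'(x)=-\sum_{n\ge1}(x+w_n)^{-2}$ costs more: it needs the Hadamard factorization (order $3/2$, genus one) and the reality of all zeros of $Ai$. In return it gives strict negativity on every interval at once. Read back through the Riccati equation, it also proves $xAi^2(x)<Ai'^2(x)$ on all of $\mathbb{R}$ rather than assuming it. The cheapest global argument is in fact already in the paper: identity \eqref{intairy1} gives $Ai'(x)^2-xAi(x)^2=\int_x^\infty Ai^2>0$ for every real $x$. Hence $Lai'(x)=-Ai(x)^{-2}\int_x^\infty Ai^2<0$, with no case split and no factorization theorem.

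Your argument also proves the endpoint limits (corollary \ref{corlai}), which the paper states without proof. At the right end of $(-w_1,\infty)$ you could even avoid the Airy asymptotics: a decreasing $Lai$ with a finite limit at $+\infty$ would force $Lai'(x)=x-Lai^2(x)\to+\infty$, contradicting $Lai'<0$.
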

\begin{cor}
	\label{corlai}
	For every $i\in\mathbb{N}_0$, $\lim_{x\rightarrow -w_i^+} Lai(x)=\infty$, $\lim_{x\rightarrow -w_i^-} Lai(x)=-\infty$.
\end{cor}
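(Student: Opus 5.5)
The plan is to argue directly from the local behaviour of $Ai$ near each of its zeroes, and to use Lemma \ref{lemalai} only as a cross-check and for the unbounded end. Two ranges of $i$ need separate treatment, because of the convention $-w_0=\infty$.

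\emph{Case $i\ge 1$.} Here $-w_i$ is a genuine finite zero of $Ai$ (Theorem \ref{zerai}). I will first show the zero is simple, i.e. $Ai'(-w_i)\neq 0$. If both $Ai(-w_i)=0$ and $Ai'(-w_i)=0$, then uniqueness for the linear Cauchy problem \eqref{airyeq} would force $Ai\equiv 0$. That contradicts $Ai(0)\neq 0$, or equally the Wronskian identity $W\{Ai,Bi\}=\frac1\pi$.

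Set $c:=Ai'(-w_i)\neq 0$. Since $Ai$ is analytic, we have $Ai(x)=c\,(x+w_i)\bigl(1+O(x+w_i)\bigr)$ and $Ai'(x)=c+O(x+w_i)$. Hence
\begin{equation}
Lai(x)=\frac{Ai'(x)}{Ai(x)}=\frac{1}{x+w_i}\bigl(1+O(x+w_i)\bigr), \qquad x\to -w_i .
\end{equation}
The factor $c$ cancels, so its sign is irrelevant. It follows that $Lai(x)\to+\infty$ as $x\to-w_i^+$ and $Lai(x)\to-\infty$ as $x\to -w_i^-$. The same conclusion can be read off Proposition \ref{weiestrass}: the term $\frac{1}{x+w_i}$ is singular at $-w_i$, while the rest of the series converges locally uniformly and stays bounded near $-w_i$.

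\emph{Case $i=0$.} With $-w_0=\infty$, only the limit $x\to+\infty$ inside $(-w_1,\infty)$ is meaningful; the one-sided limit from the right is vacuous. I need to show $Lai(x)\to-\infty$ there. The quickest route is Lemma \ref{lemalai}: $Lai$ is a strictly decreasing diffeomorphism from $(-w_1,\infty)$ onto $\mathbb{R}$. Its left-end limit $+\infty$ was obtained above, so surjectivity together with monotonicity forces the limit $-\infty$ at $+\infty$.

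To avoid leaning entirely on the surjectivity claim, I will also give a direct argument from the Riccati equation $Lai'=x-Lai^2$. Since $Lai'<0$, we get $Lai^2>x$ for all $x$ in the domain. On $(0,\infty)$ we have $Ai>0$ and $Ai'<0$, because $Ai$ is positive, decreasing and tends to $0$ there. Therefore $Lai<0$ on $(0,\infty)$, which gives $Lai(x)<-\sqrt{x}\to-\infty$.

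The only step that needs genuine care is the simplicity of the zeroes, and that follows at once from ODE uniqueness. Everything else is bookkeeping about one-sided limits and the $-w_0=\infty$ convention.
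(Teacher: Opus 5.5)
Your argument is correct, and it takes a different and more complete route than the paper. The paper gives no separate argument for these limits. The corollary is presented as a consequence of the clause in Lemma \ref{lemalai} saying that $Lai$ is a decreasing diffeomorphism of each $(-w_{i+1},-w_i)$ onto $\mathbb{R}$. However, the proof written there only establishes three things: smoothness off the zeros, the Riccati identity $Lai'=x-Lai^2$, and $Lai'<0$ (via Lemma \ref{ineq} for $x>0$). Surjectivity onto $\mathbb{R}$ is exactly what the corollary asserts, so that deduction is circular as written.

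You instead prove the limits directly. At a finite endpoint, simplicity of the zero (by ODE uniqueness) gives $Lai(x)=(x+w_i)^{-1}\bigl(1+O(x+w_i)\bigr)$, i.e.\ a simple pole with residue $1$, consistent with Proposition \ref{weiestrass}. At $+\infty$, the inequality $Lai^2>x$ coming from $Lai'<0$, combined with $Lai<0$ on $(0,\infty)$, gives $Lai(x)<-\sqrt{x}$. The gain is that your argument supplies the missing surjectivity in Lemma \ref{lemalai} (monotonicity plus these endpoint limits), so the paper's logical order can be repaired.

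For the same reason, drop your first route for $i=0$, which deduces the limit at $+\infty$ from surjectivity and so inherits the circularity; keep the Riccati argument. One small tightening there: rather than invoking that $Ai$ is decreasing, note that $Lai^2>x>0$ forces $Lai\neq 0$ on $(0,\infty)$. Since $Lai(0)=Ai'(0)/Ai(0)<0$ by the initial data, continuity then gives $Lai<0$ on $[0,\infty)$. The paper's aside ``$Lai(0)>0$'' is a sign slip; your sign is the right one.
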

\begin{proof}
	As a quotient of smooth functions, $Lai$ is smooth when the denominator is not zero and by \ref{zerai}, that means $Lai$ is smooth for every $z\notin\{-w_i:i\in\mathbb{N}\}$.\\
	By using the differential equation that $Ai$ solves, we obtain
	\begin{equation}
		Lai'(x)=\frac{Ai''(x)Ai(x)-Ai'(x)Ai'(x)}{Ai(x)^2}=x-Lai^2(x).
	\end{equation}
	Since $Lai(0)>0$ (We can compute its exact value), for every $x\in\mathbb{R}$, $Lai'(x)=x-Lai^2(x)<0$, if $x>0$ because of \ref{ineq}, in $x<0$ because both terms in the sum are strictly negative.\\
	
\end{proof}
\section{The Robin function}
\label{approbin}
We introduce now the main  function for the study of the Robin boundary problem:
\begin{defi}
	\label{defirob}
	For every $\iota\in\mathbb{R}$, we define the Robin function $Rob_\iota:\mathbb{C}\longrightarrow\mathbb{C}$ as
	\begin{equation}
		Rob_\iota(z)=\cos(\iota)Ai(z)-\sin(\iota)Ai'(z).
	\end{equation}
	\begin{obs}
		Of course, this is an analytic function and it is also smooth if we consider $\iota$ a variable too. Also, the only relevant values of $\iota$ are in a $2\pi$ cycle (and in fact, since we are interested in making this function zero, we are only interested in [0,$\pi$) ).\\
		Also, $Ai=Rob_0$ and $-Ai'=Rob_{\pi/2}$.
	\end{obs}
\end{defi}
\subsection{Zeroes of $Rob$}
With this, we can state a theorem about the real zeroes of $Rob$.
\begin{prop}\label{zerosrob}
	\label{zeros}
	For $\iota\in[0,\pi)$, the real zeroes of $Rob_\iota$ are discrete, there are infinitely many of them and at most one is positive.\\
	If we denote by $-w_{\iota,n}$ the n-th smallest zero of $Rob_\iota$, we have:\\\\
	\begin{equation}
		w_{n-1}< w_{\iota,n}\le w_{n}.
	\end{equation}
	Also, for every natural, the function $w_{\iota,n}$ is a smooth strictly decreasing function at each point with respect to $\iota$ in $(0,\pi)$ and we have $\partial_\iota w_{\iota,n}=-\frac{1+\cot(\iota)^2}{w_{\iota,n}+\cot(\iota)^2}$. Specifically, $w_{\iota,n}=-Lai^{-1}(\cot(\iota))$. Where $Lai\colon (-w_{n},-w_{n-1})\to \mathbb{R}$.
\end{prop}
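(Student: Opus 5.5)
The plan is to reduce everything to the function $Lai=Ai'/Ai$ studied in Lemma \ref{lemalai}. First, for $\iota\in(0,\pi)$ we have $\sin(\iota)\neq 0$, so on the set where $Ai(x)\neq 0$ the equation $Rob_\iota(x)=0$ is equivalent to
\begin{equation}
	Lai(x)=\frac{Ai'(x)}{Ai(x)}=\cot(\iota).
\end{equation}
At a zero $-w_k$ of $Ai$ we have $Rob_\iota(-w_k)=-\sin(\iota)Ai'(-w_k)\neq 0$, since $Ai$ and $Ai'$ have no common zero (otherwise $Ai\equiv 0$ by uniqueness for the ODE \eqref{airyeq}). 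So no zero of $Rob_\iota$ is lost or gained this way. The case $\iota=0$ is immediate: then $Rob_0=Ai$ and $w_{0,n}=w_n$, which satisfies the stated inequality with equality on the right.

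Next, Lemma \ref{lemalai} says that for each $i\in\mathbb{N}_0$ the restriction $Lai\colon(-w_{i+1},-w_i)\to\mathbb{R}$ is a decreasing diffeomorphism, with $-w_0=+\infty$. Hence on each interval $(-w_{n},-w_{n-1})$ the equation $Lai(x)=\cot(\iota)$ has exactly one solution. This solution is $-w_{\iota,n}=Lai^{-1}(\cot(\iota))$, with the inverse taken on that branch, and it yields $w_{n-1}<w_{\iota,n}<w_n$. This gives infinitely many zeroes that are discrete, and exactly one in each interval. Only the interval $(-w_1,\infty)$ contains positive numbers, so at most one zero is positive. The ordering by $n$ matches the ordering of the intervals, so $-w_{\iota,n}$ is indeed the $n$-th zero counted from the right.

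For smoothness and monotonicity, I would use the inverse function theorem: $Lai'\neq 0$ on each branch, so $Lai^{-1}$ is smooth there, and $\iota\mapsto\cot(\iota)$ is smooth on $(0,\pi)$. Differentiating $w_{\iota,n}=-Lai^{-1}(\cot\iota)$ and using the Riccati equation $Lai'(x)=x-Lai(x)^2$ at the point $x=-w_{\iota,n}$, where $Lai(x)=\cot\iota$, gives
\begin{equation}
	\partial_\iota w_{\iota,n}=-\frac{-(1+\cot^2\iota)}{Lai'(-w_{\iota,n})}=\frac{1+\cot^2\iota}{-w_{\iota,n}-\cot^2\iota}=-\frac{1+\cot^2(\iota)}{w_{\iota,n}+\cot^2(\iota)}.
\end{equation}
The denominator $w_{\iota,n}+\cot^2\iota=-Lai'(-w_{\iota,n})$ is strictly positive because $Lai'<0$. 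Therefore $\partial_\iota w_{\iota,n}<0$, so $w_{\iota,n}$ is strictly decreasing in $\iota$.

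The only delicate points are two. The first is checking that zeroes of $Ai$ are never zeroes of $Rob_\iota$ for $\iota\neq 0$. The second is the boundary behaviour: as $\iota\to 0^+$ we have $\cot\iota\to+\infty$, and by Corollary \ref{corlai} the zero tends to the left endpoint $-w_n$; as $\iota\to\pi^-$ it tends to $-w_{n-1}$. This explains the inequality $w_{n-1}<w_{\iota,n}\le w_n$ on $[0,\pi)$. The rest follows directly from Lemma \ref{lemalai}.
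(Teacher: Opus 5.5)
Your proposal is correct and follows the paper's argument. Like the paper, you reduce $Rob_\iota(x)=0$ to $Lai(x)=\cot\iota$ and use that $Lai$ is a decreasing diffeomorphism of each interval $(-w_{n},-w_{n-1})$ onto $\mathbb{R}$, which gives exactly one zero $-w_{\iota,n}=Lai^{-1}(\cot\iota)$ per interval, and then differentiate this expression using $Lai'(x)=x-Lai(x)^2$. You also make explicit two points the paper leaves implicit: that zeroes of $Ai$ are never zeroes of $Rob_\iota$ for $\iota\neq 0$, and that strict monotonicity follows from $w_{\iota,n}+\cot^2\iota=-Lai'(-w_{\iota,n})>0$.
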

In order to prove that, we use the lemma \ref{lemalai} and corollary \ref{corlai}.

\begin{proof}
	For $\iota=0$, we have the function $Ai$, and the proposition \ref{zerai}. For $\iota\ne 0$, a real number $w$ is a zero of $Rob_\iota$ iff, $Lai(w)=\cot(\iota)$.\\
	By theorem \ref{corlai}, for every $i$, in the interval $(-w_{i},-w_{i-1})$, $Lai$ takes all the real numbers exactly once. In particular, it assumes the value $\cot(\iota)$ exactly once. Thus $-w_{\iota,i}\in\mathbb{R}$ is the value (which exists and is unique) such that,$-w_{\iota,i}\in (-w_{i},-w_{i-1})$ and  $Lai(-w_{\iota,i})=\cot(\iota)$.\\
	In other words, $-w_{\iota,i}=Lai^{-1}(\cot(\iota))$, with $Lai^{-1}$ chosen with image in $(-w_{i},-w_{i-1})$. This last expression, proves the smoothness and by differentiating it we obtain 	$\partial_\iota w_{\iota,n}=-\frac{1+\cot(\iota)^2}{w_{\iota,n}+\cot(\iota)^2}$.
\end{proof}
\begin{obs}
	We make no claim about the non-real zeroes. However, in \cite{rus} it is stated that all the zeroes are always real.
\end{obs}
\begin{cor}
	All real zeroes of $Rob_\iota$ are negative except maybe the first.
\end{cor}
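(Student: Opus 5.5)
The corollary should follow directly from Proposition \ref{zerosrob}. Take $\iota\in[0,\pi)$ and list the real zeroes of $Rob_\iota$ as $-w_{\iota,n}$, $n\ge 1$. The proposition gives $w_{n-1}<w_{\iota,n}\le w_n$.

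\textbf{Case $n\ge 2$.} Theorem \ref{zerai} says all zeroes of $Ai$ lie on the negative real axis, so $w_{n-1}\ge w_1>0$. Hence $w_{\iota,n}>w_{n-1}>0$, which means the zero $-w_{\iota,n}$ is strictly negative.

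\textbf{Case $n=1$.} With the convention $-w_0=\infty$, the only information is $-w_{\iota,1}\in(-w_1,\infty)$. This interval contains positive numbers, so the first zero may be positive. That is exactly the exception allowed in the statement.

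\textbf{Completeness of the list.} The proof of Proposition \ref{zerosrob} shows every real zero of $Rob_\iota$ lies in exactly one interval $(-w_i,-w_{i-1})$. For $\iota\neq 0$ these are the solutions of $Lai(w)=\cot(\iota)$, so no zero is left out of the enumeration. The Dirichlet case $\iota=0$ gives the zeroes of $Ai$, which are all negative by Theorem \ref{zerai}. The Neumann-type boundary value $\iota$ with $\cot\iota=0$ is covered by the same argument.

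\textbf{Possible obstacle.} The only subtle point is the zeroes $-w_i$ of $Ai$ themselves, where $Lai$ is undefined. For $\iota\neq0$ we have $Rob_\iota(-w_i)=-\sin(\iota)Ai'(-w_i)\neq0$, because the zeroes of $Ai$ are simple. So none of these points is a zero of $Rob_\iota$, and the enumeration above is exhaustive.
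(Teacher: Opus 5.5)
Your proof is correct and is essentially the argument the paper intends: the corollary follows from the interlacing $w_{n-1}<w_{\iota,n}\le w_n$ of Proposition \ref{zerosrob} together with $w_1>0$, so only $-w_{\iota,1}$ can fail to be negative. Your completeness check also matches that proposition's proof. For $\iota\neq 0$ no zero of $Ai$ is a zero of $Rob_\iota$, so every real zero solves $Lai=\cot\iota$ in exactly one interval $(-w_i,-w_{i-1})$.
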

\subsection{The functions $A_{\pm,\iota}$}
\begin{defi}
	\label{defia}
	We define the following functions $A_{\pm,\iota}$:
	\begin{align}
		&A_{\pm,\iota}(z)=\frac{1}{2}\left[(\cos(\iota)Ai(-z)-\sin(\iota)Ai'(-z))\mp(\cos(\iota)Bi(-z)-\sin(\iota)Bi'(-z))i\right]\\
		&=\cos(\iota)A_\pm(z)+\sin(\iota)A'_\pm(z).
	\end{align}
	With $A_\pm=A_{\pm,0}$.
\end{defi}
As a simple consequence, if follows that
\begin{prop}
	\label{arobbis}
	\begin{equation}
		A_{\mp,\iota}(z)=-e^{\pm \frac{4\pi i}{3}}\left[\cos(\iota)e^{\mp\frac{2\pi i}{3}}Ai\left(-e^{\pm \frac{2\pi i}{3}}z\right)-\sin(\iota)Ai'\left(-e^{\pm \frac{2\pi i}{3}}z\right)\right].
	\end{equation}
\end{prop}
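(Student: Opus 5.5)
The plan is to reduce the statement to the case $\iota=0$ and then obtain the $\sin(\iota)$ part by differentiation. This works because Definition \ref{defia} gives $A_{\pm,\iota}=\cos(\iota)A_\pm+\sin(\iota)A_\pm'$, with $'$ the derivative in $z$. This form is consistent with the first line of the definition, since $\frac{d}{dz}\bigl(Ai(-z)\bigr)=-Ai'(-z)$ and similarly for $Bi$. So it suffices to prove
\begin{equation}
A_{\mp}(z)=-e^{\pm\frac{2\pi i}{3}}Ai\bigl(-e^{\pm\frac{2\pi i}{3}}z\bigr),
\end{equation}
and then differentiate in $z$.

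For the case $\iota=0$, set $j=e^{\frac{2}{3}\pi i}$ and $w=-z$, so that $A_\mp(z)=\frac{1}{2}\bigl(Ai(w)\pm iBi(w)\bigr)$. Inserting the connection formula \eqref{bij}, $Bi(w)=ijAi(jw)-ij^2Ai(j^2w)$, gives
\begin{equation}
Ai(w)\pm iBi(w)=Ai(w)\mp jAi(jw)\pm j^2Ai(j^2w).
\end{equation}
Next eliminate $Ai(w)$ with \eqref{propai}, $Ai(w)=-jAi(jw)-j^2Ai(j^2w)$. For the upper sign, which corresponds to $A_-$, the $j^2$ terms cancel and we get $-2jAi(jw)$. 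For the lower sign, which corresponds to $A_+$, the $j$ terms cancel and we get $-2j^2Ai(j^2w)$. Since $j^2=e^{-\frac{2\pi i}{3}}$, this is exactly the displayed identity. It matches the claimed formula at $\iota=0$, because $-e^{\pm\frac{4\pi i}{3}}e^{\mp\frac{2\pi i}{3}}=-e^{\pm\frac{2\pi i}{3}}$.

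For the derivative term, differentiate the identity in $z$ by the chain rule:
\begin{equation}
A_\mp'(z)=e^{\pm\frac{4\pi i}{3}}Ai'\bigl(-e^{\pm\frac{2\pi i}{3}}z\bigr).
\end{equation}
Multiplying by $\sin(\iota)$ gives precisely the term $-e^{\pm\frac{4\pi i}{3}}\bigl(-\sin(\iota)Ai'(\cdot)\bigr)$ in the statement. Adding $\cos(\iota)$ times the $\iota=0$ identity then yields the proposition. Everything is valid for all $z\in\mathbb{C}$ because $Ai$ and $Bi$ are entire.

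There is no real obstacle here; the argument is pure algebra. The only place one can go wrong is sign and branch bookkeeping: matching $A_\mp$ with the $\pm$ in $iBi$, the sign produced by differentiating $Ai(-z)$, and the reduction of the powers $j^{\pm1},j^{\pm2}$ modulo $3$. I would therefore verify the final constants directly, as done above for both $\iota=0$ and the derivative term.
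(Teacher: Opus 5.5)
Your reduction to $\iota=0$ via $A_{\pm,\iota}=\cos(\iota)A_\pm+\sin(\iota)A_\pm'$ and your differentiation step are fine. Your algebra is also correct \emph{given} \eqref{bij} as printed, and this is the only route the paper has in mind, since it just calls the proposition a simple consequence. The gap is that \eqref{bij} is wrong by an overall sign for the $Bi$ the paper actually defines.

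At $z=0$ the right-hand side of \eqref{bij} is $i(j-j^2)Ai(0)=-\sqrt3\,Ai(0)$, whereas the initial conditions give $Bi(0)=\sqrt3\,Ai(0)$. Its derivative at $0$ is $i(j^2-j)Ai'(0)=\sqrt3\,Ai'(0)=-Bi'(0)$. Both sides solve $y''=zy$, so the printed right-hand side is exactly $-Bi(z)$. The correct formula is $Bi(z)=-ijAi(jz)+ij^2Ai(j^2z)=e^{i\pi/6}Ai(jz)+e^{-i\pi/6}Ai(j^2z)$.

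Consequently the identity you set out to prove is false as stated. Directly from Definition \ref{defia}, $A_-(0)=\tfrac12\bigl(Ai(0)+iBi(0)\bigr)=e^{i\pi/3}Ai(0)$. The claimed right-hand side at $z=0$, $\iota=0$ (upper signs) is $-e^{2\pi i/3}Ai(0)=e^{-i\pi/3}Ai(0)$. The stated version also clashes with Proposition \ref{basicasympt}: as $x\to+\infty$, $e^{-i\pi/3}Ai(e^{-i\pi/3}x)$ oscillates like $e^{-i(\frac{\pi}{4}-\frac{2}{3}x^{3/2})}$, the complex conjugate of the oscillation given there for $A_-(x)$.

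With the corrected connection formula your computation goes through unchanged except for the signs. You get $Ai(w)\pm iBi(w)=Ai(w)\pm jAi(jw)\mp j^2Ai(j^2w)$. Eliminating $Ai(w)$ with \eqref{propai}, which is correct, leaves $-2j^2Ai(j^2w)$ for the upper sign and $-2jAi(jw)$ for the lower. That is, $A_\pm(z)=-e^{\pm\frac{2\pi i}{3}}Ai\bigl(-e^{\pm\frac{2\pi i}{3}}z\bigr)$. Your differentiation step then produces exactly the displayed right-hand side, but for $A_{\pm,\iota}$ instead of $A_{\mp,\iota}$; that is the statement that can actually be proved.

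The error does not propagate. The paper uses the proposition only to show that $A_{-,\iota}$ has no zeros in a strip $|\Im z|<\varepsilon$. For real $\iota$, $A_{+,\iota}(\bar z)=\overline{A_{-,\iota}(z)}$, so that conclusion is symmetric under the swap. Finally, comparing your constants with the target formula is not an independent check; evaluating both sides at $z=0$ would have exposed the problem at once.
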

\begin{obs}
	Here we can see that $A_{-,\iota}(z)$ is not just the Robin function evaluated in complex numbers, here the parameter is also complex, which introduces and additional complication. But at least, we can assert that in a neighborhood of $\mathbb{R}$, $A_{-,\iota}\ne 0$.
\end{obs}
\begin{prop}
	For every $\iota\in(0,\pi)$, there exists an $\varepsilon>0$ such that $A_{-,\iota}(z)\ne 0$ for all $z\in\{w\in \mathbb{C}| |Im(w)|<\varepsilon\}$.
\end{prop}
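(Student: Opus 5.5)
The plan is to show first that $A_{-,\iota}$ has no zeros on $\mathbb{R}$, using the Wronskian. Then I will extend this to a strip: by compactness on a bounded piece, and by the classical Airy asymptotics near $\pm\infty$.

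\textbf{Step 1: no real zeros.} For $x\in\mathbb{R}$, Definition \ref{defia} gives
\begin{equation}
2A_{-,\iota}(x)=\bigl(\cos(\iota)Ai(-x)-\sin(\iota)Ai'(-x)\bigr)+i\bigl(\cos(\iota)Bi(-x)-\sin(\iota)Bi'(-x)\bigr),
\end{equation}
and both brackets are real since $Ai(\mathbb{R}),Bi(\mathbb{R})\subset\mathbb{R}$. A zero at $x$ would force both brackets to vanish. Then the nonzero vector $(\cos\iota,-\sin\iota)$ lies in the kernel of
\begin{equation}
\begin{pmatrix} Ai(-x) & Ai'(-x)\\ Bi(-x) & Bi'(-x)\end{pmatrix}.
\end{equation}
This matrix has determinant $W\{Ai,Bi\}(-x)=\frac{1}{\pi}\neq 0$, a contradiction. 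Hence $A_{-,\iota}$ has no zeros on $\mathbb{R}$; this part needs no restriction on $\iota$.

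\textbf{Step 2: the ends of the strip.} I will show that there are $R>0$ and width $1$ such that $A_{-,\iota}\neq0$ on $\{|\mathrm{Re}\,z|\ge R,\ |\mathrm{Im}\,z|<1\}$. I will use the standard Airy expansions, valid uniformly in sectors that contain such half-strips.

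For $\mathrm{Re}\,z\to+\infty$, the argument $-z$ lies in the oscillatory region and $A_-$ behaves like $\frac{1}{2\sqrt\pi}z^{-1/4}e^{-i(\frac23 z^{3/2}-\frac\pi4)}(1+O(z^{-3/2}))$. Its derivative behaves like $-i z^{1/2}A_-(z)(1+O(z^{-3/2}))$. Therefore
\begin{equation}
A_{-,\iota}(z)=\sin(\iota)A_-'(z)\bigl(1+O(|z|^{-1/2})\bigr),
\end{equation}
and this is nonzero for $|z|$ large because $\sin\iota\neq0$ when $\iota\in(0,\pi)$. The exponential factor $e^{\mathrm{Im}(\cdots)}$ may be large or small inside the strip, but it is a common nonvanishing factor, so the relative error is unaffected.

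For $\mathrm{Re}\,z\to-\infty$, set $w=-z$ with $\mathrm{Re}\,w\to+\infty$. Here $Ai(w)$ and $Ai'(w)$ decay like $e^{-\frac23 w^{3/2}}$, while $Bi(w)\sim\pi^{-1/2}w^{-1/4}e^{\frac23 w^{3/2}}$ and $Bi'(w)\sim w^{1/2}Bi(w)$. Hence
\begin{equation}
2A_{-,\iota}(z)= -i\sin(\iota)Bi'(w)\bigl(1+O(|w|^{-1/2})\bigr),
\end{equation}
which is again nonzero for $|w|$ large.

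\textbf{Step 3: compactness on the bounded part.} By Step 1, $A_{-,\iota}$ is continuous and nonvanishing on the compact segment $[-R,R]$, so $m=\min_{[-R,R]}|A_{-,\iota}|>0$. Since $A_{-,\iota}$ is entire, it is uniformly continuous on $[-R,R]\times[-1,1]$. So there is $\varepsilon\in(0,1)$ with $|A_{-,\iota}(z)-A_{-,\iota}(\mathrm{Re}\,z)|<m/2$ whenever $|\mathrm{Im}\,z|<\varepsilon$ and $|\mathrm{Re}\,z|\le R$. This gives $|A_{-,\iota}(z)|>m/2$ there. Combining with Step 2 yields the claim on the full strip $\{|\mathrm{Im}\,z|<\varepsilon\}$.

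\textbf{Main obstacle.} The main obstacle is Step 2: I must check that the Airy asymptotic expansions hold uniformly in sectors covering the half-strips near $\pm\infty$, including the derivative expansions. The alternative form in Proposition \ref{arobbis} may help here, since it rewrites $A_{-,\iota}$ through $Ai$ at a rotated argument, where the standard sectorial expansion of $Ai$ in $|\arg|<\pi$ applies directly.
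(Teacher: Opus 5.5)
Your proof is correct, and it takes a genuinely different route from the paper. The paper uses no asymptotics. It rewrites $A_{-,\iota}(z)=0$, via the rotated form in Proposition \ref{arobbis}, as $Lai(w)=\cot(\iota)e^{-2\pi i/3}$ with $w=-e^{2\pi i/3}z$. It then inserts the Mittag-Leffler expansion $Lai(w)=-\kappa+\sum_n\bigl(\frac{1}{w+w_n}-\frac{1}{w_n}\bigr)$ of Proposition \ref{weiestrass} and separates real and imaginary parts:
\begin{itemize}
\item the imaginary part forces $\mathrm{Im}\,w$ to have the sign of $-\cot(\iota)$;
\item the real part, using $\kappa=-Ai'(0)/Ai(0)>0$, keeps $\arg w$ a fixed angle away from $2\pi/3$ and $5\pi/3$.
\end{itemize}
So every zero $z$ lies in a closed double sector $\epsilon\le|\arg z|\le\pi-\epsilon$. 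A small zero-free disc at the origin, which exists because $Lai(0)=-\kappa$ is real and nonzero, then gives the strip.

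That argument proves more than the statement: a zero-free double sector around $\mathbb{R}$ outside a disc. It relies on the global product structure of $Ai$ and a case split on the sign of $\cot(\iota)$. Your Wronskian, asymptotics and compactness argument is softer and more modular. It gives only a strip of non-explicit width, but each step is routine and Step 1 holds for every $\iota$. Step 1 alone also supplies what Proposition \ref{decomp} actually needs, namely nonvanishing on $\mathbb{R}$.

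The obstacle you flag in Step 2 closes with a single expansion. The identity $Ai(-z)+iBi(-z)=2e^{i\pi/3}Ai(e^{i\pi/3}z)$ gives
$A_{-,\iota}(z)=e^{i\pi/3}\bigl[\cos(\iota)Ai(\zeta)+e^{i\pi/3}\sin(\iota)Ai'(\zeta)\bigr]$ with $\zeta=e^{i\pi/3}z$.
Both half-strips are sent into $|\arg\zeta|\le 2\pi/3+o(1)$. There $Ai(\zeta)\neq0$ and, uniformly, $Ai'(\zeta)/Ai(\zeta)=-\zeta^{1/2}\bigl(1+O(\zeta^{-3/2})\bigr)$. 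Hence the bracket equals $-e^{i\pi/3}\sin(\iota)\zeta^{1/2}Ai(\zeta)\bigl(1+O(|\zeta|^{-1/2})\bigr)$, which is nonzero for $|\zeta|$ large.

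If you take the formula in Proposition \ref{arobbis} literally, be aware that with the upper signs it equals $\overline{A_{-,\iota}(\bar z)}=A_{+,\iota}(z)$, not $A_{-,\iota}(z)$. You can see this at $z=0$, $\iota=0$, where $A_-(0)=e^{i\pi/3}Ai(0)$ but the formula gives $e^{-i\pi/3}Ai(0)$. This does not affect your argument, since the strip is invariant under conjugation.
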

\begin{proof}
	
	Fix $\iota\in (0,\pi)$. The condition $A_{-,\iota}(z)=0$ is equivalent, by \ref{defia}, to \\$0=\cot(\iota)e^{-\frac{2\pi i}{3}}Ai\left(-e^{\frac{2\pi i}{3}}z\right)-Ai'\left(-e^{\frac{2\pi i}{3}}z \right)$ that is, since $Ai$ and $Ai'$ can never be both zero, 
	\begin{align}
		Lai\left(-e^{\frac{2\pi i}{3}}z \right)=\cot(\iota)e^{\frac{-2\pi i}{3}}=-\frac{1}{2}\cot(\iota)(1+i\sqrt{3}).
	\end{align}
	Also we know the value of $Lai(0)\in \mathbb{R}$ and $Lai(0)\ne 0$, so, for every $\iota$, $Lai(0)\ne -\frac{1}{2}\cot(\iota)(1+i\sqrt{3})$. By continuity, there exists an $\varepsilon>0$ such that, if $|z|<\varepsilon$, then $Lai\left(-e^{\frac{2\pi i}{3}}z \right)\ne -\frac{1}{2}\cot(\iota)(1+i\sqrt{3})$.\\
	Assume, for contradiction, that there exists a fixed $z$ such that $Lai\left(-e^{\frac{2\pi i}{3}}z \right)=\cot(\iota)e^{\frac{-2\pi i}{3}}$. Define $w:=-e^{\frac{2\pi i}{3}}z$, $w_r:=\Re(w)$, $w_i:=\Im(w)$. By \ref{weiestrass}, we have 
	\begin{figure}
		\caption{For $\cot(\iota)>0$, there are only zeroes in the green zone, so in the orange strip we can guarantee that there are no zeroes}
		\centering
		\begin{tikzpicture}[scale=1.5, >=Stealth]
			% Definir ángulos
			\def\angleA{120} % ángulo de la línea azul (2π/3 = 120°)
			\def\angleB{140} % ángulo de la línea roja (2π/3+0.1 rad ≈ 125.73°)
			
			% Ejes
			\draw[->] (-1.5,0) -- (1.5,0) node[right] {$\mathrm{Re}$};
			\draw[->] (0,-1.5) -- (0,1.5) node[above] {$\mathrm{Im}$};
			\node at (0,0) [below left] {};
			
			% Círculo de radio 0.5
			\draw[thick, purple] (0,0) circle (0.5);
			
			% Línea azul: arg(z)=2π/3
			\draw[thick, blue] (0,0) -- (\angleA:1.5) node[midway, above left] {};
			
			% Línea roja: arg(z)=2π/3+0.1
			\draw[thick, red] (0,0) -- (\angleB:1.5) node[midway, above left] {};
			
			% Punto P: intersección de la línea roja y el círculo de radio 0.5
			\coordinate (P) at (\angleB:0.5);
			\filldraw[red] (P) circle (1.5pt) node[above right] {};
			
			% Punto A en la línea azul para proyección
			\coordinate (A) at (\angleA:2);
			
			% Proyección de P sobre la línea azul
			\coordinate (Q) at ($(0,0)!(P)!(A)$);
			
			% Punto reflejado P'
			\coordinate (P') at ($2*(Q) - (P)$);
			
			% Línea L1: paralela a la azul que pasa por P
			\draw[thick, green!50!black] ($(P) + (\angleA:2)$) -- ($(P) + (\angleA:-2)$) node[midway, above right] {};
			
			% Línea L2: paralela a la azul que pasa por P' (lado opuesto)
			\draw[thick, green!50!black] ($(P') + (\angleA:2)$) -- ($(P') + (\angleA:-2)$) node[midway, below left] {};
			
			% Sombrear la banda entre L1 y L2 dentro del círculo de radio 1.5
			% Definir puntos lejanos en L1 y L2
			\coordinate (L1a) at ($(P) + (\angleA:10)$);
			\coordinate (L1b) at ($(P) + (\angleA:-10)$);
			\coordinate (L2a) at ($(P') + (\angleA:10)$);
			\coordinate (L2b) at ($(P') + (\angleA:-10)$);
			
			\begin{scope}
				\clip (0,0) circle (1.5);
				\fill[orange, opacity=0.3] (L1a) -- (L1b) -- (L2b) -- (L2a) -- cycle;
			\end{scope}
			
			% Círculo de radio 1.5 de referencia
			\draw[dashed] (0,0) circle (1.5);
			
			% Leyenda
			\node[blue, right] at (1.5,-1.2) {Blue line: $\arg(w)=2\pi/3$};
			\node[red, right] at (1.5,-1.5) {Red line: $\arg(w)=2\pi/3+\epsilon$};
			\node[purple, right] at (1.5,-1.8){Circle of Radius $\varepsilon$};
			\node[orange, right] at (1.5,-2.1) {Region  $\{\Im(z)\in(-\tilde{\varepsilon},\tilde{\varepsilon})\}$};
			\node[green!30!white, right] at (1.5,-2.4) {Region where the zeroes could be};
			\begin{scope}
				% Recortar para excluir el círculo interior
				\clip (0,0) circle (1.5);
				\fill[green!30!white, opacity=0.5, even odd rule] 
				(0,0) -- (\angleB:1.5) arc (\angleB:180:1.5) -- cycle
				(0,0) -- (\angleB:0.5) arc (\angleB:180:0.5) -- cycle;
			\end{scope}
		\end{tikzpicture}
	\end{figure}
	\begin{align}
		-\frac{1}{2}\cot(\iota)(1+i\sqrt{3})=-\kappa+\sum_{n=1}^{\infty}\left(\frac{1}{w_r+w_ii+w_n}-\frac{1}{w_n}\right).
	\end{align}
	By computing the fraction and taking real and imaginary parts:
	\begin{align}
		&-\frac{\cot(\iota)}{2}=-\kappa+\sum_{n=1}^{\infty}\left(\frac{w_r+w_n}{w_i^2+(w_r+w_n)^2}-\frac{1}{w_n}\right)\\
		&-\frac{\sqrt{3}\cot(\iota)}{2}=\sum_{n=1}^{\infty}\left(\frac{w_i}{w_i^2+(w_r+w_n)^2}\right).
	\end{align}
	The second line implies that $\cot(\iota)$ has always the opposite sign to $w_i$ and in particular, for $w_i\ne 0$
	\begin{align}
		\sum_{n=1}^{\infty}\left(\frac{1}{w_i^2+(w_r+w_n)^2}\right)=-\frac{\sqrt{3}\cot(\iota)}{2w_i}>0
	\end{align}
We now consider the real part:
	\begin{align}
		&-\kappa+\sum_{n=1}^{\infty}\left(\frac{w_r+w_n}{w_i^2+(w_r+w_n)^2}-\frac{1}{w_n}\right)=	-\kappa-\sum_{n=1}^{\infty}\left(\frac{w_r}{w_i^2+(w_r+w_n)^2}+\frac{w_r^2}{(w_i^2+(w_r+w_n)^2)w_n}\right)\\
		&=-\kappa+\frac{w_r\sqrt{3}\cot(\iota)}{2w_i}-w_r^2\sum_{n=1}^{\infty}\frac{1}{(w_i^2+(w_r+w_n)^2)w_n}=-\frac{\cot(\iota)}{2}.
	\end{align}
	Now, since the sum is positive we can transform the equality in an inequality
	\begin{align}
		\label{ineqwei}
		2\kappa<\frac{w_r\sqrt{3}\cot(\iota)}{w_i}+\cot(\iota).
	\end{align}
	We now separate distinguish three cases depending on the sign of $\cot(\iota)$:\\
	If $\cot(\iota)>0$, by \eqref{ineqwei}, we obtain, $w_i<0$,  $\cot(\arg(w))=\frac{w_r}{w_i}>-\frac{1}{\sqrt{3}}+\frac{2\kappa}{\sqrt{3}\cot(\iota)}$ which means that, since $w_i<0$, that is $\arg(w)\in(\pi,2\pi)$, $\pi<\arg(w)<\frac{5\pi}{3}-\epsilon$ for some $\varepsilon>0$ depending on $\iota,\kappa$.\\\\
	If $\cot(\iota)<0$, by \eqref{ineqwei}, we obtain, $w_i>0$,  $\cot(\arg(w))=\frac{w_r}{w_i}<-\frac{1}{\sqrt{3}}+\frac{2\kappa}{\sqrt{3}\cot(\iota)}$ which means that, since $w_i>0$, that is $\arg(w)\in(0,\pi)$ $\pi>\arg(w)>\frac{2\pi}{3}+\epsilon$ for some $\varepsilon>0$ depending on $\iota,\kappa$.\\\\
	If $\cot(\iota)=0$, $w_i=0$, that is, $w$ must be real.\\
	In brief, for every $\iota$, there exists a $\epsilon>0$ such that $\arg(w)\notin (\frac{2\pi}{3}-\epsilon,\frac{2\pi}{3}+\epsilon)\cup (\frac{5\pi}{3}-\epsilon,\frac{5\pi}{3}+\epsilon)$ that is, $\arg(z)\notin (0,\epsilon)\cup(\pi-\epsilon,\pi+\epsilon)\cup (2\pi-\epsilon,2\pi)$, but also, by continuity, $|z|>\varepsilon$, so this implies that, there exists an $\tilde{\varepsilon}>0$ such that $\Im(z)\notin (-\tilde{\varepsilon},\tilde{\varepsilon})$, as we wanted.
\end{proof}
By this proposition, we obtain the following decomposition:
\begin{prop}
	\label{decomp}
	There exists $\rho_\iota,\theta_\iota\in\mathcal{C}^{\infty}$ such that, for every $x\in \mathbb{R}$, 
	\begin{equation}
		A_{-,\iota}(x)=\rho_\iota(x)e^{i\theta_\iota(x)}.
	\end{equation}
	With $\rho_\iota$ strictly positive
\end{prop}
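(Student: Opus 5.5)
The plan is to get the polar decomposition from the non-vanishing of $A_{-,\iota}$ on a complex neighbourhood of the real line, which is the preceding proposition. That neighbourhood is the strip $S_\varepsilon=\{|\Im z|<\varepsilon\}$.

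First, $A_{-,\iota}$ is entire, being a linear combination of $Ai(-z),Ai'(-z),Bi(-z),Bi'(-z)$. By the preceding proposition it has no zeros on $S_\varepsilon$. The strip is simply connected, so a holomorphic logarithm exists there: fix a real base point, say $0$, and set
\begin{align}
\ell(z)=\log A_{-,\iota}(0)+\int_0^z \frac{A_{-,\iota}'(\zeta)}{A_{-,\iota}(\zeta)}\,d\zeta .
\end{align}
The integral is path independent in $S_\varepsilon$. Then $e^{\ell}=A_{-,\iota}$ on $S_\varepsilon$, because $(A_{-,\iota}e^{-\ell})'=0$ and the two sides agree at $0$.

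Restrict to $x\in\mathbb{R}$ and define $\rho_\iota(x)=e^{\Re \ell(x)}=|A_{-,\iota}(x)|$ and $\theta_\iota(x)=\Im \ell(x)$. Both are real-analytic, since they are real and imaginary parts of a holomorphic function restricted to the real axis, so in particular they are $\mathcal{C}^\infty$. Strict positivity of $\rho_\iota$ is automatic, because it is an exponential. Equivalently, it follows from $A_{-,\iota}(x)\neq 0$ for every real $x$.

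A more elementary alternative avoids complex analysis beyond the real line. Since $A_{-,\iota}(x)\ne0$ on $\mathbb{R}$, the function $\rho_\iota=|A_{-,\iota}|=\sqrt{A_{-,\iota}\overline{A_{-,\iota}}}$ is smooth and positive. One can then take $\theta_\iota(x)=\theta_0+\int_0^x \Im\bigl(A_{-,\iota}'/A_{-,\iota}\bigr)$, with $\theta_0$ an argument of $A_{-,\iota}(0)$. Differentiating $A_{-,\iota}\rho_\iota^{-1}e^{-i\theta_\iota}$ and using $\rho_\iota'/\rho_\iota=\Re(A_{-,\iota}'/A_{-,\iota})$ shows that this quotient is constant and equal to $1$.

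The only real input, and the potential obstacle, is that $A_{-,\iota}$ has no zeros on $\mathbb{R}$ itself. For real $x$ its real and imaginary parts are the real Robin combinations of $Ai(-x)$ and $Bi(-x)$. They cannot vanish simultaneously: if they did, the Wronskian $W\{Ai,Bi\}=1/\pi$ would be contradicted, since the vector $(\cos\iota,-\sin\iota)$ would then annihilate two linearly independent columns $(Ai,Ai')$ and $(Bi,Bi')$. This gives a self-contained check that does not depend on the strip result.
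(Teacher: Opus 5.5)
Your proposal is correct. Your main argument is the paper's: the preceding proposition gives a zero-free strip, a holomorphic logarithm $g$ with $A_{-,\iota}=e^{g}$ exists there, and restricting to $\mathbb{R}$ gives $\rho_\iota=e^{\Re g}$ and $\theta_\iota=\Im g$.

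Your second route is also correct, and it is genuinely different. You set $\rho_\iota=|A_{-,\iota}|$ and obtain $\theta_\iota$ by integrating $\Im(A_{-,\iota}'/A_{-,\iota})$ along the real line. The only input is that $A_{-,\iota}$ has no zeros on $\mathbb{R}$. You check this directly: if it vanished, the nonzero row vector $(\cos\iota,-\sin\iota)$ would annihilate a matrix whose determinant is $W\{Ai,Bi\}=1/\pi$.

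This route bypasses the strip proposition entirely, and that proposition's proof is the delicate part of the paper's argument. The statement only concerns $x\in\mathbb{R}$, so the uniform strip is more than what is needed. Your construction still gives real-analytic $\rho_\iota$ and $\theta_\iota$, because $A_{-,\iota}'/A_{-,\iota}$ is holomorphic near each real point.

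What the paper's route buys is a single global holomorphic logarithm on a fixed strip. That is a stronger object, but it is not used for this statement.
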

\begin{proof}
	By the previous proposition, there exists a complex logarithm in this strip for $A_{-,\iota}$, that is, there exists $g$ analytic in $\mathcal{B}(\tilde\varepsilon)$ such that $A_{-,\iota}(z)=e^{g(z)}$. In particular, that is true in $\mathbb{R}$, so we can conclude.
\end{proof}
\begin{obs}
	The function $\theta_\iota$ determined only up to addition of $2\pi k$. So we must choose a branch of $\theta$. However, the derivative of $\theta$ does not depend on the choice.
\end{obs}
\begin{defi}
	We choose $\theta_\iota$, such that $-\theta_\iota\left(w_{1,\iota}\right)$ is the smallest possible positive number.
\end{defi}
\begin{prop}
	\label{inipoint}
	For every $\iota\in[0,\pi)$,
	$\theta_\iota(w_{1,\iota})=-\pi/2$
\end{prop}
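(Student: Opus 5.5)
The plan is to evaluate $A_{-,\iota}$ exactly at the point $x_0:=w_{1,\iota}$ and show that the value is a negative multiple of $i$. By Definition \ref{defia},
\begin{align}
A_{-,\iota}(w_{1,\iota})=\frac{1}{2}\Big[\big(\cos(\iota)Ai(-w_{1,\iota})-\sin(\iota)Ai'(-w_{1,\iota})\big)+i\big(\cos(\iota)Bi(-w_{1,\iota})-\sin(\iota)Bi'(-w_{1,\iota})\big)\Big].
\end{align}
The real part equals $\frac{1}{2}Rob_\iota(-w_{1,\iota})$, which is zero by Proposition \ref{zerosrob}. Hence $A_{-,\iota}(w_{1,\iota})$ is purely imaginary. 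Since $\rho_\iota>0$ (Proposition \ref{decomp}), we get $e^{i\theta_\iota(w_{1,\iota})}=\pm i$. Under the branch convention ($-\theta_\iota(w_{1,\iota})$ is the smallest possible positive number), we obtain $\theta_\iota(w_{1,\iota})=-\pi/2$ exactly when the imaginary part is negative. So everything reduces to proving
\begin{align}
\cos(\iota)Bi(x_1)-\sin(\iota)Bi'(x_1)<0,\qquad x_1:=-w_{1,\iota}.
\end{align}

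To get this sign I will use the Wronskian $Ai\,Bi'-Ai'\,Bi=\frac{1}{\pi}$. The relation $\cos(\iota)Ai(x_1)=\sin(\iota)Ai'(x_1)$ says that the vector $(Ai(x_1),Ai'(x_1))$ is parallel to $(\sin\iota,\cos\iota)$. Write $(Ai(x_1),Ai'(x_1))=r(\sin\iota,\cos\iota)$, where $r\neq0$ because $Ai$ and $Ai'$ have no common zero. Substituting into the Wronskian gives $r\big(\sin(\iota)Bi'(x_1)-\cos(\iota)Bi(x_1)\big)=\frac{1}{\pi}$, that is,
\begin{align}
\cos(\iota)Bi(x_1)-\sin(\iota)Bi'(x_1)=-\frac{1}{\pi r}.
\end{align}
The imaginary part is therefore $-\frac{1}{2\pi r}$, and it remains to show $r>0$.

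The sign of $r$ is the main (though mild) point, and it comes from the localisation of the first Robin zero. For $\iota\in(0,\pi)$, Proposition \ref{zerosrob} gives $-w_{1,\iota}=Lai^{-1}(\cot\iota)\in(-w_1,-w_0)=(-w_1,\infty)$, an open interval. On this interval $Ai>0$, since $-w_1$ is the largest zero of $Ai$ and $Ai\to0^+$ at $+\infty$. As $\sin\iota>0$, we conclude $r=Ai(x_1)/\sin\iota>0$.

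In the endpoint case $\iota=0$ one has $x_1=-w_1$ and $\sin\iota=0$, so I will instead use $r=Ai'(-w_1)/\cos 0=Ai'(-w_1)$. This is positive, because $-w_1$ is a simple zero of $Ai$ with $Ai>0$ immediately to its right.

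In all cases $r>0$, so $A_{-,\iota}(w_{1,\iota})=-\frac{i}{2\pi r}$ has argument $-\pi/2$. With the chosen branch this gives $\theta_\iota(w_{1,\iota})=-\pi/2$. The only care needed is to read the branch convention and the sign convention of $A_{\pm,\iota}$ correctly, that is, whether the $Bi$-term enters with $+i$ for $A_-$; I will double-check this against the identity in Proposition \ref{arobbis}.
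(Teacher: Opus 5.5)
Your proof is correct, but it takes a different route from the paper's. The paper also observes that $A_{-,\iota}(w_{1,\iota})$ is purely imaginary. It then fixes the sign of $\cos(\iota)Bi(-w_{1,\iota})-\sin(\iota)Bi'(-w_{1,\iota})$ by a connectedness argument:
\begin{itemize}
\item at $\iota=0$ the quantity equals $Bi(-w_1)$, which the paper asserts is negative without proof;
\item for the other values of $\iota$ it is continuous in $\iota$, via the smoothness of $w_{\iota,1}$ from Proposition \ref{zerosrob};
\item it never vanishes, because $\rho_\iota>0$ by Proposition \ref{decomp};
\end{itemize}
so it stays negative. You instead use the Wronskian to compute the quantity exactly as $-\frac{1}{\pi r}$, with $r=Ai(-w_{1,\iota})/\sin(\iota)$ for $\iota\in(0,\pi)$ and $r=Ai'(-w_1)$ at $\iota=0$. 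You then read off its sign pointwise from $Ai>0$ on $(-w_1,\infty)$.

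What your route buys is that it avoids two inputs the paper relies on. It does not need Proposition \ref{decomp}, nor continuity of $\iota\mapsto w_{\iota,1}$ at the endpoint $\iota=0$. The paper uses that continuity implicitly, since it only quotes smoothness on $(0,\pi)$; it does hold, because $Lai^{-1}(\cot\iota)\to -w_1$ as $\iota\to0^+$. Your argument also proves the base fact $Bi(-w_1)=-\frac{1}{\pi Ai'(-w_1)}<0$, which the paper takes for granted, and gives $\rho_\iota(w_{1,\iota})=\frac{1}{2\pi r}$ explicitly. The paper's argument is shorter once that machinery is in place.

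Your sign convention, with the $Bi$-term entering $A_{-,\iota}$ with $+i$, agrees with Definition \ref{defia}. It also matches $g=\Im(A_{-,\iota})$ in the proof of Proposition \ref{phase}, so the extra check you planned is not needed.
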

\begin{proof}
	By definition 
	\begin{equation}
		A_{-}(w_{1,\iota})=\frac{1}{2}[\cos(\iota)Bi(-w_{1,\iota})-\sin(\iota)Bi'(-w_{1,\iota})]i=\rho_\iota(w_{1,\iota})e^{i\theta_\iota(w_{1,\iota})}.
	\end{equation}
	This implies that $\cos(\theta_\iota(w_{1,\iota}))=0$ which implies that $\theta_\iota(w_{1,\iota})=-\pi/2$ or $\theta_\iota(w_{1,\iota})=-3\pi/2$, depending on the sign of $[\cos(\iota)Bi(-w_{1,\iota})-\sin(\iota)Bi'(-w_{1,\iota})]$. (It would be $\pm\frac{\pi}{2}+2\pi k$, but we defined $\theta_\iota(w_{1,\iota})$ as the smallest positive value possible).\\
	If $\iota=0$, $[\cos(\iota)Bi(-w_{1,\iota})-\sin(\iota)Bi'(-w_{1,\iota})]=Bi(-w_{1})<0$. This means that $\theta_0(w_1)=-\frac{\pi}{2}$.\\
	We know that $[\cos(\iota)Bi(-w_{1,\iota})-\sin(\iota)Bi'(-w_{1,\iota})]$ is continuous with respect to $\iota\in (0,\pi)$ by \eqref{zeros} and that it cannot be zero by \eqref{decomp}, so it must always be negative, so, for every $\iota\in[0,\pi)$, $\theta_\iota(w_{1,\iota})=-\frac{\pi}{2}$. 
\end{proof}
As before, we can use the Wronskian to relate the modulus and the phase with a differential equation.
\begin{prop}
	\label{phase}
	For every  $x>w_{1,\iota}$ or $x>0$, we have that $\theta_\iota'(x)<0$, more specifically, for every real $x$,
	\begin{equation}
		\theta'_\iota(x)=-\frac{\cos^2(\iota)+\sin^2(\iota)x}{4\pi \rho^2_\iota(x)}.
	\end{equation}
\end{prop}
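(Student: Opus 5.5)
We will prove the formula for $\theta'_\iota$ through a Wronskian identity, in the same way one obtains the classical modulus–phase relation for $A_-=Ai-iBi$ (evaluated at $-x$). Write $A_{-,\iota}(x)=\rho_\iota(x)e^{i\theta_\iota(x)}$ as in Proposition \ref{decomp}. Set
\[
R(x):=\cos(\iota)Ai(-x)-\sin(\iota)Ai'(-x),\qquad S(x):=\cos(\iota)Bi(-x)-\sin(\iota)Bi'(-x).
\]
By Definition \ref{defia} we then have $A_{-,\iota}=\tfrac12(R+iS)$. Both $R$ and $S$ are real on $\mathbb{R}$. Hence $\rho_\iota\cos\theta_\iota=R/2$ and $\rho_\iota\sin\theta_\iota=S/2$. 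Differentiating $\theta_\iota=\arg(R+iS)$, which is legitimate because $\rho_\iota>0$, gives the standard identity
\[
\theta_\iota'=\frac{RS'-R'S}{R^2+S^2}=\frac{RS'-R'S}{4\rho_\iota^2}.
\]
So everything reduces to computing the Wronskian $W(x):=RS'-R'S$.

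The main step is this Wronskian computation. Expanding and using $Ai''(z)=zAi(z)$, $Bi''(z)=zBi(z)$ at $z=-x$, we get
\[
R'(x)=-\cos(\iota)Ai'(-x)+\sin(\iota)Ai''(-x)=-\cos(\iota)Ai'(-x)-x\sin(\iota)Ai(-x),
\]
and the analogous formula for $S'$. The cross terms in $RS'-R'S$ then organise into multiples of $Ai(-x)Bi'(-x)-Ai'(-x)Bi(-x)=\frac1\pi$:

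\begin{enumerate}
\item The $\cos^2(\iota)$ term contributes $-\frac{1}{\pi}$.
\item The $\sin^2(\iota)$ term contributes $x\bigl(Ai'Bi-AiBi'\bigr)=-\frac{x}{\pi}$, up to sign bookkeeping.
\item The mixed $\sin\cos$ terms cancel, since they come in pairs of the form $Ai\,Bi-Bi\,Ai$ and $Ai'Bi'-Bi'Ai'$.
\end{enumerate}

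The expected result is $W=-\frac{1}{\pi}\bigl(\cos^2(\iota)+\sin^2(\iota)x\bigr)$. Careful sign tracking is the only real obstacle: the factor $-x$ from the chain rule and the reflection $z=-x$ must be handled consistently. We will check the sign against the known case $\iota=0$, where the formula must reduce to the classical $\theta'=-\frac{1}{4\pi\rho^2}$. We will also check against Proposition \ref{inipoint}, which fixes the branch but, as remarked, not the derivative.

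Dividing by $4\rho_\iota^2$ gives
\[
\theta_\iota'(x)=-\frac{\cos^2(\iota)+\sin^2(\iota)x}{4\pi\rho_\iota^2(x)}.
\]
For the sign claim, suppose $x>0$. Then $\cos^2(\iota)+\sin^2(\iota)x>0$, because for $\iota\in(0,\pi)$ we have $\sin^2(\iota)>0$, and for $\iota=0$ the numerator equals $1$. Since $\rho_\iota>0$, this gives $\theta_\iota'<0$. The statement also covers $x>w_{1,\iota}$. There we will use that $w_{1,\iota}>w_0$ and argue from the sign of $\cos^2(\iota)+\sin^2(\iota)x$. If $w_{1,\iota}$ can be negative, the claim on that range needs $\cot^2(\iota)+x>0$, and we will verify this from the characterisation of $-w_{1,\iota}$ via $Lai$ (Proposition \ref{zeros}) together with $Lai'(x)=x-Lai^2(x)<0$ (Lemma \ref{lemalai}).
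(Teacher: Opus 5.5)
Your derivation of the formula is the paper's proof: the same split $A_{-,\iota}=\tfrac12(R+iS)$, the same use of $Ai''(z)=zAi(z)$, and the same reduction of $RS'-R'S$ to $-\frac1\pi\bigl(\cos^2(\iota)+\sin^2(\iota)x\bigr)$ via the $Ai$--$Bi$ Wronskian. Your signs are correct, so the hedging about ``sign bookkeeping'' is unnecessary.

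The one piece you leave as a promise is the sign claim for $x>w_{1,\iota}$ when $w_{1,\iota}<0$. Since the numerator is increasing in $x$, this reduces to the endpoint inequality $-w_{1,\iota}<\cot^2(\iota)$. Your remark ``$w_{1,\iota}>w_0$'' gives nothing here, because $w_0=-\infty$.

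The paper closes this step with exactly the tools you name. It writes $-w_{1,\iota}=Lai^{-1}(\cot(\iota))$ on the branch $(-w_1,\infty)$, where $Lai$ is decreasing. It therefore suffices to show $Lai(t^2)<t$ for real $t$. For $t\ge 0$ this follows from $Lai(t^2)\le Lai(0)<0\le t$. For $t<0$ it follows from $Lai(t^2)<0$ together with $Lai^2(t^2)>t^2$, which is $Lai'(t^2)=t^2-Lai^2(t^2)<0$.

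A one-line alternative is to apply Lemma \ref{ineq} at $z=-w_{1,\iota}\in(-w_1,\infty)$, valid for $\iota\in(0,\pi)$. There $Ai'(z)=\cot(\iota)Ai(z)$ and $Ai(z)\neq 0$, so $zAi(z)^2<Ai'(z)^2$ gives $z<\cot^2(\iota)$ directly.
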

\begin{proof}
	We start by defining the functions $$f(x)=\Re(A_-(x))=\frac{1}{2}(\cos(\iota)Ai(-x)-\sin(\iota)Ai'(-x))=\rho_\iota(x)\cos(\theta_\iota(x)),$$
	$$g(x)=\Im(A_-(x))=\frac{1}{2}(\cos(\iota)Bi(-x)-\sin(\iota)Bi'(-x))=\rho_\iota(x)\sin(\theta_\iota(x)).$$
	We compute the derivatives 
	$$2f'(x)=-\cos(\iota)Ai'(-x)-x\sin(\iota)Ai(-x)=\rho_\iota'(x)\cos(\theta_\iota(x))-\rho_\iota(x)\sin(\theta_\iota(x))\theta_\iota'(x),$$
	$$2g'(x)=-\cos(\iota)Bi'(-x)-x\sin(\iota)Bi(-x)=\rho_\iota'(x)\sin(\theta_\iota(x))+\rho_\iota(x)\cos(\theta_\iota(x))\theta_\iota'(x).$$
	We compute now $g'f-gf'$
	\begin{align}
		&4(f(x)g'(x)-f'(x)g(x))\\
		&=-\cos^2(\iota)Ai(-x)Bi'(-x)+\sin^2(\iota)xAi'(-x)Bi(-x)\\
		&-x\cos(\iota)\sin(\iota)Ai(-x)Bi(-x)+\cos(\iota)\sin(\iota)Ai'(-x)Bi'(-x)\\
		&+\cos^2(\iota)Ai'(-x)Bi(-x)-x\sin^2(\iota)Ai(-x)Bi'(-x)\\
		&-\cos(\iota)\sin(\iota)Ai'(-x)Bi'(-x)+x\sin(\iota)\cos(\iota)Ai(-x)Bi(-x)\\
		&=-\frac{\cos^2(\iota)+\sin^2(\iota)x}{\pi}\\
		&=4\rho_\iota^2(x)\cos^2(\theta_\iota(x))\theta_\iota'(x)+8\rho_\iota(x)\rho_\iota'(x)\cos(\theta_\iota(x))\sin(\theta_\iota(x))\\
		&+4\rho_\iota^2(x)\sin^2(\theta_\iota(x))\theta_\iota'(x)-8\rho_\iota(x)\rho_\iota'(x)\sin(\theta_\iota(x))\cos(\theta_\iota(x))\\
		&=4\rho_\iota(x)\theta_\iota'(x).
	\end{align}
	We now just have to prove that, for every $\iota$, $\cos^2(\iota)+w_{1,\iota}\sin^2(\iota)>0$.\\\\
	If $\sin(\iota)=0$, the result is immediate, and if $\sin(\iota)\ne0$, we can rewrite the condition as
	$-w_{1,\iota}<\cot^2(\iota)$. Since $Lai$ is always strictly decreasing and has range in $(-w_1,\infty)$, by definition, we get that $-w_{1,\iota}=Lai^{-1}(\cot(\iota))$, and we obtain $Lai^{-1}(\cot(\iota))<\cot^2(\iota)$. Since we want to prove that for every $\iota$, it suffices to prove $Lai^{-1}(t)<t^2$ for every t, and since $t^2$ is positive, we can apply $Lai$, together with the fact that $Lai$ is strictly decreasing, so the inequality we have to prove transforms to $t>Lai(t^2)$. Since $Lai(0)<0$ and $Lai$ is decreasing, for $t\ge0$, it follows that the inequality is true $t\ge0>Lai(0)>Lai(t^2)$. If $t<0$, we have that writing the expression like this $-Lai(t^2)>-t$, both terms are positive, so we can square them and obtain an equivalent expression $Lai^2(t^2)>t^2$, and we conclude using \eqref{lemalai} $t^2-Lai^2(t^2)=Lai'(t^2)<0$ as we wanted.
\end{proof}

We also define the following function, which will allows us to show more explicitly some of the properties of $\theta$.
\begin{defi}
	\label{defL}
	We define the real function $L_\iota$ as $L_\iota(x)=\pi-2\theta_\iota(x)$.
\end{defi}
We get the following properties.
\begin{lema}
	\label{LemaL}
	For all $k\in\mathbb{N}$, $L_\iota(w_{\iota,k})=2\pi k$.\\
	Furthermore, for every $k\in\mathbb{N}$, we also have
	\begin{equation}
		\label{Lprima}
		L_\iota'(w_{\iota,k})=2\pi \int_{0}^{\infty} Ai^2(x-w_{\iota,k})dx.
	\end{equation}
\end{lema}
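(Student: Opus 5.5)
The plan is to read both statements off the definitions $L_\iota=\pi-2\theta_\iota$ and $A_{-,\iota}=\rho_\iota e^{i\theta_\iota}$ from Proposition \ref{decomp}, using Proposition \ref{inipoint} and Proposition \ref{phase}.

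For the first identity, I would note that $-w_{\iota,k}$ is a zero of $Rob_\iota$ if and only if $\Re A_{-,\iota}(w_{\iota,k})=\tfrac12 Rob_\iota(-w_{\iota,k})=0$. This happens if and only if $\cos\theta_\iota(w_{\iota,k})=0$, because $\rho_\iota>0$. So the points $w_{\iota,k}$ are exactly the points where $\theta_\iota\in -\tfrac{\pi}{2}+\pi\mathbb{Z}$. By Proposition \ref{phase}, $\theta_\iota'<0$ for $x>w_{1,\iota}$, so $\theta_\iota$ is strictly decreasing on $[w_{1,\iota},\infty)$. Starting from $\theta_\iota(w_{1,\iota})=-\pi/2$ (Proposition \ref{inipoint}), each successive level in $-\tfrac{\pi}{2}-\pi\mathbb{N}$ is crossed exactly once. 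Since the $w_{\iota,k}$ are increasing and no other zeros lie in between, the $k$-th crossing is $w_{\iota,k}$, so $\theta_\iota(w_{\iota,k})=-\tfrac{\pi}{2}-(k-1)\pi$. Therefore $L_\iota(w_{\iota,k})=\pi+\pi+2(k-1)\pi=2\pi k$.

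I expect this counting step to be the main subtle point. It needs two facts: that $\theta_\iota$ really does reach every level, i.e. $\theta_\iota\to-\infty$ (this follows since infinitely many zeros exist by Proposition \ref{zeros}), and that no zero of $Rob_\iota(-\cdot)$ is skipped. Monotonicity together with the zeros being exactly the level crossings settles both.

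For the second identity, Proposition \ref{phase} gives
\[
L_\iota'(w_{\iota,k})=-2\theta_\iota'(w_{\iota,k})=\frac{\cos^2\iota+\sin^2\iota\, w_{\iota,k}}{2\pi\rho_\iota^2(w_{\iota,k})}.
\]
At $w=w_{\iota,k}$ we have $\cos\theta_\iota=0$, so $\rho_\iota^2=g^2=\tfrac14(\cos\iota\, Bi(-w)-\sin\iota\, Bi'(-w))^2$. Next I would evaluate $\int_0^\infty Ai^2(x-w)\,dx=Ai'(-w)^2+w\,Ai(-w)^2$ by \eqref{intairy1} with $\beta=-w$. At a zero of $Rob_\iota$ we have $Ai(-w)=\cot\iota\,Ai'(-w)$ when $\sin\iota\neq0$, which gives
\[
\int_0^\infty Ai^2(x-w)\,dx=Ai'(-w)^2\bigl(1+w\cot^2\iota\bigr)=\frac{Ai'(-w)^2(\sin^2\iota+w\cos^2\iota)}{\sin^2\iota}.
\]
To relate $Ai'(-w)$ to $g$, I would use the Wronskian $W\{Ai,Bi\}=\tfrac1\pi$. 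Substituting $Ai(-w)=\cot\iota\,Ai'(-w)$ into $Ai\,Bi'-Ai'\,Bi=\tfrac1\pi$ at $-w$ gives $-Ai'(-w)\,\dfrac{\cos\iota Bi(-w)-\sin\iota Bi'(-w)}{\sin\iota}\cdot(-1)=\tfrac1\pi$ up to sign. Hence $Ai'(-w)^2\cdot 4g^2/\sin^2\iota=\pi^{-2}$, so $\rho_\iota^2=\sin^2\iota/(4\pi^2Ai'(-w)^2)$.

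Plugging this in should give $L'=2\pi Ai'(-w)^2(\cos^2\iota+w\sin^2\iota)/\sin^2\iota$. I will have to check which of $\cos$ and $\sin$ pairs with $w$ here. The mismatch with the integral computation above suggests the integral formula is really $\int Ai^2(x-w)$ with the boundary relation giving $Ai'(-w)^2\bigl(1+w\cot^2\iota\bigr)$ versus $\cot^2\iota+w$, so this bookkeeping needs care. If a mismatch persists, it points to the convention for $w_{\iota,k}$, i.e. the sign of $Ai'$ in $Rob$, and I would recheck that convention rather than the method. The case $\sin\iota=0$ is the Dirichlet formula and follows directly from the same computation without dividing by $\sin\iota$.
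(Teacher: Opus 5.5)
Your treatment of $L_\iota(w_{\iota,k})=2\pi k$ is the paper's argument, phrased as a level-crossing count instead of an induction. The zeros of $Rob_\iota(-\cdot)$ are exactly the points where $\cos\theta_\iota=0$, and $\theta_\iota$ decreases strictly from $-\pi/2$ at $w_{1,\iota}$. That part is fine.

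The second identity does not close as written, and the cause is not a convention problem but an inverted boundary relation. From $Rob_\iota(-w)=\cos\iota\,Ai(-w)-\sin\iota\,Ai'(-w)=0$ with $\sin\iota\neq0$ one gets $Ai'(-w)=\cot\iota\,Ai(-w)$, not $Ai(-w)=\cot\iota\,Ai'(-w)$. With the correct relation,
\[
\int_0^\infty Ai^2(x-w)\,dx=Ai(-w)^2\bigl(w+\cot^2\iota\bigr)=Ai(-w)^2\,\frac{\cos^2\iota+w\sin^2\iota}{\sin^2\iota}.
\]
Write $g=\tfrac12(\cos\iota\,Bi(-w)-\sin\iota\,Bi'(-w))$ as in your notation. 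The Wronskian at $-w$ reads $Ai(-w)\bigl(Bi'(-w)-\cot\iota\,Bi(-w)\bigr)=-\frac{2Ai(-w)}{\sin\iota}\,g=\frac1\pi$. Hence $\rho_\iota^2(w)=g^2=\frac{\sin^2\iota}{4\pi^2Ai(-w)^2}$, with $Ai(-w)$ rather than $Ai'(-w)$. Your Wronskian line does not actually follow from the relation you wrote; it silently uses the correct one.

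Plugging into Proposition \ref{phase} gives $L_\iota'(w)=\frac{\cos^2\iota+w\sin^2\iota}{2\pi\rho_\iota^2(w)}=2\pi Ai(-w)^2\frac{\cos^2\iota+w\sin^2\iota}{\sin^2\iota}$. This is exactly $2\pi$ times the integral. Your pairing in $L'$ was right; the integral side was what went wrong.

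A version with no case split: $(Ai(-w),Ai'(-w))\neq(0,0)$ is orthogonal to $(\cos\iota,-\sin\iota)$, so it equals $\mu(\sin\iota,\cos\iota)$ for some $\mu\neq0$. Then the integral is $\mu^2(\cos^2\iota+w\sin^2\iota)$, and the Wronskian gives $-2\mu g=\frac1\pi$. Hence $\rho_\iota^2(w)=\frac{1}{4\pi^2\mu^2}$ and $L_\iota'(w)=2\pi\mu^2(\cos^2\iota+w\sin^2\iota)$, including the case $\sin\iota=0$.

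Once corrected, your route differs only mildly from the paper's. The paper computes $Rob_\iota'(-w)$ twice:
\begin{itemize}
\item from the Airy equation, as $\frac{\cos^2\iota+w\sin^2\iota}{\sin\iota}Ai(-w)$;
\item from the polar form $Rob_\iota(t)=2\rho_\iota(-t)\cos\theta_\iota(-t)$, as $Rob_\iota'(-w)^2=4\theta_\iota'(w)^2\rho_\iota^2(w)$.
\end{itemize}
It then uses Proposition \ref{phase} to reduce to $-\theta_\iota'(w)/\pi$. You instead read off $\rho_\iota^2$ at the zero directly from the Wronskian. Both use the same inputs, \eqref{intairy1} and Proposition \ref{phase}; yours avoids differentiating the polar representation.
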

\begin{proof}
	We prove the first part. First, we do know, by proposition \ref{inipoint}, that $L_\iota(w_{\iota,1})=2\pi$, and that for every $t>w_{\iota,1}$, $L_\iota$ is an increasing function. We will prove the following:
	\begin{lema}
		$L_\iota(t)=2\pi n$, with $t\in\mathbb{R}$ and $n\in\mathbb{Z}$ if and only if $t=w_{\iota,k}$ for some $k$. (That is $-t$ is a zero of the Robin function).
	\end{lema} 
	\begin{proof}
		If $t=w_{\iota,k}$, then $A_-(t)$ is a pure imaginary number which means that $\theta_\iota(t)$ must be $\pm \frac{pi}{2}+2\pi n$. So $L_\iota(t)=\pi-2(\pm \frac{pi}{2}+2\pi n)=\pi \pm \pi -4\pi n=2\pi m$ for an $m\in\mathbb{Z}$.\\
		Reversely if  $L_\iota(t)=2\pi m$, that means that $\theta_\iota(t)=\frac{\pi}{2}-\pi m$, which means that $e^{i\theta_\iota(t)}$ is a pure imaginary number, which means that $Rob_\iota(-t)=\Re(A_-(t))=0$, so $-t$ is a zero of $Rob_\iota$, which concludes the proof do the lemma.
	\end{proof}
	Now, since $L_\iota(w_{\iota,1})=2\pi$, We will conclude by induction. If $L_\iota(w_{\iota,k})=2\pi k$, since there are infinite zeroes of the Robin function and $L$ is a increasing function, $L_\iota(w_{\iota,k+1})=2\pi m$, by the lemma, with $m>k$, but if $m\ne k+1$, there exists a $t$ such that $L_\iota(t)=2\pi (k+1)$, but by the lemma $t=w_{\iota,u}$, for a natural $u$, but by continuity and the fact that $L$ is increasing, $w_{\iota,k}<w_{\iota,u}<w_{\iota,k+1}$, which contradicts the definition of $w_{\iota,i}$.\\\\
	Now for the second statement we have, using \eqref{integral}, we have
	\begin{align}
		\int_{0}^{\infty} Ai^2(x-w_{\iota,k})dx=w_{\iota,k}Ai(-w_{\iota,k})^2+Ai'(-w_{\iota,k})^2.
	\end{align}
	By definition $0=\cos(\iota)Ai(-w_{\iota,k})-\sin(\iota)Ai'(-w_{\iota,k})$. We assume $\sin(\iota)\ne 0$ (the case $\sin(\iota)=0$ is analogous and it is the classical case in A.1 \cite{positive}).\\
	In that case $Ai'(-w_{\iota,k})=Ai(-w_{\iota,k})\frac{1}{\tan(\iota)}$.\\
	We now compute the following $Rob'(t)=\cos(\iota)Ai'(t)-\sin(\iota)tAi(t)$, so we get
	$Rob'_\iota(-w_{\iota,k})\\=\cos(\iota)Ai'(-w_{\iota,k})+\sin(\iota)w_{\iota,k}Ai(-w_{\iota,k})=\frac{\cos^2(\iota)+\sin^2(\iota)w_{\iota,k}}{\sin(\iota)}Ai(-w_{\iota,k})$.\\
	Also $Rob_\iota(t)=2\Re(A_{-,\iota}(-t))=2\rho_\iota(-t)\cos({\theta_\iota(-t)})$, so\\
	$Rob'_\iota(-w_{\iota,k})=-2\rho_\iota'(w_{\iota,k})\cos(\theta_\iota(w_{\iota,k}))+2\theta'_\iota(w_{\iota,k})\rho_\iota(w_{\iota,k})\sin(\theta_\iota(w_{\iota,k}))$. But by definition of zero of $Rob$, $\cos(\theta_\iota(w_{\iota,k}))=0$ and $\sin(\theta_\iota(w_{\iota,k}))=\pm 1$, so $(Rob'_\iota(-w_{\iota,k}))^2=4\theta'_\iota(w_{\iota,k})^2\rho_\iota(w_{\iota,k})^2$.
	\\
	With all the discussed in the expression above
	\begin{align}
		&\int_{0}^{\infty} Ai^2(x-w_{\iota,k})dx=Ai(-w_{\iota,k})^2 (w_{\iota,k}+\frac{1}{\tan(\iota)^2})\\
		&=\frac{\sin^2(\iota)}{(\cos^2(\iota)+\sin^2(\iota)w_{\iota,k})^2}(Rob'_\iota(w_{\iota,k})^2)(\sin^2(\iota)w_{\iota,k}+\cos^2(\iota))\frac{1}{\sin^2(\iota)}\\
		&=\frac{Rob'_\iota(w_{\iota,k})^2}{(\sin^2(\iota)w_{\iota,k}+\cos^2(\iota))}=4\frac{\theta'_\iota(w_{\iota,k})^2\rho_\iota(w_{\iota,k})^2}{(\sin^2(\iota)w_{\iota,k}+\cos^2(\iota))}=-\frac{ \theta'_\iota(w_{\iota,k})}{\pi}.
	\end{align}
	Where the last line comes from \eqref{phase}.\\
	With this, we conclude since $L'_\iota(t)=-2\theta_\iota'(t)$.
\end{proof}
Finally, we end this section with an adapted version of Poisson formula:
\begin{lema}
	\label{AiryPoisson}
	In $\mathcal{D}'(\mathbb{R}_+)$, one has
	\begin{equation}
		\label{eq:AiryPoissonrob1}
		\sum_{N\in \mathbb{Z}} e^{-i NL_\iota(w)}= 2\pi \sum_{k\in \mathbb{N}} \frac 1
		{L'_\iota(w_{\iota,k})}\, \delta(w-w_{\iota,k}).
	\end{equation}
	In other words, for $g\in \mathcal C_{0}^{\infty}(\mathbb{R}_+)$,
	\begin{equation}
		\label{eq:AiryPoissonrob}
		\sum_{N\in \mathbb{Z}} \int e^{-i NL_\iota(w)} g(w)\,dw = 2\pi \sum_{k\in \mathbb{N}} \frac 1
		{L_\iota'(w_{\iota,k})}\, g(w_{\iota,k}).
	\end{equation}
\end{lema}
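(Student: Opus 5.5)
The approach is the classical proof of the Poisson summation formula, combined with the change of variables $u=L_\iota(w)$. This change is legitimate because $L_\iota$ is a smooth increasing diffeomorphism on the relevant range. By Proposition \ref{phase} and Definition \ref{defL}, $L_\iota'(w)=-2\theta_\iota'(w)=\frac{\cos^2(\iota)+\sin^2(\iota)w}{2\pi\rho_\iota^2(w)}$. This quantity is strictly positive for $w>w_{\iota,1}$, and in fact for every $w$ with $\cos^2(\iota)+\sin^2(\iota)w>0$. In particular it is positive on $\mathbb{R}_+$ whenever $w_{\iota,1}\ge 0$, which is the situation the cutoff $\psi_3$ puts us in. In general I would state the identity for test functions supported where $L_\iota'>0$, which is all that is used. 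By Lemma \ref{LemaL}, the points where $L_\iota(w)\in 2\pi\mathbb{Z}$ are exactly the $w_{\iota,k}$, with $L_\iota(w_{\iota,k})=2\pi k$.

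The first step fixes $g\in\mathcal C_0^\infty(\mathbb{R}_+)$, supported in an interval $J$ on which $L_\iota$ is a diffeomorphism onto $L_\iota(J)$. There we substitute $u=L_\iota(w)$ and set
\begin{align}
\tilde g(u)=\frac{g(L_\iota^{-1}(u))}{L_\iota'(L_\iota^{-1}(u))},
\end{align}
which is again in $\mathcal C_0^\infty(\mathbb{R})$ because $L_\iota'$ does not vanish on $J$. This gives $\int e^{-iNL_\iota(w)}g(w)\,dw=\int e^{-iNu}\tilde g(u)\,du=\hat{\tilde g}(N)$.

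The second step applies the standard Poisson summation formula $\sum_{N\in\mathbb{Z}}e^{-iNu}=2\pi\sum_{m\in\mathbb{Z}}\delta(u-2\pi m)$ in $\mathcal S'(\mathbb{R})$. Because $\tilde g$ is smooth with compact support, $\hat{\tilde g}(N)$ decays rapidly, so the series on the left converges absolutely and the interchange of sum and integral is justified. We obtain $\sum_N\hat{\tilde g}(N)=2\pi\sum_m\tilde g(2\pi m)$. Only the values $2\pi m\in L_\iota(J)$ contribute, and these correspond to $L_\iota^{-1}(2\pi m)=w_{\iota,m}$ with $m=k\ge 1$ by Lemma \ref{LemaL}. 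Then $\tilde g(2\pi k)=g(w_{\iota,k})/L_\iota'(w_{\iota,k})$, which is exactly \eqref{eq:AiryPoissonrob}. The distributional form \eqref{eq:AiryPoissonrob1} follows by density, or simply by reading the identity against each test function.

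I expect the main obstacle to be the global invertibility of $L_\iota$ on the support of $g$, that is, the sign of $L_\iota'$ near $w=0$ when $\cot(\iota)<0$. In that case $w_{\iota,1}$ may be negative, and $\cos^2(\iota)+\sin^2(\iota)w$ changes sign at the negative point $w=-\cot^2(\iota)$. That point lies outside $\mathbb{R}_+$, so on $\mathbb{R}_+$ we still have $L_\iota'>0$, and the argument goes through there. The remaining point to check is that no spurious value $L_\iota(w)\in 2\pi\mathbb{Z}$ occurs on $\mathbb{R}_+$ other than the $w_{\iota,k}$; this is exactly the content of the inner lemma in the proof of Lemma \ref{LemaL}.
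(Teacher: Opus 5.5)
Your proof is correct and is essentially the paper's argument: set $\varphi=\bigl(g/L_\iota'\bigr)\circ L_\iota^{-1}$, apply the classical Poisson summation formula, change variables back, and use Lemma \ref{LemaL} to identify $L_\iota^{-1}(2\pi\mathbb{Z})\cap\mathbb{R}_+$ with the $w_{\iota,k}$. Localizing to an interval $J\supset\operatorname{supp}g$ is slightly cleaner than the paper's claim of a global diffeomorphism on $[0,\infty)$, which degenerates at $w=0$ in the Neumann case where $L_\iota'(0)=0$; your initial hedge about $w_{\iota,1}\ge 0$ is unnecessary, since, as you note at the end, $\sin\iota\neq 0$ already gives $L_\iota'>0$ on all of $(0,\infty)$.
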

\begin{proof}
	Let $g\in\mathcal{C}_0^\infty(\mathbb{R}_+)$ (which can be defined in all $\mathbb{R}$ extending by $0$). The classical Poisson summation formula reads that, for every $f\in\mathcal{C}_0^\infty(\mathbb{R})$, 
	\begin{equation}
		\sum_{k\in \mathbb{Z}} \varphi(2\pi k)=\frac 1 {2\pi} \sum_{N\in \mathbb{Z}} \int_{\mathbb{R}} e^{-i N x}\varphi(x)dx.
	\end{equation}
	Since $L_\iota$ has positive derivative in $\mathbb{R}_+$ and $\lim_{x\rightarrow \infty}L_\iota(x)=\infty$ (because $L_\iota$ grows and $\lim_{k\rightarrow \infty}L_\iota( w_{\iota,k})=\lim_{k\rightarrow \infty}2\pi k=\infty$), $L_\iota$ is diffeomorphism between $[0,\infty]$ and $\left(L_\iota(0),\infty\right)$, so we can define, for $x\in \left(L_\iota(0),\infty\right)$ the following function 
	\begin{equation}
		\varphi(x)=\frac{g(L_\iota^{-1}(x))}{L_\iota'(L_\iota^{-1}(x))}.
	\end{equation} 
	Since $g$ has its support in $\mathbb{R_+}$, $\varphi$ has its support in $\left(L_\iota(0),\infty\right)$ and we can extend it to all $\mathbb{R}$.\\\\
	Now, using the Poisson summation formula, we obtain
	\begin{equation}
		\sum_{k\in \mathbb{N}} \varphi(L_\iota( w_{b,k}))=\sum_{k\in \mathbb{N}} \varphi(2\pi k)=\frac 1 {2\pi} \sum_{N\in \mathbb{Z}} \int_{\mathbb{R}} e^{-i N x}\varphi(x)\,dx=\frac 1 {2\pi} \sum_{N\in \mathbb{Z}} \int_{L_\iota(0)}^\infty e^{-i N x}\varphi(x)\,dx.
	\end{equation}
	By doing the change of variables $x=L_\iota(t)$ in the integral, and using the definition of $\varphi$, we obtain 
	\begin{equation}
		\sum_{k\in \mathbb{N}} \frac{g(w_{\iota,k})}{L_\iota'( w_{\iota,k})}=\frac 1 {2\pi} \sum_{N\in \mathbb{Z}} \int_{0}^\infty e^{-i N L_\iota(t)}g(t)\,dt=\frac 1 {2\pi} \sum_{N\in \mathbb{Z}} \int_{\mathbb{R}} e^{-i N L_\iota(t)}g(t)\,dt.
	\end{equation}
\end{proof}
\subsection{Asymptotics}
We now compute some asymptotics which will be useful.\\
The asymptotics for $A_{-}, A'_{-}$ are well known (and can be obtained by stationary phase).
\begin{prop}[Asymptotics]
	\label{basicasympt}
	For $x>0$, we obtain the following asymptotics:
	\begin{align}
		&A_-(x)=\frac{e^{i(\frac{\pi}{4}-\frac{2}{3}x^{3/2})}}{2\sqrt{x}\sqrt{\pi}}\left(\sum_{k=0}^{m-1}x^{-\frac{3}{2}k}\frac{i^{k}(6k)! }{2^{6k}(2k)!(3k)!3^{2k}}+i^m\tilde{O}^\infty(x^{-\frac{3}{2}m})+i^{m+1}\tilde{O}^\infty(x^{-\frac{3}{2}{m-\frac{3}{2}}})\right)\\
		&=\frac{e^{i(\frac{\pi}{4}-\frac{2}{3}x^{3/2})}}{2\sqrt{x}\sqrt{\pi}}\Big(1+\frac{5}{2^43}\frac{i}{x^{\frac{3}{2}}}-\frac{5\cdot7\cdot11}{2^93^2}\frac{1}{x^{\frac{6}{2}}}-\frac{5\cdot7\cdot11\cdot13\cdot17}{2^{13}3^4}\frac{i}{x^{\frac{9}{2}}}+\tilde{O}^\infty(x^{-\frac{12}{2}})+i\tilde{O}^\infty(x^{-\frac{15}{2}})\Big).\\
		&\label{asympa-}
	\end{align}
	\begin{align}
		&A_-'(x)\\
		&=\frac{i\sqrt{x}e^{i(\frac{\pi}{4}-\frac{2}{3}x^{3/2})}}{2\sqrt{\pi}}\left(\sum_{k=0}^{m-1}x^{-\frac{3}{2}k}\frac{i^{k+1}(6k)!(6k+1) }{2^{6k}(6k-1)(2k)!(3k)!3^{2k}}+i^{m}\tilde{O}^\infty(x^{-\frac{3}{2}m-\frac{3}{2}})+i^{m+1}\tilde{O}^\infty(x^{-\frac{3}{2}m-\frac{3}{2}})\right)\\
		&\label{asympa-'}=\frac{i\sqrt{x}e^{i(\frac{\pi}{4}-\frac{2}{3}x^{3/2})}}{2\sqrt{\pi}}\left(-1+\frac{7}{5}\frac{5}{2^43}\frac{i}{x^{\frac{3}{2}}}-\frac{13}{11}\frac{5\cdot7\cdot11}{2^93^2}\frac{1}{x^{\frac{6}{2}}}-\frac{19}{17}\frac{5\cdot7\cdot11\cdot13\cdot17}{2^{13}3^4}\frac{i}{x^{\frac{9}{2}}}\right.\\
		&\left.+\tilde{O}^{\infty}(x^{-\frac{12}{2}})+i\tilde{O}^\infty(x^{-\frac{15}{2}})\right).
	\end{align}
\end{prop}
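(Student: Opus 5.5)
The plan is to reduce $A_-$ to a single Airy function evaluated on a complex ray and then import the classical Poincaré expansion of $Ai$ in the sector $|\arg \zeta|<\pi$. Taking $\iota=0$ in Proposition \ref{arobbis} (equivalently, combining \eqref{bij} with \eqref{propai}), I would write $A_-(x)=c\,Ai(\omega x)$, with $|c|=1$ and $\omega$ a sixth root of unity, $\omega=e^{\mp i\pi/3}$. This holds for $x>0$, and the ray $\{\omega x: x>0\}$ lies strictly inside the sector where the expansion is uniform. The first concrete step is to fix the sign of $\omega$ and the branch of $\zeta^{3/2}$, $\zeta^{1/4}$ so that $-\tfrac23\zeta^{3/2}=-\tfrac{2i}{3}x^{3/2}$. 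I would pin the branch by comparing with the known behaviour of $Ai(-x)$ and $Bi(-x)$ as $x\to+\infty$.

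Next I would quote the classical expansion
\[
Ai(\zeta)=\frac{e^{-\frac23\zeta^{3/2}}}{2\sqrt\pi\,\zeta^{1/4}}\Big(\sum_{k=0}^{m-1}(-1)^k u_k\zeta^{-\frac32 k}+R_m(\zeta)\Big),\qquad u_k=\frac{(6k)!}{2^{6k}3^{2k}(2k)!(3k)!},
\]
with $|R_m(\zeta)|\lesssim|\zeta|^{-\frac32 m}$ uniformly in closed subsectors. This can be obtained, as the paper suggests, by steepest descent or stationary phase on $\frac1{2\pi}\int e^{i(t^3/3+t\zeta)}dt$. Substituting $\zeta=\omega x$ gives $\zeta^{-3k/2}=(\pm i)^k x^{-3k/2}$ and combines the constants $c\,\omega^{-1/4}$ into $e^{i\pi/4}$. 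This produces the coefficients $i^k u_k$ and the explicit terms $1+\frac{5}{2^4 3}\frac{i}{x^{3/2}}-\cdots$, which I would check numerically against $u_1=5/72$, $u_2=385/10368$, and so on. For $A_-'$ I would use the companion expansion of $Ai'$ with $v_k=-\frac{6k+1}{6k-1}u_k$, and multiply by $\omega$ from the chain rule. Alternatively, I would differentiate the $A_-$ expansion termwise, which is legitimate for the reason given in the next paragraph. This accounts for the prefactor $i\sqrt x$ and the ratios $\frac75,\frac{13}{11},\frac{19}{17}$.

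The remainders must be shown to be of type $\tilde O^\infty$, that is, with symbol-type derivative bounds, and to separate into a real part and an $i$-multiple with the stated orders. For the derivative bounds I would use analyticity. $R_m$ is holomorphic and bounded by $|\zeta|^{-\frac32 m}$ on a slightly larger sector, so Cauchy estimates on discs of radius $\sim x$ centred at $\omega x$ give $|\partial^j R_m|\lesssim x^{-\frac32 m-j}$. For the splitting, I would stop the expansion one term later: the omitted term $u_m$ is real times $i^m$ and is exactly of order $x^{-\frac32 m}$. This gives the lower bound in $\tilde O$, and the next remainder is $i^{m+1}$ times a term of order $x^{-\frac32 m-\frac32}$. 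I expect the main obstacle to be this bookkeeping of branches, phases and the real/imaginary structure of the remainders, rather than any analytic difficulty.
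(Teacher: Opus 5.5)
Your route is sound, and it is the classical argument the paper alludes to without proof (it only says ``well known''): write $A_-$ as a unimodular constant times $Ai$ on the ray $\arg\zeta=\pi/3$, import the sectorial Poincar\'e expansion, and get the symbol bounds from Cauchy estimates. The gap is that, carried out correctly, this does \emph{not} reproduce the statement as printed, and your proposal asserts that it does. With $\zeta=e^{i\pi/3}x$ one has $\zeta^{-1/4}=e^{-i\pi/12}x^{-1/4}$. So the prefactor of $A_-$ is $\frac{e^{i(\pi/4-\frac23x^{3/2})}}{2\sqrt\pi\,x^{1/4}}$, and after the chain rule that of $A_-'$ is $\frac{i\,x^{1/4}e^{i(\pi/4-\frac23x^{3/2})}}{2\sqrt\pi}$, not $\frac{1}{2\sqrt x\sqrt\pi}$ and $\frac{i\sqrt x}{2\sqrt\pi}$. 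This is forced: $|A_-(x)|^2=\frac14\bigl(Ai(-x)^2+Bi(-x)^2\bigr)\sim\frac{1}{4\pi\sqrt x}$, and the paper's own later expansion of $\sqrt{x}^{-1}4\pi\rho_\iota^2$ is consistent only with $x^{\mp1/4}$. Likewise, the companion expansion of $Ai'$ gives the coefficient $i^k\frac{6k+1}{6k-1}u_k$, with leading term $-1$ as in the explicit second line. It does not give $i^{k+1}\frac{6k+1}{6k-1}u_k$ as in the general formula, whose remainder exponents should also read $x^{-\frac32m}$ and $x^{-\frac32m-\frac32}$. So your sentence ``this accounts for the prefactor $i\sqrt x$'' is false. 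You should flag these misprints and prove the corrected statement.

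Three smaller points. First, do not take the sign of $\omega$ from Proposition~\ref{arobbis} or \eqref{bij}. As printed, \eqref{bij} has the wrong overall sign: at $z=0$ it gives $Bi(0)=-\sqrt3\,Ai(0)$. Combined with \eqref{propai} it yields $e^{-i\pi/3}Ai(e^{-i\pi/3}x)=\frac12(Ai(-x)-iBi(-x))=A_+(x)$, the complex conjugate of $A_-$. The correct identity is $Ai(z)+iBi(z)=2e^{i\pi/3}Ai(e^{-2\pi i/3}z)$, which gives $A_-(x)=e^{i\pi/3}Ai(e^{i\pi/3}x)$, $\zeta^{3/2}=ix^{3/2}$ and $c\,\omega^{-1/4}=e^{i\pi/4}$. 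Your planned comparison with $Ai(-x)$, $Bi(-x)$ catches this, so actually do it rather than rely on the paper.

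Second, $5/72$ and $385/10368$ are the coefficients in powers of $(\frac23\zeta^{3/2})^{-1}$. In your normalization, your own (correct) formula gives $u_1=\frac{5}{48}=\frac{5}{2^4 3}$ and $u_2=\frac{385}{4608}=\frac{5\cdot7\cdot11}{2^93^2}$, which are the values in the statement; the two normalizations differ by $(3/2)^k$.

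Third, the real/imaginary splitting needs two extra terms, not one. Write $R_m(\omega x)=i^mu_mx^{-\frac32m}+i^{m+1}u_{m+1}x^{-\frac32m-\frac32}+R_{m+2}(\omega x)$, then split $i^{-m}R_{m+2}(\omega x)$ into real and imaginary parts and absorb each into the corresponding explicit term. $R_{m+1}$ alone is not $i^{m+1}$ times a real function, and the lower bound on the $i^{m+1}$-component comes precisely from $u_{m+1}\neq0$. Cauchy estimates then give the symbol bounds, with the two-sided bounds valid only for $x\ge M$, as the $\tilde O^\infty$ notation requires.
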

Since $A_{-,\iota}=\cos(\iota)A_{-}+\sin(\iota)A'_{-}$, and the asymptotics for $A_{-}$,$A'_{-}$ them are known, we have the asymptotics also for $A_{-,\iota}$. This allows us to compute also the asymptotics for $\rho^2_\iota(x)$.
\begin{align}
	&\sqrt{x}^{-1}4\pi \rho^2_\iota(x)=\left[x^{-1/2}\cos(\iota)\left(1+a_2x^{-6/2}+\tilde{O}^\infty(x^{-12/2})\right)+\sin(\iota)\left(a'_1x^{-3/2}+\tilde{O}^\infty (x^{-9/2})\right) \right]^2\\
	&+\left[x^{-1/2}\cos(\iota)\left(a_1x^{-3/2}+\tilde{O}^\infty(x^{-9/2})\right)+\sin(\iota)\left(-1+a'_2x^{-6/2}+\tilde{O}^\infty(x^{-12/2})\right) \right]^2\\
	&\label{asymptmodrob}=\cos^2(\iota)(x^{-1}+\tilde O^\infty (x^{-4}))+\sin^2(\iota)(1+\tilde O^\infty(x^{-3}))+2\sin(\iota)\cos(\iota)\left(\frac{2}{5} \frac{5}{2^{4}3}\right)(x^{-2}+\tilde O^\infty(x^{-5})).
\end{align}
\begin{nota}
	All the $\tilde{O}^\infty$ involved are real functions, stable under differentiation and bounded also from below, that is, if $f(w)=\tilde{O}^\infty(w^{-a})$, then, if $w>M$ for a certain $M$, there exists, $C_1,C_2$ such that $C_1w_{-a}\le|f(w)|\le C_2w^{-a}$, and for every $k$, $f^{(k)}(w)=O(w^{-a-k})$.
\end{nota}
\subsection{Derivatives with respect to $\iota$}
For now we have treated $\iota$ in the Robin function as a parameter, but we are also interested in considering $\iota$ as a variable, since in the spectral analysis, we obtain dependence of $\theta$ on the parameter. First the function $Rob$ is smooth if we consider $x$ and $\iota$ as variables. We also have that $\rho_\iota^2$ and $A_{\iota,-}(x)$ are, but it is also the case for $\theta_\iota(x)$.\\
We start by introducing the notation of the section:
\begin{nota}
	We denote $\partial_\iota\theta_\iota(x)$ as the derivative of $\theta_\iota(x)$ with respect to $\iota$, when we consider $\iota$ as a variable.\\
	We denote $\partial_x\theta_\iota(x)$ as the derivative of $\theta_\iota(x)$ with respect to $x$, when we consider $\iota$ as a variable. However, we will also maintain  the notation $\theta_\iota'(x)=\partial_x\theta_\iota(x)$.\\
	We will also use this convention for the other functions involved ($Rob$, $\rho$...) if needed.
\end{nota}
We now proceed to compute $\partial_\iota\theta_\iota(x)$.
\begin{prop}
	\label{partialalpha}
	\begin{equation}
		\partial_\iota\theta_\iota(x)=-\frac{1}{4\pi \rho^2_\iota(x)}=\frac{\theta'_\iota(x)}{\cos^2(\iota)+\sin^2(\iota)x}.
	\end{equation}
\end{prop}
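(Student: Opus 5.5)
Write $\Theta(x,\iota):=\theta_\iota(x)$. The plan is to imitate the Wronskian computation in the proof of proposition \ref{phase}, with the roles of $x$ and $\iota$ exchanged. With $f=\Re A_{-,\iota}=\rho_\iota\cos\theta_\iota$ and $g=\Im A_{-,\iota}=\rho_\iota\sin\theta_\iota$ (with the appropriate sign convention for $\Im$), we have for any variable $v$ the identity
\begin{equation}
	f\,\partial_v g-g\,\partial_v f=\rho_\iota^2\,\partial_v\theta_\iota .
\end{equation}
This identity makes sense because $\rho_\iota>0$ and a smooth branch of the argument exists (proposition \ref{decomp}). Joint smoothness in $(x,\iota)$ follows from the analytic logarithm, since $A_{-,\iota}$ depends smoothly on $\iota$ and is nonvanishing on the strip. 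The branch normalization is locally constant, so $\partial_\iota\theta_\iota$ is well defined.

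First compute $\partial_\iota f$ and $\partial_\iota g$ directly from definition \ref{defia}:
\begin{equation}
	2\partial_\iota f=-\sin(\iota)Ai(-x)-\cos(\iota)Ai'(-x),\qquad 2\partial_\iota g=-\sin(\iota)Bi(-x)-\cos(\iota)Bi'(-x).
\end{equation}
Then form $4(f\partial_\iota g-g\partial_\iota f)$. Expanding, the $Ai\,Bi$ and $Ai'\,Bi'$ terms cancel. What remains is $(\cos^2\iota+\sin^2\iota)$ times $\pm(Ai(-x)Bi'(-x)-Ai'(-x)Bi(-x))$. By the Wronskian identity $W\{Ai,Bi\}=1/\pi$ this equals $\mp 1/\pi$. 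The sign is fixed exactly as in proposition \ref{phase}, with the same orientation of $g$. This gives
\begin{equation}
	4\rho_\iota^2\,\partial_\iota\theta_\iota=-\frac{1}{\pi},
\end{equation}
that is, $\partial_\iota\theta_\iota(x)=-\frac{1}{4\pi\rho_\iota^2(x)}$.

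The second equality follows immediately by dividing the formula of proposition \ref{phase}, $\theta'_\iota(x)=-\frac{\cos^2\iota+\sin^2\iota\,x}{4\pi\rho_\iota^2(x)}$, by $\cos^2\iota+\sin^2\iota\,x$ wherever this quantity is nonzero.

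The only delicate point is bookkeeping. The sign convention for $g$ has to be consistent with the one used in proposition \ref{phase}; the paper's $g$ there is written without the $\mp i$ sign, so I would recheck it against the computed $\theta'_\iota<0$. One also has to confirm that differentiating the chosen branch in $\iota$ is legitimate. Both are routine once $A_{-,\iota}\neq0$ on a strip uniformly for $\iota$ in compact subsets of $(0,\pi)$, which follows from the continuity argument in the nonvanishing proposition.
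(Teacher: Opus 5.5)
Your proposal is correct and is essentially the paper's proof: you redo the Wronskian computation of Proposition \ref{phase} with $\partial_\iota$ in place of $\partial_x$ to get $4\rho_\iota^2\,\partial_\iota\theta_\iota=-1/\pi$, then divide the formula of Proposition \ref{phase} by $\cos^2\iota+\sin^2\iota\,x$. The sign you left as $\pm$ is settled by direct expansion: with the paper's $g=\Im A_{-,\iota}=\tfrac12\bigl(\cos\iota\,Bi(-x)-\sin\iota\,Bi'(-x)\bigr)$ one gets exactly $-(\cos^2\iota+\sin^2\iota)\,W\{Ai,Bi\}(-x)=-1/\pi$, and your caveat that the second equality only holds where $\cos^2\iota+\sin^2\iota\,x\neq0$ is a legitimate refinement of the statement.
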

We use the same ideas as the proof of proposition \ref{phase}.\\
We start by defining the functions
$$f(x,\iota)=\Re(A_{-,\iota}(x))=\frac{1}{2}(\cos(\iota)Ai(-x)-\sin(\iota)Ai'(-x))=\rho_\iota(x)\cos(\theta_\iota(x)),$$
and
$$g(x,\iota)=\Im(A_{-,\iota}(x))=\frac{1}{2}(\cos(\iota)Bi(-x)-\sin(\iota)Bi'(-x))=\rho_\iota(x)\sin(\theta_\iota(x)).$$
We compute the derivatives 
$$2\partial_\iota f(x,\iota)=-\cos(\iota)Ai'(-x)-\sin(\iota)Ai(-x)=2\partial_\iota\rho_\iota(x)\cos(\theta_\iota(x))-2\rho_\iota(x)\sin(\theta_\iota(x))\partial_\iota\theta_\iota(x).$$
$$2\partial_\iota g(x,\iota)=-\cos(\iota)Bi'(-x)-\sin(\iota)Bi(-x)=2\partial_\iota\rho_\iota(x)\sin(\theta_\iota(x))+2\rho_\iota(x)\cos(\theta_\iota(x))\partial_\iota\theta_\iota(x).$$
We compute now $(\partial_\iota g)f-g(\partial_\iota f)$,
\begin{align}
	&4(f(x,\iota)\partial_\iota g(x,\iota)-\partial_\iota f(x,\iota)g(x,\iota))\\
	&=-\cos^2(\iota)Ai(-x)Bi'(-x)+\sin^2(\iota)Ai'(-x)Bi(-x)\\
	&-\cos(\iota)\sin(\iota)Ai(-x)Bi(-x)+\cos(\iota)\sin(\iota)Ai'(-x)Bi'(-x)\\
	&+\cos^2(\iota)Ai'(-x)Bi(-x)-\sin^2(\iota)Ai(-x)Bi'(-x)\\
	&-\cos(\iota)\sin(\iota)Ai'(-x)Bi'(-x)+\sin(\iota)\cos(\iota)Ai(-x)Bi(-x)\\
	&=-\frac{\cos^2(\iota)+\sin^2(\iota)}{\pi}=-\frac{1}{\pi}\\
	&=4\rho_\iota^2(x)\cos^2(\theta_\iota(x))\partial_\iota \theta_\iota(x)+8\rho_\iota(x)\partial_\iota \rho_\iota(x)\cos(\theta_\iota(x))\sin(\theta_\iota(x))\\
	&+4\rho_\iota^2(x)\sin^2(\theta_\iota(x))\partial_\iota \theta_\iota(x)-8\rho_\iota(x)\partial_\iota \rho_\iota(x)\sin(\theta_\iota(x))\cos(\theta_\iota(x))\\
	&=4\rho^2_\iota(x)\partial_\iota \theta_\iota(x).
\end{align}
Thus we obtain the expression we wanted (and we get the second expression by using proposition \ref{phase})\\
\section{Derivatives of $\psi_{N,a,h}$}
\label{derivatives}
We compute here the derivatives of $\psi_{N,a,h}$ in order to make the tracking of the bounds clearer.
\begin{prop}[First derivative]
	\begin{align}
		&\partial_\Sigma \psi_{N,a,h}(\boldsymbol{ T},a,{\Sigma},{S},{A},\eta)=\eta \left(\Sigma^2+ X-A\right).\\
		&\partial_S \psi_{N,a,h}(\boldsymbol{ T},a,{\Sigma},{S},{A},\eta)=\eta \left(S^2+\frac{a}{\gamma}-A\right).\\
		&\partial_A \psi_{N,a,h}(\boldsymbol{ T},a,{\Sigma},{S},{A},\eta)=-\eta S-\eta \Sigma+\frac{\eta T}{2\sqrt{1+\gamma A}}-2N\eta A^{1/2}-\frac{3}{2}A^{1/2}N\eta B'_{\beta(\eta/h)}(A^{3/2}\lambda_{\gamma,h}{\eta}).\\
		&\partial_\eta \psi_{N,a,h}(\boldsymbol{ T},a,{\Sigma},{S},{A},\eta)=Y+\frac{\Sigma^3}{3}+\Sigma (X-A)+\frac{S^3}{3}+S(\frac{a}{\gamma}-A)+\frac{T}{\gamma}(\sqrt{1+\gamma A}-\sqrt{1+\gamma})\\
		&-\frac{4}{3}NA^{3/2}\left(1+\frac{3}{4}B'_{\beta(\eta/h)}(A^{3/2}\eta\lambda_{\gamma,h})\right)-N\lambda_{\gamma,h}^{-1}Resrob_1(\gamma A,\eta,h)
	\end{align}
\end{prop}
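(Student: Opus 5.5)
This is a direct computation. I will differentiate the explicit phase $\psi_{N,a,h}$ of Definition \ref{change1} (formula \eqref{defipsiB}) term by term. The only nontrivial ingredient is the $\eta$- and $A$-dependence of the Robin term $-\frac{N}{\lambda_{\gamma,h}}B_{\beta(\eta/h)}(\eta\lambda_{\gamma,h}A^{3/2})$.

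\textbf{The $\Sigma$ and $S$ derivatives.} These are immediate. The only terms involving $\Sigma$ are $\eta(\frac{\Sigma^3}{3}+X\Sigma-\Sigma A)$, which give $\eta(\Sigma^2+X-A)$. Similarly, the terms $\eta(\frac{S^3}{3}+\frac{a}{\gamma}S-SA)$ give $\eta(S^2+\frac{a}{\gamma}-A)$. The Robin term depends on neither variable.

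\textbf{The $A$ derivative.} Here $\eta$ is fixed, so $\beta(\eta/h)$ is fixed and $B_{\beta(\eta/h)}$ is differentiated only in its argument. This is exactly the Dirichlet situation, and no $Resrob$ term appears. The polynomial part gives $-\eta(S+\Sigma)$. The time term gives
$$\eta T\frac{1}{\gamma}\cdot\frac{\gamma}{2\sqrt{1+\gamma A}}=\frac{\eta T}{2\sqrt{1+\gamma A}}.$$
The term $-\frac43 N\eta A^{3/2}$ gives $-2N\eta A^{1/2}$. By the chain rule, the Robin term gives
$$-\frac{N}{\lambda_{\gamma,h}}\,B'_{\beta(\eta/h)}(\eta\lambda_{\gamma,h}A^{3/2})\,\eta\lambda_{\gamma,h}\tfrac32A^{1/2}=-\tfrac32 N\eta A^{1/2}B'_{\beta(\eta/h)}(\cdot),$$
which matches the claim.

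\textbf{The $\eta$ derivative.} This is the only step where the Robin structure enters. The bracket multiplying $\eta$ in \eqref{defipsiB} is independent of $\eta$, so its derivative is the bracket itself. Its last piece, $-N\frac43 A^{3/2}$, combines with the Robin contribution below into the factor $-\frac43NA^{3/2}(1+\frac34B')$.

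For the Robin term, observe that $\eta\lambda_{\gamma,h}A^{3/2}=\alpha^{3/2}\eta h^{-1}$ with $\alpha=\gamma A$, since $\lambda_{\gamma,h}\gamma^{-3/2}=h^{-1}$. So
$$\frac{d}{d\eta}B_{\beta(\eta/h)}(\eta\lambda_{\gamma,h}A^{3/2})$$
is exactly the total derivative appearing in Definition \ref{restrobsnewnew}. That definition, evaluated at $\alpha=\gamma A$, gives
$$\lambda_{\gamma,h}A^{3/2}B'_{\beta(\eta/h)}(\eta\lambda_{\gamma,h}A^{3/2})+Resrob_1(\gamma A,\eta,h).$$
Multiplying by $-N/\lambda_{\gamma,h}$ yields
$$-NA^{3/2}B'-N\lambda_{\gamma,h}^{-1}Resrob_1(\gamma A,\eta,h).$$
Adding this to $-\frac43NA^{3/2}$ produces the stated formula.

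The only point requiring care is this last identification: the argument of $B$ must be rewritten in the original variables $(\alpha,\eta,h)$ so that the definition of $Resrob_1$ applies verbatim. That matching of the rescaled variable with $\alpha=\gamma A$ is the one place where the computation could go wrong.
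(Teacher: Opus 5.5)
Your proposal is correct and follows the same route as the paper, which treats this proposition as a direct term-by-term differentiation of the transverse phase. The one Robin-specific step is handled as the paper intends. You rewrite the argument of $B$ as $\alpha^{3/2}\eta h^{-1}$ with $\alpha=\gamma A$, using $\lambda_{\gamma,h}\gamma^{-3/2}=h^{-1}$, so that by the definition of $Resrob_1$ the total $\eta$-derivative splits as $\lambda_{\gamma,h}A^{3/2}B'_{\beta(\eta/h)}+Resrob_1(\gamma A,\eta,h)$.
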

\begin{prop}[Second derivative]
	\begin{align}
		&\partial^2_\Sigma \psi_{N,a,h}(\boldsymbol{ T},a,{\Sigma},{S},{A},\eta)=2\eta\Sigma.\\
		&\partial^2_S \psi_{N,a,h}(\boldsymbol{ T},a,{\Sigma},{S},{A},\eta)=2\eta S.\\
		&\partial^2_A \psi_{N,a,h}(\boldsymbol{ T},a,{\Sigma},{S},{A},\eta)=-\frac{\eta T\gamma}{4\sqrt{1+\gamma A}^3}-N\eta A^{-1/2}-A^{-1/2}\frac{3}{4}N\eta B'_{\beta(\eta/h)}(A^{3/2}\lambda_{\gamma,h}{\eta})\\
		&-\frac{9}{4}A N\eta^2 \lambda_{\gamma,h} B''_{\beta(\eta/h)}(A^{3/2}\lambda_{\gamma,h}{\eta}).\\
		&\partial^2_\eta \psi_{N,a,h}(\boldsymbol{ T},a,{\Sigma},{S},{A},\eta)=-N\lambda_{\gamma,h}{A^{3}} B''_{\beta(\eta/h)}(\lambda_{\gamma,h}\eta{A^{3/2}})-N\lambda_{\gamma,h}^{-1}Resrob_2(\gamma A,\eta,h).\\
		&\partial^2_{\Sigma,S}\psi_{N,a,h}(\boldsymbol{ T},a,{\Sigma},{S},{A},\eta)=\partial^2_{\Sigma,S}\psi_{N,a,h}(\boldsymbol{ T},a,{\Sigma},{S},{A},\eta)=0.\\
		&\partial^2_{\Sigma,A}\psi_{N,a,h}(\boldsymbol{ T},a,{\Sigma},{S},{A},\eta)=\partial^2_{S,A}\psi_{N,a,h}(\boldsymbol{ T},a,{\Sigma},{S},{A},\eta)=-\eta.\\
		&\partial^2_{\Sigma,\eta}\psi_{N,a,h}(\boldsymbol{ T},a,{\Sigma},{S},{A},\eta)=\Sigma^2+X-A. \;\;\;\;\;\;\;\;\partial^2_{S,\eta}\psi_{N,a,h}(\boldsymbol{ T},a,{\Sigma},{S},{A},\eta)=S^2+\frac{a}{\gamma}-A.\\
		&\partial^2_{A,\eta}\psi_{N,a,h}(\boldsymbol{ T},a,{\Sigma},{S},{A},\eta)= -\Sigma-S+\frac{T}{2\sqrt{1+\gamma A}}-2NA^{1/2}-NA^{1/2}\frac{3}{2}B'_{\beta(\eta/h)}(A^{3/2}\eta \lambda_{\gamma,h})\\
		&-\frac{3}{2}NA^2\eta \lambda_{\gamma,h}B''_{\beta(\eta/h)}(A^{3/2}\eta \lambda_{\gamma,h})-N\lambda_{\gamma,h}^{-1}\gamma\partial_A Resrob_1(\gamma A,\eta,h)\\
		&=\eta^{-1} \partial_A \psi_{N,a,h}(\boldsymbol{ T},a,{\Sigma},{S},{A},\eta)-\frac{3}{2}NA^2\eta \lambda_{\gamma,h}B''_{\beta(\eta/h)}(A^{3/2}\eta\lambda_{\gamma,h})-N\lambda_{\gamma,h}^{-1} \gamma \partial_A Resrob_1(\gamma A,\eta,h)
	\end{align}
\end{prop}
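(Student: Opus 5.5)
The identities are obtained by differentiating the explicit phase \eqref{defipsiB} term by term. I organise this so that every derivative in $\eta$ falling on the Robin term is absorbed into the remainders of Definition \ref{restrobsnewnew}. I split $\psi_{N,a,h}$ into three parts.
\begin{itemize}
\item The polynomial part $\eta\big(Y+\frac{\Sigma^3}{3}+\frac{S^3}{3}+X\Sigma+\frac{a}{\gamma}S-(S+\Sigma)A-N\frac43A^{3/2}\big)$.
\item The time part $\eta\frac{T}{\gamma}\big(\sqrt{1+\gamma A}-\sqrt{1+\gamma}\big)$.
\item The Robin part $R:=-\frac{N}{\lambda_{\gamma,h}}B_{\beta(\eta/h)}(\eta\lambda_{\gamma,h}A^{3/2})$.
\end{itemize}
The first two parts are elementary. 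In them $\eta$ appears only as a linear prefactor, so every mixed derivative with $\eta$ simply removes that factor. This gives directly $\partial^2_\Sigma=2\eta\Sigma$, $\partial^2_S=2\eta S$, $\partial^2_{\Sigma,S}=0$, $\partial^2_{\Sigma,A}=\partial^2_{S,A}=-\eta$, $\partial^2_{\Sigma,\eta}=\Sigma^2+X-A$ and $\partial^2_{S,\eta}=S^2+\frac a\gamma-A$. The time part contributes $-\frac{\eta T\gamma}{4(1+\gamma A)^{3/2}}$ to $\partial_A^2$ and $\frac{T}{2\sqrt{1+\gamma A}}$ to $\partial^2_{A,\eta}$.

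For $R$, derivatives in $A$ at fixed $\eta$ involve only the argument of $B_\beta$, since the subscript $\beta(\eta/h)$ does not depend on $A$; the chain rule then suffices. We have $\partial_A R=-\frac32 N\eta A^{1/2}B'_{\beta(\eta/h)}(\eta\lambda_{\gamma,h} A^{3/2})$. Differentiating once more gives the two terms $-\frac34N\eta A^{-1/2}B'$ and $-\frac94 NA\eta^2\lambda_{\gamma,h}B''$.

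For the $\eta$-derivatives of $R$, I do not differentiate $\beta(\eta/h)$ by hand. Instead I observe that with $\alpha=\gamma A$ one has $\alpha^{3/2}\eta h^{-1}=\eta\lambda_{\gamma,h}A^{3/2}$ and $\lambda_{\gamma,h}\alpha^{3/2}/\gamma^{3/2}=\lambda_{\gamma,h}A^{3/2}$. Definition \ref{restrobsnewnew} then reads literally as
\[\partial_\eta R=-NA^{3/2}B'-N\lambda_{\gamma,h}^{-1}Resrob_1(\gamma A,\eta,h),\qquad \partial^2_\eta R=-N\lambda_{\gamma,h}A^3B''-N\lambda_{\gamma,h}^{-1}Resrob_2(\gamma A,\eta,h).\]
The first identity reproduces the first-derivative proposition. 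The second gives the stated $\partial^2_\eta\psi$, because the polynomial and time parts are linear in $\eta$ and so have vanishing second $\eta$-derivative.

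For the mixed derivative $\partial^2_{A,\eta}$, I differentiate the already known expression for $\partial_\eta\psi$ with respect to $A$.
\begin{itemize}
\item The term $-\frac43NA^{3/2}$ gives $-2NA^{1/2}$.
\item The term $-NA^{3/2}B'(A^{3/2}\eta\lambda_{\gamma,h})$ gives $-\frac32NA^{1/2}B'-\frac32NA^2\eta\lambda_{\gamma,h}B''$.
\item The term $-N\lambda_{\gamma,h}^{-1}Resrob_1(\gamma A,\eta,h)$ gives $-N\lambda_{\gamma,h}^{-1}\gamma\,\partial_\alpha Resrob_1$, the factor $\gamma$ coming from the chain rule in $\alpha=\gamma A$.
\end{itemize}
Together with the polynomial and time contributions $-\Sigma-S+\frac{T}{2\sqrt{1+\gamma A}}$, this yields the first displayed form. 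For the second form, $\partial_A\psi$ equals $\eta$ times $\big(-\Sigma-S+\frac{T}{2\sqrt{1+\gamma A}}-2NA^{1/2}-\frac32NA^{1/2}B'\big)$. Hence $\eta^{-1}\partial_A\psi$ reproduces all terms except the $B''$ and $Resrob_1$ contributions, which gives the second identity.

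The only delicate step is the $\eta$-differentiation of $R$. The $\eta$-dependence through $\beta(\eta/h)$ must be fully captured by $Resrob_1$ and $Resrob_2$, with no stray $\beta'$ terms left over. Checking this amounts to verifying that the arguments of $B'$ and $B''$ in Definition \ref{restrobsnewnew} coincide with $\eta\lambda_{\gamma,h}A^{3/2}$ under $\alpha=\gamma A$. Everything else is routine calculus.
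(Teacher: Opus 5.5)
Your proposal is correct and takes the same route as the paper. The paper gives no separate argument for this appendix proposition; it differentiates the explicit phase \eqref{defipsiB} term by term. As you do, it lets the definitions of $Resrob_1$ and $Resrob_2$, with $\alpha=\gamma A$ so that $\alpha^{3/2}\eta h^{-1}=\eta\lambda_{\gamma,h}A^{3/2}$, absorb all the $\eta$-dependence through $\beta(\eta/h)$, and the factor $\gamma$ in front of $\partial_A Resrob_1$ comes from the chain rule. The only term you leave implicit is the contribution $-N\eta A^{-1/2}$ to $\partial_A^2\psi$ coming from $-\frac43 N\eta A^{3/2}$, which is immediate.
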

\begin{prop}[Third derivative]
	\label{thirderiv1}
	\begin{align}
		&\partial^3_\Sigma \psi_{N,a,h}(\boldsymbol{ T},a,{\Sigma},{S},{A},\eta)=\partial^3_S \psi_{N,a,h}(\boldsymbol{ T},a,{\Sigma},{S},{A},\eta)=2\eta.\\
		&\partial^3_A \psi_{N,a,h}(\boldsymbol{ T},a,{\Sigma},{S},{A},\eta)=\frac{3\eta \gamma^2 T}{8\sqrt{1+\gamma A}^5}+\frac{1}{2}N\eta A^{-3/2}+A^{-3/2}\frac{3}{8}N\eta B'_{\beta(\eta/h)}(A^{3/2}\lambda_{\gamma,h}{\eta})\\
		&- \lambda_{\gamma,h}\frac{27}{8}N\eta^2 B''_{\beta(\eta/h)}(A^{3/2}\lambda_{\gamma,h}{\eta})-\frac{27}{8}A^{3/2} N\eta^3 \lambda_{\gamma,h}^2 B'''_{\beta(\eta/h)}(A^{3/2}\lambda_{\gamma,h}{\eta}).\\
		&\partial^3_\eta \psi_{N,a,h}(\boldsymbol{ T},a,{\Sigma},{S},{A},\eta)=-N{A^{9/2}}\lambda_{\gamma,h}^2B'''_{\beta(\eta/h)}(\lambda_{\gamma,h}\eta{A^{3/2}})-N\lambda_{\gamma,h}^{-1}Resrob_3(\gamma A,\eta,h).\\
		&\partial^3_{\Sigma,\Sigma,S}\psi_{N,a,h}(\boldsymbol{ T},a,{\Sigma},{S},{A},\eta)=\partial^3_{\Sigma,S,S}\psi_{N,a,h}(\boldsymbol{ T},a,{\Sigma},{S},{A},\eta)=\partial^3_{S,S,A}\psi_{N,a,h}(\boldsymbol{ T},a,{\Sigma},{S},{A},\eta)=0.\\
		&\partial^3_{A,A, \Sigma}\psi_{N,a,h}(\boldsymbol{ T},a,{\Sigma},{S},{A},\eta)=\partial^3_{A,A,S}\psi_{N,a,h}(\boldsymbol{ T},a,{\Sigma},{S},{A},\eta)=\partial^3_{\Sigma,\Sigma,A}\psi_{N,a,h}(\boldsymbol{ T},a,{\Sigma},{S},{A},\eta)=0.\\
		&\partial^3_{A,\Sigma, \eta}\psi_{N,a,h}(\boldsymbol{ T},a,{\Sigma},{S},{A},\eta)=\partial^3_{A,\eta,S}\psi_{N,a,h}(\boldsymbol{ T},a,{\Sigma},{S},{A},\eta)=-1.\\
		&\partial^3_{\eta,\eta,A}\psi_{N,a,h}(\boldsymbol{ T},a,{\Sigma},{S},{A},\eta)=-3N{A^2}\lambda_{\gamma,h} B''_{\beta(\eta/h)}(\lambda_{\gamma,h}{A^{3/2}}\eta)-\frac{3}{2}N{A^{7/2}}\eta\lambda_{\gamma,h}^2B'''_{\beta(\eta/h)}(\lambda_{\gamma,h}\eta{A^{3/2}})\\
		&-N\gamma\lambda_{\gamma,h}^{-1}\partial_A Resrob_2(\gamma A,\eta,h).\\
		&\partial^3_{\eta,A,A}\psi_{N,a,h}(\boldsymbol{ T},a,{\Sigma},{S},{A},\eta)=\eta^{-1} \partial^2_{A,A}\psi_{N,a,h}(\boldsymbol{ T},a,{\Sigma},{S},{A},\eta)-3NA \eta \lambda_{\gamma,h}B''_{\beta(\eta/h)}(\lambda_{\gamma,h}{A^{3/2}}\eta)\\
		&-\frac{9}{4}NA^{5/2}\eta^2\lambda_{\gamma,h}^2 B'''_{\beta(\eta/h)}(\lambda_{\gamma,h}{A^{3/2}}\eta)-N\lambda_{\gamma,h}^{-1}\gamma^2\partial^2_A Resrob_1(\gamma A,\eta,h).\\
	\end{align}
\end{prop}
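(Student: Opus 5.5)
The plan is direct differentiation of the explicit phase \eqref{defipsiB}, starting from the first and second derivative formulas of the preceding two propositions. I write
\[
\psi_{N,a,h}=\eta\,P(\boldsymbol T,\Sigma,S,A)-\frac{N}{\lambda_{\gamma,h}}\,\mathcal B(A,\eta),
\qquad
\mathcal B(A,\eta):=B_{\beta(\eta/h)}(\eta\lambda_{\gamma,h}A^{3/2}),
\]
where $P$ is the bracket in \eqref{defipsiB}. Two mechanisms then produce every term.

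First, derivatives in $\Sigma,S,A$ act on $\mathcal B$ only through its argument, since $\beta(\eta/h)$ does not depend on $A$. So $\partial_A$ acting on $B^{(j)}_{\beta(\eta/h)}(\eta\lambda_{\gamma,h}A^{3/2})$ gives $\frac32\eta\lambda_{\gamma,h}A^{1/2}B^{(j+1)}_{\beta(\eta/h)}(\cdots)$.

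Second, pure $\eta$-derivatives of $\mathcal B$ are rewritten with Definition \ref{restrobsnewnew}, substituting $\alpha=\gamma A$. Then $\lambda_{\gamma,h}\alpha^{3/2}\gamma^{-3/2}=\lambda_{\gamma,h}A^{3/2}$, and for $i=1,2,3$
\[
\partial_\eta^i\mathcal B=\lambda_{\gamma,h}^iA^{3i/2}B^{(i)}_{\beta(\eta/h)}(\eta\lambda_{\gamma,h}A^{3/2})+Resrob_i(\gamma A,\eta,h).
\]
Multiplying by $-N/\lambda_{\gamma,h}$ immediately gives the formula for $\partial^3_\eta\psi_{N,a,h}$, with the $Resrob_3$ remainder.

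For the $\Sigma,S$-type third derivatives, $P$ is a polynomial of degree at most three in $\Sigma,S$, and its coupling to $A$ is only through $-(S+\Sigma)A$ and $X\Sigma$. The listed constants ($2\eta$, $0$, $-1$) then follow by inspection of the second derivatives $\partial^2_\Sigma\psi=2\eta\Sigma$, $\partial^2_{\Sigma,A}\psi=-\eta$, and so on.

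For $\partial^3_A$ I differentiate the formula for $\partial^2_A\psi$ term by term:
\begin{itemize}
\item $-\eta T\gamma(1+\gamma A)^{-3/2}/4$ gives $\frac38\eta\gamma^2T(1+\gamma A)^{-5/2}$;
\item $-N\eta A^{-1/2}$ gives $\frac12N\eta A^{-3/2}$;
\item the $B'$ and $B''$ terms produce $B''$ and $B'''$ contributions through the chain rule above.
\end{itemize}

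The mixed derivatives $\partial^3_{\eta,\eta,A}$ and $\partial^3_{\eta,A,A}$ are obtained by applying $\partial_A$ to the already-established formulas for $\partial^2_\eta\psi$ and $\partial^2_{A,\eta}\psi$. For example,
\[
\partial_A\bigl(-N\lambda_{\gamma,h}A^3B''\bigr)=-3N\lambda_{\gamma,h}A^2B''-\tfrac32N\lambda_{\gamma,h}^2\eta A^{7/2}B'''.
\]
The remainder terms give $\partial_A\,Resrob_i(\gamma A,\eta,h)=\gamma\,\partial_\alpha Resrob_i$, and similarly $\gamma^2\partial_\alpha^2$ for the second $A$-derivative. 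In $\partial^3_{\eta,A,A}$ I keep the term $\eta^{-1}\partial^2_{A}\psi$ as a block. It comes from differentiating the identity $\partial^2_{A,\eta}\psi=\eta^{-1}\partial_A\psi+\cdots$ once more in $A$, which avoids re-expanding.

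The main obstacle is bookkeeping rather than anything conceptual, and it has two parts.

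(i) Mixed $\eta$- and $A$-derivatives of $\mathcal B$ must be taken in a consistent order. I justify this by smoothness of $(\theta,w)\mapsto B_{\beta(\theta)}(w)$, which follows from the smoothness of $\rho_\iota,\theta_\iota$ in $(x,\iota)$ in Appendix \ref{approbin}. It ensures the $A$-derivatives of $Resrob_i$ are exactly the $\partial_\alpha$-derivatives bounded in Proposition \ref{restrobs}.

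(ii) The numerical prefactors and powers of $A,\eta$ must be checked carefully term by term. Several printed coefficients (for instance the power of $A$ accompanying the $B''$ term in $\partial^3_A\psi$) should be re-derived rather than copied. Only their orders of magnitude ($\lesssim N$ or $\lesssim N\lambda_{\gamma,h}^{-2}$ after Proposition \ref{boundsb}) are used in the proof of \eqref{imptrans}.
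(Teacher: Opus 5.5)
Your plan is correct and matches the paper's approach. The paper gives no written proof: the proposition simply records the result of differentiating \eqref{defipsiB} term by term, with pure $\eta$-derivatives of $B_{\beta(\eta/h)}(\eta\lambda_{\gamma,h}A^{3/2})$ rewritten through Definition \ref{restrobsnewnew} at $\alpha=\gamma A$.

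One correction to your caveat (ii): the $B''$ term in $\partial^3_A\psi$ is printed correctly. The contributions $-\tfrac98 N\eta^2\lambda_{\gamma,h}B''$ (from the $A^{-1/2}B'$ term) and $-\tfrac94 N\eta^2\lambda_{\gamma,h}B''$ (from the $AB''$ term) add up to $-\tfrac{27}{8}\lambda_{\gamma,h}N\eta^2B''$, with no power of $A$. In fact every listed formula checks out, provided $\partial_A Resrob_i(\gamma A,\eta,h)$ is read as $(\partial_\alpha Resrob_i)(\gamma A,\eta,h)$.
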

\bibliography{bib}
\bibliographystyle{plain}
\end{document}